\numberwithin{equation}{section}
\theoremstyle{plain}
	\newtheorem{thmintro}{Theorem}
    \newtheorem{thm}{Theorem}[section]
    \newtheorem{lemma}[thm]{Lemma}
    \newtheorem{coro}[thm]{Corollary}
    \newtheorem{prop}[thm]{Proposition}
\theoremstyle{definition}
    \newtheorem{defi}[thm]{Definition}
    \newtheorem{ex}[thm]{Example}
    \newtheorem{remark}[thm]{Remark}
\theoremstyle{remark}
\newcommand{\Z}{\mathbb{Z}}
\newcommand{\suchthat}{\ | \ }
\newcommand{\lcm}{\operatorname{lcm}}
\newcommand*\circled[1]{\tikz[baseline=(char.base)]{
            \node[shape=circle,draw,inner sep=2pt] (char) {#1};}}
\newcommand{\Gal}{\operatorname{Gal}}
\newcommand{\Fix}{\operatorname{Fix}}
\newcommand{\surfnoM}{\Sigma}
\newcommand{\marked}{\mathbb{M}}
\newcommand{\orb}{\mathbb{O}}
\newcommand{\surf}{(\surfnoM,\marked,\orb)}
\newcommand{\SSigma}{{\mathbf{\Sigma}}}
\newcommand{\SSigmaw}{{\mathbf{\Sigma}_\omega}}
\newcommand{\tauw}[1][\tau]{{#1, \omega}}
\newcommand{\dtauwi}[1][i]{{d(\tauw)_{#1}}}
\newcommand{\dtuple}{\bldD}
\newcommand{\Ctau}{C_\bullet(\tau)}
\newcommand{\CZonetauwFtwo}{Z^1(\tau)}
\newcommand{\CCtauwFtwo}{C^\bullet(\tau)}
\newcommand{\CtauwFtwo}{C_\bullet(\tau)}
\newcommand{\bbC}{\mathbb{C}}
\newcommand{\R}{\mathbb{R}}
\newcommand{\F}{\mathbb{F}}
\newcommand{\usualRA}[1]{R\langle #1\rangle}
\newcommand{\compRA}[1]{R\langle\hspace{-0.075cm}\langle #1\rangle\hspace{-0.075cm}\rangle}
\newcommand{\Hom}{\operatorname{Hom}}
\newcommand{\Aut}{\operatorname{Aut}}
\newcommand{\Image}{\operatorname{Im}}
\newcommand{\ZZ}{\mathbb{Z}}
\newcommand{\maxid}{\mathfrak{m}}
\newcommand{\jacobalg}[1]{\mathcal{P}(#1)}
\newcommand{\Rep}{\operatorname{Rep}}
\newcommand{\rep}{\operatorname{rep}}
\newcommand{\calB}{\mathcal{B}}
\newcommand{\myid}{1\hspace{-0.125cm}1}
\newcommand{\bbS}{\mathbb{S}}
\newcommand{\curlS}{\mathscr{S}}
\newcommand{\bldsigma}{\boldsymbol{\sigma}}
\newcommand{\bldD}{\boldsymbol{d}}
\newcommand{\blddelta}{\boldsymbol{\delta}}
\newcommand{\maxcompRA}[1]{\maxid\langle\hspace{-0.075cm}\langle #1\rangle\hspace{-0.075cm}\rangle}
\newcommand{\compactbigotimesE}[1][0pt]{%
  \mathrel{\raisebox{#1}{$\underset{\scalebox{0.9}{
$\boldsymbol{E}\,$}}{\boldsymbol{\otimes}}$}}%
}
\newcommand{\smallotimesE}{\scalebox{0.6}{$\compactbigotimesE[0.45em]$}}
\newcommand{\compactbigotimesL}[1][0pt]{%
  \mathrel{\raisebox{#1}{$\underset{\scalebox{0.9}{
$\boldsymbol{L}\,$}}{\boldsymbol{\otimes}}$}}%
}
\newcommand{\smallotimesL}{\scalebox{0.6}{$\compactbigotimesL[0.45em]$}}
\newcommand{\compactbigotimesF}[1][0pt]{%
  \mathrel{\raisebox{#1}{$\underset{\scalebox{0.9}{
$\boldsymbol{F}\,$}}{\boldsymbol{\otimes}}$}}%
}
\newcommand{\smallotimesF}{\scalebox{0.6}{$\compactbigotimesF[0.45em]$}}
\newcommand{\compactbigotimesempty}[1][0pt]{%
  \mathrel{\raisebox{#1}{$\underset{}{\,\boldsymbol{\otimes}}\,$}}%
}
\newcommand{\smallotimesempty}{\scalebox{0.6}{$\compactbigotimesempty[0.2em]$}}
\newcommand{\compactminus}[1][[0pt]{%
  \mathrel{\raisebox{#1}{$\boldsymbol{-}$}}%
}
\newcommand{\smallminus}{\scalebox{0.65}{$\compactminus[0.2em]$}}
\newcommand{\compactplus}[1][0pt]{%
  \mathrel{\raisebox{#1}{$\boldsymbol{+}$}}%
}
\newcommand{\smallplus}{\scalebox{0.65}{$\compactplus[0.15em]$}}
\newcommand{\compactQhat}{\skew{3}\widehat{\rule{0ex}{1.35ex}\smash{Q}}}
\newcommand{\compactblddhat}{\skew{3}\widehat{\rule{0ex}{1.35ex}\smash{\mathbf{d}}}}
\newcommand{\compactdhat}{\skew{3}\widehat{\rule{0ex}{1.35ex}\smash{d}}}
\newcommand{\mathinsection}[1]{\texorpdfstring{$#1$}}
\newcommand{\inputtripleforblocks}{Q,\bldD,g}
\newcommand{\inputpairfromtriangulation}{\tau,\xi}
\newcommand{\indexset}{\mathtt{I}}
\begin{document}

\title{Semilinear clannish algebras arising\\ from surfaces with orbifold points}
\author{Raphael Bennett-Tennenhaus}
\address{Raphael Bennett-Tennenhaus\newline
Fakult\"at f\"ur Mathematik, Universit\"at Bielefeld, Germany
		}
\email{raphaelbennetttennenhaus@gmail.com}
\author{Daniel Labardini-Fragoso}
\address{Daniel Labardini-Fragoso\newline
Dipartimento di Matematica ``Tullio Levi-Civita'', Universit\`a degli Studi di Padova, Italy, and\newline
Instituto de Matem\'aticas, UNAM, Mexico.\newline
}
\email{daniel.labardinifragoso@unipd.it}
\keywords{}

\begin{abstract} Semilinear clannish algebras have been recently introduced by the first author and Crawley-Boevey as a generalization of Crawley-Boevey's clannish algebras. In the present paper, we associate semilinear clannish algebras to the (colored) triangulations of a surface with marked points and orbifold points, and exhibit a Morita equivalence between these algebras and the Jacobian algebras constructed a few years ago by Geuenich and the second author. 
\end{abstract}

\maketitle

\tableofcontents

\section{Introduction}

\emph{Clannish algebras}, defined by Crawley-Boevey over thirty years ago \cite{CB-clans}, form a class of tame algebras whose indecomposable modules enjoy explicit parameterizations in terms of strings and bands that generalize the familiar parameterizations of indecomposables for gentle algebras. Very recently, Bennett-Tennenhaus--Crawley-Boevey \cite{BTCB}, have introduced \emph{semilinear clannish algebras}, a more general class of algebras where the action of an arrow of the quiver on a representation allows the scalars to ``come out'' up to the application of a field automorphism \emph{a priori} attached to the arrow, instead of requiring them to always ``come out'' linearly. 
The indecomposable modules over a semilinear clannish algebra still enjoy very handy parameterizations in terms of strings and bands. 

On the other hand, with the aim of categorifying the skew-symmetrizable cluster algebras associated by Felikson-Shapiro-Turmakin \cite{FeShTu-orbifolds} to surfaces 
$\Sigma$ equipped 
with 
a set $\mathbb{M}$ of 
marked points and 
a set $\mathbb{O}$ of 
 orbifold points $\boldsymbol{\Sigma}=(\Sigma,\mathbb{M},\mathbb{O})$, Geuenich--Labardini-Fragoso associated in \cite{GLF1,GLF2} a species with potential to each colored triangulation of such a surface $\boldsymbol{\Sigma}$, and showed that if $\boldsymbol{\Sigma}$ is either once-punctured closed or unpunctured, then whenever one is given two colored triangulations related by the flip of an arc, the associated species with potential are related by a mutation of species with potential (also defined in \cite{GLF1}, following the guidelines of Derksen-Weyman-Zelevinsky's mutations of quivers with potential \cite{DWZ1}), thus extending one of the main results from the second author's Ph.D. thesis from simply-laced to non-simply laced situations.

An example was given in \cite[\S5.4]{BTCB} of a 3-vertex semilinear clannish algebra that can be easily seen to be isomorphic to one of the Jacobian algebras discovered in \cite{GLF1} (namely, the Jacobian algebra of the species with potential associated to certain triangulation of a digon with two orbifold points, see Remark \ref{rem:ex-from-BTCB} below).
The main aim of this paper is to prove that, a lot more generally, the Jacobian algebras of all the species with potential from the previous paragraph are Morita-equivalent to semilinear clannish algebras. We do this by explicitly constructing semilinear clannish algebras for the colored triangulations of a surface with marked points and orbifold points, and by exhibiting explicit Morita-equivalences with the Jacobian algebras defined in \cite{GLF1,GLF2}.

Let us describe the contents of the paper in some detail. Section \ref{sec-background} is devoted to recalling some previously existing notions and to setting notation for them. In Subsection \ref{subsec-species} we recall some generalities on tensor rings, species and modulations of weighted quivers, as well as the notion of modulating function from \cite{GLF1}, the notion of semilinear path algebra from \cite{BTCB}, and some explicit computations that can be performed on bimodules over cyclic Galois field extensions whose base field contains certain roots of unity. In Subsection \ref{subsec-representations} we recall the notion of representation of a species, as well as the equivalence between the category of representations of a species and the category of left modules over the tensor ring of the species. Afterwards, we note that for any species arising from a modulating function of a weighted quiver $(Q,\mathbf{d})$ over the field extensions just mentioned, the category of representations of the species is equivalent to the category whose objects are quiver representations of $Q$ that to each vertex attach a vector space over the field corresponding to the vertex, and to each arrow attach a map that is semilinear over the intersection of the fields corresponding to the head and the tail of the arrow, see Lemma \ref{lemma:simplyfing-reps-of-these-species} and  Corollary \ref{coro:rep-of-species-equiv-to-cat-of-semilinear-maps}.

In Subsections \ref{subsec:Jac-algs} and \ref{subsec:semilinear-clannish-algs-def-of-concept} we recall from \cite{GLF1} and \cite{BTCB} the general notions of Jacobian algebra of a species with potential, and of semilinear clannish algebra.
In Section \ref{sec:specific-field-extensions} we describe the specific types of field extensions that will be used to construct the species associated to triangulations: only degree-$1$, degree-$2$ or degree-$4$ cyclic Galois extensions $E/F$ with $F$ having certain fourth roots of unity will be used. 

In Section \ref{sec:building-blocks} we introduce two sets of 3-vertex algebras. The first set is formed by ten 3-vertex Jacobian algebras defined by species with potential, and that we thus call \emph{Jacobian blocks}. The second set is formed by ten 3-vertex semilinear clannish algebras, that we call \emph{semilinear clannish blocks}. In Proposition \ref{prop-morita-equivalences-for-blocks} we prove that for $k=1,\ldots,10$, the $k^{\operatorname{th}}$ Jacobian block and the $k^{\operatorname{th}}$ semilinear clannish block are Morita-equivalent.

In Section \ref{sec:colored-triangs} we recall the combinatorial framework of surfaces with marked points and orbifold points and their triangulations, as well as the notion of colored triangulation defined in \cite{GLF2}. For the latter, one needs to first to introduce certain two-dimensional CW-complex $X(\tau)=(X_0(\tau),X_1(\tau),X_2(\tau))$ for each triangulation $\tau$, and consider the cochain complex $C^{\bullet}(\tau)$ dual to the cellular chain complex of $X(\tau)$ with coefficients in $\mathbb{F}_2\coloneqq \mathbb{Z}/2\mathbb{Z}$. A \emph{colored triangulation} is defined to be a pair $(\tau,\xi)$ consisting of a triangulation $\tau$ and a $1$-cocycle 
$\xi$ 
of $C^{\bullet}(\tau)$.

To associate a Jacobian algebra and a semilinear clannish algebra to a colored triangulation of $\boldsymbol{\Sigma}=(\Sigma,\mathbb{M},\mathbb{O})$, we need to fix one more piece of input, namely, a function $\omega:\orb\rightarrow\{1,4\}$. We refer to any such $\omega$ as a \emph{choice of weights}; there are, thus, $2^{|\orb|}$ distinct choices of weights. Roughly speaking, fixing a choice of weights corresponds to fixing one amongst all the skew-symmetrizable matrices that give rise to a given diagram in \cite[Definition 7.3]{FZ2}. E.g., the diagram $1\overset{2}{\longleftarrow}2\longleftarrow 3\longleftarrow\ldots\longleftarrow n$ arises both from a skew-symmetrizable matrix of type $B$, and from a skew-symmetrizable matrix of type $C$; fixing a choice of weights corresponds to picking one of these two matrices for the depicted diagram.

Given $\boldsymbol{\Sigma}_{\omega}\coloneqq (\Sigma,\mathbb{M},\mathbb{O},\omega)$ and a triangulation $\tau$ of $\boldsymbol{\Sigma}=(\Sigma,\mathbb{M},\mathbb{O})$, in Subsection \ref{subsec:weighted-quiver} we associate to $(\tau,\omega)$ a loop-free \emph{weighted quiver} $(Q(\tau,\omega),\mathbf{d}(\tau,\omega))$, that is, a pair consisting of a loop-free quiver $Q(\tau,\omega)$ and a tuple $\mathbf{d}(\tau,\omega))=(d(\tau,\omega)_{k})_{k\in\tau}$ of positive integers. The quiver $Q(\tau,\omega)$ is a modification of the quiver $\overline{Q}(\tau)$ that defines the 1-skeleton of $X(\tau)$, a quiver that we denote $\overline{Q}(\tau)$. 
As recalled in \S\ref{subsec:surfaces-and-triangs}, \emph{pending} arcs connect marked points with orbifold points. 
The integer $d(\tau,\omega)_{k}$ is set to be $\omega(q_k)\in\{1,4\}$ if $k$ is a pending arc incident to an orbifold point~$q_k$, and $d(\tau,\omega)_{k}\coloneqq 2$ if $k$ is a non-pending arc. Thus, $\lcm\{d(\tau,\omega)_{k}\suchthat k\in\tau\}\in\{1,2,4\}$, and this is why we only need degree-$1$, degree-$2$ and degree-$4$ field extensions $E/F$ in Section \ref{sec:specific-field-extensions}. 

With $(Q(\tau,\omega),\mathbf{d}(\tau,\omega))$ at hand, in Subsection \ref{subsec:algebras-for-arb-weights} we associate a Jacobian algebra (\S\ref{subsubsec:arb-weights-Jac-alg}) and a semilinear clannish algebra (\S\ref{subsubsec:arb-weights-semilin-clan-alg}) to each colored triangulation. 
Both constructions are defined in terms of the degree-$d$ field extension $E/F$ from Section \ref{sec:specific-field-extensions}, where $d\coloneqq \lcm\{d(\tau,\omega)_{k}\suchthat k\in\tau\}$, and the non-trivial element $\theta$ of the Galois group $\Gal(L/F)=\{\myid_{L},\theta\}$ of the unique subfield $L$ of $E$ such that $[L:F]=2$. 

In \S\ref{subsubsec:arb-weights-Jac-alg} we recall from \cite{GLF2} how for each $1$-cocycle $\xi$ of $C^\bullet(\tau)$ one can associate a species with potential $(A(\tau,\xi),W(\tau,\xi))$ to the colored triangulation $(\tau,\xi)$. 
For this we attach to each $k\in\tau$ the unique subfield $F_k$ of $E$ such that $[F_k:F]=d(\tau,\omega)_{k}$, and to each arrow $a:k\rightarrow j$ of $Q(\tau)$ the $F_j$-$F_k$-bimodule $A(\tau,\xi)_a\coloneqq F_j^{g(\tau,\xi)_a}\otimes_{F_j\cap F_k}F_k$, where $g(\tau,\xi)_a\in \Gal(F_j\cap F_k/F)$ is either an extension of $\theta^{\xi_a}:L\rightarrow L$ to $F_j\cap F_k$ (if $F\varsubsetneq F_j\cap F_k$, i.e., $L\subseteq F_j\cap F_k$) or the restriction $\theta^{\xi_a}|_{F} =\myid_F:F\rightarrow F$ (if $F= F_j\cap F_k$). The potential $W(\tau,\xi)$ is defined as a sum of ``obvious'' degree-$3$ cycles in the tensor ring $T_R(A(\tau,\xi))$, where $R\coloneqq \times_{k\in\tau}F_k$ and $A(\tau,\xi)\coloneqq \bigoplus_{a\in Q(\tau,\omega)_1}A(\tau,\xi)_a$. With the notion of \emph{cyclic derivative} from \cite[Definition 3.11]{GLF1}, we form the \emph{Jacobian algebra} $\mathcal{P}(A(\tau,\xi),W(\tau,\xi))$ as the quotient of the complete tensor ring of $A(\tau,\xi)$ over $R$, which we denote $\compRA{A(\tau,\xi)}$ and call \emph{complete path algebra}, modulo the $\mathfrak{m}$-adic topological closure of the two-sided ideal generated by the cyclic derivatives of $W(\tau,\xi)$ with respect to the arrows of $Q(\tau,\omega)$. 
Roughly speaking, for each arrow $a$ and each cycle $c$, the cyclic derivative $\partial_a(c)$ is defined to be the $g(\tau,\xi)_a^{-1}$-linear part of the usual sum of paths obtained by deleting each occurrence of $a$ in $c$ (with the accustomed reordering $yx$ whenever $c=xay$).

In \S\ref{subsubsec:arb-weights-semilin-clan-alg} we associate a semilinear clannish algebra to $(\tau,\xi)$ as follows. 
For an automorphism $\alpha$ of a field $K$ we write $K[x;\alpha]$ for the corresponding \emph{skew polynomial ring}, for which the non-negative powers of $x$ form a $K$-basis, and whose multiplication extends the equation $x\lambda=\alpha(\lambda) x$ linearly over $K$.  
Set $\widehat{Q}(\tau)$ to be the quiver obtained from $\overline{Q}(\tau)$ by adding a loop $s_j$ at each pending arc $j$ of $\tau$. Then we attach  $L$ to every vertex of $\widehat{Q}(\tau)$. To each arrow $a$ of $\widehat{Q}(\tau)$ we attach a field automorphism $\sigma_a\in\Gal(L/F)=\{\myid_L,\theta\}$ by
$$
\sigma_a\coloneqq \begin{cases}
\theta^{\xi_a} & \text{if $a\in\overline{Q}(\tau)_1$};\\
\theta & \text{if $a=s_j$ and $d(\tau,\omega)_j=1$};\\
\myid_L & \text{if $a=s_j$ and $d(\tau,\omega)_j=4$}.
\end{cases}
$$
Letting $L_{\boldsymbol{\sigma}}\widehat{Q}(\tau)$ be the \emph{semilinear path algebra} of $\widehat{Q}(\tau)$ with respect to $\boldsymbol{\sigma}\coloneqq (\sigma_a)_{a\in \widehat{Q}(\tau)_1}$, 
for each pending arc $j$ of $\tau$ we define a degree-$2$ polynomial $q_{s_j}\in L[s_j;\sigma_{s_j}]\subseteq L_{\boldsymbol{\sigma}}\widehat{Q}(\tau)$ by
$$
q_{s_j}\coloneqq \begin{cases}
s_j^2-e_j & \text{if $d(\tau,\omega)_j=1$};\\
s_j^2-ue_j & \text{if $d(\tau,\omega)_j=4$};
\end{cases}
$$
where $u$ is an \emph{a priori} given element of $L\setminus F$ such that $\theta(u)=-u$, and $e_j$ is the $j^{\operatorname{th}}$ primitive idempotent of the semisimple ring $S\coloneqq \times_{k\in \tau}L$. Furthermore, we set $Z(\tau,\xi)\subseteq L_{\boldsymbol{\sigma}}\widehat{Q}(\tau)$ to be the set of all paths of length two on $\overline{Q}(\tau)$ both of whose constituent arrows are contained in the same triangle of $\tau$. The quotient $L_{\boldsymbol{\sigma}}\widehat{Q}(\tau)/\langle Z(\tau,\xi)\cup \{q_{s_j}\suchthat j$ is a pending arc of $\tau\}\rangle$ turns out to be a semilinear clannish algebra, see Proposition~\ref{prop-algebras-are-semilinear-clannish}. It is an $F$-algebra, but not necessarily an $L$-algebra, as the action of $L$ on it is typically not central.

\textbf{Remark.}
If $\mathbb{O}\neq\varnothing$, then the semilinear clannish algebras we obtain are not
semilinear gentle, because of the presence of special loops attached to the orbifold points.
If $\mathbb{O}=\varnothing$, which we allow in Section 6.2, then the algebras constructed in
\S6.2.1 and \S6.2.2 are semilinear gentle (infinite-dimensional if $\surf$ is moreover
once-punctured closed). Finally, if $\mathbb{O}=\varnothing$, $\xi$ is the zero cocycle and,
moreover, $\marked\subseteq\partial\Sigma$, then the algebras obtained are precisely the
gentle algebras studied by Assem-Br\"{u}stle-Charbonneau-Plamondon in~\cite{ABCP}, introduced earlier in \cite{LF-QPsurfs1}.

Special attention deserve the constant choices of weights $\omega:\orb\rightarrow\{1,4\}$, i.e., the constant functions $\omega\equiv 1$ and $\omega\equiv 4$. For $\omega\equiv 1$ we have $d\coloneqq \lcm\{d(\tau,\omega)_{k}\suchthat k\in\tau\}=2$, and the settings from Subsection \ref{subsec:algebras-for-arb-weights} allow us to take $E/F$ to be $\mathbb{C}/\mathbb{R}$ (the field thus attached in \S\ref{subsubsec:arb-weights-Jac-alg} to the non-pending arcs is $\mathbb{C}$, the one attached to the pending arcs is $\mathbb{R}$, whereas the field attached to all arcs in \S\ref{subsubsec:arb-weights-semilin-clan-alg} is then $\mathbb{C}$). 

In contrast, for $\omega\equiv 4$ we have $d\coloneqq \lcm\{d(\tau,\omega)_{k}\suchthat k\in\tau\}=4$, and strictly speaking, the settings from Subsection \ref{subsec:algebras-for-arb-weights} do not allow us to take $E/F$ or $E/L$ to be $\mathbb{C}/\mathbb{R}$ (since $[E:F]=d=4$ and $L$ has degree $2$ over its subfield $F$); however, a careful reading of \S\ref{subsubsec:arb-weights-Jac-alg} and \S\ref{subsubsec:arb-weights-semilin-clan-alg} suggests that, by taking $\xi$ to be the zero $1$-cocycle, one should be able to work over $\mathbb{C}/\mathbb{R}$ and still obtain all the definitions and results from Subsection~\ref{subsec:algebras-for-arb-weights}. We do this in detail in \S\ref{subsubsec:constant-weights-Jac-alg} and \S\ref{subsubsec:constant-weights-semilin-clan-alg}. For $\orb\neq \varnothing$ and $\omega\equiv 4$, one is thus allowed in \S\ref{subsubsec:constant-weights-Jac-alg} to attach the field $\mathbb{R}$ to the non-pending arcs, and the field $\mathbb{C}$ to the pending arcs, whereas the field attached to all arcs in \S\ref{subsubsec:constant-weights-semilin-clan-alg} is, concordantly, $\mathbb{R}$.

In Section \ref{sec-morita-equivalence-between-algebras-from-triangulations} we state and prove our main result, namely:
\begin{thmintro}
\label{main-theorem}
Let $\SSigma\coloneqq (\Sigma,\marked,\orb)$ be a surface with marked points and orbifold points, $\omega:\orb\rightarrow\{1,4\}$ a function, and $(\tau,\xi)$ a colored triangulation of $\SSigma$. If
\begin{itemize}
\item $\partial\Sigma=\varnothing$ and $|\marked|=1$, or
\item $\partial\Sigma\neq\varnothing$ and $\marked\subseteq\partial\Sigma$,
\end{itemize}
then the Jacobian algebra of the species with potential associated to $(\tau,\xi)$ in \S\ref{subsubsec:arb-weights-Jac-alg} (resp. \S\ref{subsubsec:constant-weights-Jac-alg}) is Morita-equivalent to the semilinear clannish algebra associated to $(\tau,\xi)$ in \S\ref{subsubsec:arb-weights-semilin-clan-alg} (resp. \S\ref{subsubsec:constant-weights-semilin-clan-alg}). The Morita-equivalence is $F$-linear and restricts to an equivalence between the categories of finite-dimensional left modules. 
\end{thmintro}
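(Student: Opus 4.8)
Write $\mathcal{P}:=\jacobalg{A(\tau,\xi),W(\tau,\xi)}$ for the Jacobian algebra and $\Lambda$ for the semilinear clannish algebra attached to $(\tau,\xi)$. The plan is to deduce the statement from the block-level equivalences of Proposition~\ref{prop-morita-equivalences-for-blocks} by a local-to-global (``gluing'') argument over the ideal triangles of $\tau$. First I would record two reductions valid under either hypothesis: (i) $\mathcal{P}$ is finite dimensional over $F$, so that the $\idealM$-adic closure in its definition is vacuous and one may work with the uncompleted path algebra modulo relations; and (ii) $\tau$ has no self-folded triangles, since a self-folded triangle requires an interior marked point enclosed by a loop, which is excluded when $\marked\subseteq\partial\Sigma$ and, when $|\marked|=1$, would force a second puncture. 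Consequently every ideal triangle $\triangle$ of $\tau$ realizes one of the ten configurations of Section~\ref{sec:building-blocks}, and the combinatorial data defining $\mathcal{P}$ and $\Lambda$ is triangle-local: $\widehat{Q}(\tau)$ (resp.\ $Q(\tau,\omega)$) is obtained by gluing the block quivers along the vertices corresponding to the non-pending arcs they share; the relation set $Z(\tau,\xi)\cup\{q_{s_j}\}$ is the union of the block relation sets; and $W(\tau,\xi)$ is the sum of the block potentials (in the boundary case), resp.\ that sum together with one cycle around the puncture (in the once-punctured closed case). Thus $\mathcal{P}$ is the amalgamated colimit, over the ideal triangles $\triangle$ of $\tau$, of the Jacobian blocks $\mathcal{P}_\triangle$ along the semisimple subalgebras $\prod_k F_k$ indexed by shared arcs, and $\Lambda$ is the analogous amalgamated colimit of the semilinear clannish blocks $\Lambda_\triangle$.

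Next I would glue the block equivalences. Each equivalence of Proposition~\ref{prop-morita-equivalences-for-blocks} is realized by a $(\mathcal{P}_\triangle,\Lambda_\triangle)$-progenerator bimodule $M_\triangle$, with inverse $N_\triangle$; the feature to extract from the proof of that proposition is that $M_\triangle$ is \emph{trivial at non-pending-arc vertices}, meaning that over the idempotent of such a vertex the bimodule is rank one and the equivalence is the identity on the attached field (which is $L$ on both sides there), all modifications taking place at the pending-arc vertices, where the loop $s_j$, the field $L$ and the polynomial $q_{s_j}$ reproduce, up to Morita equivalence, the field $F_j$ of degree $\omega(q_j)\in\{1,4\}$ over $F$ (concretely, $L[s_j;\myid_L]/(s_j^2-ue_j)$ is a degree-$2$ field extension of $L$ isomorphic to $F_j$ when $\omega(q_j)=4$, while $L[s_j;\theta]/(s_j^2-e_j)\cong\End_F(L)$ when $\omega(q_j)=1$). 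Since the arcs shared by two triangles are always non-pending, the bimodules $M_\triangle$ and $M_{\triangle'}$ agree over their common corners, so they glue to a $(\mathcal{P},\Lambda)$-bimodule $M$, and likewise the $N_\triangle$ glue to $N$. Because $M\otimes_{(-)}N$ commutes with the amalgamated colimits of the previous paragraph, the local isomorphisms $M_\triangle\otimes_{\Lambda_\triangle}N_\triangle\cong\mathcal{P}_\triangle$ and $N_\triangle\otimes_{\mathcal{P}_\triangle}M_\triangle\cong\Lambda_\triangle$ assemble into isomorphisms $M\otimes_\Lambda N\cong\mathcal{P}$ and $N\otimes_{\mathcal{P}}M\cong\Lambda$. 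Hence $M$ is a $(\mathcal{P},\Lambda)$-progenerator bimodule and the functor it induces is an equivalence $\modcat\mathcal{P}\to\modcat\Lambda$; it is $F$-linear because all algebras and bimodules in sight are $F$-algebras and $F$-central bimodules, and it restricts to an equivalence between the finite-dimensional module categories because $M$, being finitely generated over the finite-dimensional $F$-algebras $\mathcal{P}$ and $\Lambda$, is itself finite dimensional over $F$.

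The once-punctured closed case needs one extra check: the cyclic derivatives of the puncture cycle in $W(\tau,\xi)$ produce relations in $\mathcal{P}$ that are not triangle-local. I would dispatch this by verifying, using finite dimensionality, that these long relations are already consequences of the triangle relations, so that the amalgamated-colimit description of $\mathcal{P}$ over the ideal triangles is unaffected and the argument above goes through unchanged. Finally, the versions of the construction living over $\bbC/\R$ treated in \S\ref{subsubsec:constant-weights-Jac-alg} and \S\ref{subsubsec:constant-weights-semilin-clan-alg} are obtained from the general construction of \S\ref{subsubsec:arb-weights-Jac-alg} and \S\ref{subsubsec:arb-weights-semilin-clan-alg} by specializing $\xi$ to the zero cocycle and the ground fields, so the same proof applies to them.

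The step I expect to be the main obstacle is making the two amalgamated-colimit presentations rigorous. On the Jacobian side one must check that the $\idealM$-adic closure of the ideal of cyclic derivatives contributes no relations beyond those supported on individual triangles -- this is precisely where the once-punctured correction term must be controlled -- and one must keep careful track of how the Galois automorphisms $g(\tau,\xi)_a$ on the Jacobian side and $\sigma_a$ on the semilinear clannish side are compatible along an arrow $a$ crossing from one triangle into an adjacent one, so as to be certain that the block bimodules $M_\triangle$ genuinely agree over the shared non-pending corners. Once these compatibilities are in place, the rest is a formal consequence of Proposition~\ref{prop-morita-equivalences-for-blocks} together with the fact that forming $M\otimes_{(-)}N$ commutes with amalgamated colimits.
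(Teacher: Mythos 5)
Your proof follows essentially the same route as the paper's: decompose both algebras into the triangle-local blocks of Section \ref{sec:building-blocks}, invoke the block-level equivalences of Proposition \ref{prop-morita-equivalences-for-blocks}, and glue them using the fact that the shared arcs are non-pending and that the block equivalences act as the identity (same field, rank one) at those vertices --- this is exactly the content of Proposition \ref{prop:our-algs-are-block-decomposable} together with the paper's observation that ``there is never a loop based at an outlet'' and that the explicit functors $\Psi,\Phi$ of Proposition \ref{prop-morita-equivalences-for-blocks} are trivial there. Phrasing the gluing via progenerator bimodules and amalgamated colimits rather than via explicitly assembled functors is a cosmetic difference.

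Two remarks. First, your ``extra check'' for the once-punctured closed case addresses a term that is not present in this paper: Definition \ref{def:W(tau,xi)} takes $W(\tau,\xi)$ to be the sum of the $W^\triangle(\tau,\xi)$ over interior triangles only, with no additional cycle around the puncture, so all relations are already triangle-local and the step is vacuous. This is fortunate, because the justification you propose --- that the derivatives of a puncture cycle would be consequences of the triangle relations --- is false in general (e.g.\ for the Markov-type configurations on a once-punctured torus the puncture term genuinely changes the Jacobian algebra), so if the definition did include such a term your argument would break there rather than ``go through unchanged.'' Second, your reduction (i) to the uncompleted path algebra via finite-dimensionality is only backed by Theorem \ref{thm:Jac-algs-are-fin-dim} in the unpunctured case; for once-punctured closed surfaces the paper does not claim finite-dimensionality, and the identification of the completed Jacobian quotient with the glued-block presentation is asserted in Proposition \ref{prop:our-algs-are-block-decomposable} without appeal to finite dimension. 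Neither point derails the proof as it applies to the paper's actual definitions, but both should be stated accurately.
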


The species with potential associated to $(\tau,\xi)$ in \S\ref{subsubsec:arb-weights-Jac-alg} (resp.~\S\ref{subsubsec:constant-weights-Jac-alg}) was first constructed in \cite{GLF2} (resp.~\cite{GLF1}). Theorem \ref{main-theorem} is stated more precisely as Theorem \ref{thm:main-result-of-paper} below. To prove it we recall from \cite{FeShTu-orbifolds,Fomin-Shapiro-Thurston} that $\tau$ can be obtained by gluing finitely many \emph{puzzle pieces}, and notice a few facts, namely,
\begin{itemize}
\item that the Jacobian blocks and the semilinear clannish blocks defined respectively in Subsections \ref{subsec:Jacobian-blocks} and \ref{subsec:semilinear-clannish-blocks} are precisely the Jacobian algebras and the semilinear clannish algebras that would arise from the puzzle pieces according to the constructions of Section \ref{sec:Jac-algs-and-semilinear-clan-algs-of-colored-triangs} (allowing boundary segments to be vertices of the quivers or, alternatively, adding to them some artificial boundary triangles);
\item that one can glue not only the puzzle pieces, but also their associated Jacobian blocks (resp. semilinear clannish blocks), following a procedure first defined by Br\"ustle in \cite{Brustle-kit}, and that the result of this gluing of blocks is precisely the Jacobian algebra (resp. the semilinear clannish algebra) associated to $(\tau,\xi)$; alternatively, applying the notion of $\rho$-block decomposition from \cite{GLFS-schemes}, the $\rho$-blocks of the Jacobian algebra (resp. the semilinear clannish algebra) associated to $(\tau,\xi)$ are precisely the Jacobian blocks (resp the semilinear clannish blocks) given by the puzzle-piece decomposition of $(\tau,\xi)$; 
\item that the explicit Morita equivalences given in the proof of Proposition \ref{prop-isomorphisms-between-some-of-the-blocks} between Jacobian blocks and semilinear clannish blocks, can be glued as well to produce an explicit Morita equivalence between the Jacobian algebra and the semilinear clannish algebra associated to $(\tau,\xi)$.
\end{itemize}

Section \ref{sec:indecs-for-blocks} is devoted to recalling the parameterizations of the indecomposable modules over a semilinear clannish algebra in terms of symmetric and asymmetric strings and bands given in \cite{BTCB}, to providing a full list of strings and bands for the semilinear clannish blocks from Subsection \ref{subsec:semilinear-clannish-blocks}, and to illustrating the construction of symmetric-string modules by means of an explicit example, and the representation of the Jacobian algebra corresponding to it under Morita equivalence.

Finally, in Section \ref{sec:on-need-of-cocycles} we explain why we need to work with $1$-cocycles $\xi$ as part of the input $(\tau,\xi)$ to which we associate algebras. Roughly and informally speaking, only the species arising from modulating functions satisfying a cocycle conditions have the chance of admitting a non-degenerate potential for the notion of mutation of species with potential from \cite{GLF1}. In contrast, one does not need any cocycle condition to be satisfied in order to define a semilinear clannish algebra. This means that the constructions from Subsections \ref{subsec:semilinear-clannish-blocks}, \ref{subsubsec:arb-weights-semilin-clan-alg} and \ref{subsubsec:constant-weights-semilin-clan-alg} can be carried out, without requiring $\xi$ to be a cocycle, to still obtain semilinear clannish algebras in the end.

\section{Algebraic background}
\label{sec-background}

\subsection{Tensor rings, species and modulations}
\label{subsec-species}

\,

In this subsection we recall some generalities on tensor rings, species and modulations of weighted quivers. We will start with very general concepts and settings, and gradually decrease the level of generality. The intention of this approach is to make as transparent as possible how the concrete Jacobian and semilinear clannish algebras we will later introduce fit into classical well-known general constructions of rings that are not necessarily algebras over an algebraically closed field. Our main references for \S2.1 are \cite[\S 7.1]{Gabriel1973}, \cite[\S 10]{Dlab-Ringel-on-algebras-finite},\cite[\S 1B]{Green-representation-tensor-algebra}, \cite[\S 2]{Ringel-species} and \cite[\S 2]{Simson}, \cite[\S2]{BTCB}, \cite[\S2 and \S3]{GLF1}, \cite[\S2]{Berg} and \cite[\S2]{GeuenichPHD}.

\vspace{1mm}

\subsubsection{\textbf{Tensor rings}} The \emph{tensor ring} $\usualRA{A}$ of an $R$-$R$-bimodule $A$ over a ring $R$ is
\[
\begin{array}{ccc}
\usualRA{A}\coloneqq \bigoplus_{n\geq0} A^{\otimes_R n}, & A^{\otimes_R 0}\coloneqq R, & A^{\otimes_R n}\coloneqq \underbrace{A\otimes_{R}\dots\otimes_{R}A}_{n}\quad(n>0),
\end{array}
\]
with  multiplication given by the natural $R$-balanced maps $A^{\otimes_R n}\times A^{\otimes_R m}\to A^{\otimes_R (n+m)}$. 
Define the \emph{complete tensor ring} $\compRA{A}$, 
 and the (two-sided) \emph{arrow ideal} $\maxcompRA{A}\triangleleft \compRA{A}$, by setting
\[
\begin{array}{cc}
\compRA{A}\coloneqq \prod_{n\geq 0} A^{\otimes_R n}=\varprojlim_{\,l>0}
\left(\usualRA{A}/\bigoplus_{n\geq l}A^{\otimes_R n}\right), 
&
\maxcompRA{A}\coloneqq \prod_{n\geq 1}A^{\otimes_R n}.
\end{array}
\]
Both $\usualRA{A}$ and $\compRA{A}$ are $R$-\emph{rings}, meaning each occurs as the codomain of a ring homomorphism from $R$. The image of $R$ under such ring homomorphism is often not contained in the center of $\usualRA{A}$ (or $\compRA{A}$), even when $R$ is commutative.
A two-sided ideal $I\triangleleft\usualRA{A}$ is said to be  \emph{bounded} if  $\bigoplus_{n\geq l}A^{\otimes n}\subseteq I$ for some $l\gg 0$, in which case 
\[
\begin{array}{cc}
\usualRA{A}/I\cong \compRA{A}/ J, & 
J=\bigcap_{n>0}(I+\maxcompRA{A}^{n}),
\end{array}
\]
that is, $J$ is the closure of $I$ in the $\maxcompRA{A}$-adic topology. 
See \cite[Definition 3.6]{GLF1}.

\vspace{1mm}

\subsubsection{\textbf{Species}}\label{subsubsec:species-def-of-concept}
A \emph{species} is a pair $\curlS=((D_{i})_{i\in \indexset},(A_{ij})_{(i,j)\in \indexset\times\indexset})$ subject to  the following conditions:
\begin{itemize}
 \item $D_{i}$ is a division ring attached to each index $i$ in the given finite set $\indexset$.
    \item $A_{ij}$ is a  $D_{i}$-$D_{j}$-bimodule for each $(i,j)\in \indexset\times\indexset$;
    \item $\Hom_{D_{i}\text{-}\mathbf{Mod}}(A_{ij},D_{i})\cong \Hom_{\mathbf{Mod}\text{-}D_{j}}(A_{ij},D_{j})$ as $D_{j}$-$D_{i}$-bimodules for each $(i,j)\in \indexset\times\indexset$.
\end{itemize}

Every species $((D_{i})_{i\in \indexset},(A_{ij})_{(i,j)\in \indexset\times\indexset})$ gives rise to a semisimple ring $R\coloneqq \times_{i\in\indexset}D_i$ and an $R$-$R$-bimodule $A\coloneqq \bigoplus_{(i,j)\in\indexset\times\indexset}A_{ij}$. For $j\in \indexset$ we denote $e_j\coloneqq (\delta_{i,j})_{i\in\indexset}\in R$, where $\delta_{i,j}\in D_i$ is the Kronecker delta between $i$ and $j$, and call $e_j$ the \emph{trivial path at $j$}. Thus, $e_j^2=e_j$ and $1_R=\sum_{j\in \indexset}e_j$.

For a field $F$ we say that $\curlS$ as above is an $F$-\emph{species} provided that for every $(i,j)\in \indexset\times\indexset$, $F$ acts centrally on $D_{i}$ and $A_{ij}$, turning them unambiguously into $F$-vector spaces, and provided these vector spaces are finite-dimensional. 

\vspace{1mm}

\subsubsection{\textbf{Weighted quivers and modulations.}}\label{subsubsec:weighted-quivers-and-modulations}
We write quivers as quadruples $Q=(Q_{0},Q_{1},h,t)$ where $Q_{0}$ is the set of vertices, $Q_{1}$ is the set of arrows and $h$ (respectively, $t$) denotes the function  $Q_{1}\to Q_{0}$ assigning to each arrow $a$ its head $h(a)$ (respectively, tail $t(a)$). 
A \emph{weighted quiver} is a pair $(Q,\mathbf{d})$ consisting of a finite quiver $Q=(Q_0,Q_1,t,h)$ and a $Q_0$-tuple $\mathbf{d}=(d_i)_{i\in Q_0}$ of positive integers. Each  $d_{i}$ is referred to as the \emph{weight attached to} $i$. See \cite[Definition 2.2]{LZ}. 

\vspace{1mm}

For a field $F$  an $F$-\emph{modulation} of $(Q,\bldD)$ is a pair $((D_{i})_{i\in Q_0},(A_{a})_{a\in Q_1})$ such that: 
\begin{itemize}
    \item $D_{i}$ is a finite-dimensional division algebra over $F$ (in particular, $F$ is contained in the center of $D_i$), with $\dim_{F}(D_{i})=d_{i}$, for each $i\in Q_{0}$;
    \item $A_{a}$ is a $D_{h(a)}$-$D_{t(a)}$-bimodule for each $a\in Q_{1}$, finitely generated both on the left over $D_{h(a)}$ and on the right over $D_{t(a)}$;
    \item for each $a\in Q_1$, the action of $F$ on $A_a$, arising from the $D_{h(a)}$-$D_{t(a)}$-bimodule structure of $A_a$, is central.
\end{itemize}
Any $F$-modulation gives rise to an $F$-species $\curlS=((D_{i})_{i\in Q_0},(A_{ij})_{(i,j)\in Q_0\times Q_0})$ by setting $A_{ij}\coloneqq \bigoplus_{a:j\to i} A_{a}$ where the sum runs over all arrows $a\in Q_1$ that go from $j$ to $i$.

By the Krull--Remak--Schmidt property of finite-length modules, any $F$-species $\curlS=((D_{i})_{i\in \indexset},(A_{ij})_{(i,j)\in \indexset\times\indexset})$ arises from an $F$-modulation of a weighted quiver $(Q,\bldD)$ by setting $Q_{0}=\indexset$, $d_{i}\coloneqq \dim_{F}(R_{i})$, by stipulating that the number of arrows $j$ to $i$ is  the number of indecomposable $D_{j}$-$D_{i}$-bimodule summands of $A_{ij}$. The pair $((D_{i})_{i\in Q_0},(A_{a})_{a\in Q_1})$ is then an $F$-modulation of $(Q,\bldD)$ giving rise to $\curlS$.
See \cite[Remarks 2.4.5 and 2.4.6, Corollary 2.3.11]{GeuenichPHD}.

\begin{remark}
We will thus use the term \emph{$F$-species} to refer indistinctly to either an $F$-species or the $F$-modulation it arises from. We shall use the notations $\curlS=((D_{i})_{i\in Q_0},(A_{ij})_{(i,j)\in Q_0\times Q_0})$ and $\curlS=((D_{i})_{i\in Q_0},(A_{a})_{a\in Q_1})$ indistinctly as well to denote an $F$-species.
\end{remark}

\vspace{1mm}

\subsubsection{\textbf{Modulating functions}}\label{subsubsec:modulating-functions-def-of-concept} Let $(Q,\bldD)$ be a weighted quiver, $d\coloneqq \lcm\{d_{i}\suchthat i\in Q_0\}$, and $E/F$ a degree-$d$ cyclic Galois field extension. 
For $i\in Q_0$ define $F_i$ to be the unique degree-$d_{i}$ extension of $F$ contained in~$E$.

Following \cite[Definition 3.2]{GLF1}, we define a \emph{modulating function} to be a collection $$g=(g_a)_{a\in Q_1}\in\times_{a\in Q_1}\Gal(F_{h(a)}\cap F_{t(a)}/F)$$ of field automorphisms $g_a\in \Gal(F_{h(a)}\cap F_{t(a)}/F)$. 

For each $a\in Q_1$ we denote by $F_{h(a)}^{g_a}$ the $F_{h(a)}$-$(F_{h(a)}\cap F_{t(a)})$-bimodule defined as follows (cf. \cite[Section 2]{GLF1}):
\begin{enumerate}
    \item as an additive group, $F_{h(a)}^{g_a}\coloneqq F_{h(a)}$;
    \item for $w\in F_{h(a)}$, $z\in F_{h(a)}\cap F_{t(a)}$ and $m\in F_{h(a)}^{g_a}$, the left action of $w$ on $m$ and the right action of $z$ on $m$ are defined by the rules
    \begin{equation}\label{eq:def-of-twisted-bimodule-actions}
    w\star m \coloneqq  wm \qquad m\star z\coloneqq mg_a(z),
    \end{equation}
    where the products on the right hand sides of the equalities in \eqref{eq:def-of-twisted-bimodule-actions} are taken according to the multiplication that $F_{h(a)}$ has as a field.
\end{enumerate}

Each modulating function $g$ gives rise to an $F$-modulation $((F_i)_{i\in Q_0},(A_{a}(\inputtripleforblocks))_{a\in Q_1})$, where 
 \[
A_{a}(\inputtripleforblocks)\coloneqq F_{h(a)}^{g_a}\otimes_{F_{h(a)}\cap F_{t(a)}} F_{t(a)}\quad \text{for} \ a\in Q_{1}.
\]

Set $R\coloneqq \times_{i\in Q_0}F_i$ and $A(\inputtripleforblocks)\coloneqq \bigoplus_{a\in Q_1}A_{a}(\inputtripleforblocks)$. In \cite[Definition 3.5]{GLF1} and \cite{GLF2}, the tensor ring $\usualRA{A(\inputtripleforblocks)}$ (resp. the complete tensor ring $\compRA{A(\inputtripleforblocks)}$) is called the \emph{path algebra} (resp. \emph{complete path algebra}) of $(Q,\mathbf{d},g)$.

\vspace{1mm}

\subsubsection{\textbf{Semilinear path algebras}}\label{subsubsec:semilinear-path-algs-def-of-concept} 

Let $(\compactQhat,\compactblddhat)$ be a weighted quiver all of whose weights are the same positive integer $\compactdhat$, i.e., such that $\compactblddhat=(\compactdhat)_{i\in \compactQhat_0}$. The reason for this notation is that, later, $(\compactQhat,\compactblddhat)$ will be defined in terms of a given loop-free weighted quiver $(Q,\bldD)$ by adding some loops to $Q$ and setting $\compactdhat$ to be one of the integers $d_i$ appearing in $\bldD$.

Let $K/F$ be a degree-$\compactdhat$ cyclic Galois field extension, 
and let $\bldsigma\colon \compactQhat_{1}\to \Gal(K/F)$ be a function assigning a ring automorphism $\sigma_{b}\in\Gal(K/F)$ to each $b\in \compactQhat_{1}$, i.e., a modulating function.

Following \cite[\S2.1]{BTCB} and the notation therein, we define the \emph{semilinear path algebra} $K_{\bldsigma}\compactQhat$ to be the tensor ring $S\langle A(\widehat{Q},\compactblddhat,\bldsigma)\rangle$, where 
the semisimple ring $S$ and the $S$-$S$-bimodule $A(\compactQhat,\compactblddhat,\bldsigma)$ are defined as (cf. \cite[\S2.1]{BTCB})
\[
\begin{array}{cc}
  S\coloneqq \times_{i\in \widehat{Q}_{0}}K,   
  &  
  A(\compactQhat,\compactblddhat,\bldsigma)\coloneqq \bigoplus_{b\in\widehat{Q}_1}{}_{\pi_{h(b)}} K_{\sigma_b \pi_{t(b)}}.
\end{array}
\]
where $\pi_{j}\colon S\to K$ is the restriction sending $(\lambda_{i})$ to $\lambda_{j}$, and where ${}_{\pi_{h(b)}} K_{\sigma_b \pi_{t(b)}}$ is the set $K$ whose $S$-$S$-bimodule action is indicated by the subscripts, namely, by means of the equations
\[
\begin{array}{ccc}
(\lambda_{i})\cdot \mu=\lambda_{h(b)}  \mu,     & 
\mu\cdot ( \lambda_{i})=\mu \sigma_{b}(\lambda_{t(b)}), & (( \lambda_{i}\colon i\in \compactQhat_{0})\in S,\quad \mu\in {}_{\pi_{h(b)}} K_{\sigma_b \pi_{t(b)}},\quad b\in\compactQhat_{1}).
\end{array}
\]

\begin{remark}
The concept of semilinear path algebra just defined forms a particular instance of the concept of path algebra defined in \S\ref{subsubsec:modulating-functions-def-of-concept}. This is not the case for the more general concept of semilinear path algebra that Bennett-Tennenhaus--Crawley-Boevey work with in \cite{BTCB}: they allow $K$ to be a division ring, and do not require it to be finite-dimensional over some particular field.
\end{remark}

\vspace{1mm}

\subsubsection{\textbf{Eigenbases of cyclic Galois extensions and bimodules}}\label{subsubsec:eigenbases}

Let $d$ be a positive integer, $F$ a field containing a primitive $d^{\operatorname{th}}$ root of unity $\zeta\in F$, and $E/F$ a degree-$d$ cyclic Galois extension with Galois group $\Gal(E/F)=\langle\rho\rangle$. Then there exists $v\in E$ which is an eigenvector of $\rho$ with eigenvalue $\zeta$, i.e.,
$$
\rho(v)=\zeta v.
$$
For $n,m\in\mathbb{Z}$ we then have
$$
\rho^n(v^m)=(\rho^n(v))^m=(\zeta^nv)^m=\zeta^{nm}v^m.
$$
It follows that the set
$$
\calB_{E/F}\coloneqq \{1,v,v^2,\ldots,v^{d-1}\}
$$
is an \emph{eigenbasis of $E/F$}, that is, an $F$-vector space basis of $E$ consisting of eigenvectors of all the elements of $\Gal(E/F)$.

For each positive divisor $d_i$ of $d$, let $F_i$ be the unique subfield of $E$ containing $F$ and such that $[F_i:F]=d_i$. If $d_i$ and $d_j$ are positive divisors of $d$, then $F_j/F_i\cap F_j$ is a degree-$\frac{d_j}{\gcd(d_i,d_j)}$ cyclic Galois extension with Galois group $\Gal(F_j/F_i\cap F_j)=\{\myid_{F_j},\rho|_{F_j}, \rho^2|_{F_j},\ldots,\rho^{\frac{d_j}{\gcd(d_i,d_j)}-1}|_{F_j}\}$, and
\begin{equation}\label{eq:eigenbasis-intermediate-subextension}
\calB^i_{i,j}\coloneqq \{1,v^{\frac{d}{d_j}},(v^{\frac{d}{d_j}})^{2},\ldots,(v^{\frac{d}{d_j}})^{\frac{d_j}{\gcd(d_i,d_j)}-1}\}
\end{equation}
is an eigenbasis of $F_j/F_i\cap F_j$. Notice that $\calB^i_{i,j}$, and hence also any eigenbasis of $F_j/F_i\cap F_j$ has the property that for any two elements $\omega_1,\omega_2\in\calB^i_{i,j}$, the product $\omega_1\omega_2$ is an $F$-multiple of some element of $\calB^i_{i,j}$.

\begin{remark}
    Thanks to the facts that all the intermediate field subextensions of $E/F$ admit eigenbases, that such eigenbases can be computed explicitly, and that each of them is closed under multiplication up to $F$-multiples, many definitions and computations (e.g., cyclic derivatives of potentials) can be done very explicitly when working with the path algebras and the semilinear path algebras from \S\ref{subsubsec:modulating-functions-def-of-concept} and \S\ref{subsubsec:semilinear-path-algs-def-of-concept}.
\end{remark}

The following result from \cite{GLF1} lies behind the definition of \emph{cyclic derivative} for the species we will~work with, see Definition \ref{def:paths-potentials-cyclic-derivatives-jacobian-algebras}-\eqref{item:def-cyclic-derivative} below.

\begin{prop}\cite[Proposition 2.15]{GLF1}\label{prop:properties-of-Fi-Fj-bimodules} For $F_i$ and $F_j$ as above, let $\mathcal{C}_{F_i,F_j}$ be the category of those $F_i$-$F_j$-bimodules on which $F$ acts centrally and whose dimension over $F$ is finite.
\begin{enumerate}
\item the bimodules $F_i^{\rho}\otimes_{F_i\cap F_j}F_j$, with $\rho$ running in $\Gal(F_i\cap F_j/F)$, form a complete set of pairwise non-isomorphic simple objects in $\mathcal{C}_{F_i,F_j}$;
\item for every $M\in\mathcal{C}_{F_i,F_j}$ and every $\rho\in\Gal(F_i\cap F_j/F)$, the function $\pi_\rho=\pi^M_\rho:M\rightarrow M$ defined by
    $$
    m\mapsto\frac{1}{[F_i\cap F_j:F]}\sum_{\omega\in\calB_{F_i\cap F_j/F}}\rho(\omega^{-1})m\omega
    $$
    is an idempotent $F_i$-$F_j$-bimodule homomorphism;
\item for every $M\in\mathcal{C}_{F_i,F_j}$ and every pair $\rho_1,\rho_2\in\Gal(F_i\cap F_j/F)$, if $\rho_1\neq\rho_2$, then $\pi_{\rho_1}\pi_{\rho_2}=0$;
\item for every $M\in\mathcal{C}_{F_i,F_j}$ we have an internal direct sum decomposition
$$
M=\bigoplus_{\rho\in\Gal(F_i\cap F_j/F)}\Image\pi_\rho;
$$
\item for every $M\in\mathcal{C}_{F_i,F_j}$, every $m\in\Image\pi_\rho$ and every $x\in F_i\cap F_j$ we have $mx=\rho(x)m$;
\item for every $M\in\mathcal{C}_{F_i,F_j}$ and every $m\in\Image\pi_\rho$ there exists a unique $F_i$-$F_j$-bimodule homomorphism $\varphi:F_i^{\rho}\otimes_{F_i\cap F_j}F_j\rightarrow M$ such that $\varphi(1\otimes 1)=m$.
\end{enumerate}
\end{prop}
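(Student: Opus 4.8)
The plan is to reduce the entire proposition to the structure theory of the finite-dimensional commutative $F$-algebra $\Lambda:=F_i\otimes_F F_j$. Since $F_j$ is a field (so $F_j^{\operatorname{op}}=F_j$) and $F$ acts centrally, specifying an object $M$ of $\mathcal{C}_{F_i,F_j}$ is the same as specifying a finite-dimensional $\Lambda$-module; and $\Lambda$ is a product of fields because $F_i/F$ and $F_j/F$ are separable. Write $L:=F_i\cap F_j$, $e:=[L:F]$, and fix the eigenbasis $\calB:=\calB_{L/F}=\{1,v^{d/e},\dots\}$ from \S\ref{subsubsec:eigenbases}; thus each $\omega_k:=v^{dk/e}\in\calB$ satisfies $\rho(\omega_k)=\eta^k\omega_k$, where $\eta:=\zeta^{d/e}$ is a primitive $e$-th root of unity in $F$, and we enumerate $\Gal(L/F)=\langle\rho|_L\rangle$ accordingly. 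A direct computation, using only this eigenbasis and the identity $\frac1e\sum_k\eta^{jk}=\delta_{e\mid j}$, shows that the elements $\varepsilon_\rho:=\frac1e\sum_{\omega\in\calB}\rho(\omega^{-1})\otimes\omega\in L\otimes_F L$ form a complete set of orthogonal idempotents and that $L\otimes_F L\cong\prod_{\rho\in\Gal(L/F)}L$, the $\rho$-th projection sending $x\otimes y$ to $x\,\rho(y)$. Now every $M\in\mathcal{C}_{F_i,F_j}$ is an $L\otimes_F L$-module via the restrictions of its $F_i$- and $F_j$-actions, and the operator $\pi_\rho$ in the statement is precisely the action of $\varepsilon_\rho$ (i.e. $m\mapsto\varepsilon_\rho\cdot m$, where $a\otimes b$ acts by $m\mapsto amb$). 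Because $F_i$ and $F_j$ are commutative, this $L\otimes_F L$-action commutes with the full left $F_i$- and right $F_j$-actions, so each $\pi_\rho$ is an $F_i$-$F_j$-bimodule endomorphism; items (2), (3), (4) then follow at once from $\varepsilon_\rho^2=\varepsilon_\rho$, $\varepsilon_{\rho_1}\varepsilon_{\rho_2}=0$ for $\rho_1\neq\rho_2$, and $\sum_\rho\varepsilon_\rho=1$. Item (5) is the observation that in the $\rho$-th factor of $L\otimes_F L$ right multiplication by $x\in L$ equals left multiplication by $\rho(x)$, read off the isomorphism above.

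For item (6), given $m\in\Image\pi_\rho$, define $\varphi\colon F_i^\rho\otimes_{F_i\cap F_j}F_j\to M$ by $\varphi(a\otimes b):=amb$. This is well defined on the $L$-balanced tensor product because, for $\ell\in L$, the twisted right action $a\star\ell=a\rho(\ell)$ on $F_i^\rho$ gives $(a\rho(\ell))\,m\,b=a\,(\rho(\ell)m)\,b=a\,(m\ell)\,b$ by item (5); it is visibly an $F_i$-$F_j$-bimodule homomorphism with $\varphi(1\otimes1)=m$, and it is the unique such since $1\otimes1$ generates $F_i^\rho\otimes_{F_i\cap F_j}F_j$ as a bimodule. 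For item (1), note the bimodule isomorphism $F_i\otimes_F F_j\cong(F_i\otimes_F L)\otimes_L F_j\cong\prod_{\rho\in\Gal(L/F)}(F_i^\rho\otimes_{F_i\cap F_j}F_j)$, where the first tensor factor splits as $\prod_\rho F_i$ because $L/F$ is Galois with $L\subseteq F_i$ (the $\rho$-summand being $F_i$ equipped with the right $L$-action $m\star\ell=m\rho(\ell)$, i.e. $F_i^\rho$). Since $\Lambda=F_i\otimes_F F_j$ is a product of fields and, by a minimal-polynomial count that uses cyclicity of $E/F$ (so that $[F_iF_j:F_i]=[F_j:F_i\cap F_j]=d_j/e$), it has exactly $e=|\Gal(L/F)|$ field factors, each nonzero $F_i^\rho\otimes_{F_i\cap F_j}F_j$ must be a single field, hence a simple $\Lambda$-module, i.e. a simple object of $\mathcal{C}_{F_i,F_j}$; distinct $\rho$'s give distinct factors of $\Lambda$, hence pairwise non-isomorphic simples, and these exhaust all simples by the count. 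A final short computation, $\pi_{\rho'}(1\otimes1)=\delta_{\rho\rho'}(1\otimes1)$ in $F_i^\rho\otimes_{F_i\cap F_j}F_j$ (the same geometric sum as before), matches the indexing with items (2)--(4).

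The main obstacle is the explicit idempotent identity in the first paragraph: one must check, by hand and using precisely the roots-of-unity hypothesis on $F$, that the particular averaging operators $\pi_\rho$ written in the statement are exactly the components of the canonical orthogonal decomposition of $L\otimes_F L$; once this is secured, items (2)--(6) are formal consequences. A second, more conceptual subtlety sits in item (1): one genuinely needs $E/F$ to be cyclic (not merely $F_i/F$, $F_j/F$ separable) to force $F_i\otimes_F F_j$ to have exactly $[F_i\cap F_j:F]$ field factors, so that the dimension count pins down each $F_i^\rho\otimes_{F_i\cap F_j}F_j$ as simple and in bijection with $\Gal(F_i\cap F_j/F)$; for a non-abelian ambient extension this bookkeeping would break down.
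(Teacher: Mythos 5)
The paper does not actually prove this proposition: it is quoted verbatim from \cite[Proposition 2.15]{GLF1}, so there is no in-paper argument to compare against, and your proposal has to stand on its own. It does. Your route is the natural one and almost certainly the intended one: identify $\mathcal{C}_{F_i,F_j}$ with finite-dimensional modules over the commutative \'etale algebra $F_i\otimes_F F_j$, observe that the averaging operator in item (2) is precisely the action of the idempotent $\varepsilon_\rho=\tfrac{1}{e}\sum_{\omega\in\calB_{L/F}}\rho(\omega^{-1})\otimes\omega$ in $L\otimes_F L\cong\prod_{\rho}L$ (this is exactly where the hypothesis that $F$ contains the relevant roots of unity is used, via the geometric-sum identity), and then items (2)--(5) are formal, item (6) is the direct check of $L$-balancedness using (5), and item (1) follows from $F_i\otimes_F F_j\cong\prod_\rho F_i^\rho\otimes_{F_i\cap F_j}F_j$ together with the count of field factors, which uses that $[F_iF_j:F_i]=[F_j:F_i\cap F_j]$ inside the cyclic extension $E/F$. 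All the individual verifications (the ring-homomorphism property of $a\otimes b\mapsto(m\mapsto amb)$ resting on central $F$-action and commutativity, the well-definedness of $\varphi$ in (6) over the twisted tensor product, the identity $\pi_{\rho'}(1\otimes1)=\delta_{\rho,\rho'}(1\otimes1)$ pinning down the indexing) check out.

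The one spot where I would tighten the wording is in item (1): ``each nonzero $F_i^\rho\otimes_{F_i\cap F_j}F_j$ must be a single field'' is slightly loose, since these summands are bimodules, not a priori ring factors. The clean statement is the Krull--Schmidt/dimension argument you are implicitly making: the regular module decomposes both into the $e$ bimodules $F_i^\rho\otimes_{F_i\cap F_j}F_j$ and into $e$ simple summands (the field factors), all of the same $F$-dimension $d_id_j/e$, so each bimodule summand is simple; non-isomorphy then follows from item (5) applied to $1\otimes1$, since an isomorphism would force $\rho_1(x)m=\rho_2(x)m$ for all $x\in F_i\cap F_j$. This is exactly the count you invoke, so it is a matter of phrasing, not a gap.
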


\vspace{1mm}

\subsection{Representations of species}
\label{subsec-representations}

\,

\subsubsection{\textbf{Representations and modules}}\label{subsubsec-representations-and-modules}
Let $F$ be a field and $\curlS=((D_{i})_{i\in Q_0},(A_{a})_{a\in Q_1})$ an $F$-species of a weighted quiver  $(Q,\bldD)$.  
Write $\Rep (\curlS)$ for the category of $\curlS$-\emph{representations}, with objects and morphisms defined as follows.

An $\curlS$-\emph{representation} refers to a collection $M=((M_{i})_{i\in Q_0},(M_{a})_{a\in Q_1})$ subject to the items below.
\begin{itemize}
    \item $M_{i}$ is a left $D_{i}$-module for each $i\in Q_{0}$. 
    \item $M_{a}\colon A_{a}\otimes_{D_{t(a)}} M_{t(a)}\to M_{h(a)}$ is a homomorphism of left  $D_{h(a)}$-modules for each $a\in Q_{1}$.
\end{itemize}
If each $M_{i}$ is of finite rank over $D_{i}$, $M$ is said to be \emph{finite}-\emph{dimensional}. 
A \emph{morphism} of $\curlS$-representations $f:M=((M_{i})_{i\in Q_0},(M_{a})_{a\in Q_1})\to N=((N_{i})_{i\in Q_0},(N_{a})_{a\in Q_1})$ is a $Q_0$-tuple $f=(f_{i})_{i\in Q_0}$ satisfying: 
\begin{itemize}
\item $f_{i}\colon M_{i}\to N_{i}$ is a homomorphism of left $D_{i}$-modules for each $i\in Q_0$; 
\item for each $a\in Q_1$ the diagram of $D_{h(a)}$-module homomorphisms 
$$
\xymatrix{
A_{a}\otimes_{D_{t(a)}} M_{t(a)} \ar[r]^{\quad M_a} \ar[d]_{\myid_{A_{a}}\otimes  f_{t(a)}} & M_{h(a)} \ar[d]^{f_{h(a)}}\\
A_{a}\otimes_{D_{t(a)}} N_{t(a)} \ar[r]^{\quad N_a}  & N_{h(a)}
}
$$
commutes.
\end{itemize}

Let $F$ be a field and $\curlS=((D_{i})_{i\in Q_0},(A_{a})_{a\in Q_1})$ an $F$-modulation of a weighted quiver  $(Q,\bldD)$.
For $n\geq 0$ and a path $p$ in $Q$  of length $n$, we 
define an $R_{h(p)}$-$R_{t(p)}$-bimodule  $A_{p}$ as follows. 
For $n=0$, and hence $p$ a trivial path, let $A_{p}=D_{i}$ where $h(p)=i=t(p)$. For $n>0$, say where $p=a_{n}\dots a_{1}$ with $a_{i}\in Q_{1}$, we let $A_{p}=A_{a}$ if $n=1$ and $a_{1}=a$, and for $n>1$ we let
\[
A_{p}=
A_{a_{n}}
\otimes_{D_{t(a_n)}}
A_{a_{n-1}}
\otimes_{D_{t(a_{n-1})}}
\,
\dots \,
\otimes_{D_{t(a_{3})}}
A_{a_{2}}
\otimes_{D_{t(a_2)}}
A_{a_{1}}.
\]
Using this notation, for $i,j\in Q_{0}$ we have the $D_{i}$-$D_{j}$-bimodule
\[
\begin{array}{c}
e_{i}\usualRA{A}e_{j}=\bigoplus_{\text{paths } p\text{ in }Q\,:\,h(p)=i,\,t(p)=j}A_{p}.
\end{array}
\]
 
 For a representation $M=((M_{i})_{i\in Q_0},(M_{a})_{a\in Q_1})$ of $\curlS$ and a path $p$ in $Q$ as above, the morphisms $M_{a}$ can be combined to give a $D_h(p)$-module homomorphism $M_{p}\colon A_{p}\otimes_{D_{t(p)}}M_{t(p)}\to M_{h(p)}$ defined as follows. 
If $p$ is the trivial path (with $n=0$) at $i\in Q_{0}$ then take  $M_{p}$ as the isomorphism $D_{i}\otimes_{D_i} M_{i}\to M_{i}$. 
If instead $n>0$ and $p=a_{n}\dots a_{1}$ we take
\[
\begin{array}{c}
M_{p}=
M_{a_{n}}
\circ
(\myid_{A_{a_{n}}}\otimes M_{a_{n-1}})
\circ (\myid_{A_{a_{n}}}\otimes \myid_{A_{a_{n-1}}}\otimes M_{a_{n-2}})\circ\dots \circ
(\myid_{A_{a_{n}}}\otimes\dots \otimes \myid_{A_{a_{2}}}\otimes M_{a_{1}}).
\end{array}
\]
For $i,j\in Q_{0}$ we define the map 
$M_{ij}\colon e_{i}\usualRA{A}e_{j}\otimes_{D_{j}}M_{j}\to M_{i}$ by assembling the maps $M_p$
through the universal property of the coproduct of $D_i$-modules.

By a \emph{relation with head $i\in Q_0$ and tail $j\in Q_0$ in the $F$-species $\curlS$} we mean an element $\sigma$ of $e_{i}\usualRA{A}e_{j}$. A representation $M$ is said to be \emph{annihilated by $\sigma$} provided $M_{ij}\circ(\iota \otimes_{D_{j}}\myid_{M_{j}} )=0$ where $\iota$ is the inclusion $R_{i}\sigma R_{j}\subseteq e_{i}\usualRA{A}e_{j}$ of $D_{i}$-$D_{j}$-bimodules.

\vspace{2mm}

It was shown by Dlab and Ringel \cite[Proposition 10.1]{Dlab-Ringel-on-algebras-finite} that $\Rep (\curlS)$ is equivalent to the category $\usualRA{\curlS}$-$\mathbf{Mod}$ of left modules over the tensor ring $\usualRA{A}$; see also \cite[Theorem A]{Green-representation-tensor-algebra} and \cite[Proposition~2.1]{Berg}. 
Namely, there are functors
\[
\begin{array}{ccc}
   \Omega \colon \usualRA{A}\text{-}\mathbf{Mod}\to  \Rep (\curlS), &&   \Gamma\colon \Rep (\curlS)\to \usualRA{A}\text{-}\mathbf{Mod}, 
\end{array}
\]
which are mutually quasi-inverse. 
\vspace{1mm}

Let $I=\langle \rho \rangle$ be a two-sided ideal in the tensor ring $\usualRA{\curlS}$ generated by a set $\rho$ of relations in $\curlS$. 
A representation $M=((M_{i})_{i\in Q_0},(M_{a})_{a\in Q_1})$ of $\curlS$ is called \emph{finite}-\emph{dimensional over} $F$ provided each $M_{i}$ is finite-dimensional over $F$. 
Consider the following subcategories:
\begin{itemize}
    \item $\usualRA{A}/I\text{-}\mathbf{Mod}$, the full subcategory of $\usualRA{A}\text{-}\mathbf{Mod}$ whose modules are annihilated by  $I=\langle\rho\rangle$.  
    \item $\Rep (\curlS, \rho)$, the full subcategory of $\Rep (\curlS)$ whose $\curlS$-representations are annihilated by every $\sigma\in\rho$. 
    \item $\usualRA{A}/I\text{-}\mathbf{mod}$, the full subcategory of $\usualRA{A}/I\text{-}\mathbf{Mod}$ whose modules are finite-dimensional over $F$. 
    \item $\rep (\curlS, \rho)$, the full subcategory of $\Rep (\curlS, \rho)$ whose $\curlS$-representations 
    are finite-dimensional over $F$.
\end{itemize}
By \cite[Corollary 2.2, Proposition 2.3, Corollary 2.4]{Berg} the equivalences $\Omega$ and $\Gamma$ restrict to equivalences 
\[
\begin{array}{ccc}
\usualRA{A}/I\text{-}\mathbf{Mod}\to  \Rep (\curlS, \rho), &&    \Rep (\curlS, \rho)\to \usualRA{A}/I\text{-}\mathbf{Mod}, \\    \usualRA{A}/I\text{-}\mathbf{mod}\to  \rep (\curlS, \rho), &&   \rep (\curlS, \rho)\to \usualRA{A}/I\text{-}\mathbf{mod}.
\end{array}
\]

\subsubsection{\textbf{Representations as tuples of semilinear maps}}
\label{subsubsec-semilinear-clannish}

For a field $K$, a field automorphism $\rho:K\rightarrow K$ and $K$-vector spaces $M$ and $N$, we~write
$$
\Hom_{K}^\rho(M,N)\coloneqq \{\varphi:M\rightarrow N\suchthat \text{for all} \ \alpha\in K \ \text{and for all} \ m,n\in M: \ \varphi(\alpha m+n)=\rho(\alpha)\varphi(m)+\varphi(n)\}.
$$

The following lemma is well known, see e.g. \cite[Lemma 12.5]{LZ}, where the result is proved under the assumption that $\gcd(d_i,d_j)=1$.

\begin{lemma}\label{lemma:simplyfing-reps-of-these-species} Let $d$ be a positive integer, $d_i$ and $d_j$ be positive divisors of $d$, $F$ a field containing a primitive $d^{\operatorname{th}}$ root of unity, $E/F$ a degree-$d$ cyclic Galois extension, and $F_i,F_j\subseteq E$ the subfields of $E$ containing $F$ such that $[F_i:F]=d_i$ and $[F_j:F]=d_j$. For any given $\rho\in \Gal(F_i\cap F_j/F)$, there exist $F$-vector space isomorphisms
\begin{equation}\label{eq:reps-of-species-simplified-via-semilinear-maps}
\Hom_{F_i}((F_i^\rho\otimes_{F_i\cap F_j}F_j)\otimes_{F_j} M, N)\cong\Hom_{F_i\cap F_j}^{\rho}(M,N)\cong\Hom_{F_j}(M,(F_j^{\rho^{-1}}\otimes_{F_i\cap F_j}F_i)\otimes_{F_i}N),
\end{equation}
natural in the $F_j$-vector space $M$ and the $F_i$-vector space $N$.
\end{lemma}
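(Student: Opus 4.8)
The statement is a naturality/adjunction assertion for bimodules over intermediate subfields of a cyclic Galois extension. The plan is to produce the two isomorphisms in \eqref{eq:reps-of-species-simplified-via-semilinear-maps} explicitly and then check naturality; the middle object $\Hom_{F_i\cap F_j}^\rho(M,N)$ will serve as the pivot. Write $L:=F_i\cap F_j$ throughout. For the left-hand isomorphism, I would use tensor--hom adjunction over $F_j$ together with the extension-of-scalars adjunction along $L\hookrightarrow F_j$ to rewrite
\[
\Hom_{F_i}\bigl((F_i^\rho\otimes_{L}F_j)\otimes_{F_j}M,\,N\bigr)\cong\Hom_{F_i}\bigl(F_i^\rho\otimes_{L}M,\,N\bigr)\cong\Hom_{L}\bigl({}_{\rho}M,\,N\bigr),
\]
where in the last step one uses that restricting an $F_i$-linear map $F_i^\rho\otimes_L M\to N$ along $m\mapsto 1\otimes m$ gives an $L$-linear map $M\to N$ once the left $L$-action on the source is transported through $\rho$ (this is exactly Proposition \ref{prop:properties-of-Fi-Fj-bimodules}(5)--(6) applied to $M_{\mathrm{bimod}}:=\Hom_{F_i}(\,\cdot\,,N)$, or can be seen by hand from the twisted bimodule structure \eqref{eq:def-of-twisted-bimodule-actions}). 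Concretely the map sends $\varphi$ to $(m\mapsto\varphi(1\otimes m))$, with inverse $\psi\mapsto\bigl(w\otimes m\mapsto w\psi(m)\bigr)$; one checks the latter is well defined over $L$ precisely because $\psi$ is $\rho$-semilinear, i.e. $\psi(zm)=\rho(z)\psi(m)$, matching the relation $w\star z\otimes m=w\otimes zm$ which reads $wg_a(z)\otimes m$ on the twisted side.

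For the right-hand isomorphism one runs the mirror-image argument: tensor--hom over $F_i$ collapses $(F_j^{\rho^{-1}}\otimes_L F_i)\otimes_{F_i}N\cong F_j^{\rho^{-1}}\otimes_L N$, and then an $F_j$-linear map $M\to F_j^{\rho^{-1}}\otimes_L N$ is determined by where it sends elements of $M$; decomposing the target via Proposition \ref{prop:properties-of-Fi-Fj-bimodules}(4) (or just using that $F_j^{\rho^{-1}}\otimes_L N$ is a free left $F_j$-module on a basis of $N$ over $L$, via an eigenbasis $\calB_{F_j/L}$ from \S\ref{subsubsec:eigenbases}) identifies such maps with $L$-linear maps $M\to N$ that pick up the automorphism $\rho^{-1}$ on the $F_j$-side — equivalently $\rho$ on the relevant scalars, after accounting for the direction of the twist — giving an element of $\Hom_{L}^\rho(M,N)$. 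The cleanest bookkeeping device here is an eigenbasis $\calB_{F_j/L}=\{1,v',\dots\}$ with $\rho(v')=\zeta' v'$: expand $\psi(m)=\sum_k (v')^k\otimes\psi_k(m)$ and verify the semilinearity constraints reduce to $\psi_0$ being $\rho$-semilinear and $\psi_k=0$ for $k\neq 0$; this makes both the map and its inverse completely explicit.

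Finally I would verify naturality. In $M$: given an $F_j$-linear $f:M'\to M$, the square relating the two instances of the composite isomorphism commutes because every map in the chain above is defined by pre-/post-composition or by the functorial tensor--hom and extension-of-scalars transformations, each of which is natural by construction; one only has to note that the semilinearity twist $\rho$ is fixed (independent of $M$), so composing a $\rho$-semilinear map with an $F_j$-linear — hence in particular $L$-linear — map stays $\rho$-semilinear. Naturality in $N$ is symmetric. I do not expect a genuine obstacle here: the only subtle point is keeping the direction of the twist consistent across the three objects — the left object twists by $\rho$ on the bimodule $F_i^\rho$, the middle twists the scalar action on $M$ by $\rho$, and the right twists by $\rho^{-1}$ on $F_j^{\rho^{-1}}$ — and checking that these conventions are mutually compatible, which amounts to the identity $\Hom_{F_i}(F_i^\rho\otimes_L -,N)\cong\Hom_L^\rho(-,N)$ together with its $\rho\leftrightarrow\rho^{-1}$ dual $\Hom_L^\rho(-,N)\cong\Hom_{F_j}(-,F_j^{\rho^{-1}}\otimes_L N)$. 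Both are special cases of Proposition \ref{prop:properties-of-Fi-Fj-bimodules}, so I would phrase the proof as: apply that proposition (parts (4)--(6)) to the bimodules $\Hom_{F_i}(\,\cdot\,,N)$ and $F_j^{\rho^{-1}}\otimes_L N$ respectively, and read off the claimed isomorphisms, then observe naturality is immediate from the explicit formulas.
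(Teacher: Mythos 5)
Your overall route is the paper's: pivot through $\Hom_{F_i\cap F_j}^{\rho}(M,N)$, realize the left-hand isomorphism by evaluation at $1\otimes m$ (with inverse $\psi\mapsto(w\otimes m\mapsto w\psi(m))$, well defined over $L=F_i\cap F_j$ exactly by the $\rho$-semilinearity computation you indicate), and realize the right-hand one by expanding values in $F_j^{\rho^{-1}}\otimes_L N$ along an eigenbasis of $F_j/L$. This matches the paper's maps $\overrightarrow{\bullet}$ and $\overleftarrow{\bullet}$, and the naturality discussion is as routine as you say.

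There is, however, one concrete error in your description of the right-hand isomorphism. Writing $\psi(m)=\sum_k(v')^k\otimes\psi_k(m)$, you claim that $F_j$-linearity of $\psi$ forces $\psi_k=0$ for $k\neq 0$. That is false whenever $F_j\supsetneq L$: comparing coefficients in $\psi(v'm)=v'\psi(m)$ shows instead that the higher components are \emph{determined by} $\psi_0$ (essentially $\psi_k(m)=\psi_0((v')^{k}m)$ up to the normalization coming from $(v')^{d_j/\gcd(d_i,d_j)}\in L$), not that they vanish. This is precisely why the correct inverse of $\psi\mapsto\psi_0$ is $b\mapsto\bigl(m\mapsto\sum_{\omega\in\mathcal{B}^j_{i,j}}\omega^{-1}\otimes b(\omega m)\bigr)$, as in the paper, and not $b\mapsto(m\mapsto 1\otimes b(m))$; the latter fails to be $F_j$-linear. (A minimal check: take $d_i=1$, $d_j=2$, $\rho=\myid$, $M=F_j$, $N=F$; then $\Hom_{F_j}(F_j,F_j\otimes_F F)\cong F_j$ is $2$-dimensional over $F$, which already forces generic elements to have a nonzero second component.) The fix is local and does not affect the rest of your argument, but as stated the verification step you propose would not go through.
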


\begin{proof}  The proof is rather elementary and follows e.g. from \cite[p. 14]{Dlab-Ringel-reps-of-graphs} by first noting that, given any field automophism $\widetilde{\rho}:F_iF_j\rightarrow F_iF_j$ with $\widetilde{\rho}|_{F_i\cap F_j}=\rho$, the rules 
\begin{align*}
a\otimes b & \mapsto [x\otimes y\mapsto a \operatorname{Tr}_{F_i/F_i\cap F_j}(\widetilde{\rho}^{-1}(bx))y]
&
a\otimes b & \mapsto [x\otimes y\mapsto x\operatorname{Tr}_{F_j/F_i\cap F_j}(\widetilde{\rho}(ya))b]
\end{align*}
induce well-defined $F_j$-$F_i$ bimodule isomorphisms
\begin{align*}
F_j^{\rho^{-1}}\otimes_{F_i\cap F_j}F_i & \rightarrow \Hom_{\mathbf{vec}\text{-}F_j}(F_i^{\rho}\otimes_{F_i\cap F_j} F_j,F_j)
&
F_j^{\rho^{-1}}\otimes_{F_i\cap F_j}F_i & \rightarrow \Hom_{F_i\text{-}\mathbf{vec}}(F_i^{\rho}\otimes_{F_i\cap F_j} F_j,F_i),
\end{align*}
where $\operatorname{Tr}_{F_j/F_i\cap F_j}:F_j\rightarrow F_i\cap F_j$ is the trace function of the field extension $F_j/F_i\cap F_j$.

Now, under our hypotheses, we have the eigenbases $\mathcal{B}^j_{i,j}$ and 
$\{\omega^{-1}\suchthat \omega\in \mathcal{B}^j_{i,j}\}$ of $F_j/F_i\cap F_j$ at hand. One easily checks that they are 
mutually dual with respect to the $F_i\cap F_j$-$F_i\cap F_j$-bilinear form 
$$\operatorname{Tr}_{F_j/F_i\cap F_j}(\bullet \cdot ?):F_j\times F_j\rightarrow F_i\cap F_j,$$ and this enables us to give 
very concrete formulae for the desired isomorphisms \eqref{eq:reps-of-species-simplified-via-semilinear-maps}. For this reason, and 
in order to establish some notation that will be used later, we explicitly exhibit these formulae.

We shall use the natural abbreviations $$F_i^\rho\otimes_{F_i\cap F_j} M=(F_i^\rho\otimes_{F_i\cap F_j}F_j)\otimes_{F_j} M \quad \text{and} \quad F_j^{\rho^{-1}}\otimes_{F_i\cap F_j}N=(F_j^{\rho^{-1}}\otimes_{F_i\cap F_j}F_i)\otimes_{F_i}N.$$

\begin{itemize}
\item[(1)] $\Hom_{F_i}(F_i^\rho\otimes_{F_i\cap F_j} M, N)\cong\Hom_{F_i\cap F_j}^{\rho}(M,N)$.
\end{itemize}
A straightforward computation shows that
\begin{align*}
\overrightarrow{\bullet}:\Hom_{F_i}(F_i^\rho\otimes_{F_i\cap F_j} M, N)\rightarrow\Hom_{F_i\cap F_j}^{\rho}(M,N) & & \overrightarrow{f}(m)&\coloneqq f(1\otimes m)\\
\overleftarrow{\bullet}:\Hom_{F_i\cap F_j}^{\rho}(M,N)\rightarrow\Hom_{F_i}(F_i^\rho\otimes_{F_i\cap F_j} M, N) & & \overleftarrow{a}(e\otimes m)&\coloneqq ea(m)
\end{align*}
are well-defined, mutually inverse $F$-vector space isomorphisms.

\begin{itemize}
\item[(2)] $\Hom_{F_i\cap F_j}^{\rho}(M,N)\cong\Hom_{F_j}(M,F_j^{\rho^{-1}}\otimes_{F_i\cap F_j}N)$.
\end{itemize}
Let $\mathcal{B}^j_{i,j}$ be an eigenbasis of the field extension $F_j/F_i\cap F_j$. Such a basis exists because $K/F$ is a finite-degree cyclic Galois field extension and $F$ contains a primitive $[K:F]$-th root of unity.

Given $f\in \Hom_{F_j}(M,F_j^{\rho^{-1}}\otimes_{F_i\cap F_j}N)$ and $m\in M$, one can uniquely write
$$
f(m)=\sum_{\omega\in\mathcal{B}^j_{i,j}}\omega^{-1}\otimes n_{f,m,\omega^{-1}}
$$
for some elements $n_{f,m,\omega^{-1}}\in N$ uniquely determined by $f$ and $m$.
With this in mind, one can verify that
\begin{align*}
\overrightarrow{\bullet}:\Hom_{F_i\cap F_j}^{\rho}(M,N)\rightarrow\Hom_{F_j}(M,F_j^{\rho^{-1}}\otimes_{F_i\cap F_j}N) & &  \overrightarrow{b}(m)&\coloneqq \sum_{\omega\in\mathcal{B}^j_{i,j}}\omega^{-1}\otimes b(\omega m) \\
\overleftarrow{\bullet}:\Hom_{F_j}(M,F_j^{\rho^{-1}}\otimes_{F_i\cap F_j}N)\rightarrow\Hom_{F_i\cap F_j}^{\rho}(M,N) & & \overleftarrow{f}(m)&\coloneqq n_{f,m,1} 
\end{align*}
 are well-defined, mutually inverse $F$-vector space isomorphisms, independent of the eigenbasis $\mathcal{B}^j_{i,j}$ of $F_j/F_i\cap F_j$ chosen.

\begin{itemize}
\item[(3)] $\Hom_{F_j}(M,F_j^{\rho^{-1}}\otimes_{F_i\cap F_j}N)\cong \Hom_{F_i}(F_i^\rho\otimes_{F_i\cap F_j} M, N)$.
\end{itemize}
The maps $\overleftarrow{\overleftarrow{\bullet}}:\Hom_{F_j}(M,F_j^{\rho^{-1}}\otimes_{F_i\cap F_j}N)\rightarrow\Hom_{F_i}(F_i^\rho\otimes_{F_i\cap F_j} M, N)$ and $\overrightarrow{\overrightarrow{\bullet}}:\Hom_{F_i}(F_i^\rho\otimes_{F_i\cap F_j} M, N)\rightarrow\Hom_{F_j}(M,F_j^{\rho^{-1}}\otimes_{F_i\cap F_j}N)$, obtained by composing the isomorphisms from (1) and (2) above, are obviously inverse to each other. For the convenience of the reader we display the rules these maps obey:
$$
\overleftarrow{\overleftarrow{f}}(x\otimes m)=xn_{f,m,1}, \ \ \text{and} \ \ \overrightarrow{\overrightarrow{g}}(m)=\sum_{\omega\in\mathcal{B}_j}\omega^{-1}\otimes g(1\otimes \omega m).
$$
\end{proof}

\begin{coro}\label{coro:rep-of-species-equiv-to-cat-of-semilinear-maps} Let $(Q,\bldD)$ be a weighted quiver, $d\coloneqq \lcm\{d_i\suchthat i\in Q_0\}$, $E/F$ and $(F_i)_{i\in Q_0}$ be as in \S\ref{subsubsec:modulating-functions-def-of-concept}, with the field $F$ containing a primitive $d^{\operatorname{th}}$ root of unity, and let $$g=(g_a)_{a\in Q_1}\in\times_{a\in Q_1}\Gal(F_{h(a)}\cap F_{t(a)}/F)$$ be a modulating function. The category of representations of the $F$-species $$((F_i)_{i\in Q_0},(F_{h(a)}^{g_a}\otimes_{F_{h(a)}\cap F_{t(a)}}F_{t(a)})_{a\in Q_1})$$ is isomorphic to the category
\begin{enumerate}
\item whose objects are the pairs $((M_i)_{i\in Q_0},(\varphi_{a})_{a\in Q_1})$ that attach an $F_i$-vector space $M_i$ to each $i\in Q_0$, and a map $\varphi_a\in \Hom^{g_a}_{F_{h(a)}\cap F_{t(a)}}(M_{t(a)},M_{h(a)})$ to each $a\in Q_1$ (i.e., an $F$-linear map $\varphi_a:M_{t(a)}\rightarrow M_{h(a)}$ such that $\varphi_a(\ell m)=g_a(\ell)\varphi_a(m)$ for every $\ell\in F_{h(a)}\cap F_{t(a)}$ and every $m\in M_{t(a)}$);
\item whose morphisms $((M_i)_{i\in Q_0},(\varphi_a)_{a\in Q_1}\rightarrow ((N_i)_{i\in Q_0},(\psi_a)_{a\in Q_1}$ are $Q_0$-tuples $(f_i)_{i\in Q_0}$ consisting of an $F_i$-linear map $f_i:M_i\rightarrow N_i$ for each $i\in Q_0$, such that the diagram of $F$-linear maps
$$
\xymatrix{
M_{t(a)} \ar[r]^{\varphi_{a}} \ar[d]_{f_{t(a)}} & M_{h(a)} \ar[d]^{f_{h(a)}} \\
N_{t(a)} \ar[r]_{\psi_{a}} & N_{h(a)}
} 
$$
commutes for every $a\in Q_1$.
\end{enumerate}
\end{coro}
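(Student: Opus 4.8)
The plan is to build the claimed category isomorphism by composing, arrow by arrow, the $F$-vector space isomorphisms supplied by Lemma~\ref{lemma:simplyfing-reps-of-these-species}, and then checking that this assignment is compatible with morphisms. Recall that an object of $\Rep(\curlS)$ for the species $((F_i)_{i\in Q_0},(F_{h(a)}^{g_a}\otimes_{F_{h(a)}\cap F_{t(a)}}F_{t(a)})_{a\in Q_1})$ consists of left $F_i$-modules $M_i$ together with, for each $a\in Q_1$, an $F_{h(a)}$-linear map $M_a\colon (F_{h(a)}^{g_a}\otimes_{F_{h(a)}\cap F_{t(a)}}F_{t(a)})\otimes_{F_{t(a)}}M_{t(a)}\to M_{h(a)}$, which using the abbreviation from the proof of Lemma~\ref{lemma:simplyfing-reps-of-these-species} is just an element of $\Hom_{F_{h(a)}}(F_{h(a)}^{g_a}\otimes_{F_{h(a)}\cap F_{t(a)}}M_{t(a)},M_{h(a)})$. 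First I would apply part~(1) of Lemma~\ref{lemma:simplyfing-reps-of-these-species} (with $i=h(a)$, $j=t(a)$, $\rho=g_a$) to identify this Hom-space with $\Hom_{F_{h(a)}\cap F_{t(a)}}^{g_a}(M_{t(a)},M_{h(a)})$, and define the object-level assignment by $((M_i)_i,(M_a)_a)\mapsto((M_i)_i,(\overrightarrow{M_a})_a)$, where $\overrightarrow{M_a}(m):=M_a(1\otimes m)$. The inverse assignment sends $((M_i)_i,(\varphi_a)_a)$ to $((M_i)_i,(\overleftarrow{\varphi_a})_a)$ with $\overleftarrow{\varphi_a}(e\otimes m):=e\varphi_a(m)$; that these are mutually inverse on the level of objects is exactly the content of Lemma~\ref{lemma:simplyfing-reps-of-these-species}(1), applied separately for each $a\in Q_1$.

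Next I would verify that morphisms transport correctly. A morphism $f=(f_i)_i\colon M\to N$ in $\Rep(\curlS)$ is, by definition, a tuple of $F_i$-linear maps such that for each $a\in Q_1$ the square relating $M_a$, $N_a$, $\myid\otimes f_{t(a)}$, and $f_{h(a)}$ commutes. One checks that, after applying the identification $\overrightarrow{\bullet}$, this commuting square becomes precisely the commuting square of $F$-linear maps displayed in item~(2) of the statement, relating $\varphi_a=\overrightarrow{M_a}$, $\psi_a=\overrightarrow{N_a}$, $f_{t(a)}$ and $f_{h(a)}$. Indeed, evaluating the original square on $1\otimes m$ and using $f_{h(a)}$-linearity over $F_{h(a)}$ (so that $f_{h(a)}$ commutes with the left scalar $e$ needed to recover the general element $e\otimes m$) shows the two conditions are equivalent. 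Since the $F_i$-linear maps $f_i$ themselves are left untouched by the construction, the assignment on morphisms is the identity on the underlying tuples, hence visibly bijective on Hom-sets and compatible with composition and identities. This gives a functor in each direction, and the two functors are mutually inverse because they are identities on objects' module parts and on morphisms, and inverse to each other on the structure maps by Lemma~\ref{lemma:simplyfing-reps-of-these-species}(1). Combining with the Dlab--Ringel equivalence $\Rep(\curlS)\simeq \usualRA{A}\text{-}\mathbf{Mod}$ recalled in \S\ref{subsubsec-representations-and-modules} is not even needed here, since the statement only asserts an isomorphism with the explicitly described category.

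The only genuinely substantive point, and thus the main thing to be careful about, is the bookkeeping that the left $F_{h(a)}$-linearity in the definition of an $\curlS$-representation morphism is exactly what is required to pass between "commutes on all of $F_{h(a)}^{g_a}\otimes_{F_{h(a)}\cap F_{t(a)}} M_{t(a)}$" and "commutes as a square of $F$-linear maps $M_{t(a)}\to M_{h(a)}$"; this is where one uses that $F_{h(a)}^{g_a}\otimes_{F_{h(a)}\cap F_{t(a)}}M_{t(a)}$ is generated as an $F_{h(a)}$-module by the elements $1\otimes m$. Everything else — well-definedness of $\overrightarrow{M_a}$ as a $g_a$-semilinear map over $F_{h(a)}\cap F_{t(a)}$, functoriality, and the inverse bijections — is immediate from Lemma~\ref{lemma:simplyfing-reps-of-these-species} applied one arrow at a time, together with the naturality in $M$ and $N$ recorded there. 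I do not expect any obstacle beyond this routine verification; the corollary is essentially a packaging of the lemma over the arrow set $Q_1$, with the morphism condition being the commutativity of the relevant squares arrow by arrow.
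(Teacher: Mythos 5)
Your proposal is correct and matches the paper's intent exactly: the corollary is stated without a separate proof precisely because it is the arrow-by-arrow application of Lemma \ref{lemma:simplyfing-reps-of-these-species}(1) (via the maps $\overrightarrow{\bullet}$ and $\overleftarrow{\bullet}$), with the morphism condition handled by the naturality recorded in that lemma and the observation that $1\otimes m$ generates $F_{h(a)}^{g_a}\otimes_{F_{h(a)}\cap F_{t(a)}}M_{t(a)}$ as a left $F_{h(a)}$-module. Your identification of the left $F_{h(a)}$-linearity of $f_{h(a)}$ as the one substantive bookkeeping point is exactly right.
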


\begin{remark} Although Lemma \ref{lemma:simplyfing-reps-of-these-species} and Corollary \ref{coro:rep-of-species-equiv-to-cat-of-semilinear-maps} are of course valid in significantly broader generality, the very explicit formulae in the proof of Lemma \ref{lemma:simplyfing-reps-of-these-species}, that our hypotheses on the field extension $E/F$ allow us to obtain, will later enable us to perform many computations very explicitly.
\end{remark}

\vspace{1mm}

\subsection{Jacobian algebras}\label{subsec:Jac-algs}

\,

Let $(Q,\bldD)$ be a weighted quiver, $d\coloneqq \lcm\{d_i\suchthat i\in Q_0\}$, $E/F$ and $(F_i)_{i\in Q_0}$ be as in \S\ref{subsubsec:modulating-functions-def-of-concept}, with the field $F$ containing a primitive $d^{\operatorname{th}}$ root of unity, and let $$g=(g_a)_{a\in Q_1}\in\times_{a\in Q_1}\Gal(F_{h(a),t(a)}/F)$$ be a modulating function. Let $((F_i)_{i\in Q_0},(A_{a}(\inputtripleforblocks))_{a\in Q_1})$ be the corresponding $F$-modulation, and let 
$\usualRA{A(\inputtripleforblocks)}$ and $\compRA{A(\inputtripleforblocks)}$ be the path algebra and complete path algebra of $(Q,\mathbf{d},g)$, defined in \S\ref{subsubsec:modulating-functions-def-of-concept}. Recall $\maxcompRA{A(\inputtripleforblocks)}$ denotes the arrow ideal in $\compRA{A(\inputtripleforblocks)}$ defined by the product over $n>0$ of $n$-fold tensor products $A(\inputtripleforblocks)\otimes_{R}\cdots \otimes_{R}A(\inputtripleforblocks)$.  Denote by $\calB^i$ the eigenbasis of $F_i/F$ given by \eqref{eq:eigenbasis-intermediate-subextension}. 

\begin{defi}\label{def:paths-potentials-cyclic-derivatives-jacobian-algebras}\, 
\begin{enumerate}
\item Following \cite[Definition 4.4]{LZ} and \cite[Definition 3.6]{GLF1}, we define a \emph{path of length $n$ on $A(\inputtripleforblocks)$} to be any element $\omega_{0}a_{1}\omega_{1}\dots \omega_{n-1}a_{n}\omega_{n}\in  \compRA{A(\inputtripleforblocks)}$ such that 
\begin{itemize}
\item $a_1,\ldots,a_n$ are arrows of $Q$ such that $h(a_{r+1})=t(a_r)$ for $r=1,...,n-1$;
\item $\omega_0\in \calB^{h(a_1)}$ and $\omega_r\in\calB^{t(a_r)}$ for $r = 1,\ldots,n$.
\end{itemize}
A path $\omega_{0}a_{1}\omega_{1}\dots \omega_{n-1}a_{n}\omega_{n}$ is \emph{cyclic} if $h(a_1)=t(a_n)$.

\item A \emph{potential} on $A(\inputtripleforblocks)$ is any element $W\in (\maxcompRA{A(\inputtripleforblocks)})^2$ satisfying $W=\sum_{i\in Q_0}e_iWe_i$, i.e., any element of $\compRA{A(\inputtripleforblocks)}$ that can be written as a possibly infinite $F$-linear combination of cyclic paths of length $\geq 2$ on $A(\inputtripleforblocks)$, cf. \cite[Definition 5.1]{LZ} and \cite[Definition~3.11]{GLF1}.

\item A potential $W\in \compRA{A(\inputtripleforblocks)}$ will be called \emph{polynomial potential} if it actually belongs to $\usualRA{A(\inputtripleforblocks)}$.

\item\label{item:def-cyclic-derivative} Following \cite[Definition 3.11]{GLF1} and \cite[Equation~(10.1)]{GLF2}, for each arrow $a\in Q_{1}$ and each cyclic path $\omega_{0}a_{1}\omega_{1}\dots \omega_{n-1}a_{n}\omega_{n}$ on $A(\inputtripleforblocks)$  we define 
 the \emph{cyclic derivative} 
\begin{equation}\label{eq:def-cyclic-derivative-of-cyclic-path}
\partial_{a}(\omega_{0}a_{1}\omega_{1}\dots \omega_{n-1}a_{n}\omega_{n})\coloneqq \frac{1}{d_{a}}\sum_{m=0}^{d_{a}-1}g_{a}^{-1}(v^{\frac{-md}{d_{a}}})\left(\sum_{k=1}^{n}\delta_{a,a_{k}}\omega_{k}a_{k+1}\dots a_{n}\omega_{n}\omega_{0}a_{1}\dots a_{k-1}\omega_{k-1}\right)v^{\frac{md}{d_{a}}}
\end{equation}
where $d_{a}\coloneqq \gcd(d_{h(a)},d_{t(a)})$ and $\delta_{a,a_k}$ is the Kronecker delta between $a$ and $a_k$. The cyclic derivative $\partial_a(W)$ for an arbitrary potential $W$ on $A(\inputtripleforblocks)$ is defined by extending \eqref{eq:def-cyclic-derivative-of-cyclic-path} by $F$-linearity and continuity. 

\item For a potential $W\in \compRA{A(\inputtripleforblocks)}$, the \emph{Jacobian ideal} $J(W)$ is defined to be the topological closure of the two-sided ideal of $\compRA{A(\inputtripleforblocks)}$ generated by $\{\partial_a(W)\suchthat a\in Q_1\}$, and the \emph{Jacobian algebra} is the quotient
$$
\jacobalg{A(\inputtripleforblocks),W}\coloneqq \compRA{A(\inputtripleforblocks)}/J(W).
$$

\item
Given a polynomial potential $W$ on $A(\inputtripleforblocks)$ we will say that the two-sided ideal 
\[
J_{0}(W)
\coloneqq \langle\partial_{a}(W)\mid a\in Q_{1}\rangle
\] 
of $\usualRA{A(\inputtripleforblocks)}$
is the \emph{polynomial Jacobian ideal} of $W$.
\end{enumerate}
\end{defi}

\begin{remark}
\label{remark-properties-of-jacobian-blocks}
\begin{enumerate}\item Let $p\coloneqq \sum_{k=1}^{n}\delta_{a,a_{k}}\omega_{k}a_{k+1}\dots a_{n}\omega_{n}\omega_{0}a_{1}\dots a_{k-1}\omega_{k-1}$ in \eqref{eq:def-cyclic-derivative-of-cyclic-path}. By Proposition \ref{prop:properties-of-Fi-Fj-bimodules}, there exist elements $p_\rho$, uniquely determined by $p$, such that $\rho$ runs in $\Gal(F_{h(a)}\cap F_{t(a)}/F)$, $p=\sum_{\rho}p_\rho$, and $p_\rho z=\rho(z)p_{\rho}$ for every $z\in F_{h(a)}\cap F_{t(a)}$. That is, $p_\rho$ is the $\rho$-linear part of $p$. Thus, the cyclic derivative \eqref{eq:def-cyclic-derivative-of-cyclic-path} is the $g_a^{-1}$-linear part of $p$.
\item For cyclic derivatives defined in more general species and contexts, see \cite{Bautista-Lopez,Lopez-Aguayo,Rota-Sagan-Stein}.
\item If $W$ is a polynomial potential on $A(\inputtripleforblocks)$ such that $(\maxcompRA{A(\inputtripleforblocks)})^n\subseteq J(W)$ for some $n>0$, then the ring homomorphism $\usualRA{A(\inputtripleforblocks)}/J_0(W)\rightarrow \jacobalg{A(\inputtripleforblocks),W}$ induced by the canonical inclusion $\usualRA{A(\inputtripleforblocks)}\hookrightarrow\compRA{A(\inputtripleforblocks)}$ is an $F$-algebra isomorphism. 
See \cite[Lemma 2.6.2]{GeuenichPHD}. 
\end{enumerate}
\end{remark}

\begin{ex}
\label{ex-main-example-for-jacobian-blocks}
Let $(Q,\bldD)$ be the  weighted quiver
\[
\xymatrix{ &
& 1 \ar[dr]^{\gamma} & \\
&2 \ar[ur]^{\alpha} & & 3\ar[ll]^{\beta} 
}
\xymatrix{ &
& d_1=2  & \\
& d_2=1  & & d_3= 4
}
\]
Thus $d\coloneqq \lcm(d_{1},d_{2},d_{3})=4=[E:F]$. Take $E/F$, $(F_i)_{i\in Q_0}$ and $g=(g_a)_{a\in Q_1}\in\times_{a\in Q_1}\Gal(F_{h(a)}\cap F_{t(a)}/F)$ as in the opening paragraph of the ongoing \S\ref{subsec:Jac-algs}. 
Following \S\ref{subsubsec:modulating-functions-def-of-concept}, to begin this means that $F_{1}=L$, $F_{2}=F$ and $F_{3}=E$  where $L$ is the unique subfield of $E$ that contains $F$ and satisfies $[L:F]=2$ (hence also $[E:L]=2$). 
Furthermore, since $g_{a}$ is an $F$-linear automorphism of $F_{h(a),t(a)}=F_{h(a)}\cap F_{t(a)}$, we have that 
\[
\begin{array}{ccccc}
F_{1,2}=F,
&
F_{3,1}=L,
&
F_{2,3}=F,
&
g_{\alpha}=\myid_{F},
&
g_{\beta}=\myid_{F}.
\end{array}
\]
Then the species $((F_i)_{i\in Q_0},(A_{a}(\inputtripleforblocks))_{a\in Q_1})$ can be mnemotechnically  visualized as follows:
    $$
    \xymatrix{ &
& L \ar[dr]_{\gamma}^{E^{g_\gamma}\otimes_FL} & \\
&F \ar[ur]_{\alpha}^{L\otimes_FF} & & E\ar[ll]_{\beta}^{F\otimes_FE}. 
}
    $$
    where $L$ is the unique subfield of $E$ that contains $F$ and satisfies $[L:F]=2$ (hence also $[E:L]=2$). By 
    Lemma \ref{lemma:simplyfing-reps-of-these-species} and Corollary \ref{coro:rep-of-species-equiv-to-cat-of-semilinear-maps}, the category of representations of this species is equivalent to the category whose objects have the form
    \begin{equation}\label{eq:example-ultimately-explicit-representations-of-a-species}
    \xymatrix{ &
& M_1 \ar[dr]^{\varphi_\gamma} & \\
& M_2 \ar[ur]^{\varphi_\alpha} & & M_3\ar[ll]^{\varphi_\beta}. 
}
    \end{equation}
    with $M_1$ an $L$-vector space, $M_2$ an $F$-vector space, $M_3$ an $E$-vector space, $\varphi_\alpha$ an $F$-linear map, $\varphi_\beta$ an $F$-linear map, and $\varphi_\gamma$ an $F$-linear map satisfying $\varphi_\gamma(\ell  m)=g_\gamma(\ell)\varphi_\gamma(m)$ for $\ell\in L$ and $m\in M_1$. (We skip the description of the morphisms here.)
    
    Letting $\calB_{E/F}\coloneqq \{1,v,v^2,v^3\}$ be an eigenbasis of $E/F$ as in \S\ref{subsubsec:eigenbases}, and setting $u\coloneqq v^2$, we see that $\calB_{L/F}\coloneqq \{1,u\}$ is an eigenbasis of $L/F$. Furthermore, writing $\Gal(L/F)=\{\myid_L,\theta\}$, we have $g_\gamma=g_\gamma^{-1}=\theta^{\xi_\gamma}$ with $\xi_\gamma\in\ZZ/2\ZZ$, hence $g_\gamma(u)=(-1)^{\xi_\gamma}u$ and $g_\gamma^{-1}(u^{-1})=(-1)^{\xi_\gamma}u^{-1}$. Thus, for the potential $W=\alpha\beta\gamma$ we have
    \[
    \begin{array}{ccc}
    \partial_{\alpha}(\alpha\beta\gamma)=\beta\gamma, &
    \partial_{\beta}(\alpha\beta\gamma)=\gamma\alpha, & 
    \partial_{\gamma}(\alpha\beta\gamma)=\frac{1}{2}(\alpha\beta+(-1)^{\xi_\gamma}u^{-1}\alpha\beta u).
    \end{array}
    \] 
We now consider the consequence of Proposition \ref{prop:properties-of-Fi-Fj-bimodules}(4) in this context. 
    This result describes a direct sum decomposition of an arbitrary $L$-$E$-bimodule. 
    This specifies here to the  $L$-$E$-bimodule decomposition $\Hom_{F}(M_{3},M_{1})=\Hom_{L}(M_{3},M_{1})\oplus \Hom_{L}^{\theta}(M_{3},M_{1})$. 
    To see this, note any $\varphi\in \Hom_{F}(M_{3},M_{1})$ satisfies
    \[
    \begin{array}{ccc}
    \varphi=\varphi'+\varphi'',
    &
    \varphi'\coloneqq\frac{1}{2}(\varphi+u^{-1}\varphi u)\in \Hom_{L}(M_{3},M_{1}),
    &
    \varphi''\coloneqq\frac{1}{2}(\varphi-u^{-1}\varphi u)\in  \Hom_{L}^{\theta}(M_{3},M_{1}).
    \end{array}
    \]
    Indeed, it is clear that $\varphi',\varphi''\in \Hom_{F}(M_{3},M_{1})$, and since $u^{2}\in F$, for any $m\in M_{3}$ we have 
    \[
    2\varphi''(um)=\varphi(um)- u^{-1}\varphi(u^{2}m)=\varphi(um)-u\varphi(m)=-u(\varphi(m)-u^{-1}\varphi(um))=-2u\varphi''(m).
    \]
    Note also that $\varphi=u^{-2}\varphi u^{2}$ and so $\varphi'=u^{-1}\varphi'u$ and $\varphi''=-u^{-1}\varphi''u$. Therefore, the category of left modules over the Jacobian algebra $\jacobalg{A(Q,\mathbf{d},g)}$ is equivalent to the category whose objects are the objects \eqref{eq:example-ultimately-explicit-representations-of-a-species} above satisfying that $\varphi_\beta\circ\varphi_\gamma=0$, $\varphi_\gamma\circ\varphi_\alpha=0$, and that writing $\varphi_{\alpha}\circ\varphi_{\beta}=\varphi'+\varphi''$ as above gives
    \[
    0=\frac{1}{2}(\varphi+(-1)^{\xi_{\gamma}}u^{-1}\varphi u)=\frac{1}{2}(\varphi'+\varphi''+(-1)^{\xi_{\gamma}}u^{-1}(\varphi'+\varphi'')u)=\begin{cases}
        \varphi'
        &
        (\text{if }\xi_{\gamma}=0)
        \\
        \varphi''
        &
        (\text{if }\xi_{\gamma}=1)
    \end{cases}
    \]
    (We skip the description of the morphisms here.)
\end{ex}

\vspace{2mm}

\subsection{Semilinear clannish algebras}\label{subsec:semilinear-clannish-algs-def-of-concept} 

\,

As in \S\ref{subsubsec:semilinear-path-algs-def-of-concept} (and less generally than \cite{BTCB}), let $(\compactQhat,\compactblddhat)$ be a weighted quiver all of whose weights are the same positive integer $\compactdhat$, $K/F$ be a degree-$\compactdhat$ cyclic Galois field extension, and $\bldsigma\colon \compactQhat_{1}\to \Gal(K/F)$ be a function assigning a field automorphism $\sigma_{b}\in\Gal(K/F)$ to each $b\in \compactQhat_{1}$. Let $K_{\bldsigma}\compactQhat$ be the semilinear path algebra constructed with this data.

\begin{defi}
\label{defi-semilinear-clannish-algebra}
\cite[\S1,~\S2.1,~\S2.3]{BTCB} A \emph{semilinear clannish algebra} is a $K$-ring of the form $K_{\bldsigma}\compactQhat/I$ where $I=\langle Z\cup S\rangle$ is generated by a set $Z$, of \emph{zero}-\emph{relations}, and a set $S$, of \emph{special}-\emph{relations}, such that:
\begin{enumerate}
    \item[(Q)] $\compactQhat$ contains a specified set $\bbS$ of \emph{special loops}, the arrows not belonging to $\bbS$ thus being called \emph{ordinary}, and such that for any $i\in\compactQhat_{0}$ there are at most $2$ arrows $b\in\compactQhat_1$ with $h(b)=i$, and at most two arrows $c\in\compactQhat_1$ with $t(c)=i$; 
    \item[(Z)] $Z$ consists of paths in $\compactQhat$ of length at least two, such that:
    \begin{itemize}
        \item for any ordinary arrow $a$ there is at most one arrow $b\in \compactQhat_1$  with $ba$ a path outside $Z$;
        \item for any ordinary arrow $a$ there is at most one arrow  $b\in \compactQhat_1$ with $ab$ a path outside $Z$; and 
        \item if $p\in Z$ and  $s\in \bbS$, then $p$ does not start nor end with $s$, and does not contain $s^{2}$ as a subpath.  
    \end{itemize}
    \item[(S)] $S=\{s^{2}-\beta_{s} s+\gamma_{s} e_{i}\mid s\in \bbS,h(s)=i=t(s)\}$ for some elements $\beta_{s},\gamma_{s}\in K$ ($s\in \bbS$). That is, $S$ is given by specifying a quadratic polynomial $q_{s}(x)=x^{2}-\beta_{s}x+\gamma_{s}\in K[x;\sigma_{s}]$ for each $s\in\bbS$. 
    \end{enumerate}
\end{defi}

\begin{remark} The semilinear clannish algebras introduced in \cite{BTCB} are far more general: $K$ is allowed to be a division ring, not necessarily finite-dimensional over any field, and for each the arrow $b\in\compactQhat_1$, $\sigma_b$ is allowed to be any ring automorphism of $K$.
\end{remark}

\begin{defi}\, 
\label{def:types-of-clannish}
\begin{enumerate}
\item
Given $\sigma\in\Aut(K)$ and $q(x)=x^{2}-\beta x +\gamma\in K[x;\sigma]$ we say that $q(x)$ is:
\begin{enumerate}[(i)]
    \item \emph{normal} if the left and right ideals generated by $q(x)$  coincide, that is, $K[x;\sigma]q(x)=q(x)K[x;\sigma]$; 
    \item \emph{non}-\emph{singular} if the constant term of $q(x)$ is non-zero, that is, $\gamma\neq 0$; and
    \item \emph{of semisimple type} if the quotient $K[x;\sigma]/\langle q(x)\rangle$ is a semisimple ring. 
\end{enumerate}
\item
A semilinear clannish algebra $K_{\bldsigma}\compactQhat/I$ is  \emph{normally}-\emph{bound}  \emph{non}-\emph{singular} or \emph{of semisimple type} if each of the quadratics $q_{s}(x)\in K[x;\sigma_{s}]$ ($s\in\bbS$) is normal, non-singular or of semisimple type, respectively. 
\end{enumerate}
\end{defi}

\begin{remark}\label{rem:normality-and-semisimplicity-of-q}
For $\sigma\in \Aut(K)$ and $\mu\in K$, set $q(x)\coloneqq x^{2}-\mu\in K[x;\sigma]$.  
By \cite[Lemma~2.1(i)]{BTCB}, to say that $q(x)$ is normal is equivalent to the conditions that $\sigma(\mu)=\mu$ and $\sigma^{2}(\lambda)\mu=\mu\lambda$ for all $\lambda\in K$.
\end{remark}

\begin{ex}\label{ex:specific-pols-of-semisimple-type}
Let $K$ be a field.
\begin{enumerate}
    \item If $\sigma\coloneqq \myid_{K}$ and $\mu\in\{u\in K\suchthat\nexists x\in K \ \text{with} \ x^2=u\}$, then $L[x;\sigma]/\langle q(x)\rangle=L[x]/\langle q(x)\rangle$ is a field,  
    so $q(x)\coloneqq x^{2}-\mu\in K[x;\sigma]=K[x]$ is of semisimple type.
    \item If $\sigma$ is an order-$2$ field automorphism of $K$ and $\mu\coloneqq 1$, then
    $K[x;\sigma]/\langle q(x)\rangle \cong F^{2\times 2}$, where $F\coloneqq \{x\in K\suchthat \sigma(x)=x\}$, so $q(x)\coloneqq x^{2}-\mu\in K[x;\sigma]$ is of semisimple type.
\end{enumerate}
\end{ex}

\begin{ex}
\label{ex-main-example-for-semilinear-clannish-blocks}
Let $\compactQhat $ be one of the following three connected quivers
\[
\begin{array}{ccc}
\xymatrix{
& 1 \ar[dr]^{\gamma} & \\
2 \ar[ur]^{\alpha} & & 3 \ar[ll]^{\beta} 
}
&
\xymatrix{
 & 1 \ar@(lu,ru)^{s_1} \ar[dr]^{\gamma} &\\
 2 \ar[ur]^{\alpha} & & 3\ar[ll]^{\beta}
}
&
\xymatrix{
 & 1  \ar[dr]^{\gamma} &  \\
 2 \ar@(u,l)_{s_2} \ar[ur]^{\alpha} & & 3 \ar@(u,r)^{s_3} \ar[ll]^{\beta} 
}
\end{array}
\]
Set $\bbS\coloneqq \compactQhat_{1}\setminus \{\alpha,\beta,\gamma\}$, meaning that every loop in $\compactQhat$ is special. 
Take the weight $\compactdhat$ to be either $1$ or $2$, and let $K/F$ be a degree-$\compactdhat$ field extension. Fix arbitrary elements 
\[
\begin{array}{cccc}
\sigma_{\alpha},\sigma_{\beta},\sigma_{\gamma},\sigma_{s_{i}}\in \Gal(K/F), & \mu_{i}\in K, & q_{s_{i}}(x)=x^{2}-\mu_{i}\in K[x;\sigma_{s_{i}}],     & (s_{i}\in \bbS).
\end{array}
\]
Let $Z=\{\alpha\beta,\beta\gamma,\gamma\alpha\}$. 
It is straightforward to observe that conditions (Q), (Z) and (S) from Definition \ref{defi-semilinear-clannish-algebra} hold, so  $K_{\bldsigma}\compactQhat /\langle Z\cup S\rangle$ is a semilinear clannish algebra. 

We now specify $K$, $\sigma$ and $\mu$ to particular examples that arise from Section \ref{sec:building-blocks} on. By Remark \ref{rem:normality-and-semisimplicity-of-q} and Example \ref{ex:specific-pols-of-semisimple-type}, $K_{\bldsigma}\compactQhat /\langle Z\cup S\rangle$ is normally-bound, non-singular and of semisimple type in each of the following two situations: 
\begin{itemize}
\item $[K:F]=2$, $\operatorname{char}(K)\neq 2$, and $(\sigma_{s_{i}},\mu_{i})\in\{(\theta,1)\}\cup\{(\myid_{K},u)\suchthat \nexists x\in K \ \text{with} \ x^2=u\}$ for each special loop $s_i$, where $\Gal(K/F)=\{\myid_K,\theta\}$.
\item 
$K=F$ and $(\sigma_{s_{i}},\mu_{i})\in \{(\myid_{F},u)\suchthat \nexists x\in K \ \text{with} \ x^2=u\}$ for each special loop $s_i$.
\end{itemize}
\end{ex}

\vspace{1mm}

\section{Specific field extensions}
\label{sec:specific-field-extensions}

In this short section we give a brief description of the specific field extensions over which we will define Jacobian algebras and semilinear clannish algebras for triangulations.

\begin{defi}[Degree-4 datum] We will say that a $E/F$ is a \emph{degree-$4$ datum} if:
\begin{enumerate}
\item $E/F$ is a degree-$4$ cyclic Galois field extension; and
\item $F$ contains a primitive $4^{\operatorname{th}}$ root of unity.
\end{enumerate}
\end{defi}

For a degree-$4$ datum $E/F$, the following notation will always be adopted:
\begin{itemize}
\item $\rho:E\rightarrow E$ will be a generator of the Galois group $\Gal(E/F)$, so $\Gal(E/F)=\{\myid_E,\rho,\rho^2,\rho^3\}$;
\item $\zeta\in F$ will be a primitive $4^{\operatorname{th}}$ root of unity;
\item $v\in E\setminus\{0\}$ will be an eigenvector for $\rho$ with eigenvalue $\zeta$, i.e., $\rho(v)=\zeta v$, and $u\coloneqq v^2$;
\item $L/F$ will be the unique degree-$2$ field extension with $L\subseteq E$, i.e., $L=F(u)=\Fix_E(\rho^2)$;
\item $\Gal(L/F)=\{\myid_L,\theta\}$, i.e., $\myid_E|_L=\myid_L=\rho^2|_L$ and $\rho|_L=\theta=\rho^3|_L$.
\end{itemize}

\begin{ex}
Let $F$ be a finite field whose characteristic $p$ is a prime number congruent to $1$ modulo $4$, and let $E/F$ be the unique degree-$4$ field extension of $F$ inside an \emph{a priori} given algebraic closure of $F$. Then $E/F$ is a degree-$4$ datum.
\end{ex}

\begin{ex} Let  $p$ be a positive prime number congruent to $1$ modulo $4$, $F$ be any finite extension of the field of $p$-adic numbers $\mathbb{Q}_p$, and $E/F$ be the unique degree-$4$ unramified extension of $F$ inside an \emph{a priori} given algebraic closure of $F$. Then $E/F$ is a degree-$4$ datum. See, e.g., \cite[\S5.3 and \S5.4]{Gouvea-p-adic-numbers}
or \cite[\S III.3]{Koblitz-p-adic-numbers}.
\end{ex}

\begin{defi}[Degree-2 datum]
We will say that a $L/F$ is a \emph{degree-$2$ datum} if:
\begin{enumerate}
\item $L/F$ is a degree-$2$ field extension; and
\item $F$ contains a primitive $2^{\operatorname{nd}}$ root of unity, i.e., $\operatorname{char}(F)\neq 2$.
\end{enumerate}
\end{defi}

Notice that for a field $F$ containing a primitive $2^{\operatorname{nd}}$ root of unity, every degree-$2$ field extension is Galois with cyclic Galois group.

For a degree-$2$ datum, the following notation will always be adopted:
\begin{itemize}
\item $\theta:L\rightarrow L$ will be a generator of the Galois group $\Gal(L/F)$, so $\Gal(L/F)=\{\myid_L,\theta\}$;
\item $u\in L\setminus\{0\}$ will be an eigenvector for $\theta$ with eigenvalue $-1$, i.e., $\theta(u)=-u$.
\end{itemize}

Notice that $c\coloneqq u^2\in F$ and $\theta^{-1}(u^{-1})=\theta(u^{-1})=\theta(u)^{-1}=-u^{-1}$.

\begin{ex}
The well-known field extension $\mathbb{C}/\mathbb{R}$ is a degree-$2$ datum for which $\theta$ is the usual conjugation of complex numbers, and one can take $u$ to be $i$ or $-i$.
\end{ex}

\begin{defi}[Degree-1 datum]
By \emph{a degree-$1$ datum} we simply mean a field $F$ with no further conditions imposed.
\end{defi}

\begin{remark}
Every degree-$4$ datum $E/F$ contains two degree-$2$ data, namely $E/L$ and $L/F$. However, it is not true that every degree-$2$ datum is part of a degree-$4$ datum. Consider, for instance, the field extension $\mathbb{C}/\mathbb{R}$: degree-$2$ extensions of $\mathbb{C}$ do not exist because $\mathbb{C}$ is algebraically closed, and no subfield $F$ of $\mathbb{R}$ satisfies $[\mathbb{R}:F]=2$ because $\Aut(\mathbb{R})=\{\myid_\mathbb{R}\}$. This is the technical reason why in Section \ref{sec:Jac-algs-and-semilinear-clan-algs-of-colored-triangs} the constructions from \S\ref{subsec:constant-weights-algs-def-over-C/R} cannot be simply said to be a particular case of the constructions from \S\ref{subsec:algebras-for-arb-weights}.
\end{remark}

\section{Three-vertex blocks}\label{sec:building-blocks}

In this section we introduce two lists of $3$-vertex algebras. The first list will consist of $10$ Jacobian algebras (see Tables \ref{table-Jacobian-blocks-1-to-5} and \ref{table-Jacobian-blocks-6-to-10}), whereas the second one will consist of $10$ semilinear clannish algebras (see Tables \ref{table-semilinear-clannish-blocks-1-to-5} and \ref{table-semilinear-clannish-blocks-6-to-10}). The main aim of the section is to show that for each $k=1,\ldots,10$, the $k^{\operatorname{th}}$ Jacobian block is Morita-equivalent to the $k^{\operatorname{th}}$ semilinear clannish block.

The ten Jacobian blocks are instances of the algebras constructed in \cite{GLF1,GLF2}, whereas, except for the blocks 8 and 10, the construction of the semilinear clannish blocks is brand new. Later on, in Section \ref{sec:Jac-algs-and-semilinear-clan-algs-of-colored-triangs}, we will separately associate a Jacobian algebra and a semilinear-clannish algebra to each colored triangulation of a surface with orbifold points. It will turn out that these can alternatively be obtained by gluing copies of the blocks we are about to introduce.

The reader will notice that in Tables \ref{table-Jacobian-blocks-1-to-5}, \ref{table-Jacobian-blocks-6-to-10}, \ref{table-semilinear-clannish-blocks-1-to-5} and \ref{table-semilinear-clannish-blocks-6-to-10} some entries of the weight triple $\bldD=(d_1,d_2,d_3)$ appear enclosed in a small circle. This means that the corresponding vertex is an \emph{outlet} that in Section \ref{sec-morita-equivalence-between-algebras-from-triangulations} below will be allowed to be matched and glued to another outlet, in a fashion similar to \cite[Definition 13.1]{Fomin-Shapiro-Thurston} and \cite{Brustle-kit}.

\afterpage{%
    \clearpage
    \thispagestyle{empty}
    \begin{landscape}
        \centering 
\renewcommand{\arraystretch}{1.25}
{\small
\begin{tabular}{|c|c|c|c|c|c|}
\hline
 & Block 1 & Block 2 & Block 3 & Block 4 & Block 5 \\
\hline
\begin{tabular}{c}
Weight triple\\ 
$\left(\begin{array}{ccc} & d_1 &\\ d_2 & & d_3\end{array}\right)$\end{tabular} & 
$\left(\begin{array}{ccc} & \circled{2} &\\ \circled{2} & & \circled{2} \end{array}\right)$&
$\left(\begin{array}{ccc} & 1 &\\ \circled{2} & & \circled{2} \end{array}\right)$
&
$\left(\begin{array}{ccc} & 4 &\\ \circled{2} & & \circled{2}\end{array}\right)$ & $\left(\begin{array}{ccc} & \circled{2} &\\ 1 & & 1\end{array}\right)$ & $\left(\begin{array}{ccc} & \circled{2} &\\ 4 & & 4\end{array}\right)$ \\
\hline
\begin{tabular}{c}
Vertex fields\\
$\begin{array}{ccc} & F_1 &\\ F_2 & & F_3\end{array}$
\end{tabular}&
\begin{tabular}{c}\\ $\begin{array}{ccc} & L &\\ L & & L\end{array}$\end{tabular}
&
\begin{tabular}{c}\\ $\begin{array}{ccc} & F &\\ L & & L\end{array}$\end{tabular}
& 
\begin{tabular}{c}\\ $\begin{array}{ccc} & E &\\ L & & L\end{array}$\end{tabular} & \begin{tabular}{c}\\ $\begin{array}{ccc} & L &\\ F & & F\end{array}$ \end{tabular} & \begin{tabular}{c}\\ $\begin{array}{ccc} & L &\\ E & & E\end{array}$\end{tabular}\\
\hline
\begin{tabular}{c}
Arrow bimodules\\
\begin{tabular}{c}$A(Q,\bldD,\xi)_\alpha$\\
$A(Q,\bldD,\xi)_\beta$\\ 
$A(Q,\bldD,\xi)_{\beta_{0}}$\\
$A(Q,\bldD,\xi)_{\beta_{1}}$\\
$A(Q,\bldD,\xi)_\gamma$\end{tabular}
\end{tabular}
&
\begin{tabular}{c}
\\
\begin{tabular}{c}$L^{\theta^{\xi_\alpha}}\smallotimesL L$\\
$L^{\theta^{\xi_\beta}}\smallotimesL L$\\ 
\\
\\
$L^{\theta^{\xi_\gamma}}\smallotimesL L$\end{tabular}
\end{tabular}
&
\begin{tabular}{c}
\\
\begin{tabular}{c}$F\smallotimesF L$\\
$L^{\theta^{\xi_\beta}}\smallotimesL L$\\ 
\\
\\
$L\smallotimesF F$\end{tabular}
\end{tabular}
& 
\begin{tabular}{c}
\\
\begin{tabular}{c}$E^{\theta^{\xi_\alpha}}\smallotimesL L$\\
$L^{\theta^{\xi_\beta}}\smallotimesL L$\\ 
\\
\\
$L^{\theta^{\xi_\gamma}}\smallotimesL E$\end{tabular}
\end{tabular}
& 
\begin{tabular}{c}
\\
\begin{tabular}{l}$L\smallotimesF F$\\
\\
$F\smallotimesF F$\\
$F\smallotimesF F$\\ $F\smallotimesF L$\end{tabular}
\end{tabular}
&
\begin{tabular}{c}
\\
\begin{tabular}{l}$L^{\theta^{\xi_\alpha}}\smallotimesL L$\\
\\
$E^{\rho^l}\smallotimesE E$\\
$E^{\rho^{l+2}}\smallotimesE E$\\ $L^{\theta^{\xi_\gamma}}\smallotimesL L$\end{tabular}
\end{tabular}
\\
\hline
Mnemotechnics & 
$\xymatrix@C=0.7em@R=6em{
& L \ar[dr]_(0.6){\gamma}^(.4){L^{\theta^{\xi_\gamma}}\smallotimesL L} & \\
L \ar[ur]_(0.4){\alpha}^(.6){L^{\theta^{\xi_\alpha}}\smallotimesL L} & & L \ar[ll]_{\beta}^{L^{\theta^{\xi_\beta}}\smallotimesL L} 
}$
&
$\xymatrix@C=0.7em@R=6em{
& F \ar[dr]_(.6){\gamma}^(.4){L\smallotimesF F} & \\
L \ar[ur]_(.4){\alpha}^(.6){F\smallotimesF L} & & L \ar[ll]_{\beta}^{L^{\theta^{\xi_\beta}}\smallotimesL L} 
}$
&
$\xymatrix@C=0.7em@R=6em{
 & E  \ar[dr]_(.6){\gamma}^(.4){L^{\theta^{\xi_\gamma}}\smallotimesL E} &\\
 L \ar[ur]_(.4){\alpha}^(.6){E^{\theta^{\xi_\alpha}}\smallotimesL L} & & L\ar[ll]_{\beta}^{L^{\theta^{\xi_\beta}}\smallotimesL L}
}$
 &{
 $\xymatrix@R=2.5em{
  & L  \ar[ddr]_(.4){\gamma}^(.4){F\smallotimesF L} &  \\
  & &  \\
 F \ar[uur]_(.6){\alpha}^(.6){L\smallotimesF F}  & &  F \ar@/_0.7pc/[ll]^{\beta_0}_{F\smallotimesF F} \ar@/^0.9pc/[ll]_{\beta_1}^{F\smallotimesF F}
}$}
 &{
 $\xymatrix@R=2.5em{
  & L  \ar[ddr]_(.4){\gamma}^(.4){E^{\theta^{\xi_\gamma}}\smallotimesL L} &  \\
  & &  \\
 E \ar[uur]_(.6){\alpha}^(.6){L^{\theta^{\xi_\alpha}}\smallotimesL E}  & &  E \ar@/_0.7pc/[ll]^{\beta_0}_{E^{\rho^l}\smallotimesE E} \ar@/^0.9pc/[ll]_{\beta_1}^{E^{\rho^{l+2}}\smallotimesE E}
}$}
\\
\hline
\begin{tabular}{c}
Potential\\
$W(Q,\bldD,\xi)$\end{tabular}
&
$\alpha\beta\gamma$
&
$\alpha\beta\gamma$
& 
$\alpha\beta\gamma$
& 
$\beta_0\gamma\alpha+\beta_1\gamma u\alpha$
&
$\alpha(\beta_0+\beta_1)\gamma$
\\
\hline
\begin{tabular}{c}
Cyclic derivatives \\
$\partial_\alpha(W(Q,\bldD,\xi))$\\
$\partial_\beta(W(Q,\bldD,\xi))$\\
$\partial_{\beta_0}(W(Q,\bldD,\xi))$\\
$\partial_{\beta_1}(W(Q,\bldD,\xi))$\\
$\partial_\gamma(W(Q,\bldD,\xi))$
\end{tabular}
&
\begin{tabular}{c}
 \\
$\beta\gamma$\\
$\gamma\alpha$\\
\\
\\
$\alpha\beta$
\end{tabular}
&
\begin{tabular}{l}
 \\
$\beta\gamma$\\
$\frac{1}{2}(\gamma\alpha+(-1)^{\xi_\beta}u^{-1}\gamma\alpha u)$\\
\\
\\
$\alpha\beta$
\end{tabular}
& 
\begin{tabular}{c}
 \\
$\beta\gamma$\\
$\gamma\alpha$\\
\\
\\
$\alpha\beta$
\end{tabular}
& 
\begin{tabular}{l}
 \\
$\beta_0\gamma+\beta_1\gamma u$\\
\\
$\gamma\alpha$\\
$\gamma u\alpha$\\
$\alpha\beta_0+u\alpha\beta_1$
\end{tabular}
& 
\begin{tabular}{l}
 \\
$(\beta_0+\beta_1)\gamma$\\
\\
$\frac{1}{2}(\gamma\alpha+\rho^{-l}(v^{-1})\gamma\alpha v)$\\
$\frac{1}{2}(\gamma\alpha+\rho^{-l-2}(v^{-1})\gamma\alpha v)$\\
$\alpha(\beta_0+\beta_1)$
\end{tabular}
 \\
\hline
\end{tabular}}
       \captionof{table}{Jacobian blocks 1 to 5 \label{table-Jacobian-blocks-1-to-5}}
    \end{landscape}
    \clearpage
}

\afterpage{%
    \clearpage
    \thispagestyle{empty}
    \begin{landscape}
        \centering 
\renewcommand{\arraystretch}{1.25}
{\small
\begin{tabular}{|c|c|c|c|c|c|}
\hline
& Block 6 & Block 7 & Block 8 & Block 9 & Block 10 \\
\hline
\begin{tabular}{c}
Weight triple\\ 
$\left(\begin{array}{ccc} & d_1 &\\ d_2 & & d_3\end{array}\right)$\end{tabular} & 
$\left(\begin{array}{ccc} & \circled{2} &\\ 4 & & 1\end{array}\right)$&
$\left(\begin{array}{ccc} & \circled{2} &\\ 1 & & 4\end{array}\right)$
&
$\left(\begin{array}{ccc} & \circled{1} &\\ \circled{1} & & \circled{1}\end{array}\right)$ & $\left(\begin{array}{ccc} & 2 &\\ \circled{1} & & \circled{1}\end{array}\right)$ & $\left(\begin{array}{ccc} & \circled{1} &\\ 2 & & 2\end{array}\right)$ \\
\hline
\begin{tabular}{c}
Vertex fields\\
$\begin{array}{ccc} & F_1 &\\ F_2 & & F_3\end{array}$
\end{tabular}&
\begin{tabular}{c}\\ $\begin{array}{ccc} & L &\\ E & & F\end{array}$\end{tabular}
&
\begin{tabular}{c}\\ $\begin{array}{ccc} & L &\\ F & & E\end{array}$\end{tabular}
& 
\begin{tabular}{c}\\ $\begin{array}{ccc} & F &\\ F & & F\end{array}$\end{tabular} & \begin{tabular}{c}\\ $\begin{array}{ccc} & L &\\ F & & F\end{array}$ \end{tabular} & \begin{tabular}{c}\\ $\begin{array}{ccc} & F &\\ L & & L\end{array}$\end{tabular}\\
\hline
\begin{tabular}{c}
Arrow bimodules\\
\begin{tabular}{c}$A(Q,\bldD,\xi)_\alpha$\\
$A(Q,\bldD,\xi)_\beta$\\ 
$A(Q,\bldD,\xi)_{\beta_{0}}$\\
$A(Q,\bldD,\xi)_{\beta_{1}}$\\
$A(Q,\bldD,\xi)_\gamma$\end{tabular}
\end{tabular}
&
\begin{tabular}{c}
\\
\begin{tabular}{c}$L^{\theta^{\xi_\alpha}}\smallotimesL  E$\\
$E\smallotimesF F$\\ 
\\
\\
$F\smallotimesF L$\end{tabular}\end{tabular}
&
\begin{tabular}{c}
\\
\begin{tabular}{c}$L\smallotimesF F$\\
$F\smallotimesF E$\\ 
\\
\\
$E^{\theta^{\xi_\gamma}}\smallotimesL L$\end{tabular}\end{tabular}
& \begin{tabular}{c}
\\
\begin{tabular}{c}$F\smallotimesF F$\\
$F\smallotimesF F$\\ 
\\
\\
$F\smallotimesF F$\end{tabular}\end{tabular}
& \begin{tabular}{c}\\ \begin{tabular}{c}$L\smallotimesF F$ \\ 
$F\smallotimesF F$ \\
\\
\\
$F\smallotimesF L$\end{tabular}\end{tabular}
&\begin{tabular}{c}\\ \begin{tabular}{l}$F\smallotimesF L$ \\ 
\\
$L\smallotimesL L$
\\
$L^\theta\smallotimesL L$ \\ $L\smallotimesF F$\end{tabular}\end{tabular}\\
\hline
Mnemotechnics & 
$\xymatrix@C=0.7em@R=6em{
 & L  \ar[dr]_(.6){\gamma}^(.4){F\smallotimesF L} & \\
 E \ar[ur]_(.4){\alpha}^(.6){L^{\theta^{\xi_\alpha}}\smallotimesL E} & & F \ar[ll]_{\beta}^{E\smallotimesF F}
}$
&
$\xymatrix@C=0.7em@R=6em{
 & L  \ar[dr]_(.6){\gamma}^(.4){E^{\theta^{\xi_\gamma}}\smallotimesL L} & \\
 F \ar[ur]_(.4){\alpha}^(.6){L\smallotimesF F} & & E \ar[ll]_{\beta}^{F\smallotimesF E}
}$
&
$\xymatrix@C=0.7em@R=6em{
& F \ar[dr]_(.6){\gamma}^(.4){F\smallotimesF F} & \\
F \ar[ur]_(.4){\alpha}^(.6){F\smallotimesF F} & & F, \ar[ll]_{\beta}^{F\smallotimesF F} 
}$ & $\xymatrix@C=0.7em@R=6em{
& L \ar[dr]_(.6){\gamma}^{F\smallotimesF L} & \\
F \ar[ur]_(.4){\alpha}^{L\smallotimesF F} & & F, \ar[ll]_{\beta}^{F\smallotimesF F} 
}$ &  $\xymatrix@C=1.2em@R=2.5em{
 & & F  \ar[ddrr]_(.4){\gamma}^(.4){L\smallotimesF F} & & \\
 & & & & \\
 L \ar[uurr]_(.6){\alpha}^(.6){F\smallotimesF L} & & & & L \ar@/_1pc/[llll]^{\beta_0}_{L\smallotimesL L} \ar@/^1pc/[llll]_{\beta_1}^{L^\theta\smallotimesL L}
}$
\\
\hline
\begin{tabular}{c}
Potential\\
$W(Q,\bldD,\xi)$\end{tabular}
&
$\alpha\beta\gamma$
&
$\alpha\beta\gamma$
&  $\alpha\beta\gamma$ & $\alpha\beta\gamma$ &$\alpha(\beta_0+\beta_1)\gamma$ \\
\hline
\begin{tabular}{c}
Cyclic derivatives \\
$\partial_\alpha(W(Q,\bldD,\xi))$\\
$\partial_\beta(W(Q,\bldD,\xi))$\\
$\partial_{\beta_0}(W(Q,\bldD,\xi))$\\
$\partial_{\beta_1}(W(Q,\bldD,\xi))$\\
$\partial_\gamma(W(Q,\bldD,\xi))$
\end{tabular}
&
\begin{tabular}{l}
\\
$\frac{1}{2} (\beta\gamma + 
\theta^{-\xi_\alpha}(u^{-1})\beta\gamma u)$\\
$\gamma\alpha$ \\ 
\\
\\
$\alpha\beta$\end{tabular}
&
\begin{tabular}{l}
\\
$\beta\gamma$ \\ $\gamma\alpha$ \\ \\ \\  $\frac{1}{2} (\alpha\beta + 
\theta^{-\xi_\gamma}(u^{-1})\alpha\beta u)$\end{tabular}
& 
\begin{tabular}{c}\\
$\beta\gamma$ \\ $\gamma\alpha$ \\ \\ \\ $\alpha\beta$,\end{tabular}
& \begin{tabular}{c}\\
$\beta\gamma$, \\ $\gamma\alpha$ \\ \\ \\ $\alpha\beta$,\end{tabular} & 
\begin{tabular}{l}\\
$(\beta_0+\beta_1)\gamma$ \\ \\ $\frac{1}{2}(\gamma\alpha+u^{-1}\gamma\alpha u)$ \\ 
$\frac{1}{2}(\gamma\alpha-u^{-1}\gamma\alpha u)$, \\ 
$\alpha(\beta_0+\beta_1)$\end{tabular} \\
\hline
\end{tabular}}
       \captionof{table}{Jacobian blocks 6 to 10 \label{table-Jacobian-blocks-6-to-10}}
    \end{landscape}
    \clearpage
}

\vspace{1mm}

\subsection{Jacobian blocks}
\label{subsec:Jacobian-blocks}
\,

In Tables \ref{table-Jacobian-blocks-1-to-5} and \ref{table-Jacobian-blocks-6-to-10} the reader can see ten Jacobian algebras of the form $\jacobalg{A(\inputtripleforblocks(\xi)),W(\inputtripleforblocks(\xi))}$, where: 
\begin{enumerate}\item $Q$ is one of the following $3$-vertex quivers
\begin{equation}\label{eq:3-vertex-quivers-for-blocks}
\xymatrix{
& 1 \ar[dr]^{\gamma} & \\
2 \ar[ur]^{\alpha} & & 3 \ar[ll]^{\beta}
}\qquad\qquad
\xymatrix{
& 1 \ar[dr]^{\gamma} & \\
2 \ar[ur]^{\alpha} & & 3 \ar@/_0.25pc/[ll]_{\beta_0} \ar@/^0.25pc/[ll]^{\beta_1}
}
\end{equation}
\item $\bldD=(d_{1},d_{2},d_{3})$ is a triple of integers with $d\coloneqq \lcm(d_{1},d_{2},d_{3})\in\{1,2,4\}$; 
\item $\xi:\{\alpha,\beta,\gamma\}\rightarrow\mathbb{Z}/2\mathbb{Z}$ is a function satisfying $\xi_\alpha+\xi_\beta+\xi_\gamma=0$, where we write $\beta=\{\beta_0,\beta_1\}$ if $Q$ is the quiver on the right in \eqref{eq:3-vertex-quivers-for-blocks};
\item to each vertex $i\in Q_0$ there is attached a field $F_i$ extracted from a degree-$d$ datum $E/F$ and satisfying $[F_i:F]=d_i$;
\item the bimodules $A(Q,\mathbf{d},g(\xi))_a\coloneqq F_{h(a)}^{g_a(\xi)}\otimes_{F_{h(a)}\cap F_{t(a)}}F_{t(a)}$ arise from a modulating function $g(\xi)=(g_a(\xi))_{a\in Q_1}\in\times_{a\in Q_1}\Gal(F_{h(a)}\cap F_{t(a)}/F)$ defined in terms of the cocycle $\xi$ above;
\item $W(\inputtripleforblocks(\xi))$ is a potential defined using the guidelines from \cite{GLF2} for Blocks 1--7 (resp. from \cite{GLF1} for Blocks 8, 9 and 10), and the cyclic derivatives are computed via Definition \ref{def:paths-potentials-cyclic-derivatives-jacobian-algebras}-\eqref{item:def-cyclic-derivative}.
\end{enumerate}

For Block 5, the elements $\rho^l,\rho^{l+2}\in \Gal(E/F)$ are defined so that $\rho^l|_L=\theta^{\xi_\beta}=\rho^{l+2}|_L\in \Gal(L/F)$. 


\begin{lemma}
\label{lem-quotient-of-completed-path-algebra-is-f.d}
For each of the Jacobian blocks in Tables \ref{table-Jacobian-blocks-1-to-5} and \ref{table-Jacobian-blocks-6-to-10}, the canonical inclusion $\usualRA{A(Q,\bldD,\xi)}\hookrightarrow\compRA{A(Q,\bldD,\xi)}$ induces an $F$-algebra isomorphism
$
\usualRA{A(Q,\bldD,\xi)}/J_0(W(Q,\bldD,\xi))
\rightarrow 
\jacobalg{Q,\bldD,\xi}
$
that acts as the identity on $R$. In particular, $\dim_F(\jacobalg{Q,\bldD,\xi})<\infty$.
\end{lemma}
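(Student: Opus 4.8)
The statement has two parts: the isomorphism $\usualRA{A(Q,\bldD,\xi)}/J_0(W) \rightarrow \jacobalg{Q,\bldD,\xi}$ acting as the identity on $R$, and the finite-dimensionality conclusion. By the third item of the Remark following Definition \ref{def:paths-potentials-cyclic-derivatives-jacobian-algebras}, both parts follow at once if I can verify that each $W(Q,\bldD,\xi)$ listed in the tables is a polynomial potential (which it visibly is, being a finite $F$-linear combination of degree-$3$ cycles) and that $\mathfrak{m}^n \subseteq J(W)$ for some $n > 0$; equivalently, that $\mathfrak{m}^n \subseteq J_0(W) + \mathfrak{m}^{n+1}$ for some $n$, which by a standard Nakayama-type argument over the arrow ideal (or directly, since the cited remark already packages this) forces $J_0(W)$ to contain all sufficiently long paths. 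So the plan is: for each of the ten blocks, show that the polynomial Jacobian ideal $J_0(W)$ contains every path of length $\geq N$ for some small explicit $N$ (one can take $N=4$ uniformly, in fact $N=3$ suffices after one checks the length-$2$ and length-$3$ survivors).

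\textbf{Key steps, in order.} First, I would record the cyclic derivatives already computed in the tables; these generate $J_0(W)$ as a two-sided ideal of $\usualRA{A(Q,\bldD,\xi)}$. For the blocks with potential $\alpha\beta\gamma$ (Blocks 1, 2, 3, 6, 7, 8, 9), the relevant relations are $\beta\gamma$, $\gamma\alpha$, $\alpha\beta$ up to the $u$-twists and the scalar $\tfrac12$ factors appearing in Blocks 2, 6, 7; since the twisting elements $u$ and $u^{-1}$ are units in the respective vertex fields and $\tfrac12$ is invertible (characteristic $\neq 2$, guaranteed by the degree-$2$ and degree-$4$ data), the two-sided ideal generated by these cyclic derivatives equals the ideal generated by the honest length-$2$ paths $\alpha\beta$, $\beta\gamma$, $\gamma\alpha$. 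Since these three paths exhaust all length-$2$ paths in the relevant $3$-vertex quiver (the quiver is an oriented $3$-cycle), every length-$2$ path lies in $J_0(W)$, hence $\mathfrak{m}^2 \subseteq J_0(W)$ and certainly $\mathfrak{m}^n \subseteq J(W)$. For Blocks 4, 5, 10 (two parallel arrows $\beta_0,\beta_1$, potential of the form $\beta_0\gamma\alpha + \beta_1\gamma u\alpha$ or $\alpha(\beta_0+\beta_1)\gamma$), the cyclic derivatives listed are $\beta_0\gamma + \beta_1\gamma u$ (or $(\beta_0+\beta_1)\gamma$), $\gamma\alpha$ and $\gamma u\alpha$ (or $\tfrac12(\gamma\alpha \pm u^{-1}\gamma\alpha u)$), $\alpha\beta_0 + u\alpha\beta_1$ (or $\alpha(\beta_0+\beta_1)$), etc.; again using invertibility of $u$ and of $2$, an elementary linear-algebra manipulation on the pairs of relations recovers each individual length-$2$ path $\gamma\alpha$, $\alpha\beta_0$, $\alpha\beta_1$, $\beta_0\gamma$, $\beta_1\gamma$ (these are exactly the length-$2$ paths of the quiver), so once more $\mathfrak{m}^2 \subseteq J_0(W)$. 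Having $\mathfrak{m}^2 \subseteq J_0(W) \subseteq J(W)$ in all ten cases, the cited Remark applies verbatim and yields the claimed $F$-algebra isomorphism fixing $R$; and $\usualRA{A(Q,\bldD,\xi)}/J_0(W)$ is a quotient of $R \oplus A \oplus A^{\otimes 2}$-ish finite-dimensional pieces (precisely: $R$ plus the images of the arrow bimodules, since $\mathfrak{m}^2$ dies), hence finite-dimensional over $F$, giving the ``in particular'' clause.

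\textbf{Main obstacle.} The only place requiring genuine care, rather than bookkeeping, is the step of deducing that the individual length-$2$ paths lie in $J_0(W)$ from the twisted/summed cyclic derivatives in Blocks 2, 4, 5, 6, 7, 10 — one must confirm that the relations obtained by multiplying a cyclic derivative on the left or right by suitable elements of the vertex fields (using the bimodule actions, which are $g_a$-twisted!) really do separate $\alpha\beta_0$ from $\alpha\beta_1$ and similarly for the $u$-twisted single-arrow blocks. Concretely, because the right action of scalars on a twisted bimodule $F^{g}\otimes F'$ is through $g$, one needs $g_a(u) \neq \pm u$-type nondegeneracy, i.e.\ that $\{1,u\}$ remains a basis after twisting; this is exactly the eigenbasis property from \S\ref{subsubsec:eigenbases} (namely $\theta(u) = -u$, $\rho^l$ acting on $v$ by a primitive $4$th root of unity), so the argument goes through, but it is the one point where the specific structure of the data $E/F$ is used rather than formal nonsense. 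Everything else is routine verification block by block, and the uniform conclusion $\mathfrak{m}^2 \subseteq J_0(W)$ makes the finiteness statement immediate.
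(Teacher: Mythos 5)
Your overall strategy --- check that each $W(Q,\bldD,\xi)$ is a polynomial potential with $\mathfrak{m}^n\subseteq J_0(W(Q,\bldD,\xi))$ for some $n$, then invoke the remark after Definition \ref{def:paths-potentials-cyclic-derivatives-jacobian-algebras} --- is the right one (the paper offers nothing beyond ``easily verified''), but the load-bearing step is false as stated. The claim that ``the two-sided ideal generated by these cyclic derivatives equals the ideal generated by the honest length-$2$ paths'', i.e.\ $\mathfrak{m}^2\subseteq J_0(W)$, fails for Blocks 2, 4, 5, 6, 7 and 10. Take Block 2: $\partial_\beta(W)=\tfrac12\bigl(\gamma\alpha+(-1)^{\xi_\beta}u^{-1}\gamma\alpha u\bigr)$ is by construction the image of $\gamma\alpha$ under the idempotent \emph{bimodule} endomorphism $\pi_{g_\beta^{-1}}$ of $A_\gamma\otimes_{F_1}A_\alpha\cong L\otimes_FL\cong (L\otimes_LL)\oplus(L^\theta\otimes_LL)$ from Proposition \ref{prop:properties-of-Fi-Fj-bimodules}. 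Its image is one of the two \emph{simple} summands, and by item (5) of that proposition ($mx=\rho(x)m$ for $m\in\Image\pi_\rho$) this summand is stable under left and right multiplication by the vertex fields. Since the degree-$2$ homogeneous component of $J_0(W)$ is exactly the sub-bimodule of $A^{\otimes 2}$ generated by the degree-$2$ cyclic derivatives, no manipulation with the units $u$, $u^{-1}$, $\tfrac12$ can produce the complementary summand: your ``separation'' argument runs exactly backwards, because the eigenbasis/isotypic structure is what \emph{prevents} escaping an isotypic component in degree $2$. In particular $\gamma\alpha\notin J_0(W)$ for Block 2 (consistently, the string $\gamma s_1^{*}\alpha$ in Table \ref{table-semilinear-clannish-blocks-1-to-10-rep-type} witnesses a nonzero length-two composite on the clannish side of the Morita equivalence). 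The same issue affects $\partial_\alpha$ and $\partial_\gamma$ in Blocks 4, 5, 10 and the averaged derivatives in Blocks 6 and 7; for Block 4 one can check that even $e_1\mathfrak{m}^3e_1\not\subseteq J_0(W)$, so the parenthetical ``$N=3$ suffices'' in your plan is also not uniformly available.

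The lemma itself is fine and your framework is salvageable with $n=4$. In every block at least one of the three consecutive-arrow bimodules is \emph{fully} contained in $J_0(W)$: for Blocks 1, 2, 3, 6, 7, 8, 9 at least two of $A_\alpha A_\beta$, $A_\beta A_\gamma$, $A_\gamma A_\alpha$ arise from unaveraged derivatives (whence $\mathfrak{m}^3\subseteq J_0(W)$ there, since a length-$3$ path in the oriented $3$-cycle realizes two of the three pair-types); for Blocks 4, 5, 10 the full $A_\gamma A_\alpha$ lies in $J_0(W)$, either because both $\gamma\alpha$ and $\gamma u\alpha$ are listed derivatives (Block 4) or because $\partial_{\beta_0}W+\partial_{\beta_1}W=\gamma\alpha$ (Blocks 5 and 10, using $\rho^{-l}(v^{-1})+\rho^{-l-2}(v^{-1})=0$, resp.\ the cancellation of the $u^{-1}\gamma\alpha u$ terms). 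Since every length-$4$ path in the oriented $3$-cycle contains each of the three pair-types as a consecutive tensor factor, $\mathfrak{m}^4\subseteq J_0(W)$ holds in all ten cases; the remark then yields the isomorphism fixing $R$, and $\usualRA{A(Q,\bldD,\xi)}/J_0(W(Q,\bldD,\xi))$ is a quotient of $\bigoplus_{n\leq 3}A^{\otimes n}$, hence finite-dimensional over $F$.
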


\begin{proof}

For each $i=1,\dots,10$ let $P$ denote the set of paths of length $4$ in the quiver $Q$ associated to Jacobian block $i$. 
So every path of length at least $4$ factors through an element in $P$. 
We claim that, when considering $P$ as a subset of the bimodule $A(Q,\bldD,\xi)$, it must be contained in $J(W(Q,\bldD,\xi))$. 
    Note that the asserted isomorphism then immediately follows from Remark \ref{remark-properties-of-jacobian-blocks}(3). 
Note that for any $i$ each element of $P$ factors through the path $\gamma\alpha$. 
So the claim holds for $i=1,3,4,6,7,8,9,10$. 
Likewise, when $i=2$, each element of $P$ factors through $\alpha\beta$, and so again the claim holds. 
Thus we now just consider the case where $i=5$.

In $A(Q,\bldD,\xi)$ we have
\begin{equation}
\label{eqn-useful-for-2-proofs}
    \beta_0=\frac{1}{2}\left((\beta_0+\beta_1) + \zeta^{-l} v^{-1}e_2(\beta_0+\beta_1) ve_3 \right),\quad \text{and} \quad \beta_1=\frac{1}{2}\left((\beta_0+\beta_1) + \zeta^{-l-2} v^{-1}e_2(\beta_0+\beta_1) ve_3 \right),
\end{equation}
 After right multiplication by $\gamma$, it follows that $\beta_{0}\gamma,\beta_{1}\gamma\in J(W(Q,\bldD,\xi))$, and since every element of $P$ factors through either $\beta_{0}\gamma$ or $\beta_{1}\gamma$, the claim follows. 
\end{proof}

\vspace{1mm}

\subsection{Semilinear clannish blocks}\label{subsec:semilinear-clannish-blocks}

\,

Next, we present ten semilinear clannish blocks in Tables \ref{table-semilinear-clannish-blocks-1-to-5} and \ref{table-semilinear-clannish-blocks-6-to-10}. Just as in Subsection \ref{subsec:semilinear-clannish-algs-def-of-concept}, we have tried to adapt as much as possible to the notation in \cite{BTCB}.  
Specifically, in each column of Tables \ref{table-semilinear-clannish-blocks-1-to-5} and \ref{table-semilinear-clannish-blocks-6-to-10} we construct an  $F$-algebra $K_{\bldsigma}\compactQhat /I$, where the field $K$ may be $F$ or $L$ depending on the block, the quiver $\compactQhat$ has three vertices, and when $K=L$ (resp. when $K=F$), the function $\bldsigma:\compactQhat_{1}\rightarrow\Gal(L/F)\subseteq\Aut(L)$ satisfies $\sigma_\alpha\circ\sigma_\beta\circ\sigma_\gamma=\myid_{L}$ (resp. $\sigma_\alpha=\sigma_\beta=\sigma_\gamma=\myid_F$).
According to Example \ref{ex-main-example-for-semilinear-clannish-blocks}, $K_{\bldsigma}\compactQhat /I$ is a semilinear clannish algebra that turns out to be normally-bound, non-singular, and of semisimple type.

\afterpage{%
    \clearpage
    \thispagestyle{empty}
    \begin{landscape}
        \centering 
        \hspace{-1cm}
\renewcommand{\arraystretch}{1.25}
{\small
\begin{tabular}{|c|c|c|c|c|c|}
\hline
 & Block 1 & Block 2 & Block 3 & Block 4 & Block 5 \\
\hline
\begin{tabular}{c}
Weight triple\\ 
$\left(\begin{array}{ccc} & \widehat{d} &\\ \widehat{d} & & \widehat{d}\end{array}\right)$\end{tabular} & 
$\left(\begin{array}{ccc} & \circled{2} &\\ \circled{2} & & \circled{2} \end{array}\right)$&
$\left(\begin{array}{ccc} & 2 &\\ \circled{2} & & \circled{2} \end{array}\right)$
&
$\left(\begin{array}{ccc} & 2 &\\ \circled{2} & & \circled{2}\end{array}\right)$ & $\left(\begin{array}{ccc} & \circled{2} &\\ 2 & & 2\end{array}\right)$ & $\left(\begin{array}{ccc} & \circled{2} &\\ 2 & & 2\end{array}\right)$ \\
\hline
\begin{tabular}{c}
Ordinary quiver\\
$\compactQhat$
\end{tabular}
&
$\xymatrix@C=1.2em@R=4.5em{
& 1 \ar[dr]^{\gamma} & \\
2 \ar[ur]^{\alpha} & & 3, \ar[ll]^{\beta} 
}$
&
$\xymatrix@C=1.2em@R=4.5em{
 & 1 \ar@(lu,ru)^{s_1} \ar[dr]^{\gamma} &\\
 2 \ar[ur]^{\alpha} & & 3\ar[ll]^{\beta}
}$
&
$\xymatrix@C=1.2em@R=4.5em{
 & 1 \ar@(lu,ru)^{s_1} \ar[dr]^{\gamma} &\\
 2 \ar[ur]^{\alpha} & & 3\ar[ll]^{\beta}
}$
&
$\xymatrix@C=1.2em@R=4.5em{
 & 1  \ar[dr]^{\gamma} &  \\
 2 \ar@(dr,dl)^{s_2} \ar[ur]^{\alpha} & & 3 \ar@(dr,dl)^{s_3} \ar[ll]^{\beta} 
}$
&
$\xymatrix@C=1.2em@R=4.5em{
 & 1  \ar[dr]^{\gamma} &\\
 2 \ar@(dr,dl)^{s_2} \ar[ur]^{\alpha} & & 3 \ar@(dr,dl)^{s_3} \ar[ll]^{\beta}
}$
\\
\hline
Special loop set $\bbS$
&
$\varnothing$
&
$\{s_1\}$
&
$\{s_1\}$
&
$\{s_2,s_3\}$
&
$\{s_2,s_3\}$
\\
\hline
Field $K$
&
$L$
&
$L$
& 
$L$
& 
$L$
& 
$L$
\\
\hline
\begin{tabular}{c}
Field automorphisms\\
$\sigma_a\in\Aut(K)$\\ for $a\in\compactQhat_1$
\end{tabular} 
&
\begin{tabular}{l}
$\sigma_\alpha=\theta^{\xi_\alpha}$\\
$\sigma_\beta=\theta^{\xi_\beta}$\\
$\sigma_\gamma=\theta^{\xi_\gamma}$
\end{tabular}
&
\begin{tabular}{ll}
$\sigma_\alpha=\theta^{-\xi_\beta}$ & $\sigma_{s_1}=\theta$ \\ $\sigma_\beta=\theta^{\xi_\beta}$
\\ $\sigma_\gamma=\myid_L$ 
\end{tabular} 
&
\begin{tabular}{ll}
$\sigma_\alpha=\theta^{\xi_\alpha}$ & $\sigma_{s_1}=\myid_L$\\  
$\sigma_\beta=\theta^{\xi_\beta}$\\ $\sigma_\gamma=\theta^{\xi_\gamma}$
\end{tabular}
&
\begin{tabular}{ll}
$\sigma_\alpha=\myid_L$\\ $\sigma_\beta=\myid_L$& $\sigma_{s_2} =\theta$\\ $\sigma_\gamma=\myid_L$ & $\sigma_{s_3}=\theta$
\end{tabular}
& 
\begin{tabular}{ll}
$\sigma_\alpha=\theta^{\xi_\alpha}$\\
$\sigma_\beta=\theta^{\xi_\beta}$ & $\sigma_{s_2} =\myid_L$ \\ $\sigma_\gamma=\theta^{\xi_\gamma}$ & $\sigma_{s_3}=\myid_L$
\end{tabular}
\\
\hline
\begin{tabular}{c}
Arrow bimodules\\
$ K^{\sigma_a}\otimes_{K}K$\\ for $a\in\compactQhat_1$
\end{tabular}
&
\begin{tabular}{c}
$L^{\sigma_\alpha}\smallotimesL L$\\ $L^{\sigma_\beta}\smallotimesL L$
\\ $L^{\sigma_\gamma}\smallotimesL L$
\end{tabular}
&
\begin{tabular}{cc}
$L^{\sigma_\alpha}\smallotimesL L$ & $L^{\sigma_{s_1}}\smallotimesL L$\\ $L^{\sigma_\beta}\smallotimesL L$
\\ $L^{\sigma_\gamma}\smallotimesL L$ 
\end{tabular}
& 
\begin{tabular}{cc}
$L^{\sigma_\alpha}\smallotimesL L$ & $L^{\sigma_{s_1}}\smallotimesL L$ \\ $L^{\sigma_\beta}\smallotimesL L$ \\
$ L^{\sigma_\gamma}\smallotimesL L$
\end{tabular}
& 
\begin{tabular}{cc} $L^{\sigma_\alpha}\smallotimesL L$ \\
$L^{\sigma_\beta}\smallotimesL L$ & $L^{\sigma_{s_2}}\smallotimesL L$\\
$L^{\sigma_\gamma}\smallotimesL L$ & 
$L^{\sigma_{s_3}}\smallotimesL L$
\end{tabular}
&
\begin{tabular}{cc} $ L^{\sigma_\alpha}\smallotimesL L$\\
$ L^{\sigma_\beta}\smallotimesL L$ & $L^{\sigma_{s_2}}\smallotimesL L$\\
$L^{\sigma_\gamma}\smallotimesL L$ &
$L^{\sigma_{s_3}}\smallotimesL L$
\end{tabular}
\\
\hline
Mnemotechnics & 
$\xymatrix@C=1.2em@R=4.5em{& L \ar[dr]_(.6){\gamma}^(.4){L^{\theta^{\xi_\gamma}}\smallotimesL L} & \\
L \ar[ur]_(.4){\alpha}^(.6){L^{\theta^{\xi_\alpha}}\smallotimesL L} & & L \ar[ll]_{\beta}^{L^{\theta^{\xi_\beta}}\smallotimesL L}}$
&
$\xymatrix@C=1.2em@R=4.5em{
& L \ar@(lu,ru)_{s_1}^{L^\theta\smallotimesL L} \ar[dr]_(.6){\gamma}^(.4){L\smallotimesL L} & \\
L \ar[ur]_(.4){\alpha}^(.6){L^{\theta^{-\xi_\beta}}\smallotimesL L} & & L \ar[ll]_{\beta}^{L^{\theta^{\xi_\beta}}\smallotimesL L} 
}$
&
$\xymatrix@C=1.2em@R=4.5em{
& L \ar@(lu,ru)_{s_1}^{L\smallotimesL L} \ar[dr]_(.6){\gamma}^(.4){L^{\theta^{\xi_\gamma}}\smallotimesL L} & \\
L \ar[ur]_(.4){\alpha}^(.6){L^{\theta^{\xi_\alpha}}\smallotimesL L} & & L \ar[ll]_{\beta}^{L^{\theta^{\xi_\beta}}\smallotimesL L} 
}$
 &
 $\xymatrix@C=1.7em@R=4.5em{
 & L  \ar[dr]_(.6){\gamma}^(.4){L\smallotimesL  L} &  \\
 L \ar@(dr,dl)_{s_2}^{L^{\theta}\smallotimesL  L} \ar[ur]_(.4){\alpha}^(.6){L\smallotimesL L} & & L \ar@(dr,dl)_{s_3}^{L^\theta\smallotimesL L} \ar[ll]_{\beta}^{L\smallotimesL L} 
}$
 &
$\xymatrix@C=1.7em@R=4.5em{
 & L  \ar[dr]_(.6){\gamma}^(.4){L^{\theta^{\xi_\gamma}}\smallotimesL  L} &  \\
 L \ar@(dr,dl)_{s_2}^{L\smallotimesL  L} \ar[ur]_(.4){\alpha}^(.6){L^{\theta^{\xi_\alpha}}\smallotimesL L} & & L \ar@(dr,dl)_{s_3}^{L\smallotimesL L} \ar[ll]_{\beta}^{L^{\theta^{\xi_\beta}}\smallotimesL L} 
}$ 
\\
\hline
Ideal $I=\langle Z\cup S\rangle\begin{array}{c}Z\\S
\end{array}$
& 
\begin{tabular}{r}
$\{\alpha\beta,\beta\gamma,\gamma\alpha\}$\\
$\emptyset\quad\quad$
\end{tabular}
&
\begin{tabular}{r}
$\{\alpha\beta,\beta\gamma,\gamma\alpha\}$\\ $\{s_1^2-e_1\}$
\end{tabular}
& 
\begin{tabular}{r}
$\{\alpha\beta,\beta\gamma,\gamma\alpha\}$\\ $\{s_1^2-ue_1\}$
\end{tabular}
& 
\begin{tabular}{r}
$\{\alpha\beta,\beta\gamma,\gamma\alpha\}\quad$\\ $\{s_2^2-e_2,s_3^2-e_3\}$
\end{tabular}
& 
\begin{tabular}{r}
$\{\alpha\beta,\beta\gamma,\gamma\alpha\}\quad$\\ $\{s_2^2-ue_2,s_3^2-ue_3\}$
\end{tabular}\\
\hline
\end{tabular}}
       \captionof{table}{Semilinear clannish blocks 1 to 5 \label{table-semilinear-clannish-blocks-1-to-5}}
    \end{landscape}
    \clearpage
}

\afterpage{%
    \clearpage
    \thispagestyle{empty}
    \begin{landscape}
        \centering 
                \hspace{-0.75cm}
\renewcommand{\arraystretch}{1.25}
{\small
\begin{tabular}{|c|c|c|c|c|c|}
\hline
& Block 6 & Block 7 & Block 8 & Block 9 & Block 10 \\
\hline
\begin{tabular}{c}
Weight triple\\ 
$\left(\begin{array}{ccc} & \widehat{d} &\\ \widehat{d} & & \widehat{d}\end{array}\right)$\end{tabular} 
& 
$\left(\begin{array}{ccc} & \circled{2} &\\ 2 & & 2\end{array}\right)$
&
$\left(\begin{array}{ccc} & \circled{2} &\\ 2 & & 2\end{array}\right)$
&
$\left(\begin{array}{ccc} & \circled{1} &\\ \circled{1} & & \circled{1}\end{array}\right)$ 
& 
$\left(\begin{array}{ccc} & 1 &\\ \circled{1} & & \circled{1}\end{array}\right)$ 
& 
$\left(\begin{array}{ccc} & \circled{1} &\\ 1 & & 1\end{array}\right)$ \\
\hline
\begin{tabular}{c}
Ordinary quiver\\
$\compactQhat $
\end{tabular}
&
$\xymatrix@C=1.2em@R=4.5em{
 & 1  \ar[dr]^{\gamma} &\\
 2 \ar@(dr,dl)^{s_2} \ar[ur]^{\alpha} & & 3 \ar@(dr,dl)^{s_3} \ar[ll]^{\beta}
}$
&
$\xymatrix@C=1.2em@R=4.5em{
 & 1  \ar[dr]^{\gamma} &\\
 2 \ar@(dr,dl)^{s_2} \ar[ur]^{\alpha} & & 3 \ar@(dr,dl)^{s_3} \ar[ll]^{\beta}
}$
&
$\xymatrix@C=1.2em@R=4.5em{
& 1 \ar[dr]^{\gamma} & \\
2 \ar[ur]^{\alpha} & & 3, \ar[ll]^{\beta} 
}$
&
$\xymatrix@C=1.2em@R=4.5em{
 & 1 \ar@(lu,ru)^{s_1} \ar[dr]^{\gamma} &\\
 2 \ar[ur]^{\alpha} & & 3\ar[ll]^{\beta}
}$
&
$\xymatrix@C=1.2em@R=4.5em{
 & 1  \ar[dr]^{\gamma} &\\
 2 \ar@(dr,dl)^{s_2} \ar[ur]^{\alpha} & & 3 \ar@(dr,dl)^{s_3} \ar[ll]^{\beta}
}$
\\
\hline
Special loop set $\bbS$
&
$\{s_2,s_3\}$
&
$\{s_2,s_3\}$
&
$\varnothing$
&
$\{s_1\}$
&
$\{s_2,s_3\}$
\\
\hline
Field $K$
&
$L$
&
$L$
& 
$F$
& 
$F$
& 
$F$
\\
\hline
\begin{tabular}{c}
Field automorphisms\\
$\sigma_a\in\Aut(K)$\\ for $a\in\compactQhat_1$
\end{tabular} 
&
\begin{tabular}{ll}
$\sigma_\alpha=\theta^{\xi_\alpha}$ &  \\ $\sigma_\beta=\myid_L$ & $\sigma_{s_2}=\myid_L$
\\ $\sigma_\gamma=\theta^{-\xi_\alpha}$ & $\sigma_{s_3}=\theta$
\end{tabular} 
&
\begin{tabular}{ll}
$\sigma_\alpha=\theta^{-\xi_\gamma}$ &  \\ $\sigma_\beta=\myid_L$ & $\sigma_{s_2}=\theta$
\\ $\sigma_\gamma=\theta^{\xi_\gamma}$ & $\sigma_{s_3}=\myid_L$
\end{tabular} 
&
\begin{tabular}{l}
$\sigma_\alpha=\myid_F$ \\  
$\sigma_\beta=\myid_F$\\ $\sigma_\gamma=\myid_F$
\end{tabular}
&
\begin{tabular}{ll}
$\sigma_\alpha=\myid_F$ & $\sigma_{s_1}=\myid_F$ \\  
$\sigma_\beta=\myid_F$ & \\ $\sigma_\gamma=\myid_F$ &
\end{tabular}
& 
\begin{tabular}{ll}
$\sigma_\alpha=\myid_F$ &  \\ $\sigma_\beta=\myid_F$ & $\sigma_{s_2}=\myid_F$
\\ $\sigma_\gamma=\myid_F$ & $\sigma_{s_3}=\myid_F$
\end{tabular} 
\\
\hline
\begin{tabular}{c}
Arrow bimodules\\
$ K^{\sigma_a}\otimes_KK$\\ for $a\in\compactQhat_1$
\end{tabular} 
&
\begin{tabular}{ll}
$L^{\sigma_\alpha}\smallotimesL L$ &  \\ $L^{\sigma_\beta}\smallotimesL L$ & $L^{\sigma_{s_2}}\smallotimesL L$
\\ $L^{\sigma_\gamma}\smallotimesL L$ & $L^{\sigma_{s_3}}\smallotimesL L$
\end{tabular} 
&
\begin{tabular}{ll}
$L^{\sigma_\alpha}\smallotimesL L$ &  \\ $L^{\sigma_\beta}\smallotimesL L$ & $L^{\sigma_{s_2}}\smallotimesL L$
\\ $L^{\sigma_\gamma}\smallotimesL L$ & $L^{\sigma_{s_3}}\smallotimesL L$
\end{tabular} 
&
\begin{tabular}{l}
$F^{\sigma_\alpha}\smallotimesF F$ \\  
$F^{\sigma_\beta}\smallotimesF F$\\ $F^{\sigma_\gamma}\smallotimesF F$
\end{tabular}
&
\begin{tabular}{ll}
$F^{\sigma_\alpha}\smallotimesF F$ & $F^{\sigma_{s_1}}\smallotimesF F$ \\  
$F^{\sigma_\beta}\smallotimesF F$ & \\ $F^{\sigma_\gamma}\smallotimesF F$ &
\end{tabular}
& 
\begin{tabular}{ll}
$F^{\sigma_\alpha}\smallotimesF F$ &  \\ $F^{\sigma_\beta}\smallotimesF F$ & $F^{\sigma_{s_2}}\smallotimesF F$
\\ $F^{\sigma_\gamma}\smallotimesF F$ & $F^{\sigma_{s_3}}\smallotimesF F$
\end{tabular} 
\\
\hline
Mnemotechnics & 
$\xymatrix@C=1.2em@R=4.5em{
 & L  \ar[dr]_(.6){\gamma}^(.4){L^{\theta^{-\xi_\alpha}}\smallotimesL  L} &  \\
 L \ar@(dr,dl)_{s_2}^{L\smallotimesL  L} \ar[ur]_(.4){\alpha}^(.6){L^{\theta^{\xi_\alpha}}\smallotimesL L} & & L \ar@(dr,dl)_{s_3}^{L^{\theta}\smallotimesL L} \ar[ll]_{\beta}^{L\smallotimesL L} 
}$
&
$\xymatrix@C=1.2em@R=4.5em{
 & L  \ar[dr]_(.6){\gamma}^(.4){L^{\theta^{\xi_\gamma}}\smallotimesL  L} &  \\
 L \ar@(dr,dl)_{s_2}^{L^\theta\smallotimesL  L} \ar[ur]_(.4){\alpha}^(.6){L^{\theta^{-\xi_\gamma}}\smallotimesL L} & & L \ar@(dr,dl)_{s_3}^{L\smallotimesL L} \ar[ll]_{\beta}^{L\smallotimesL L} 
}$
&
$\xymatrix@C=1.2em@R=4.5em{
& F \ar[dr]_(.6){\gamma}^(.4){F\smallotimesF F} & \\
F \ar[ur]_(.4){\alpha}^(.6){F\smallotimesF F} & & F \ar[ll]_{\beta}^{F\smallotimesF F} 
}$
 &
$\xymatrix@C=1.2em@R=4.5em{
& F \ar@(lu,ru)_{s_1}^{F\smallotimesF F} \ar[dr]_(.6){\gamma}^(.4){F\smallotimesF F} & \\
F \ar[ur]_(.4){\alpha}^(.6){F\smallotimesF F} & & F \ar[ll]_{\beta}^{F\smallotimesF F} 
}$
 &
$\xymatrix@C=1.2em@R=4.5em{
 & F  \ar[dr]_(.6){\gamma}^(.4){F\smallotimesF F} &  \\
 F \ar@(dr,dl)_{s_2}^{F\smallotimesF F} \ar[ur]_(.4){\alpha}^(.6){F\smallotimesF F} & & F \ar@(dr,dl)_{s_3}^{F\smallotimesF F} \ar[ll]_{\beta}^{F\smallotimesF F} 
}$ 
\\
\hline
Ideal $I=\langle Z\cup S\rangle\begin{array}{c}Z\\S
\end{array}$
&
\begin{tabular}{r}
$\{\alpha\beta,\beta\gamma,\gamma\alpha\}\quad$\\  $\{s_2^2-ue_2,s_3^2-e_3\}$
\end{tabular}
& 
\begin{tabular}{r}
$\{\alpha\beta,\beta\gamma,\gamma\alpha\}\quad$\\  $\{s_2^2-e_2,s_3^2-ue_3\}$
\end{tabular}
& 
\begin{tabular}{r}
$\{\alpha\beta,\beta\gamma,\gamma\alpha\}$\\
$\emptyset\quad\quad\quad$
\end{tabular}
& 
\begin{tabular}{r}
$\{\alpha\beta,\beta\gamma,\gamma\alpha\}$\\  $\{s_1^2-u^2e_1\}$
\end{tabular}
& 
\begin{tabular}{r}
$\{\alpha\beta,\beta\gamma,\gamma\alpha\}\quad$\\  $\{s_2^2-u^2e_2, s_3^2-u^2e_3\}$
\end{tabular}\\
\hline
\end{tabular}}
 \captionof{table}{Semilinear clannish blocks 6 to 10 \label{table-semilinear-clannish-blocks-6-to-10}}
    \end{landscape}
    \clearpage
}

\vspace{1mm}
\subsection{Morita equivalences for blocks}
\label{subsec:Morita-equiv-for-blocks}
\,

\begin{prop}\label{prop-isomorphisms-between-some-of-the-blocks}    
For $k\in\{1,3,5,8,9,10\}$ there is an $F$-algebra isomorphism between the $k^{\operatorname{th}}$ Jacobian block from Tables \ref{table-Jacobian-blocks-1-to-5} and \ref{table-Jacobian-blocks-6-to-10} and the $k^{\operatorname{th}}$ semilinear clannish block from Tables \ref{table-semilinear-clannish-blocks-1-to-5} and  \ref{table-semilinear-clannish-blocks-6-to-10}.
\end{prop}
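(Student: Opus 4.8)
The plan is to prove the six isomorphisms directly, one block at a time, after replacing both algebras by more tractable presentations. On the Jacobian side, Lemma~\ref{lem-quotient-of-completed-path-algebra-is-f.d} lets us replace $\jacobalg{Q,\bldD,\xi}$ by $\usualRA{A(Q,\bldD,\xi)}/J_0(W(Q,\bldD,\xi))$, a quotient of an \emph{ordinary} tensor ring by the polynomial Jacobian ideal; each semilinear clannish block is by definition a quotient of an ordinary semilinear path algebra. Both quotients are finite-dimensional over $F$, so it suffices to write down $F$-algebra homomorphisms on generators in each direction and check that they are mutually inverse (alternatively, one surjection plus a comparison of $F$-dimensions). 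I would do the former.

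The feature that distinguishes these six blocks from the remaining four is how the special loops behave. For blocks $1$ and $8$ the semilinear clannish block has no special loops: its base field equals every vertex field $F_i$ of the Jacobian block, its twists $\sigma_a$ equal the modulating function $g_a(\xi)$, its arrow bimodules $K^{\sigma_a}\otimes_K K$ equal the $A(Q,\bldD,\xi)_a$, and its zero relations are exactly the list of cyclic derivatives of $W$, so the two algebras coincide on the nose and the isomorphism is the identity. For blocks $3,5,9,10$ some vertex $i$ carries, on the Jacobian side, a field $F_i$ that is a \emph{quadratic} extension of the base field $K$ of the semilinear block ($F_i=E$ over $K=L$ for blocks $3,5$; $F_i=L$ over $K=F$ for blocks $9,10$), and the corresponding special loop $s_i$ has $\sigma_{s_i}=\myid_K$ and quadratic $q_{s_i}=s_i^2-\mu_i$ chosen precisely so that $K[s_i]/(q_{s_i})$ is \emph{again a field}: with $E=L(v)$, $v^2=u$, $L=F(u)$, one has $\mu_i=u$ (blocks $3,5$), resp.\ $\mu_i=u^2\in F$ (blocks $9,10$), and hence a distinguished $K$-algebra isomorphism $K[s_i]/(q_{s_i})\xrightarrow{\ \sim\ }F_i$. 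This is exactly the point that fails for blocks $2,4,6,7$, where the relevant special loop has $\sigma_s=\theta$ and $q_s=s^2-1$, so that $K[s;\theta]/(q_s)\cong F^{2\times2}$ is a matrix algebra, forcing a genuine Morita equivalence rather than an isomorphism.

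The homomorphism is then defined on generators as follows: primitive idempotents go to primitive idempotents; a generator of a "promoted" vertex field $F_i$ goes to the image of $s_i$ under the chosen isomorphism $K[s_i]/(q_{s_i})\cong F_i$; the ordinary arrows $\alpha,\beta,\gamma$ go to the corresponding ordinary arrows, where an arrow incident to a promoted vertex $i$ is read through the inclusion $e_iK_{\bldsigma}\compactQhat e_i\supseteq K[s_i]/(q_{s_i})\cong F_i$, so that the full $F_i$-bimodule structure of $A(Q,\bldD,\xi)_a$ is recovered from the $K$-bimodule of the semilinear arrow by pre/post-composition with $s_i$; and in the double-arrow blocks $5$ and $10$ the arrows $\beta_0,\beta_1$ go to the two isotypic summands $\pi_\rho(\beta),\pi_{\rho'}(\beta)$ of the single semilinear arrow $\beta$ under the idempotent decomposition of Proposition~\ref{prop:properties-of-Fi-Fj-bimodules} relative to $\Gal(F_i\cap F_j/F)$, $\rho,\rho'$ being the two extensions of $\theta^{\xi_\beta}$. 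The inverse sends $s_i$ to the chosen primitive element of $F_i$, ordinary arrows to ordinary arrows, and $\beta$ to $\beta_0+\beta_1$.

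What remains is verification. Well-definedness amounts to checking that defining relations map into the target ideal: the zero relations $\alpha\beta,\beta\gamma,\gamma\alpha$ are, up to units of $K$, the untwisted partners of the cyclic derivatives of $W$, and each $\partial_a W$ is — as noted in the remark after Definition~\ref{def:paths-potentials-cyclic-derivatives-jacobian-algebras} — the $g_a^{-1}$-isotypic part of the path obtained by deleting $a$ from $W$, so after the identifications it is carried onto the corresponding zero relation; meanwhile the special relations $q_{s_i}$ hold by construction inside $F_i$. The two generator maps are visibly mutually inverse once these compatibilities are in place. The main obstacle is the bookkeeping in the double-arrow blocks $5$ and $10$: one has to match $K^{\sigma_\beta}\otimes_K K$ extended by the two special loops with $A_{\beta_0}(\xi)\oplus A_{\beta_1}(\xi)$ — which is $E^{\rho}\oplus E^{\rho'}$ in block $5$ and $L\oplus L^{\theta}$ in block $10$ — as bimodules over the promoted fields, and keep the twisting factors $v^{\pm md/d_{\beta}}$ appearing in $\partial_{\beta_0}W,\partial_{\beta_1}W$ consistent with the fixed eigenvectors $u,v$, so that $\partial_{\beta_0}W$ and $\partial_{\beta_1}W$ together go over to exactly the zero relations $\alpha\beta,\beta\gamma$ (using that the $\pi_\rho$ sum to the identity). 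Everything else is a direct, if somewhat lengthy, computation carried out separately in each of the six columns.
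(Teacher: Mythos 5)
Your proposal is correct and follows essentially the same route as the paper: replace the Jacobian block by its incomplete presentation, promote each "heavier" vertex field via the special loop using the field isomorphism $K[s_i]/(q_{s_i})\cong F_i$ (the quadratics being of field type precisely for $k\in\{1,3,5,8,9,10\}$, in contrast to the matrix-type quadratics forcing genuine Morita equivalences for $k\in\{2,4,6,7\}$), and define mutually inverse generator-level $F$-algebra maps, sending $\beta_0,\beta_1$ to the isotypic components of $\beta$ in blocks $5$ and $10$. Your description of those components via the projections $\pi_\rho$ of Proposition \ref{prop:properties-of-Fi-Fj-bimodules} reproduces exactly the paper's explicit formulas $\beta_j\mapsto\frac{1}{2}\bigl(\beta+(\zeta^{l+2j}u)^{-1}s_2\beta s_3\bigr)$, and your well-definedness argument (cyclic derivatives landing in $\langle Z\cup S\rangle$, zero relations landing in $J_0(W)$ because the twisted parts cancel) matches the paper's verification.
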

\begin{proof}
The cases $k=1,3,8,9$ are straightforward. 
For $k=1,8$, one is considering tensor rings for the same species. 
For $k=3$ one can use the $F$-algebra isomorphism $E\cong L[x]/(x^{2}-u)$. 
For  $k=9$ one can use the $F$-algebra isomorphism $L=F(u)\cong F[x]/(x^{2}-u^{2})$.
The cases $k=5,10$ are more difficult. 
We exhibit an explicit isomorphism between the $5^{\operatorname{th}}$ Jacobian and semilinear clannish blocks. 
The treatment of the $10^{\operatorname{th}}$ blocks is similar and somewhat simpler, so we leave it in the reader's hands.

For the $5^{\operatorname{th}}$ semilinear clannish block we are taking $K=L$. (For the $10^{\operatorname{th}}$ block one takes $K=F$.) Notice that $K_{\bldsigma}\compactQhat /I$ has a natural $R$-$R$-bimodule structure with $R\coloneqq F_1\times F_2\times F_3=L\times E\times E$, extending its natural $S$-$S$-bimodule structure with $S\coloneqq L\times L\times L$. 
Here the left and right actions of the element $ve_2=(0,v,0)$ (resp. $ve_3=(0,0,v)$) of $R$ on $K_{\bldsigma}\compactQhat /I$ are respectively given by left and right multiplications by $s_2$ (resp. $s_3$). 
This uses the $F$-algebra isomorphism $E\cong L[x]/(x^{2}-u)$ together with the fact that $\sigma_{s_{2}}=\sigma_{s_{3}}=\myid_{L}$ for this block. 
Furthermore, the assignment
$$
ve_2\mapsto s_2,\quad ve_3\mapsto s_3,\quad
\alpha \mapsto \alpha,\quad \beta_0\mapsto \frac{1}{2}(\beta + (\zeta^{l} u)^{-1} s_2\beta s_3 ),\quad \beta_1\mapsto \frac{1}{2}(\beta + (\zeta^{l+2} u)^{-1} s_2\beta s_3 ),\quad \gamma\mapsto \gamma,
$$
extends uniquely to an $R$-$R$-bimodule homomorphism
$\varphi:A(Q,\bldD,\xi)\rightarrow K_{\bldsigma}\compactQhat /I$, where: on the one hand, we identify each $a\in\{\alpha,\beta_0,\beta_1,\gamma\}=Q_{1}$ with the element $1\otimes 1$ of the summand $A(Q,\bldD,\xi)_a$ of $A(Q,\bldD,\xi)$; and, on the other hand, we identify each $b\in\{\alpha,\beta,\gamma,s_{2},s_{3}\}=\compactQhat_{1}$ with the coset (modulo $I=\langle Z\cup S\rangle$) represented by the element $1\otimes 1$ of the summand $L^{\sigma_b}\otimes L$ of the arrow bimodule ${}_{\pi_{h(b)}} K_{\sigma_b \pi_{t(b)}}$ associated to the arrow $b$. For instance, since $v^{2}=u$, $\rho^l|_{L}=\theta^{\xi_\beta}$, $\rho^l(v)=\zeta^{ l} v$, $\sigma_\beta=\theta^{\xi_\beta}$ and $\sigma_{s_2}=\myid_L$, the fact that $\varphi$ can be defined as an $R$-$R$-bimodule homomorphism on the whole direct summand $A(Q,\bldD,\xi)_{\beta_0}$ of $A(Q,\bldD,\xi)$ follows from the following computation:
$$
\begin{array}{lclclcl}
\varphi(\beta_0 ve_3) &=& 
\frac{1}{2}(\beta + (\zeta^{l} u)^{-1} s_2\beta s_3 ) s_3 
&=& 
\frac{1}{2}(\beta s_3 + (\zeta^{l} u)^{-1} s_2\beta ue_3 )
&=&
\frac{1}{2}(\beta s_3 + (\zeta^{l} u)^{-1} s_2 \theta^{\xi_\beta}(u)\beta  )\\
&=&
\frac{1}{2}(\beta s_3 + (\zeta^{l} u)^{-1} \theta^{\xi_\beta}(u)s_2 \beta  ) 
&=&
\frac{1}{2}(\beta s_3 + (\zeta^{l} u)^{-1} \zeta^{2l}u s_2 \beta  ) 
&=& 
\frac{1}{2}(\beta s_3 + \zeta^{l}  s_2 \beta  )\\
&=&\frac{\zeta^{l}s_2}{2}(\zeta^{-l}u^{-1}s_2\beta s_3 +   \beta  )
&=&\varphi(\rho^l(v)e_2\beta_0).
\end{array}
$$
Thus, $\varphi$ induces a well-defined ring homomorphism 
\[
\begin{array}{cc}
\varphi:\usualRA{A(Q,\bldD,\xi)}\rightarrow K_{\bldsigma}\compactQhat/I,     & (I\coloneqq \langle \alpha\beta,\beta\gamma,\gamma\alpha,s_{2}^{2}-ue_{2},s_{3}^{2}-ue_{3}\rangle)
\end{array}
\] 
(we use the same letter $\varphi$ in order to avoid making the notation even heavier) which is $F$-linear and an $R$-$R$-bimodule homomorphism. In particular $\varphi$ sends the trivial path $e_{i}$ in the path algebra $\usualRA{A(Q,\bldD,\xi)}$ of $Q$ to the trivial path $e_i$ in the semilinear path algebra $K_{\bldsigma}\compactQhat$  (see \S\ref{subsubsec:species-def-of-concept} for the definition of trivial path). We claim that $\varphi$ is surjective. Indeed, since $\zeta^4=1$ and $\zeta^2\neq 1$, we have $\zeta^2+1=0$, hence $\beta=\varphi(\beta_0+\beta_1)$.  So, each arrow in $\compactQhat $ lies in the image of $\varphi$. Furthermore, $S\coloneqq 
L\times L\times L$ is an $L$-subalgebra of $R=F_1\times F_2\times F_3=L\times E\times E$, so the image of the coproduct 
\[
\begin{array}{c}
A(\compactQhat,\bldsigma)=\bigoplus_{b\in\widehat{Q}_1}{}_{\pi_{h(b)}} K_{\sigma_b \pi_{t(b)}}
\end{array}
\] 
under the projection $K_{\bldsigma}\compactQhat\rightarrow K_{\bldsigma}\compactQhat/I$ is contained in the image of $\varphi$. Hence the whole $K_{\bldsigma}\compactQhat/I$ is contained in the image of $\varphi$. 

Next, the cyclic derivatives of $W(Q,\bldD,\xi)=\alpha(\beta_{0}+\beta_{1})\gamma$ are contained in $\ker(\varphi)$. Indeed, with the aid of the last columns of Tables \ref{table-Jacobian-blocks-1-to-5} and \ref{table-semilinear-clannish-blocks-1-to-5}, we see that
\begin{align*}
    \varphi(\partial_\alpha(W(Q,\bldD,\xi))) &= \beta\gamma \in I, &
    \varphi(\partial_{\beta_0}(W(Q,\bldD,\xi))) &= 
    \frac{1}{2}(\gamma\alpha+\zeta^{l}u^{-1}s_2\gamma\alpha s_3)\in I,\\
    \varphi(\partial_\gamma(W(Q,\bldD,\xi)))&=\alpha\beta\in I, & 
    \varphi(\partial_{\beta_1}(W(Q,\bldD,\xi)))&= \frac{1}{2}(\gamma\alpha+\zeta^{l+2}u^{-1}s_2\gamma\alpha s_3)\in I.
\end{align*}
So, we have an induced surjective ring homomorphism 
$$\varphi:\usualRA{A(Q,\bldD,\xi)}/J_{0}(W(Q,\bldD,\xi))\rightarrow K_{\bldsigma}\compactQhat/I$$
(we again use the same letter $\varphi$ in order to avoid making the notation heavier) which is an $F$-linear $R$-$R$-bimodule homomorphism.

On other hand, the assignments
$$
s_2\mapsto ve_2,\quad s_3\mapsto ve_3,\quad
\alpha \mapsto \alpha,\quad \beta\mapsto \beta_0 + \beta_1,\quad \gamma\mapsto \gamma,
$$ 
extend 
uniquely to an $S$-$S$-bimodule homomorphism 
$\psi:\bigoplus_{b\in\widehat{Q}_1}{}_{\pi_{h(b)}} K_{\sigma_b \pi_{t(b)}}\rightarrow \usualRA{A(Q,\bldD,\xi)}$,
where we are using the same identifications of arrows with elements of the direct summands ${}_{\pi_{h(b)}} K_{\sigma_b \pi_{t(b)}}$ and $A(Q,\bldD,\xi)_a$ as above. For instance, the fact that $\psi$ can be defined on the whole direct summand ${}_{\pi_{h(\beta)}} K_{\sigma_\beta \pi_{t(\beta)}}$ as an $S$-$S$-bimodule homomorphism 
follows from the fact that $\rho^l|_{L}=\theta^{\xi_\beta}=\rho^{l+2}|_L$ and 
$$
\begin{array}{lclcl}
\psi(\beta \ell e_3) &=& (\beta_0 + \beta_1)\ell e_3
& = & \rho^l(\ell)e_2\beta_0+\rho^{l+2}(\ell)e_2\beta_1
\\
&=&
\theta^{\xi_\beta}(\ell)e_2(\beta_0+\beta_1)
& = &
\psi(\theta^{\xi_\beta}(\ell)e_2\beta).
\end{array}
$$
Thus, $\psi$ induces a well-defined ring homomorphism 
$$
\psi: K_{\bldsigma}\compactQhat\rightarrow \usualRA{A(Q,\bldD,\xi)}
$$
(we use the same letter $\psi$ in order to avoid making the notation even heavier) which is $F$-linear and an $S$-$S$-bimodule homomorphism. 

Recalling \eqref{eqn-useful-for-2-proofs} from the proof of Lemma \ref{lem-quotient-of-completed-path-algebra-is-f.d}, we have that  
$R$  and all the arrows of $Q$ are in the image of $\psi$. Thus, $A(Q,\bldD,\xi)$, and hence the whole $\usualRA{A(Q,\bldD,\xi)}$, are contained in the image of $\psi$. 
The image of $I\subseteq K_{\bldsigma}\compactQhat$ under $\psi$ is contained in the (incomplete) Jacobian ideal $J_0(W(Q,\bldD,\xi))$. Indeed, since $v^2=u$ and $\rho^{-l}(v^{-1})+\rho^{-l-2}(v^{-1})=\zeta^l(1+\zeta^{2})v^{-1}=0$ in $E$, with the aid of the last columns of Tables \ref{table-Jacobian-blocks-1-to-5} and \ref{table-semilinear-clannish-blocks-1-to-5}, we see that
\begin{center}
\begin{tabular}{ll}
     $\psi(\alpha\beta)=\partial_\gamma(W(Q,\bldD,\xi)),$ &  \\
     $\psi(\beta\gamma)=\partial_\alpha(W(Q,\bldD,\xi)),$ & $\psi(s_2^2-ue_2)=0,$\\
     $\psi(\gamma\alpha)=\partial_{\beta_0}(W(Q,\bldD,\xi))+\partial_{\beta_1}(W(Q,\bldD,\xi))$, &  $\psi(s_3^2-ue_3)=0$.
\end{tabular}
\end{center}
So, we have an induced surjective ring homomorphism 
$$\psi:K_{\bldsigma}\compactQhat/I\rightarrow  \usualRA{A(Q,\bldD,\xi)}/J_0(W(Q,\bldD,\xi))$$
(we again use the same letter $\psi$ in order to avoid making the notation heavier) which is $F$-linear and an $S$-$S$-bimodule homomorphism.

With the above considerations, one easily verifies that $\psi\circ\varphi$ and $\varphi\circ\psi$ act as the identity on specific sets that generate  $\usualRA{A(Q,\bldD,\xi)}/J_{0}(W(Q,\bldD,\xi))$ and $K_{\bldsigma}\compactQhat/I$ as $F$-algebras. This implies that $\varphi$ and $\psi$ are mutually inverse $F$-algebra  isomorphisms. This finishes the treatment of the $5^{\operatorname{th}}$ blocks.
\end{proof}

\begin{remark}\label{rem:ex-from-BTCB}
Notice that the isomorphism between the $10^{\operatorname{th}}$ Jacobian and semilinear clannish blocks is the one alluded to in \cite[\S5.4]{BTCB} (with $F=\mathbb{R}$ and $L=\mathbb{C}$ in \emph{loc. cit.}).
\end{remark}

\begin{remark}
    To motivate the statement of Proposition \ref{prop-morita-equivalences-for-blocks}, we explain why Proposition \ref{prop-isomorphisms-between-some-of-the-blocks} 
 does not extend to the cases of blocks $2$, $4$, $6$ and $7$. That is, we explain why, for each of these blocks, the Jacobian  algebra is not isomorphic to the semilinear clannish algebra. 
All of the rings we are considering are finite-dimensional over the field $F$, and hence they are artinian, and hence perfect, and hence semiperfect. 
Hence, for each said ring, the multiplicative identity can be written as a finite sum of primitive (and hence local) pairwise  orthogonal idempotents, and any such sum must have the same number of summands.

Note that each jacobian algebra modulo its radical is the product of exactly $3$ division rings. It follows that the multiplicative identity of each Jacobian algebra, in each case, is a sum of exactly $3$ primitive pairwise  orthogonal idempotents. 
Let $A=L_{\bldsigma}\compactQhat/I$ be any of the semilinear clannish blocks $2$, $4$, $6$ and $7$.
From what we have observed so far, it sufiices to find at least $4$ pairwise orthogonal idempotents in $A$.

Note that there exists some $i=1,2,3$ such that vertex $i$ in the quiver $\compactQhat$ has a special loop $s$ such that $q_{s}(x)=x^{2}-1$. 
Now let $e=e_{i}$. 
Recall that $L$ has characteristic different from $2$. 
Consider that, in $L_{\bldsigma}\compactQhat$,
\[
\begin{array}{cc}
(\frac{1}{2}(e\pm s))^{2}=\frac{1}{4}((e+s^{2})\pm 2s)=\frac{1}{2}(\frac{1}{2}(e+s^{2})\pm s),
&
\frac{1}{2}(e\pm s)\frac{1}{2}(e\mp s)=\frac{1}{4}(e-s^{2}).
\end{array}
\]

Since $q_{s}(x)=x^{2}-1$ we have $e-s^{2}\in I$, and so $e+s^{2}+I=2e+I$, and it follows that $e'+I$ and $e''+I$ are pairwise orthogonal idempotents in $A$ where $e'\coloneqq \frac{1}{2}(e+ s)$ and $e''\coloneqq \frac{1}{2}(e-s)$.  
Finally, writing $f$ and $f'$ for the trivial paths in $\compactQhat$ such that $\{e_{j}\mid j\neq i\}=\{f,f'\}$, observe that $f+I$, $f'+I$, $e'+I$ and $e''+I$ are pairwise orthogonal idempotents in $A$, since $ef=fe=ef'=f'e=sf=fs=s'f=f's=0$ in $L_{\bldsigma}\compactQhat$. 
\end{remark}

\begin{prop}
\label{prop-morita-equivalences-for-blocks}
For $k\in\{1,2,3,4,5,6,7,8,9,10\}$, the $k^{\operatorname{th}}$ Jacobian block from Tables \ref{table-Jacobian-blocks-1-to-5}, \ref{table-Jacobian-blocks-6-to-10}, and the $k^{\operatorname{th}}$ semilinear clannish block from Tables \ref{table-semilinear-clannish-blocks-1-to-5}, \ref{table-semilinear-clannish-blocks-6-to-10}, are Morita equivalent through an $F$-linear Morita equivalence.
\end{prop}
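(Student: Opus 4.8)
The plan is to reduce everything to Proposition~\ref{prop-isomorphisms-between-some-of-the-blocks} together with a standard corner‑algebra argument. For $k\in\{1,3,5,8,9,10\}$, Proposition~\ref{prop-isomorphisms-between-some-of-the-blocks} already gives an $F$‑algebra isomorphism between the $k^{\operatorname{th}}$ Jacobian block and the $k^{\operatorname{th}}$ semilinear clannish block, and an $F$‑algebra isomorphism is in particular an $F$‑linear Morita equivalence (restricting trivially to finite‑dimensional modules); so those six cases need nothing more. The remaining cases $k\in\{2,4,6,7\}$ are exactly those in which the two blocks are \emph{not} isomorphic: the semilinear clannish block $\Lambda_k$ carries one or two special loops $s_j$ with $\sigma_{s_j}=\theta$ and quadratic $q_{s_j}=s_j^2-e_j$, so that by Example~\ref{ex:specific-pols-of-semisimple-type}(2) the local algebra $e_j\Lambda_ke_j\cong L[s_j;\theta]/\langle s_j^2-e_j\rangle$ is the matrix algebra $F^{2\times 2}$, whereas the corresponding vertex field of the Jacobian block $\Gamma_k$ is the smaller field $F$. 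For these four cases the strategy is to produce a \emph{full} idempotent $e\in\Lambda_k$ with $e\Lambda_ke\cong\Gamma_k$ as $F$‑algebras; then $\Lambda_ke$ is a progenerator of $\Lambda_k$‑$\mathbf{Mod}$, the functors $e\Lambda_k\otimes_{\Lambda_k}(-)$ and $\Lambda_ke\otimes_{e\Lambda_ke}(-)$ implement a Morita equivalence between $\Lambda_k$ and $e\Lambda_ke\cong\Gamma_k$, this equivalence is $F$‑linear since $e$ is an idempotent of the $F$‑algebra $\Lambda_k$, and it restricts to the categories of finite‑dimensional modules because all algebras involved are finite‑dimensional over $F$ (Lemma~\ref{lem-quotient-of-completed-path-algebra-is-f.d}).

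To build $e$, for $k\in\{2,4,6,7\}$ I would let $\mathcal J$ be the set of vertices of $\compactQhat$ carrying a special loop $s_j$ with $\sigma_{s_j}=\theta$, put $f_j:=\tfrac12(e_j+s_j)$ for $j\in\mathcal J$, and set $e:=\sum_{i\notin\mathcal J}e_i+\sum_{j\in\mathcal J}f_j$. Using $s_j^2=e_j$ and $s_j\ell=\theta(\ell)s_j$ one checks directly that $f_j$ is a primitive idempotent with $f_j(e_j\Lambda_ke_j)f_j=Ff_j$ and that the complementary idempotent $e_j-f_j$ lies in the two‑sided ideal generated by $f_j$, whence $e_i\in\Lambda_ke\Lambda_k$ for every vertex $i$; so $e$ is full and $\Lambda_k$ is Morita equivalent to $e\Lambda_ke$. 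It then remains to identify the corner $e\Lambda_ke$ with $\Gamma_k$, which I would do by verifying, in turn: (i) that the quiver underlying $e\Lambda_ke$ is the quiver $Q$ of $\Gamma_k$, the special loops having been absorbed into the corners at the vertices of $\mathcal J$; (ii) that this corner carries the correct vertex fields — namely $F$ at each vertex of $\mathcal J$, $L$ at the ordinary vertices, and, after using the $F$‑algebra isomorphism $E\cong L[x]/(x^2-u)$ exactly as in the treatment of Blocks~3 and~5 in Proposition~\ref{prop-isomorphisms-between-some-of-the-blocks}, the field $E$ at any vertex bearing a special loop $s_j$ with $\sigma_{s_j}=\myid_L$ and $q_{s_j}=s_j^2-ue_j$ (this being the feature that separates Blocks~6 and~7 from Blocks~2 and~4); (iii) that the corner arrow‑bimodules agree, up to $F$‑linear bimodule isomorphism, with the bimodules $A(Q,\bldD,\xi)_a$ of $\Gamma_k$ — here one invokes $F$‑$L$‑bimodule isomorphisms such as $L^\theta\cong L$, together with, for the doubled arrow of Block~4, the familiar reshapings $\beta_0\mapsto\tfrac12(\beta+(\zeta^{l}u)^{-1}s_2\beta s_3)$ and $\beta_1\mapsto\tfrac12(\beta+(\zeta^{l+2}u)^{-1}s_2\beta s_3)$; and (iv) that, under explicit mutually inverse $F$‑algebra homomorphisms between $e\Lambda_ke$ and $\usualRA{A(Q,\bldD,\xi)}/J_0(W(Q,\bldD,\xi))\cong\Gamma_k$, the zero‑relations $\alpha\beta,\beta\gamma,\gamma\alpha\in Z$ correspond to the ``obvious'' cyclic derivatives and the remaining cyclic derivatives correspond to combinations of $Z\cup S$. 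Step~(iv) reuses, essentially verbatim but now performed inside the corner, the manipulations from the Block~5 case of Proposition~\ref{prop-isomorphisms-between-some-of-the-blocks} (the identities $\zeta^2+1=0$, $s_j^2\in\{e_j,ue_j\}$, $s_j\ell=\theta(\ell)s_j$).

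The step I expect to be the main obstacle is~(iv) for Block~4. For Blocks~2, 6 and~7 the corner $e\Lambda_ke$ has the same simple shape as $Q$ (no multiple arrows), and matching bases and relations amounts to a dimension count backed by the bimodule identifications of~(iii); as a sanity check, for $k=2$ both algebras are $13$‑dimensional over $F$. Block~4, however, has the doubled arrow $\beta_0,\beta_1$, so one must split the two‑dimensional corner bimodule $f_2\,(e_2\Lambda_4e_3)\,f_3$ into two one‑dimensional pieces realizing $\beta_0$ and $\beta_1$ in a way that is \emph{simultaneously} compatible with all four relations $\partial_\alpha W=\beta_0\gamma+\beta_1\gamma u$, $\partial_{\beta_0}W=\gamma\alpha$, $\partial_{\beta_1}W=\gamma u\alpha$ and $\partial_\gamma W=\alpha\beta_0+u\alpha\beta_1$ coming from $W=\beta_0\gamma\alpha+\beta_1\gamma u\alpha$. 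This is carried out along the lines of the Block~5 case, but it is the point at which the scalar bookkeeping is most delicate.
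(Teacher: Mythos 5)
Your proposal is correct in strategy but follows a genuinely different route from the paper. The paper handles $k\in\{2,4,6,7\}$ by writing down explicit quasi-inverse functors $\Psi,\Phi$ directly on the representation categories (Table \ref{table-Morita-equivalences-2467}), together with explicit natural isomorphisms $\varepsilon,\eta$, and verifying compatibility with all relations by hand. You instead package the same content ring-theoretically: the idempotent $f_j=\tfrac12(e_j+s_j)$ is indeed a rank-one idempotent of $e_j\Lambda_ke_j\cong L[s_j;\theta]/\langle s_j^2-e_j\rangle\cong F^{2\times 2}$ (and this corner really is all of $e_j\Lambda_ke_j$, since every longer path $j\to j$ hits a relation from $Z$), simplicity of $F^{2\times 2}$ makes $e$ full, and the problem reduces to an $F$-algebra isomorphism $e\Lambda_ke\cong\Gamma_k$ of exactly the flavour of Proposition \ref{prop-isomorphisms-between-some-of-the-blocks}. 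The two approaches are close relatives: the paper's $\Phi(N)$ is, after unwinding, precisely $eN$ (e.g.\ $\ker(N_{s_3}-\myid)=f_3N_3$ is the image of $\tfrac12(\myid+N_{s_3})$, and the $E$-structure on $N_2$ via $N_{s_2}$ is the identification $e_2\Lambda e_2\cong L[x]/(x^2-u)\cong E$). What your route buys is that the ``Morita'' part becomes the standard equivalence attached to a full idempotent, concentrating all the work into a single corner-algebra computation; what the paper's route buys is the completely explicit functors and natural transformations that are reused in Section \ref{sec:indecs-for-blocks} to transport representations.

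Two cautions on the part you defer. First, proving $e\Lambda_ke\cong\Gamma_k$ requires not only that the cyclic derivatives die in the corner but also injectivity, hence a dimension count of $\Gamma_k$; there the relations must be handled as sub-\emph{bimodules} generated over the (possibly small) vertex fields — e.g.\ in Block 6 the relation $\partial_\gamma W=\alpha\beta$ generates only $L\cdot\alpha\beta\cdot F\varsubsetneq A_\alpha\otimes_E A_\beta$, so $e_1\Gamma_6e_3\neq 0$, matching $e_1\Lambda_6f_3\ni \alpha s_2\beta f_3\neq 0$. Your Block 2 check ($13=13$) is right, but the analogous counts for $4,6,7$ are where a sign or a sub-bimodule can be missed. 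Second, the ``reshapings'' you quote for the doubled arrow of Block 4 are the Block 5 formulas (involving $\zeta^{l}$ and $v$, which do not occur in Block 4, where the pending weights are $1$); the correct splitting of the two-dimensional corner $f_2\Lambda_4f_3$ uses the elements $f_2\beta f_3$ and $f_2u\beta f_3$ (equivalently $\tfrac14(\beta+\beta s_3\pm s_2\beta\pm s_2\beta s_3)$), in line with the entries $\tfrac12(M_{s_2}\smallplus\myid)M_\beta$ and $\tfrac{u}{2}(M_{s_2}\smallminus\myid)M_\beta$ of Table \ref{table-Morita-equivalences-2467}. Neither point undermines the approach, but both need to be carried out explicitly for the argument to be complete.
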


\begin{proof}  
By Proposition \ref{prop-isomorphisms-between-some-of-the-blocks}, it only remains to show that the $k^{\operatorname{th}}$ Jacobian and semilinear clannish blocks are Morita-equivalent for  $k=2, 4, 6, 7$. This can be done by a direct exhibition of functors 
\begin{align*}
\Psi&:\usualRA{A(Q,\bldD,\xi)}/J_0(W(Q,\bldD,\xi))\text{-}\mathbf{Mod} \rightarrow K_{\bldsigma}\compactQhat/I\text{-}\mathbf{Mod},\\
\Phi&:K_{\bldsigma}\compactQhat/I\text{-}\mathbf{Mod} \rightarrow \usualRA{A(Q,\bldD,\xi)}/J_0(W(Q,\bldD,\xi))\text{-}\mathbf{Mod},
\end{align*}
(recall that $K:=L$ for $k=2, 4, 6, 7$) and invertible natural transformations
\begin{align*}
\varepsilon&:\myid_{\usualRA{A(Q,\bldD,\xi)}/J_0(W(Q,\bldD,\xi))\text{-}\mathbf{Mod}}\rightarrow \Phi\circ\Psi,\\
\eta&:\myid_{K_{\bldsigma}\widehat{Q}/I\text{-}\mathbf{Mod}}\rightarrow \Psi\circ\Phi.
\end{align*}
By Corollary \ref{coro:rep-of-species-equiv-to-cat-of-semilinear-maps} and the discussion in \S\ref{subsubsec-representations-and-modules}, we can treat the two module categories involved as full subcategories of categories of semilinear representations of quivers. 
In Table \ref{table-Morita-equivalences-2467}, 
the reader can see the correspondence rules for $\Psi$ and $\Phi$ on objects. We shall write down in detail the correspondence rules for $\Psi$ and $\Phi$ on morphisms, as well as the correspondence rules of $\varepsilon$ and $\eta$, only in the case $k=6$. The cases $k=2,4,7,$ can be handled similarly, and are thus left in the reader's hands.

\afterpage{%
    \clearpage
    \thispagestyle{empty}
    \begin{landscape}
        \hspace{-3.5cm}
        \hspace{1.3cm}
\renewcommand{\arraystretch}{1.25}
{\tiny
\begin{tabular}{|c|c|c|c|c|}
\hline
& Blocks 2 & Blocks 4 & Blocks 6 & Blocks 7\\
\hline
$\xymatrix@C=0.01em@R=0.1em{& d_1 &\\ d_2 & & d_3}$
& 
$\xymatrix@C=0.01em@R=0.1em{& 1 &\\ \circled{2} & & \circled{2}}$
&
$\xymatrix@C=0.01em@R=0.1em{& \circled{2} &\\ 1 & & 1}$
 &
$\xymatrix@C=0.01em@R=0.1em{ & \circled{2} &\\ 4 & & 1}$&
$\xymatrix@C=0.01em@R=0.1em{& \circled{2} &\\ 1 & & 4}$ \\
\hline
\begin{tabular}{c}
Jacobian\\ block
\end{tabular} 
&
\begin{tabular}{c}
$\xymatrix@C=4em@R=3em
{
& F \ar@/^0.6pc/[dr]_(.4){\gamma}^(.4){L\smallotimesF F} & \\
L \ar@/^0.6pc/[ur]_(.6){\alpha}^(.6){F\smallotimesF L} & & L \ar@/_0.6pc/[ll]_{\beta}^{L^{\theta^{\xi_\beta}}\smallotimesL L} 
}$\\
$\beta\gamma$, $\alpha\beta$, $\frac{1}{2}(\gamma\alpha+(-1)^{\xi_\beta}u^{-1}\gamma\alpha u)$
\end{tabular}
&{
\begin{tabular}{c}
 $\xymatrix@C=8em@R=3em
 {
  & L  \ar@/^0.6pc/[dr]_(.4){\gamma}^(.4){F{\smallotimesF }L} &  \\
 F \ar@/^0.6pc/[ur]_(.6){\alpha}^(.6){L{\smallotimesF }F}  & &  F \ar@/_0.8pc/[ll]^{\beta_0}_{F{\smallotimesF }F} \ar@/^0.8pc/[ll]_{\beta_1}^{F{\smallotimesF }F}
}$\\
$\beta_0\gamma+\beta_1\gamma u$,
$\gamma\alpha$, $\gamma u\alpha$,
$\alpha\beta_0+u\alpha\beta_1$
\end{tabular}
}
&
\begin{tabular}{c}$\xymatrix@C=3.5em@R=3em{
 & L  \ar@/^0.6pc/[dr]_{\gamma}^{F\smallotimesF L} & \\
 E \ar@/^0.6pc/[ur]_{\alpha}^{L^{\theta^{\xi_\alpha}}\smallotimesL E} & & F \ar@/_0.6pc/[ll]_{\beta}^{E\smallotimesF F}
}$\\
$\frac{1}{2} (\beta\gamma + 
\theta^{-\xi_\alpha}(u^{-1})\beta\gamma u)$,
$\gamma\alpha$,
$\alpha\beta$
\end{tabular}
&
\begin{tabular}{c}
$\xymatrix@C=3.5em@R=3em{
 & L  \ar@/^0.6pc/[dr]_{\gamma}^{E^{\theta^{\xi_\gamma}}\smallotimesL L} & \\
 F \ar@/^0.6pc/[ur]_{\alpha}^{L\smallotimesF F} & & E \ar@/_0.6pc/[ll]_{\beta}^{F\smallotimesF E}
}$\\
$\beta\gamma$, $\gamma\alpha$,  $\frac{1}{2} (\alpha\beta + 
\theta^{-\xi_\gamma}(u^{-1})\alpha\beta u)$
\end{tabular}
\\
\hline
$\xymatrix@R=1.5em{
M  \ar@{|-}[d] 
\\ \,  
}$ 
& $
\xymatrix@C=4em@R=3em{
   & M_1  \ar@/^0.6pc/[dr]^{M_\gamma} & \\
 M_2 \ar@/^0.6pc/[ur]^{M_\alpha } & & M_3 \ar@/_0.6pc/[ll]^{M_{\beta}}}
$ & 
$\xymatrix@C=8em@R=3em{
  & M_1 \ar@/^0.6pc/[dr]^{M_\gamma} & \\
 M_2 \ar@/^0.6pc/[ur]^{M_\alpha } & & M_3 \ar@/_0.5pc/[ll]_{M_{\beta_0}} \ar@/^0.3pc/[ll]^{M_{\beta_1}}
}$
&
$
\xymatrix@C=3.5em@R=3em{
   & M_1  \ar@/^0.6pc/[dr]^{M_\gamma} & \\
 M_2 \ar@/^0.6pc/[ur]^{M_\alpha } & & M_3 \ar@/_0.6pc/[ll]^{M_{\beta}}
}
$
&
$
\xymatrix@C=3.5em@R=3em{
   & M_1  \ar@/^0.6pc/[dr]^{M_\gamma} & \\
 M_2 \ar@/^0.6pc/[ur]^{M_\alpha } & & M_3 \ar@/_0.6pc/[ll]^{M_{\beta}}
}
$
\\
\hline
$\xymatrix@R=2em{
\,  \ar@{->}[d]  
\\ \Psi(M) 
}$ 
& $
\xymatrix@C=4em@R=4em{
  & L\smallotimesF  M_1 \ar@(ld,rd)_{\theta\smallotimesempty\myid_{M_1}} \ar@/^0.6pc/[dr]^{\overleftarrow{M_\gamma}} &\\
 M_2 \ar@/^0.6pc/[ur]^{\overrightarrow{M_\alpha}} & & M_3\ar@/_0.6pc/[ll]^{M_\beta} 
}
$ & 
$\xymatrix@C=4.5em@R=4em{
  & M_1 \ar@/^0.6pc/[dr]^{\overrightarrow{M_\gamma}} & \\
 L\smallotimesF M_2 \ar@/^0.6pc/[ur]^{\overleftarrow{M_\alpha}} \ar@(u,l)_(.4){\theta\smallotimesempty\myid_{M_2}} & & L\smallotimesF M_3 \ar@/_0.6pc/[ll]^{\myid\smallotimesempty M_{\beta_0}+u\myid\smallotimesempty M_{\beta_1}} \ar@(u,r)^(.4){\theta\smallotimesempty  \myid_{M_{3}}}
}$
&
$
\xymatrix@C=3em@R=4em{
  & M_1  \ar@/^0.6pc/[dr]^(.45){(\theta^{-\xi_\alpha}\smallotimesempty\myid_{M_3})\overrightarrow{M_\gamma}} &\\
 M_2 \ar@/^0.6pc/[ur]^{M_\alpha} \ar@(u,l)_(.4){v\myid_{M_{2}}} & & L\smallotimesF M_3\ar@/_0.6pc/[ll]^{\overleftarrow{M_\beta}} \ar@(u,r)^(.4){\theta\smallotimesempty \myid_{M_{3}}}
}
$
&
$
\xymatrix@C=3em@R=4em{
   & M_1  \ar@/^0.6pc/[dr]^{M_\gamma} &\\
 L\smallotimesF M_2 \ar@/^0.6pc/[ur]^{\overleftarrow{M_\alpha}} \ar@(u,l)_(.4){\theta\smallotimesempty\myid_{M_2}} & & M_3\ar@/_0.6pc/[ll]^{\overrightarrow{M_\beta}} \ar@(u,r)^(.4){v\myid_{M_3}}
}
$
\\
  & \begin{tabular}{l}  
  $\overrightarrow{M_\alpha}(m)=\frac{1}{2}\sum_{j=0}^{1}u^{-j}\smallotimesempty M_\alpha(u^jm) $
  \\
$\overleftarrow{M_\gamma}(\ell\smallotimesempty m)=\ell M_\gamma(m)$\end{tabular} &  
\begin{tabular}{l}  
  $\overleftarrow{M_\alpha}(\ell\smallotimesempty m)=\theta^{-\xi_\gamma}(\ell) M_\alpha(m)$
  \\
$\overrightarrow{M_\gamma}(m)=\frac{1}{2}\sum_{j=0}^1u^{-j}\smallotimesempty M_\gamma(u^jm) $\end{tabular}
&
\begin{tabular}{l} $\overleftarrow{M_\beta}(\ell\smallotimesempty m)=\ell M_\beta(m)$ \\
$\overrightarrow{M_\gamma}(m)=\frac{1}{2}\sum_{j=0}^1u^{-j}\smallotimesempty M_\gamma(u^jm) $\end{tabular}
&
\begin{tabular}{l} $\overleftarrow{M_\alpha}(\ell\smallotimesempty m)=\theta^{-\xi_\gamma}(\ell) M_\alpha(m)$ \\
$\overrightarrow{M_\beta}(m)=\frac{1}{2}\sum_{j=0}^1u^{-j}\smallotimesempty M_\beta(u^jm) )$
\end{tabular}
\\
\hline
\begin{tabular}{c}
Semilinear\\ clannish\\ block 
\end{tabular}
&
\begin{tabular}{c}
$\xymatrix@C=4.5em@R=6em{
& L \ar@(ld,rd)^{s_1}_{L^\theta\smallotimesL L} \ar@/^0.6pc/[dr]_{\gamma}^{L\smallotimesL L} & \\
L \ar@/^0.6pc/[ur]_{\alpha}^{L^{\theta^{-\xi_\beta}}\smallotimesL L} & & L \ar@/_0.6pc/[ll]_{\beta}^{L^{\theta^{\xi_\beta}}\smallotimesL L} 
}$\\
$\alpha\beta,\beta\gamma,\gamma\alpha$, $s_1^2-e_1$
\end{tabular}
 &
 \begin{tabular}{c}
 $\xymatrix@C=4.5em@R=6em{
 & L  \ar@/^0.6pc/[dr]_(.4){\gamma}^(.4){L\smallotimesL  L} &  \\
 L \ar@(u,l)^{s_2}_(.4){L^{\theta}\smallotimesL  L} \ar@/^0.6pc/[ur]_(.6){\alpha}^(.6){L\smallotimesL L} & & L \ar@(u,r)_{s_3}^(.4){L^\theta\smallotimesL L} \ar@/_0.6pc/[ll]_{\beta}^{L\smallotimesL L} 
}$\\
$\alpha\beta,\beta\gamma,\gamma\alpha$, $s_2^2-e_2,s_3^2-e_3$
\end{tabular}
& 
\begin{tabular}{c}
$\xymatrix@C=3em@R=6em{
 & L  \ar@/^0.6pc/[dr]_(.4){\gamma}^(.4){L^{\theta^{-\xi_\alpha}}\smallotimesL  L} &  \\
 L \ar@(u,l)^{s_2}_(.4){L\smallotimesL  L} \ar@/^0.6pc/[ur]_(.6){\alpha}^(.6){L^{\theta^{\xi_\alpha}}\smallotimesL L} & & L \ar@(u,r)_{s_3}^(.4){L^{\theta}\smallotimesL L} \ar@/_0.6pc/[ll]_{\beta}^{L\smallotimesL L} 
}$\\
$\alpha\beta,\beta\gamma,\gamma\alpha,$  $s_2^2-ue_2,s_3^2-e_3$
\end{tabular}
&
\begin{tabular}{c}
$\xymatrix@C=3em@R=6em{
 & L  \ar@/^0.6pc/[dr]_(.4){\gamma}^(.4){L^{\theta^{\xi_\gamma}}\smallotimesL  L} &  \\
 L \ar@(u,l)^{s_2}_(.4){L^\theta\smallotimesL  L} \ar@/^0.6pc/[ur]_(.6){\alpha}^(.6){L^{\theta^{-\xi_\gamma}}\smallotimesL L} & & L \ar@(u,r)_{s_3}^(.4){L\smallotimesL L} \ar@/_0.6pc/[ll]_{\beta}^{L\smallotimesL L} 
}$\\
$\alpha\beta,\beta\gamma,\gamma\alpha,$  $s_2^2-ue_2,s_3^2-e_3$
\end{tabular}
\\
\hline
$\xymatrix@R=2.4em{
M\ar@{|-}[d]\\ \,
}$ 
&
$\xymatrix@R=4em{ & M_1 \ar@(rd,ld)^{M_{s_1}} \ar@/^0.6pc/[dr]^{M_\gamma} & \\
 M_2 \ar@/^0.6pc/[ur]^{M_\alpha} & & M_3\ar@/_0.6pc/[ll]^{M_\beta}
}$ & 
$
\xymatrix@R=4em{
& M_1 \ar@/^0.6pc/[dr]^{M_\gamma} & \\
M_2 \ar@(u,l)_(.4){M_{s_2}}\ar@/^0.6pc/[ur]^{M_\alpha} & & M_3 \ar@(u,r)^(.4){M_{s_3}} \ar@/_0.6pc/[ll]^{M_\beta}
}
$
&
$
\xymatrix@R=4em{
& M_1 \ar@/^0.6pc/[dr]^{M_\gamma} & \\
M_2 \ar@(u,l)_(.4){M_{s_2}}\ar@/^0.6pc/[ur]^{M_\alpha} & & M_3 \ar@(u,r)^(.4){M_{s_3}} \ar@/_0.6pc/[ll]^{M_\beta}
}
$
&
$
\xymatrix@R=4em{
& M_1 \ar@/^0.6pc/[dr]^{M_\gamma} & \\
M_2 \ar@(u,l)_(.4){M_{s_2}}\ar@/^0.6pc/[ur]^{M_\alpha} & & M_3 \ar@(u,r)^(.4){M_{s_3}} \ar@/_0.6pc/[ll]^{M_\beta}
}
$
\\
\hline
$\xymatrix{
\,\ar@{->}[d] \\ \Phi(M)
}$ 
& 
$\xymatrix@C=1.8em@R=6em{
   & \ker(M_{s_1}\smallminus\myid_{M_1})  \ar@/^0.6pc/[dr]^(.6){M_\gamma|_{\ker}} &\\
  M_2 \ar@/^0.6pc/[ur]^{\frac{1}{2}(\myid_{M_1}\smallplus M_{s_1}) M_\alpha } & & M_3 \ar@/_0.8pc/[ll]^{M_{\beta}}
}$ & 
$
\xymatrix@C=2.2em@R=4em{
& M_{1} \ar@/^0.6pc/[dr]^{\quad\frac{1}{2}(\myid_{M_3}\smallplus M_{s_3}) M_\gamma} & \\
\ker(M_{s_2}\smallminus\myid_{M_2}) \ar@/^0.6pc/[ur]^{M_\alpha|_{\ker}\quad} & & \ker(M_{s_3}\smallminus\myid_{M_3}) \ar@/_0.6pc/[ll]_{\frac{1}{2}(M_{s_2}\smallplus\myid_{M_2}) M_\beta|_{\ker}} \ar@/^0.8pc/[ll]^{\frac{u}{2}(M_{s_2}\smallminus\myid_{M_2}) M_\beta|_{\ker}}
}
$
&
$
\xymatrix@C=1.8em@R=6em{
& M_{1} \ar@/^0.6pc/[dr]^(.4){\frac{1}{2}(\myid_{M_3}\smallplus M_{s_3}) M_\gamma} & \\ 
M_2 \ar@/^0.6pc/[ur]^(.6){M_\alpha} & & \ker(M_{s_3}\smallminus\myid_{M_3}) \ar@/_0.8pc/[ll]^{M_\beta|_{\ker}\qquad} 
}
$
&
$
\xymatrix@C=4em@R=6em{
& M_{1} \ar@/^0.6pc/[dr]^{M_\gamma} & \\
\ker(M_{s_2}\smallminus\myid_{M_2}) \ar@/^0.6pc/[ur]^{M_\alpha|_{\ker}} & & M_3 \ar@/_0.8pc/[ll]^{\qquad\frac{1}{2}(M_{s_2}\smallplus\myid_{M_2}) M_\beta} 
}
$
\\
\hline 
\end{tabular}}
       \captionof{table}{Morita equivalences 2, 4, 6 and 7, behavior of functors on objects
       \label{table-Morita-equivalences-2467}}
    \end{landscape}
}

As the reader should have noticed already, for each Jacobian block in Table \ref{table-Morita-equivalences-2467}, the vertices of the quiver get different fields attached, taken from $\{F,L,E\}$, which means that in the representations of such block the spaces assigned to the vertices are vector spaces over different fields, and the action of each arrow is semilinear over the intersection of the fields attached to its head and its tail. On the other hand, for each semilinear clannish block in that same table, all vertices get attached the same field $L$, which is an extension of $F$ and a subfield of $E$. This means that in the representations of such block the spaces assigned to the vertices are vector spaces over the same field $L$, and the action of each arrow is semilinear over $L$.

When associating to a given representation $M$ of a Jacobian block a representation $\Psi(M)$ of a semilinear clannish block, we replace each $F$-vector space $M_j$ with the $L$-vector space $L\otimes_FM_j$ and the $L$-semilinear endomorphism $\theta\otimes\myid_{M_j}:L\otimes_FM_j\rightarrow L\otimes_FM_j$, whereas each $L$-vector space $M_j$ is left unchanged, and each $E$-vector space $M_j$ is replaced with the $L$-vector space $M_j$ and the $L$-linear endomorphism $v\myid_{M_j}:M_j\rightarrow M_j$. Furthermore, for some arrows $a$ the $F$-linear map $M_a$ has to be replaced with an $L$-semilinear one, which we define through an extension or coextension of scalars of sorts: the domain or the codomain of $M_a$ has been already tensored with $L$, the corresponding  $L$-semilinear extension or coextension of $M_a$, denoted $\overleftarrow{M}_a$ or $\overrightarrow{M}_a$, respectively, is defined by an explicit formula, which appeared in the proof of Lemma \ref{lemma:simplyfing-reps-of-these-species}. Such formula is recalled in the row of Table \ref{table-Morita-equivalences-2467} labeled $\Psi(M)$. When the action of $M_a$ is already semilinear over $L$, no extension or coextension of scalars is needed, so the map $M_a$ is kept unchanged in the definition of~$\Psi(M)$.

On the other hand, when associating to a representation $N$ of a semilinear clannish block a representation $\Phi(N)$ of a Jacobian block, we use the $L$-semilinear endomorphisms $N_{s_j}$ that are not $L$-linear to realize the corresponding $L$-vector space $N_j$ canonically as $L\otimes_F\ker(N_{s_j}-\myid_{N_j})$ and this way replace $N_j$ with the $F$-vector space $\ker(N_{s_j}-\myid_{N_j})$ (this is Galois descent for vector spaces). Similarly, we use the endomorphisms $N_{s_j}$ that are $L$-linear to extend the left action of $L$ on $N_j$ canonically to a action of $E$ on $N_j$ that makes $N_j$ an $E$-vector space.

For the case of the $6^{\operatorname{th}}$ Jacobian and semilinear clannish blocks, we begin by checking that for $M\in \usualRA{A(Q,\bldD,\xi)}/J_0(W(Q,\bldD,\xi))\text{-}\mathbf{Mod}$ and $N\in K_{\bldsigma}\compactQhat/I\text{-}\mathbf{Mod}$ we indeed have $\Psi(M)\in K_{\bldsigma}\compactQhat/I\text{-}\mathbf{Mod}$ and $\Phi(N)\in \usualRA{A(Q,\bldD,\xi)}/J_0(W(Q,\bldD,\xi))\text{-}\mathbf{Mod}$:
\begin{align*}
    \Psi(M)_{\alpha} &\coloneqq  M_\alpha &&  \text{so $\Psi(M)_{\alpha}$ is $\sigma_\alpha$-linear}\\
    \Psi(M)_{\beta}(\ell\otimes m)&\coloneqq \ell M_{\beta}(m) && \text{so $\Psi(M)_{\beta}$ is $\sigma_\beta$-linear}\\
    \Psi(M)_{\gamma}(u m) &\coloneqq  ((\theta^{-\xi_\alpha}\otimes\myid_{M_3})\circ\overrightarrow{M_\gamma})(u m) && \\
    &= \frac{1}{2}(\theta^{-\xi_\alpha}\otimes\myid_{M_3})(1\otimes M_\gamma(u m)+u^{-1}\otimes M_\gamma(u^2m) ) && \\
    &= \frac{1}{2}(\theta^{-\xi_\alpha}\otimes\myid_{M_3})(1\otimes M_\gamma(u m)+u\otimes M_\gamma(m) ) && \text{since $u^2\in F$}\\
    &= \frac{1}{2}(1\otimes M_\gamma(u m)+\theta^{-\xi_\alpha}(u)\otimes M_\gamma(m) ) && \\
    &= \frac{\theta^{-\xi_\alpha}(u)}{2}(\theta^{-\xi_\alpha}(u)^{-1}\otimes M_\gamma(u m)+1\otimes M_\gamma(m) ) && \\
    &= \frac{\theta^{-\xi_\alpha}(u)}{2}(\theta^{-\xi_\alpha}\otimes\myid_{M_3})(u^{-1}\otimes M_\gamma(u m)+1\otimes M_\gamma(m) ) && \\
    &= \theta^{-\xi_\alpha}(u)((\theta^{-\xi_\alpha}\otimes\myid_{M_3})\circ\overrightarrow{M_\gamma})(m) && \\
    &= \theta^{-\xi_\alpha}(u)\Psi(M)_{\gamma}(m) && \text{so $\Psi(M)_{\gamma}$ is $\sigma_\gamma$-linear}\\
    \Psi(M)_{s_2} &\coloneqq  v\myid_{M_2} && \text{so $\Psi(M)_{s_2}$ is $\sigma_{s_2}$-linear}\\
    \Psi(M)_{s_3} &\coloneqq  \theta\otimes \myid_{M_3} && \text{so $\Psi(M)_{s_3}$ is $\sigma_{s_3}$-linear}
\end{align*}
\begin{align*}
    \Psi(M)_{\alpha\beta}(\ell\otimes m) &= M_\alpha\circ \overleftarrow{M_\beta} (\ell\otimes m)=M_\alpha(\ell M_\beta(m))=\theta^{\xi_\alpha}(\ell)M_\alpha (M_\beta(m))\\
    &= \theta^{\xi_\alpha}(\ell)M_{\partial_\gamma(W(Q,\bldD,\xi))}(m)  =0 \\
    \Psi(M)_{\beta\gamma}(m) &= (\overleftarrow{M_\beta}\circ (\theta^{-\xi_\alpha}\otimes\myid_{M_3})\circ\overrightarrow{M_\gamma})(m)\\
    &= \frac{1}{2}((\overleftarrow{M_\beta}\circ  (\theta^{-\xi_\alpha}\otimes\myid_{M_3}) )(1\otimes M_\gamma(m) +u^{-1}\otimes M_\gamma(u m)))\\
    &= \frac{1}{2}\overleftarrow{M_\beta}(1\otimes M_\gamma(m) +\theta^{-\xi_\alpha}(u^{-1})\otimes M_\gamma(u m))\\
    &= \frac{1}{2}(M_\beta (M_\gamma(m)) +\theta^{-\xi_\alpha}(u^{-1})M_\beta (M_\gamma(u m)))\\
    &= M_{\partial_{\alpha}(W(Q,\bldD,\xi))} (m) =0\\
    \Psi(M)_{\gamma\alpha}(m) &= ((\theta^{-\xi_\alpha}\otimes\myid_{M_3})\circ\overrightarrow{M_\gamma}\circ M_\alpha)(m) \\
    &= \frac{1}{2}(\theta^{-\xi_\alpha}\otimes\myid_{M_3}) (1\otimes M_\gamma(M_\alpha(m)) +u^{-1}\otimes M_\gamma(u M_\alpha(m)))\\
    &=\frac{1}{2}(\theta^{-\xi_\alpha}\otimes\myid_{M_3}) (1\otimes M_\gamma(M_\alpha(m)) +u^{-1}\otimes M_\gamma( M_\alpha(\theta^{-\xi_\alpha}(u)m)))\\
    &=\frac{1}{2}(1\otimes M_{\partial_\beta(W(Q,\bldD,\xi))}(m) +\theta^{-\xi_\alpha}(u^{-1})\otimes M_{\partial_\beta(W(Q,\bldD,\xi))}(\theta^{-\xi_\alpha}(u)m)) =0\\
    \Psi(M)_{s_2^2-ue_2} &= ((v\myid_{M_2})\circ(v\myid_{M_2}))-u\myid_{M_2} =0 \\
    \Psi(M)_{s_3^2-e_3} &= ((\theta\otimes \myid_{M_3})\circ(\theta\otimes \myid_{M_3}))-\myid_{M_3} = 0.
\end{align*}
Therefore, $\Psi(M)\in K_{\bldsigma}\compactQhat/I\text{-}\mathbf{Mod}$.

For $N\in K_{\bldsigma}\compactQhat/I\text{-}\mathbf{Mod}$, we turn the $L$-vector space $\Phi(N)_2\coloneqq N_2$ into an $E$-vector space by setting
$$
(\ell_0+\ell_1 v)n\coloneqq \ell_0n+\ell_1N_{s_2}(n) \qquad \text{for} \ \ell_0,\ell_1\in L \ \text{and} \ n\in N_2. 
$$
This uses that the map $N_{s_{2}}$ is $L$-linear. 
Furthermore, $\Phi(N)_3\coloneqq \ker(N_{s_3}-\myid_{N_3})$ certainly is an $F$-vector space since $N_{s_3}$ is $F$-linear, and since $(N_{s_3}-\myid_{N_3})\circ(\myid_{N_3}+N_{s_3})=N_{s_3}^2-\myid_{N_3}=0$, the image of $(\myid_{N_3}+N_{s_3})\circ N_\gamma$ is contained in $\Phi(N)_3$. Moreover,
\begin{align*}
    \Phi(N)_\alpha &\coloneqq  N_\alpha && \text{so $\Phi(N)_{\alpha}$ is $\theta^{\xi_\alpha}$-linear} \\
    \Phi(N)_\beta &\coloneqq N_\beta|_{\ker(N_{s_3}-\myid_{N_3})} && \text{so $\Phi(N)_{\beta}$ is $F$-linear}  \\
    \Phi(N)_\gamma &\coloneqq \frac{1}{2}(\myid_{N_3}+N_{s_3})\circ N_\gamma && \text{so $\Phi(N)_{\gamma}$ is $F$-linear}
\end{align*}
Hence, 
\begin{align*}
    \Phi(N)_{\partial_\alpha(W(Q,\bldD,\xi))} &= \frac{1}{2} (\Phi(N)_\beta \Phi(N)_\gamma + 
\theta^{-\xi_\alpha}(u^{-1})\Phi(N)_\beta \Phi(N)_\gamma u)\\
&= \frac{1}{4} (N_\beta|_{\Phi(N)_3} (\myid_{N_3}+N_{s_3}) N_\gamma + 
\theta^{-\xi_\alpha}(u^{-1})N_\beta|_{\Phi(N)_3} (\myid_{N_3}+N_{s_3}) N_\gamma u) \\
&= \frac{1}{4} (N_\beta| (\myid_{N_3}+N_{s_3}) N_\gamma + 
\theta^{-\xi_\alpha}(u^{-1})N_\beta| (\myid_{N_3}+N_{s_3}) \theta^{-\xi_\alpha}(u)N_\gamma )\\
&=\frac{1}{4} (N_\beta| (\myid_{N_3}+N_{s_3}) N_\gamma + 
\theta^{-\xi_\alpha}(u^{-1})N_\beta| (\theta^{-\xi_\alpha}(u) \myid_{N_3}+\theta^{1-\xi_\alpha}(u)N_{s_3}) N_\gamma )
\\
&=\frac{1}{4} (N_\beta| (\myid_{N_3}+N_{s_3}) N_\gamma + 
\theta^{-\xi_\alpha}(u^{-1})N_\beta| (\theta^{-\xi_\alpha}(u) \myid_{N_3}-\theta^{-\xi_\alpha}(u)N_{s_3}) N_\gamma ) 
\end{align*}
since $\theta(u)=-u$.
Thus, for all $n\in \Phi(N)_1\coloneqq N_1$ we have
\begin{align*}
    \Phi(N)_{\partial_\alpha(W(Q,\bldD,\xi))} (n)&= \frac{1}{4} (N_\beta| (\myid_{N_3}+N_{s_3}) N_\gamma (n) + 
\theta^{-\xi_\alpha}(u^{-1})\theta^{-\xi_\alpha}(u)N_\beta ( \myid_{N_3}-N_{s_3}) N_\gamma (n))\\
&= \frac{1}{4} (N_\beta| (\myid_{N_3}+N_{s_3}) N_\gamma (n) + 
N_\beta ( \myid_{N_3}-N_{s_3}) N_\gamma (n))\\
  &=\frac{1}{4} (N_\beta (\myid_{N_3}+N_{s_3}) N_\gamma + 
  N_\beta(\myid_{N_3}-N_{s_3}) N_\gamma )(n)\\
  &=\frac{1}{2}N_\beta N_\gamma(n)=0.
\end{align*}
Also, we have
\begin{align*}
\Phi(N)_{\partial_\beta(W(Q,\bldD,\xi))} &= \Phi(N)_\gamma \Phi(N)_\alpha= \frac{1}{2}(\myid_{N_3}+N_{s_3}) N_\gamma N_\alpha=0 \\
\Phi(N)_{\partial_\gamma(W(Q,\bldD,\xi))} &= \Phi(N)_\alpha \Phi(N)_\beta= N_\alpha N_\beta|_{\ker(N_{s_3}-\myid_{N_3})}=0.  
\end{align*}
Therefore, $\Phi(N)\in \usualRA{A(Q,\bldD,\xi)}/J_0(W(Q,\bldD,\xi))\text{-}\mathbf{Mod}$. We see that $\Psi$ and $\Phi$ are well-defined on objects.

At the level of morphisms, 
$\Psi:\usualRA{A(Q,\bldD,\xi)}/J_0(W(Q,\bldD,\xi))\text{-}\mathbf{Mod} \rightarrow K_{\bldsigma}\compactQhat/I\text{-}\mathbf{Mod}$ is defined by the rule
$$
\xymatrix{
&   & M_1  \ar@{.>}[dr]^{M_\gamma} \ar@/_1pc/[dd]|-{f_1} & & & & & M_1 \ar@/_1pc/[dd]|-{f_1}  \ar@{.>}[dr]|-{\quad (\theta^{-\xi_\alpha}\otimes\myid_{M_3})\circ\overrightarrow{M_\gamma}} &\\
M\ar[dd]^{f}& M_2  \ar@{.>}[ur]^{M_\alpha } \ar[dd]|-{f_2}  & & M_3 \ar@{.>}[ll]^{\qquad \quad  M_{\beta}}\ar[dd]|-{f_3}  & & \Psi(M)\ar[dd]^{\Psi(f)} & M_2 \ar@/^1.75pc/[dd]|-{f_2} \ar@{.>}[ur]|-{M_\alpha} \ar@{.>}@(dr,dl)^{v\myid_{M_{2}}} & & L\otimes_FM_3 \ar@/^2pc/[dd]|-{\myid_L\otimes f_3} \ar@{.>}[ll]^{\quad \overleftarrow{M_\beta}} \ar@{.>}@(dr,dl)^{\theta\otimes \myid_{M_{3}}}\\
&   & N_1  \ar@{.>}[dr]|-{N_\gamma} && \mapsto & & & N_1  \ar@{.>}[dr]|-{\quad (\theta^{-\xi_\alpha}\otimes\myid_{N_3})\circ\overrightarrow{N_\gamma}} &\\
N& N_2 \ar@{.>}[ur]|-{N_\alpha } & & N_3 \ar@{.>}[ll]|-{N_{\beta}}& & \Psi(N)  & N_2 \ar@{.>}[ur]|-{N_\alpha} \ar@{.>}@(dr,dl)^{v\myid_{N_{2}}} & & L\otimes_FN_3, \ar@{.>}[ll]^{\overleftarrow{N_\beta}} \ar@{.>}@(dr,dl)^{\theta\otimes \myid_{N_{3}}}
}
$$
whereas $\Phi:K_{\bldsigma}\compactQhat/I\text{-}\mathbf{Mod} \rightarrow \usualRA{A(Q,\bldD,\xi)}/J_0(W(Q,\bldD,\xi))\text{-}\mathbf{Mod}$ is defined by the rule
$$
\xymatrix{
& & M_1 \ar@/_1pc/[dd]|-{f_1}  \ar@{.>}[dr]|-{M_\gamma} & & & &   & & M_1  \ar@{.>}[drr]^{\frac{1}{2}(\myid_{M_3}+M_{s_3})\circ M_\gamma}  \ar@/_1.75pc/[dd]|-{f_1} && &\\
 M\ar[dd]^{f} & M_2 \ar@/^1.5pc/[dd]|-{f_2} \ar@{.>}[ur]|-{M_\alpha} \ar@{.>}@(dr,dl)^{M_{s_2}} & & M_3 \ar@/^1.5pc/[dd]|-{f_3} \ar@{.>}[ll]^{\quad M_\beta} \ar@{.>}@(dr,dl)^{M_{s_3}} & & \Phi(M)\ar[dd]^{\Phi(f)}& M_2  \ar@{.>}[urr]^{M_\alpha }  \ar[dd]|-{f_2} & & & &  \ker(M_{s_3}-\myid_{M_3}) \ar@{.>}[llll]^{\qquad M_\beta|_{\ker(M_{s_3}-\myid_{M_3})}}  \ar[dd]|-{f_3}  \\
 & & N_1  \ar@{.>}[dr]|-{N_\gamma} & & \mapsto & &   & & N_1  \ar@{.>}[drr]|-{\frac{1}{2}(\myid_{N_3}+N_{s_3})\circ N_\gamma}  && & \\
 N  & N_2 \ar@{.>}[ur]|-{N_\alpha} \ar@{.>}@(dr,dl)^{N_{s_2}} & & N_3 \ar@{.>}[ll]^{N_\beta} \ar@{.>}@(dr,dl)^{N_{s_3}} & & \Phi(N) & N_2 \ar@{.>}[urr]|-{N_\alpha } & & & &  \ker(N_{s_3}-\myid_{N_3}). \ar@{.>}[llll]^{N_\beta|_{\ker(N_{s_3}-\myid_{N_3})}}  
}
$$

Through a routine check, the reader can easily verify that $\Psi$ and $\Phi$ are covariant $F$-linear functors. We claim that they are equivalences of categories, mutually inverse up to isomorphism of functors. We will prove this claim by exhibiting invertible natural transformations
\begin{align*}
\varepsilon&:\myid_{\jacobalg{A(Q,\xi)}\text{-}\mathbf{Mod}}\rightarrow\Phi\circ\Psi\\
\eta&:\myid_{K_{\bldsigma}\widehat{Q}/I\text{-}\mathbf{Mod}}\rightarrow \Psi\circ\Phi.
\end{align*}

For $M\in \jacobalg{A(Q,\xi)}$-$\mathbf{Mod}$ define an $F$-linear function $\varepsilon_M:M\rightarrow\Phi\circ\Psi(M)$ as follows:
$$
\xymatrix{
   & M_1  \ar@{.>}[ddr]^{M_\gamma} \ar[rrrrr]^{\myid_{M_1}}  & & &&   & M_1  \ar@{.>}[ddr]|-{\qquad\qquad\qquad \frac{1}{2}(\myid_{L\otimes_FM_3}+\theta\otimes \myid_{M_{3}})\circ (\theta^{-\xi_\alpha}\otimes\myid_{M_3})\circ\overrightarrow{M_\gamma}}  &\\
 & & & M\ar[r]^{\varepsilon_M\qquad} & \Phi\circ\Psi(M)& & & \\
 M_2 \ar@{.>}[uur]^{M_\alpha } \ar@/_2pc/[rrrrr]_{\myid_{M_2}}  & & M_3 \ar@{.>}[ll]_{M_{\beta}}  \ar@/_2pc/[rrrrr]_{m\mapsto 1\otimes m} & && M_2 \ar@{.>}[uur]^{M_\alpha}   & & \ker(\theta\otimes \myid_{M_{3}}-\myid_{L\otimes_FM_3}). \ar@{.>}[ll]_{\overleftarrow{M_\beta}\ |_{\ker}\qquad}  
}
$$
And for $N\in  K_{\bldsigma}\compactQhat/I$-$\mathbf{Mod}$, define an $F$-linear function $\eta_N:N\rightarrow \Psi\circ\Phi(N)$ as follows:
$$
\xymatrix{& N_1  \ar@{.>}[ddr]^{N_\gamma} \ar[rrrrr]^{\myid_{N_1}}  & & & & & N_{1}  \ar@{.>}[ddr]|-{\qquad\qquad\qquad\qquad(\theta^{-\xi_\alpha}\otimes\myid_{\ker(N_{s_3}-\myid_{N_3})})\circ\overrightarrow{\frac{1}{2}(\myid_{N_3}+N_{s_3})\circ N_\gamma}}  &  \\
  &  & &N \ar[r]^{\eta_N\qquad} &  \Psi\circ\Phi(N) & &  &  \\
 N_2 \ar@{.>}@(dr,dl)^{N_{s_2}} \ar@{.>}[uur]^{N_\alpha}  \ar@/_3pc/[rrrrr]_{\myid_{N_2}}  & & N_3 \ar@{.>}@(dr,dl)_{N_{s_3}} \ar@{.>}[ll]_{ N_\beta}  \ar@/_3pc/[rrrrr]_{\qquad\qquad\qquad\frac{1}{2}\left(1\otimes \frac{1}{2}\left(\myid_{N_1}+N_{s_3}\right)+u^{-1}\otimes \frac{1}{2}\left(\myid_{N_1}+N_{s_3}\right)u\right)}  & & & N_2 \ar@{.>}@(dr,dl)_{N_{s_2}} \ar@{.>}[uur]|-{N_\alpha}  & & L\otimes_F\ker(N_{s_3}-\myid_{N_3}). \ar@{.>}@(dr,dl)^{\qquad\qquad\theta\otimes \myid_{\ker(N_{s_3}-\myid_{N_3})}} \ar@{.>}[ll]_{\overleftarrow{N_\beta|_{\ker(N_{s_3}-\myid_{N_3})}}\qquad\quad} 
}
$$

Again, a routine check shows that:
\begin{itemize}
\item $\varepsilon_M$ is an isomorphism of $\jacobalg{A(Q,\xi)}$-modules;
\item $\eta_N$ is an isomorphism of $K_{\bldsigma}\compactQhat/I$-modules;
\item $\varepsilon\coloneqq (\varepsilon_M)_{M\in\jacobalg{A(Q,\xi)}\text{-}\mathbf{Mod}}$ is a natural transformation $\myid_{\jacobalg{A(Q,\xi)}\text{-}\mathbf{Mod}}\rightarrow\Phi\circ\Psi$;
\item $\eta\coloneqq (\eta_M)_{M\in L_{\bldsigma}\widehat{Q}/I\text{-}\mathbf{Mod}}$ is a natural transformation $\myid_{K_{\bldsigma}\widehat{Q}/I\text{-}\mathbf{Mod}}\rightarrow \Psi\circ\Phi$.
\end{itemize}
Therefore, $\varepsilon$ and $\eta$ are $F$-linear isomorphisms of functors, and $\Psi$ and $\Phi$ are $F$-linear Morita equivalences between the Jacobian algebra $\jacobalg{A(Q,\xi)}$ and the semilinear clannish algebra $K_{\bldsigma}\compactQhat/I$. 
Observe that  $\Phi,\Psi$ restrict to equivalences between the full subcategories of finite-dimensional representations. 
\end{proof}

\begin{ex}
For $k=2$ in Proposition \ref{prop-morita-equivalences-for-blocks} take $F=\mathbb{R}$, $L=\mathbb{C}$, and consider the Jacobian block (left) and the semilinear clannish block (right)
\begin{center}
\begin{tabular}{ccc}
\begin{tabular}{c}
$\xymatrix{
& \mathbb{R} \ar@/^0.5pc/[dr]_{\gamma}^{\mathbb{C}\otimes_{\mathbb{R}} \mathbb{R}} & \\
\mathbb{C} \ar@/^0.5pc/[ur]_{\alpha}^{\mathbb{R}\otimes_{\mathbb{R}} \mathbb{C}} & & \mathbb{C} \ar@/^0.5pc/[ll]_{\beta}^{\mathbb{C}^{\theta^{\xi_\beta}}\otimes_{\mathbb{C}} \mathbb{C}} 
}$\\
\\
$J(Q,\mathbf{d},\xi)=\langle\beta\gamma, \alpha\beta, \frac{1}{2}(\gamma\alpha+(-1)^{\xi_\beta}u^{-1}\gamma\alpha u)\rangle$
\end{tabular}
& \qquad\qquad &
\begin{tabular}{c}
$\xymatrix{
& \mathbb{C} \ar@(ul,ur)_{s_1}^{\mathbb{C}^\theta\otimes_{\mathbb{C}} \mathbb{C}} \ar@/^0.5pc/[dr]_{\gamma}^{\mathbb{C}\otimes_{\mathbb{C}} \mathbb{C}} & \\
\mathbb{C} \ar@/^0.5pc/[ur]_{\alpha}^{\mathbb{C}^{\theta^{-\xi_\beta}}\otimes_{\mathbb{C}} \mathbb{C}} & & \mathbb{C} \ar@/^0.5pc/[ll]_{\beta}^{\mathbb{C}^{\theta^{\xi_\beta}}\otimes_{\mathbb{C}} \mathbb{C}} 
}$\\
\\
$I=\langle \alpha\beta,\beta\gamma,\gamma\alpha, s_1^2-e_1\rangle$
\end{tabular}
\end{tabular}
\end{center}
where $u\in \mathbb{C}$ satisfies $u^2=-1$. We have seen in Proposition \ref{prop-morita-equivalences-for-blocks} that the arising Jacobian algebra $\jacobalg{A(Q,\xi)}$ and semilinear clannish algebra $\mathbb{C}_{\bldsigma}\compactQhat/I$ are Morita equivalent. They are, however, not isomorphic as rings. 

To prove our claim, notice first that in each of $\jacobalg{A(Q,\xi)}$ and $\mathbb{C}_{\bldsigma}\compactQhat/I$, the center is the image of $\mathbb{R}$ under the corresponding diagonal embeddings $\mathbb{R}\hookrightarrow \jacobalg{A(Q,\xi)}$ and $\mathbb{R}\hookrightarrow\mathbb{C}_{\bldsigma}\compactQhat/I$. Since every ring isomorphism restricts to an isomorphism between centers, and since the only non-zero ring endomorphism of $\mathbb{R}$ is the identity, we deduce that any ring isomorphism between $\jacobalg{A(Q,\xi)}$ and $\mathbb{C}_{\bldsigma}\compactQhat/I$ would be forced to be $\mathbb{R}$-linear. However,
$$
\dim_{\mathbb{R}}(\jacobalg{A(Q,\xi)})=13 \quad \text{and} \quad \dim_{\mathbb{R}}(\mathbb{C}_{\bldsigma}\compactQhat/I)=16,
$$
so $\jacobalg{A(Q,\xi})$ and $\mathbb{C}_{\bldsigma}\compactQhat/I$ cannot be isomorphic as rings.

Alternatively, instead of a dimension count, one could notice that, on the one hand, the diagonal $\Delta:\mathbb{C}\hookrightarrow \mathbb{C}_{\bldsigma}\compactQhat/I$ embeds $\mathbb{C}$ as a unital subring of $\mathbb{C}_{\bldsigma}\compactQhat/I$ containing the center $Z(\mathbb{C}_{\bldsigma}\compactQhat/I)$, and on the other, it is possible to endow the real vector space $\mathbb{R}$ with the structure of left $\jacobalg{A(Q,\xi)}$-module. Were $\varphi:\mathbb{C}_{\bldsigma}\compactQhat/I\rightarrow \jacobalg{A(Q,\xi)}$ a ring isomorphism, \emph{a fortiori} $\mathbb{R}$-linear as we have seen, its restriction to $\Delta(\mathbb{C})$ would lift the real vector space structure of $\mathbb{R}$ to a complex vector space structure, under which we would then have $2\leq [\mathbb{C}:\mathbb{R}]\dim_{\mathbb{C}}(\mathbb{R})=\dim_{\mathbb{R}}(\mathbb{R})=1$.

Similar arguments show in general that for $k=2,4,6,7$ the $k^{\operatorname{th}}$ Jacobian block and the $k^{\operatorname{th}}$ semilinear clannish block cannot be isomorphic as rings.
\end{ex}

\section{Colored triangulations of surfaces with orbifold points}\label{sec:colored-triangs}

\subsection{Triangulations}
\label{subsec:surfaces-and-triangs}

\,

By a \emph{surface with marked points and orbifold points} we mean a triple~$\SSigma = (\Sigma, \marked, \orb)$ consisting of
\begin{itemize}
 \item  an oriented connected compact real surface $\Sigma$ with (possibly empty) boundary~$\partial \Sigma$,
 \item  a non-empty finite set $\marked \subseteq \Sigma$ meeting each connected component of~$\partial \Sigma$ at least once,
 \item  a (possibly empty) finite set $\orb \subseteq \Sigma \setminus (\partial \Sigma \cup \marked)$.
\end{itemize}
The points in $\marked$ are called \emph{marked points}, the points in $\orb$ are \emph{orbifold points}.
Marked points belonging to~$\Sigma \setminus \partial \Sigma$ are known as \emph{punctures}. We shall refer to $\SSigma$ simply as a \emph{surface}.

We will consider only \emph{unpunctured surfaces} with finitely many orbifold points, and \emph{once-punctured closed surfaces} with arbitrarily many orbifold points. Furthermore, we will always assume that $\surf$ is none of the following 8 surfaces:
\begin{itemize}
\item a once-punctured closed sphere with $|\orb|<4$;
\item the unpunctured disc with $|\marked| =1$ and $|\orb|=1$;
\item the unpunctured discs with $|\marked|\in\{1,2,3\}$ and $|\orb|=0$.
\end{itemize}
Our reasons for working only with unpunctured and once-punctured closed surfaces are:
\begin{itemize}
\item A consequence of Theorem \ref{thm:SPs-well-behaved-under-colored-flips-and-muts-arb-weights} below is that the species with potential associated to their colored triangulations are non-degenerate in the sense of Derksen-Weyman-Zelevinsky, a result that for surfaces with arbitrarily many punctures has been shown only when $\orb=\varnothing$ \cite{LF-QPsurfs4} and when the choice of weights $\omega:\orb\rightarrow\{1,4\}$ is the constant function that takes the value $1$ at every orbifold point \cite{GLF1};
\item for surfaces with many punctures, the non-degenerate potentials on the species associated to colored triangulations typically yield Jacobian algebras not Morita equivalent to semilinear clannish algebras. This is well-known in the case of surfaces without orbifold points.
\end{itemize}

\begin{defi}\cite[Section 4]{FeShTu-orbifolds}\label{def:triangulation} Let $\surf$ be a surface with marked points and orbifold points.
\begin{enumerate}\item An \emph{arc} on $\surf$, is a curve $i$ on $\Sigma$ such that:
\begin{itemize}
\item either both of the endpoints of $i$ belong to $\marked$, or $i$ connects a point of $\marked$ with a point of $\orb$;
\item $i$ does not intersect itself, except that its endpoints may coincide;
\item the points in $i$ that are not endpoints do not belong to $\marked\cup\orb\cup\partial\Sigma$;
\item if $i$ cuts out an unpunctured monogon, then such monogon contains at least two orbifold points;
\item if $i$ cuts out an unpunctured digon, then such digon contains at least one orbifold point.
\end{itemize}
\item If $i$ is an arc that connects a point of $\marked$ with a point of $\orb$, we will say that $i$ is a \emph{pending arc}; if it connects a point of $\marked$ to a point of $\marked$, we will say it is \emph{non-pending}.
\item Two arcs $i_1$ and $i_2$ are \emph{isotopic relative to $\marked\cup\orb$} if there exists a continuous function $H:[0,1]\times\Sigma\rightarrow\Sigma$ such that
    \begin{itemize}
    \item[(a)] $H(0,x)=x$ for all $x\in\Sigma$;
    \item[(b)] $H(1,i_1)=i_2$;
    \item[(c)] $H(t,m)=m$ for all $t\in I$ and all $m\in\marked\cup\orb$;
    \item[(d)] for every $t\in I$, the function $H_t:\Sigma\to\Sigma$ given by $x\mapsto H(t,x)$ is a homeomorphism.
    \end{itemize}
    Arcs will be considered up to isotopy relative to $\marked\cup\orb$, parametrization, and orientation.
\item Two isotopy classes $C_1$ and $C_2$ of arcs are \emph{compatible} if either
 \begin{itemize}
 \item $C_1=C_2$; or
 \item $C_1\neq C_2$ and there are arcs $i_1\in C_1$ and $i_2\in C_2$ such that $i_1$ and $i_2$ do not share an orbifold point as a common endpoint, and, except possibly for their endpoints, $i_1$ and $i_2$ do not intersect.
 \end{itemize}
 If $C_1$ and $C_2$ form a pair of compatible isotopy classes of arcs and we have elements $j_1\in C_1$ and $j_2\in C_2$, we will also say that $j_1$ and $j_2$ are compatible.
\item An \emph{ideal triangulation} of $\surf$ is any maximal collection $\tau$ of pairwise compatible arcs.
\end{enumerate}
\end{defi}

Thus, a non-pending arc goes from a point in $\marked$ to a point in $\marked$, whereas a pending arc connects a point in $\marked$ with a point in $\orb$. Loops based at a marked point and cutting off a monogon containing exactly one orbifold point are not considered to be arcs. In this paper, ideal triangulations will be often referred to simply as triangulations, and ideal triangles simply as triangles.

The following result states the basic properties of the \emph{flip}, which is a combinatorial move on ideal triangulations. Recall that we are considering only unpunctured surfaces and closed surfaces with exactly one puncture.

\begin{thm}[\cite{FeShTu-orbifolds}] Let $\surf$ be a surface with marked points and orbifold points.
\begin{enumerate}
\item If $\tau$ is an ideal triangulation of $\surf$ and $i\in\tau$, then there exists a unique arc $j$ on $\surf$ such that the set $\sigma=(\tau\setminus\{i\})\cup\{j\}$ is an ideal triangulation of $\surf$. We say that $\sigma$ is obtained from $\tau$ by the \emph{flip of $i\in\tau$}.
\item Any two ideal triangulations of $\surf$ can be obtained from each other by a finite sequence of flips.
\end{enumerate}
\end{thm}

In other words, for unpunctured surfaces and closed surfaces with exactly one puncture, every arc in an ideal triangulation can be flipped, and any two ideal triangulations are related by a chain of flips. 

\begin{ex} In Figure \ref{Fig:some_flips} we can see four triangulations of a hexagon with one orbifold point.
        \begin{figure}[!ht]
                \centering
                \includegraphics[scale=.175]{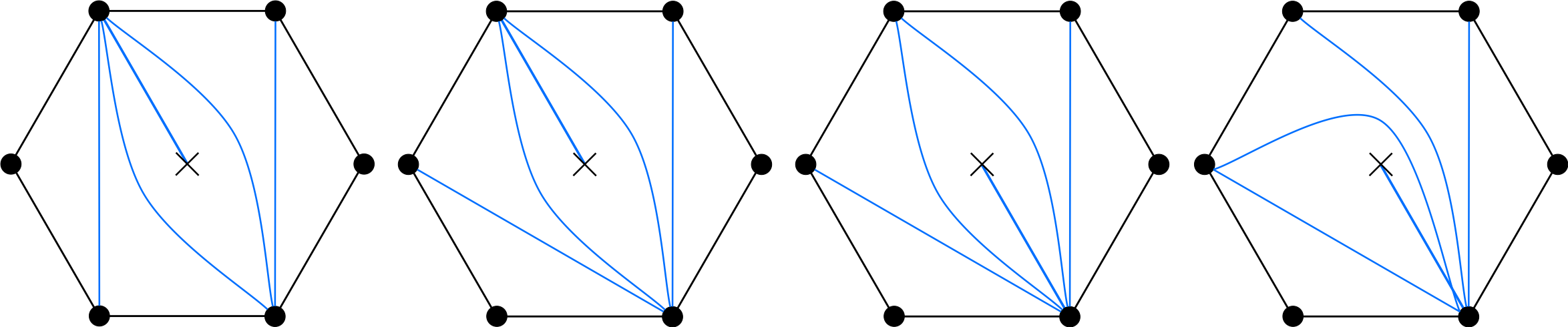}
                \caption{}
                \label{Fig:some_flips}
        \end{figure}
Every two consecutive triangulations are related by a flip.
\end{ex}

\begin{defi} Let $\surf$ be a surface with marked points and orbifold points, and let $\tau$ be an ideal triangulation of $\surf$.
\begin{enumerate}\item An \emph{ideal triangle} of $\tau$ is the topological closure of a connected component of the complement in $\Sigma$ of the union of the arcs in $\tau$.
\item An ideal triangle $\triangle$ is \emph{interior} if its intersection with the boundary of $\Sigma$ consists only of (possibly none) marked points. Otherwise it will be called \emph{non-interior}.
\item An \emph{orbifolded triangle} is an ideal triangle (not necessarily interior) that contains an orbifold point.
\end{enumerate}
\end{defi}

We now give a combinatorial description of ideal triangulations in terms of \emph{puzzle-piece decompositions}.
Consider the three ``puzzle pieces" shown in Figure \ref{Fig:unpunct_puzzle_pieces}.
        \begin{figure}[!ht]
                \centering
                \includegraphics[scale=.175]{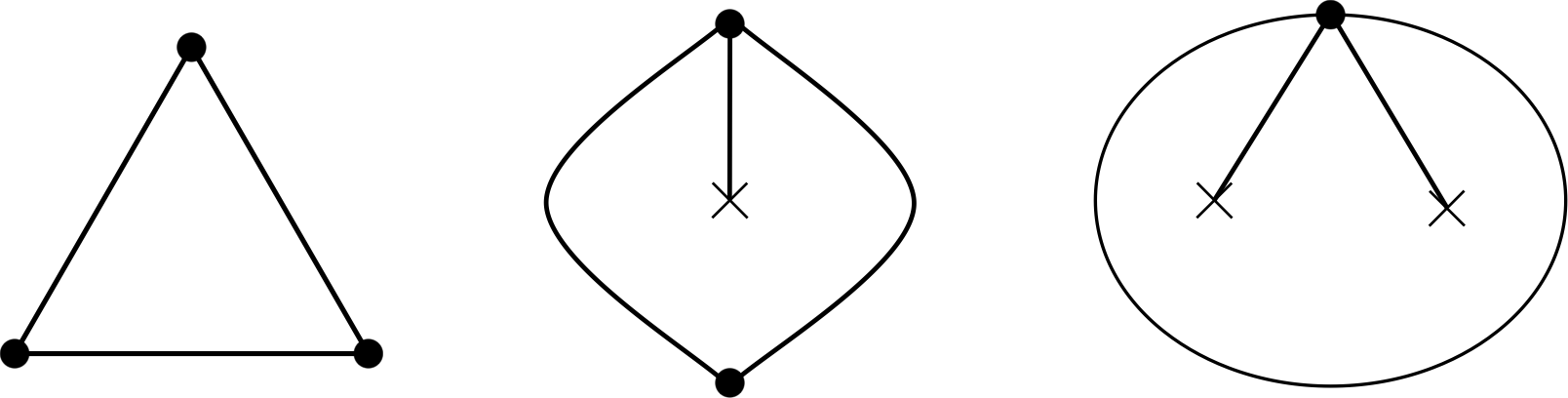}
                \caption{}
                \label{Fig:unpunct_puzzle_pieces}
        \end{figure}

Take several pairwise disjoint copies of these pieces, assign an orientation to each of the outer sides of these copies and fix a partial matching on the set of all outer sides of the copies taken, never matching two sides of the same copy. Then glue the puzzle pieces along the matched sides, making sure the orientations match. Though some partial matchings may not lead to an (ideal triangulation of an) oriented surface, we do have the following.

\begin{thm}\label{thm:existence-of-puzzle-piece-decomps} Any ideal triangulation $\tau$ of an oriented surface $\surf$ can be obtained from a suitable partial matching by means of the procedure just described.
\end{thm}

One way to see this is to start with an ideal triangulation $\tau_0$ of $(\Sigma,\mathbb{M},\varnothing)$, add the points in $\mathbb{O}$, say one by one, completing the given ideal triangulation each time a point is added, and then notice that 
\begin{enumerate}
    \item $\tau_0$ can be obtained from a puzzle-piece decomposition, say, by \cite[Remark 4.2]{Fomin-Shapiro-Thurston};
    \item every time a point from $\mathbb{O}$ is added and the ideal triangulation is completed, the puzzle-piece decomposition can be updated;
    \item possessing a puzzle-piece decomposition is a property of ideal triangulations which is invariant under flips. This can be easily shown through a case by case verification depending on whether the arc to be flipped sits inside a puzzle piece or is an arc shared by two puzzle pieces. The verification was carried out exhaustively in \cite[Figures 21 and 22]{GLF2}.
\end{enumerate}

\begin{defi}\label{def:puzzle-piece-decomps} Any partial matching giving rise to $\tau$ through the procedure just described will be called a \emph{puzzle-piece decomposition} of $\tau$.
\end{defi}

Theorem \ref{thm:existence-of-puzzle-piece-decomps} will play an essential role in the proof of our main result. Notice that the possibilities for how a triangle in a triangulation of one of the surfaces in our setting (unpunctured, or once-punctured closed) can look like are limited. More precisely, there are three types of triangles:
\begin{itemize}
 \itemsep 4pt \parskip 0pt \parsep 0pt

 \item
 \emph{Ordinary triangles}, i.e.\ triangles containing no orbifold points.

 \item
 \emph{Once orbifolded triangles}, i.e.\ triangles containing exactly one orbifold point.
.

 \item
 \emph{Twice orbifolded triangles}, i.e.\ triangles containing containing exactly two orbifold points.
\end{itemize}

Following \cite[Definition 3.2]{GLF2}, given a triangulation $\tau$ of $\SSigma$, we define a
 quiver $\overline{Q}(\tau)$ as follows:
 \begin{enumerate}
  \itemsep 3pt \parskip 0pt \parsep 0pt

  \item
  The vertices of $\overline{Q}(\tau)$ are the arcs in $\tau$, that is,  $\overline{Q}(\tau)_0 = \tau$.

  \item
  The arrows of $\overline{Q}(\tau)$ are induced by the triangles of $\tau$ and the orientation of $\surfnoM$:
  for each triangle $\triangle$ of $\tau$ and every pair $i, j \in \tau$ of arcs in $\triangle$ such that $j$ succeeds $i$ in $\triangle$ with respect to the orientation of $\Sigma$, we draw a single arrow
  from $i$ to $j$.
 \end{enumerate}
  Thus, for the three types of triangles depicted in Figure~\ref{Fig:unpunct_puzzle_pieces}, we draw arrows according to the rule depicted in Figure \ref{Fig:rule-for-arrows-of-overlineQ}, with the convention that no arrow incident to a boundary segment is drawn.
        \begin{figure}[!ht]
                \centering
                \includegraphics[scale=.1]{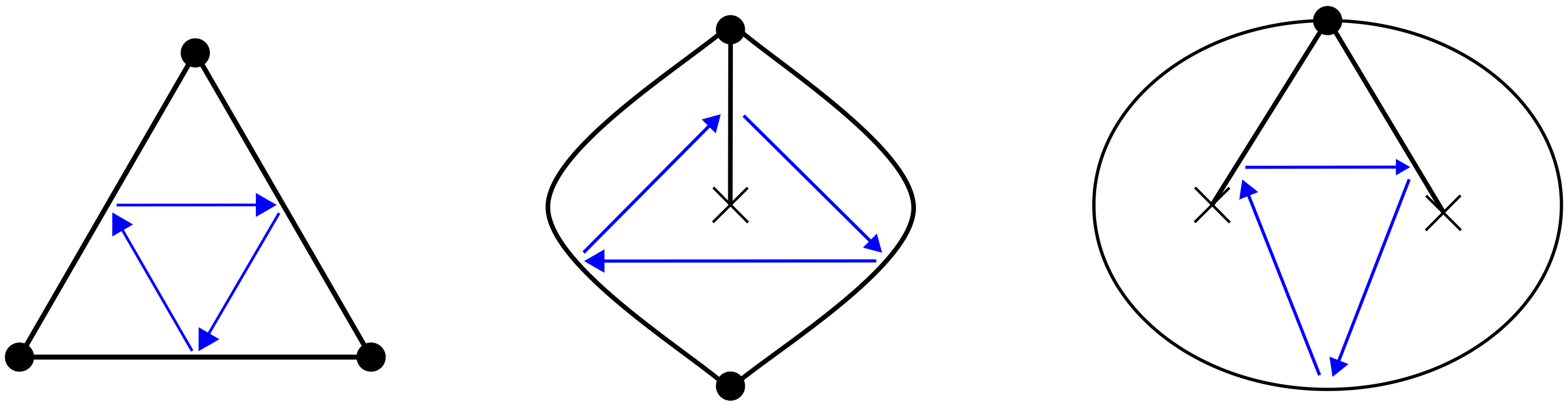}
                \caption{}
                \label{Fig:rule-for-arrows-of-overlineQ}
        \end{figure}

\subsection{Colored triangulations}
\label{subsec:colored-triangulations}

\,

Let $\SSigma$ be a surface with orbifold points, and let $\tau$ a triangulation of $\SSigma$. Through a slight modification of \cite[Equation (4.1)]{GLF2}, we define a family of sets $X_\bullet(\tau) = (X_n(\tau))_{n \in \mathbb{Z}_{\geq 0}}$ by setting $X_n(\tau) = \varnothing$ for $n \not\in \{0, 1, 2\}$ and
\begin{equation}\label{eq:def-sets-X}
 \begin{array}{ccc}
X_0(\tau)   =  \overline{Q}(\tau)_0,
&
X_1(\tau)   =  \overline{Q}(\tau)_1,
&
 X_2(\tau) = \{ \triangle \suchthat \text{$\triangle$ is an interior triangle of $\tau$} \}
 \end{array}
\end{equation}
(a triangle $\triangle$ is \emph{interior} if its intersection with the boundary of $\Sigma$ consists only of marked points).
We use $X_\bullet(\tau)$ to define a chain complex $\Ctau$ as follows:
\begin{equation}\label{eq:def-chain-complexes}
 \xymatrix{
  \Ctau :
  \:\:\:
  \cdots \ar[r] & 0
  \ar[r]^{\partial_3\phantom{/10pt/}}
  &
  \F_2 X_2(\tau)
  \ar[r]^{\partial_2}
  &
  \F_2 X_1(\tau)
  \ar[r]^{\partial_1}
  &
  \F_2 X_0(\tau)
  \ar[r]  \ar[r]^/9pt/{\partial_0}
  &
  0,
 }
\end{equation}
where $\F_2X$ stands for the vector space with basis $X$ over the two-element field $\F_2\coloneqq \Z/2\Z$.
The non-zero differentials are given on basis elements as follows:
\begin{equation}\label{eq:def-of-differentials}
 \arraycolsep 2pt
 \begin{array}{lcll}
  \partial_2(\triangle)
  &=&
  {{\alpha}} + {{\beta}} + {{\gamma}}
  &
  \hspace{0.5em}
  \text{if \smash{$\triangle \in X_2(\tau)$} induces
  $\alpha, \beta, \gamma \in \overline{Q}(\tau)_1$,}
  \\ [0.5em]
  \partial_1({{\alpha}})
  &=&
  h(\alpha) - t(\alpha)
  &
  \hspace{0.5em}
  \text{for \smash{${{\alpha}} \in X_1(\tau)$}.}
 \end{array}
\end{equation}

\begin{ex}\cite[Example 4.3]{GLF2} In Figure \ref{Fig:pentagon_two_orb_points} we can see two triangulations $\tau$ and $\sigma$ of the pentagon with two orbifold points, as well as the quivers $\overline{Q}(\tau)$ and $\overline{Q}(\sigma)$. We can also visualize the 2-dimensional cells belonging to the sets $X_2(\tau)$ and $X_2(\sigma)$.
\end{ex}

        \begin{figure}[!ht]
                \centering
                \includegraphics[scale=.175]{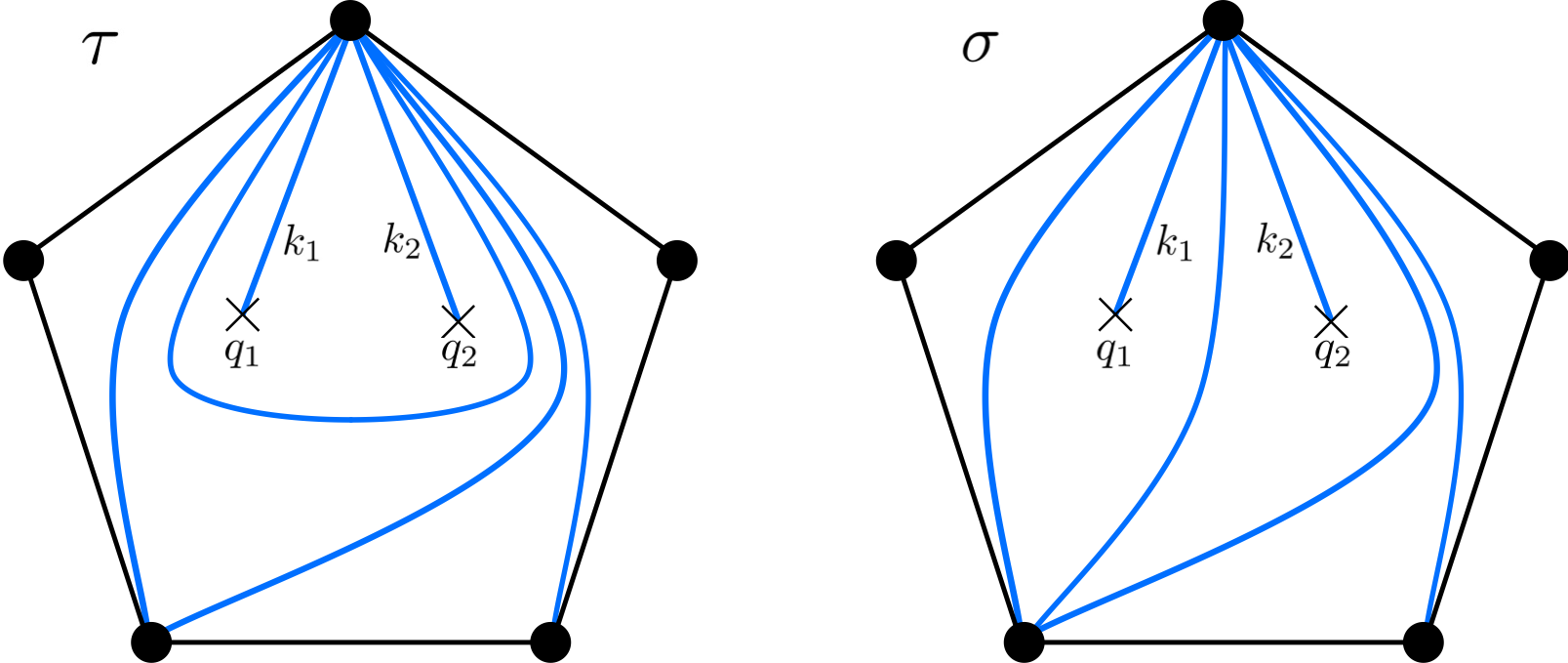}
                \includegraphics[scale=.175]{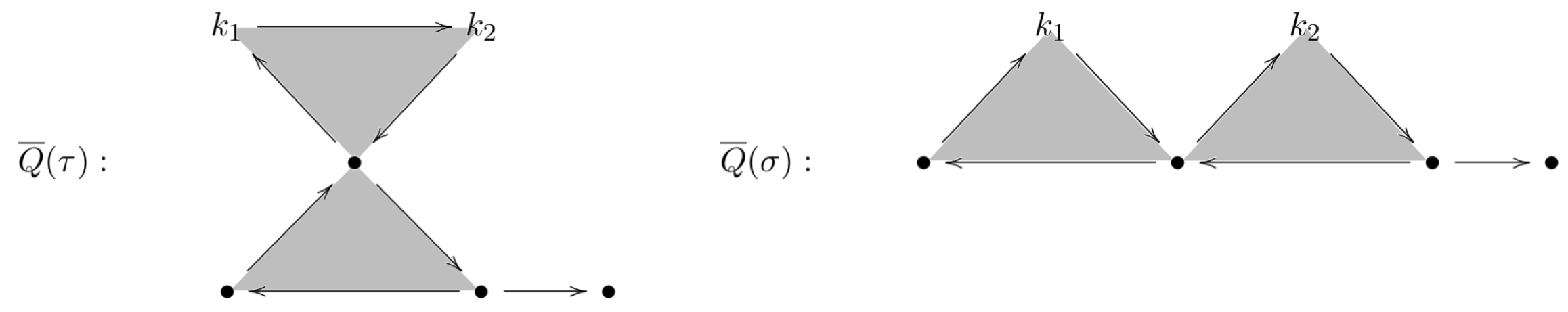}
                \caption{}
                \label{Fig:pentagon_two_orb_points}
        \end{figure}

\begin{defi}\label{defi:colored-triangulations}
 Let $\CZonetauwFtwo$ be the set of $1$-cocycles of the cochain complex $\CCtauwFtwo = \Hom_{\F_2}(\CtauwFtwo, \F_2)$.
 A \emph{colored triangulation} of $\SSigmaw$ is a pair~$(\tau, \xi)$ consisting of a triangulation $\tau$ of $\SSigma$ and a $1$-cocycle~$\xi \in \CZonetauwFtwo\subseteq C^1(\tau)=\Hom_{\F_2}(C_1(\tau), \F_2)$.
\end{defi}

\begin{remark}\label{rem:cocycle-condition}\begin{enumerate}\item The chain complex $C_\bullet(\tau,\omega)$ defined in \cite[Equation (4.1)]{GLF2} is a subcomplex of the chain complex $C_\bullet(\tau)$ we have defined above through \eqref{eq:def-sets-X}, \eqref{eq:def-chain-complexes} and \eqref{eq:def-of-differentials}. It is easy to see that the inclusion $C_\bullet(\tau,\omega)\hookrightarrow C_\bullet(\tau)$ is a homotopy equivalence.
\item The first cohomology group $H^1(C^\bullet(\tau))$ is isomorphic to $H^1(\Sigma\setminus\mathbb{M},\F_2)$, see \cite[Definition 3.6, Equations (4.1), (4.3) and (8.1), and Corollary 8.8]{GLF2}. Thus, for instance, if $\Sigma$ has positive genus, then $H^1(C^\bullet(\tau))\neq 0$.
\item By definition, $C_1(\tau)$ is the $\F_2$-vector space with basis $\overline{Q}(\tau)_1$. Let $\{{{\alpha}}^\vee\suchthat\alpha\in \overline{Q}(\tau)_1\}$ be the $\F_2$-vector space basis of $C^1(\tau)=\Hom_{\F_2}(C_1(\tau), \F_2)$ which is dual to $\overline{Q}(\tau)_1$.
Then, choosing a cocycle $\xi = \sum_{{{\alpha}}} \xi({{\alpha}}) {{\alpha}}^\vee \in \CZonetauwFtwo$ amounts to fixing, for each arrow \smash{${{\alpha}} \in \overline{Q}(\tau)_1$}, an element $\xi({{\alpha}}) \in \{0,1\} = \F_2$ in such a way that whenever ${{\alpha}}, {{\beta}}, {{\gamma}}$ are arrows of $\overline{Q}(\tau)$ induced by an interior triangle $\triangle$
 one has
 \[
  \xi({{\alpha}}) + \xi({{\beta}}) + \xi({{\gamma}}) = 0 \:\in\: \F_2 \,.
 \]
 See Section \ref{sec:on-need-of-cocycles} below for a brief discussion on the necessity to impose this cocycle condition.
\end{enumerate}
\end{remark}

\section{Jacobian and semi-linear clannish algebras associated to colored triangulations}
\label{sec:Jac-algs-and-semilinear-clan-algs-of-colored-triangs}

\subsection{The weighted quiver of a triangulation}
\label{subsec:weighted-quiver}

\,

As already mentioned in the Introduction, our input information will consist not only of a surface $\SSigma$, but of an assignment of a weight to each orbifold point.

\begin{defi}\label{def:surf-with-weighted-orb-pts}
 A \emph{surface with marked points and weighted orbifold points}~$\SSigmaw$, is a surface $\SSigma=\surf$ together with a function $\omega : \orb \to \{1, 4\}$.
\end{defi}

\begin{remark}\begin{enumerate}\item The idea of taking a function $\omega:\orb\rightarrow\{1,4\}$ as part of the input information comes from~\cite{FeShTu-orbifolds}.
\item If $\Gamma$ is a discrete subgroup of $\operatorname{PSL}_2(\mathbb{R})$ and $z$ is a point in the upper half plane $\mathbb{U}\subseteq\mathbb{C}$ which is fixed by a non-identity element of $\Gamma$, then the order of the stabilizer $\Gamma_z\subseteq \Gamma$ is said to be the \emph{order of $q=p(z)$ as an orbifold point} of $\mathbb{U}/\Gamma$, where $p:\mathbb{U}\rightarrow \mathbb{U}/\Gamma$ is the projection to the orbit space. In Teichmüller theory one typically fixes the topological type of $\mathbb{U}/\Gamma$ (that is, one fixes it as a topological manifold, but ignores any possible Riemann surface structure on it) as well as a set of prescribed orbifold points, together with their prescribed orders --integers greater than $1$, then considers all the discrete subgroups $G$ of $\operatorname{PSL}_2(\mathbb{R})$ such that $\mathbb{U}/G$ has the desired topological type and the prescribed orbifold points, with the prescribed orders. 
\item As such, the number $\omega(q)\in\{1,4\}$ is unrelated to the order of $q$ as an orbifold point, which plays no role in this paper.
 \end{enumerate}
\end{remark}

For the rest of the article,~$\SSigma_\omega=(\Sigma,\marked,\orb,\omega)$ will be part of our \emph{a priori} given input. For each triangulation $\tau$ of $\SSigma$, we shall define a weighted quiver $(Q(\tauw), \dtuple(\tauw))$.

\begin{defi}\label{def:Q(tau,omega)}\cite[Definition 3.3]{GLF2}
Let $\SSigmaw=(\surfnoM,\marked,\orb,\omega)$ be a surface with weighted orbifold points as in Definition \ref{def:surf-with-weighted-orb-pts}, and let $\tau$ a triangulation of $\SSigma$.
  For each arc $i \in Q_0(\tau)$ we define an integer $\dtauwi$, the \emph{weight of $i$ with respect to $\omega$}, by the rule
   \[
    \dtauwi
    \coloneqq 
    \begin{cases}
     2         & \text{if $i$ is a non-pending arc,} \\
     \omega(q) & \text{if $i$ is a pending arc with $q \in i \cap \orb$.}
    \end{cases}
   \]
We set $\dtuple(\tauw)=(\dtauwi)_{i\in\tau}$, and define the \emph{weighted quiver of $\tau$ with respect to $\omega$} to be the weighted quiver $(Q(\tauw), \dtuple(\tauw))$ on the vertex set $Q_0(\tauw)=\tau$, where $Q(\tauw)$ is the quiver obtained from $\overline{Q}(\tau)$ by adding an extra arrow $j\rightarrow i$ for each pair of pending arcs $i$ and $j$ that satisfy $\dtauwi = \dtauwi[j]$ and for which $\overline{Q}(\tau)$ has an arrow from $j$ to $i$.
\end{defi}

\begin{ex}\label{ex:all-possible-associated-quivers}\cite[Example 3.8]{GLF2} Consider the triangulations $\tau$ and $\sigma$ from Figure \ref{Fig:pentagon_two_orb_points}. The quivers $Q(\tauw)$ and $Q(\sigma,\omega)$ can be seen in Figure
        \begin{figure}[!ht]
                \centering
                \includegraphics[scale=.125]{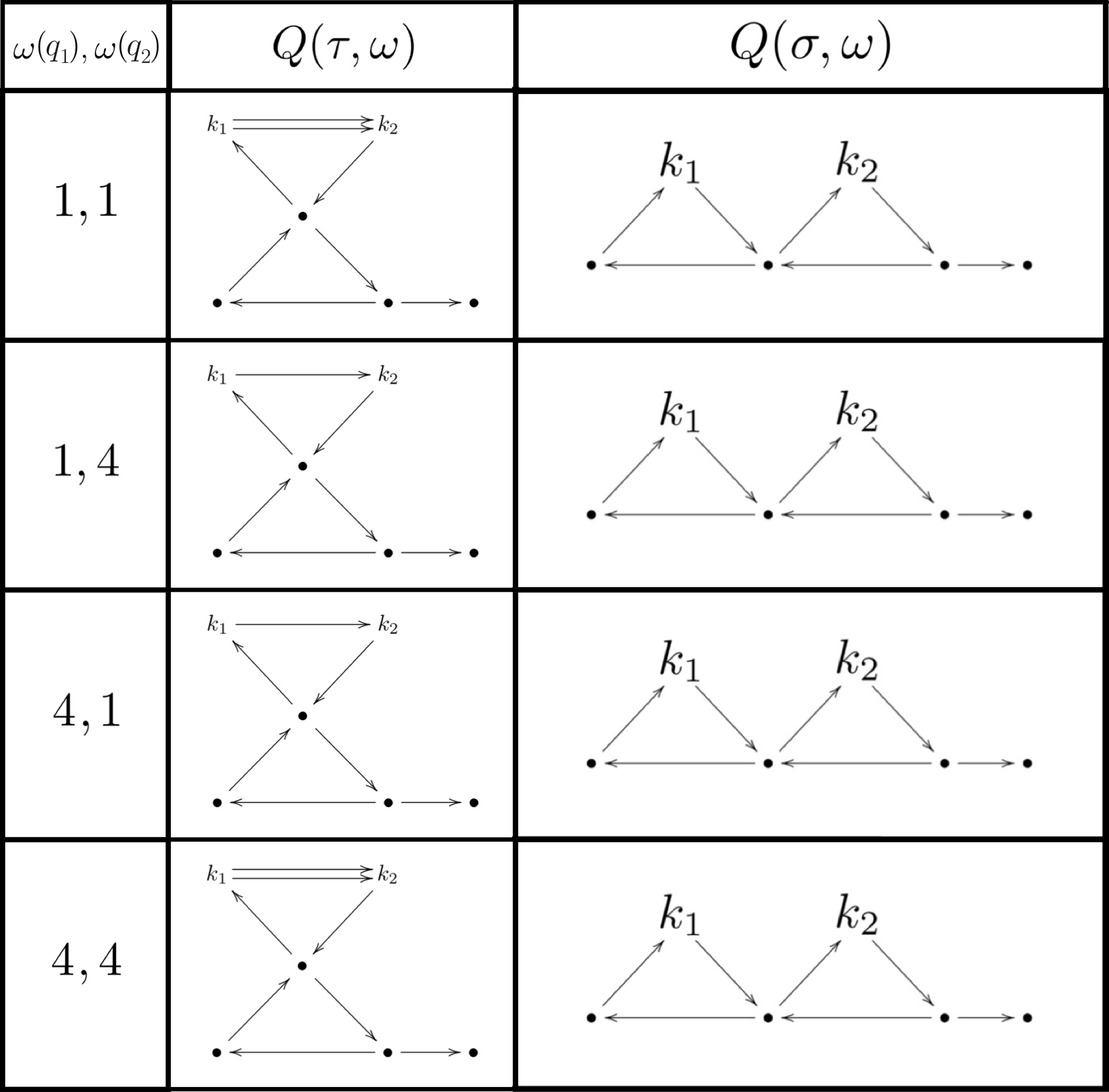}
                \caption{}
                \label{Fig:example_Qtauomega}
        \end{figure}
for all possible functions $\omega:\orb\rightarrow\{1,4\}$. No triangle of $\sigma$ contains more than one orbifold point, hence $Q(\sigma,\omega)=\overline{Q}(\sigma)$ for every function $\omega:\orb\rightarrow\{1,4\}$.
\end{ex}

\begin{ex}\label{ex-weighted-quivers-def-in-tables-also-come-from-triangulations}
For $k=1,\ldots,7$, the weighted quiver $(Q,\bldD)$ appearing in the column labeled ``Block $k$'' in Tables  \ref{table-Jacobian-blocks-1-to-5}, \ref{table-Jacobian-blocks-6-to-10}, \ref{table-semilinear-clannish-blocks-1-to-5} and \ref{table-semilinear-clannish-blocks-6-to-10} has the form $(Q(\tauw), \dtuple(\tauw))$ for some triangulation $\tau$ of a puzzle piece from Figure \ref{Fig:unpunct_puzzle_pieces} with weighted orbifold points.
\end{ex}

\subsection{Arbitrary weights: algebras defined over degree-4 field extensions}\label{subsec:algebras-for-arb-weights}

\,

Let $\SSigmaw=(\Sigma,\marked,\orb,\omega)$ be a surface with weighted orbifold points, and let $\tau$ be a triangulation of $\SSigma$. Set $d=\lcm\{d(\tau,\omega)_k\suchthat k\in\tau\}$, which is equal to either $2$ or $4$ (because $\tau$ contains at least one non-pending arc), and let $E/F$ be a degree-$d$ datum, and $L/F$ be the degree-$2$ datum contained in $E/F$ (see Subsection \ref{sec:specific-field-extensions}). Notice that
\begin{equation}\label{eq:d=2-or-d=4-depending-on-omega}
d=\begin{cases}
2 & \text{if $\orb=\varnothing$ or $\omega\equiv 1$},\\
4 & \text{otherwise;}
\end{cases}
\qquad
\text{hence}
\qquad
[E:L]=\begin{cases}
1 & \text{if $\orb=\varnothing$ or $\omega\equiv 1$},\\
2 & \text{otherwise;}
\end{cases}
\end{equation}

\begin{remark}\label{rem:orb-not-empty-and-omega-constant-1=>degree-2-datum}
\begin{enumerate}\item If $\orb\neq\varnothing$, then there are $2^{\orb}-1$ functions $\omega:\orb\rightarrow\{1,4\}$ that yield $d=4$, but only one that yields $d=2$, namely, the constant function $\omega\equiv 1$.
\item If $\omega\equiv 1$, then \eqref{eq:d=2-or-d=4-depending-on-omega} tells us that $E/F$ is a degree-$2$ datum and $E=L$ (even if $\orb=\varnothing$). In particular, when $\omega\equiv 1$, the forthcoming constructions and results are valid over the field extension $\mathbb{C}/\mathbb{R}$.
\end{enumerate}
\end{remark}

Following \cite[Section 6]{GLF2}, for each $k\in\tau$ we set $F_k/F$ to be the unique degree-$d(\tau,\omega)_k$ field subextension of $E/F$, and denote $G_k=\Gal(F_k/F)$. We also denote $G_{j,k}=\Gal(F_j\cap F_k/F)$ for $j,k\in\tau$. Thus:
$$
G_{j,k}=\begin{cases}
\{\myid_{E},\rho,\rho^2,\rho^3\} & \text{if $\lcm(d(\tau,\omega)_j,d(\tau,\omega)_k)=4$;}\\
\{\myid_{L},\theta\} & \text{if $\lcm(d(\tau,\omega)_j,d(\tau,\omega)_k)=2$;}\\
\{\myid_{F}\} & \text{if $\lcm(d(\tau,\omega)_j,d(\tau,\omega)_k)=1$.}
\end{cases}
$$

\subsubsection{\textbf{The Jacobian algebra of a colored triangulation}}\label{subsubsec:arb-weights-Jac-alg}

Let $(\tau,\xi)$ be a colored triangulation of $\SSigmaw$.
Exactly as in \cite[Definition 6.1]{GLF2}, we define a modulating function $g(\tau,\xi):Q(\tau,\omega)_1\rightarrow\bigcup_{j,k\in\tau}G_{j,k}$ as follows. Take an arrow $a\in Q(\tau,\xi)_1$.
\begin{enumerate}
\item If $d(\tau,\omega)_{h(a)}=1$ or $d(\tau,\omega)_{t(a)}=1$, set
$$
g(\tau,\xi)_{a}=\myid\in G_{h(a),t(a)}.
$$
\item If $d(\tau,\omega)_{h(a)}\neq 1\neq d(\tau,\omega)_{t(a)}$, and $d(\tau,\omega)_{h(a)}d(\tau,\omega)_{t(a)}<16$, set
    $$
    g(\tau,\xi)_{a}=\theta^{\xi(a)}\in\ G_{h(a),t(a)}.
    $$
\item If $d(\tau,\omega)_{h(a)}=4=d(\tau,\omega)_{t(a)}$, then $t(a)$ and $h(a)$ are pending arcs contained in a twice orbifolded triangle $\triangle$, and
\begin{enumerate}
\item the quiver $\overline{Q}(\tau)$ has exactly one arrow $t(a)\rightarrow h(a)$, induced by $\triangle$; let $\delta_0^\triangle$ be this arrow of $\overline{Q}(\tau)$; notice that we can evaluate $\xi$ at $\delta_0^\triangle$;
\item the quiver $Q(\tau,\omega)$ has exactly two arrows going from $t(a)$ to $h(a)$, one of which is $\delta_0^\triangle$; let $\delta_1^\triangle$ be the other such arrow of $Q(\tau,\omega)$; of course, $a\in\{\delta_0^\triangle,\delta_1^\triangle\}$;
\item $[E:F]=4$ and $F_{h(a)}=E=F_{t(a)}$; let $\ell$ be the unique element of $\{0,1\}$ whose residue class modulo 2 is $\xi(\delta_0^\triangle)\in\F_2\coloneqq \mathbb{Z}/2\mathbb{Z}$ (equivalently, let $\ell$ be the unique element of $\{0,1\}$ such that $\rho^\ell|_{L}=\theta^{\xi(\delta_0^\triangle)}=\rho^{\ell+2}|_{L}$).
\end{enumerate}
We set
    $$
    g(\tau,\xi)_{a}=\begin{cases}
    \rho^\ell & \text{if $a=\delta_0^\triangle$;}\\
    \rho^{\ell+2} & \text{if $a=\delta_1^\triangle$.}
    \end{cases}
    $$
\end{enumerate}

\begin{ex}\label{ex-modulated-quivers-from-puzlle-pieces-defined-table}
For $k=1,\ldots,7$, the weighted quiver $(Q,\bldD)$ and  the modulating function $Q_{1}\to\bigcup_{i,j}G_{i,j}$ appearing in the column labeled ``Block $k$'' in Tables \ref{table-Jacobian-blocks-1-to-5} and \ref{table-Jacobian-blocks-6-to-10} have the form $(Q(\tau,\xi),\bldD(\tau,\xi))$ and  $g(\tau,\xi)$, respectively, for some colored triangulation $(\tau,\xi)$ of a puzzle piece surface from Figure \ref{Fig:unpunct_puzzle_pieces}.
\end{ex}

For the next definition we refer the reader to \S\ref{subsubsec:weighted-quivers-and-modulations} and \S\ref{subsubsec:modulating-functions-def-of-concept}.

\begin{defi}\label{def:species-of-colored-triangulation}\cite[Definition 6.2]{GLF2} The \emph{species of the colored triangulation $(\tau,\xi)$} is the $F$-modulation of $(Q(\inputpairfromtriangulation), \mathbf{d}(\inputpairfromtriangulation))$ defined by setting 
\begin{align*}
(\mathbf{F},\mathbf{A}(\tau,\xi))&\coloneqq ((F_k)_{k\in\tau},(A(\tau,\xi)_a)_{a\in Q(\tau,\xi)_1}),  \qquad \text{where} \\
A(\tau,\xi)_a&\coloneqq F_{h(a)}^{g(\tau,\xi)_{a}}\otimes_{F_h(a)\cap F(ta)} F_{t(a)}. 
\end{align*}
\end{defi}

We write $R\coloneqq \times_{k\in\tau} F_k$ and $A(\tau,\xi)\coloneqq \bigoplus_{a\in Q(\tau,\xi)_1}A(\tau,\xi)_a$. It is clear that $R$ is a semisimple ring and $A(\tau,\xi)$ is an $R$-$R$-bimodule.
Detailed examples can be found in \cite[Examples 6.3 and 6.4]{GLF2}. The next proposition asserts that $(\mathbf{F},\mathbf{A}(\tau,\xi))$ is a species realization of one of the skew-symmetrizable matrices associated to $\tau$ by Felikson-Shapiro-Tumarkin \cite{FeShTu-orbifolds}, cf. \cite[Remark 3.5-(2)]{GLF2}. The proof is left to the reader.

\begin{prop}\label{prop:our-species-realize-FeShTu-matrices}\cite[Proposition 6.5]{GLF2} Let $\SSigmaw$ be a surface with weighted orbifold points, and $(\tau,\xi)$ a colored triangulation of $\SSigmaw$, where $\SSigma$ is either unpunctured or once-punctured closed. Let $B(\tau,\omega)=(b_{kj}(\tau,\omega))_{k,j}$ denote the skew-symmetrizable matrix that corresponds to the weighted quiver $(Q(\tau,\omega),\dtuple(\tau,\omega))$ under \cite[Lemma 2.3]{LZ}. For every pair $(k,j)\in\tau\times\tau$ we have:
\begin{enumerate}
\item $e_kA(\tau,\xi)e_j$ is an $F_k$-$F_j$-bimodule;
\item $\dim_{F_k}(e_kA(\tau,\xi)e_j)=[b_{kj}(\tau,\omega)]_+$ and $\dim_{F_j}(e_kA(\tau,\xi)e_j)=[-b_{jk}(\tau,\omega)]_+$, where $[b]_+\coloneqq \max(b,0)$;
\item there is an $F_j$-$F_k$-bimodule isomorphism $\Hom_{F_k}(e_kA(\tau,\xi)e_j,F_k)\cong\Hom_{F_j}(e_kA(\tau,\xi)e_j,F_j)$.
\end{enumerate}
\end{prop}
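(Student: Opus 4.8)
The statement is a local, triangle-by-triangle verification, so the plan is to fix a pair $(k,j)\in\tau\times\tau$ and reduce everything to the puzzle-piece decomposition recalled before Definition~\ref{def:Q(tau,omega)}. First I would note that $e_kA(\tau,\xi)e_j=\bigoplus_{a\colon j\to k}A(\tau,\xi)_a$, where the sum runs over the arrows of $Q(\tau,\omega)$ from $j$ to $k$, and each summand is by construction the $F_k$-$F_j$-bimodule $F_k^{g(\tau,\xi)_a}\otimes_{F_k\cap F_j}F_j$; part~(1) is then immediate, and for parts~(2) and~(3) it suffices to treat each such summand separately. Since every arrow of $Q(\tau,\omega)$ is induced by a single triangle $\triangle$ of $\tau$, and since the type of $\triangle$ (ordinary, once orbifolded, twice orbifolded) together with the weights $d(\tau,\omega)_k,d(\tau,\omega)_j\in\{1,2,4\}$ is one of finitely many cases, the whole statement follows by inspecting this finite list.

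The key steps, in order, are: (i) record that by Definition~\ref{def:Q(tau,omega)} there is at most one arrow $j\to k$ in $Q(\tau,\omega)$ unless $k$ and $j$ are pending arcs in a twice orbifolded triangle with $d(\tau,\omega)_k=d(\tau,\omega)_j\in\{1,4\}$, in which case there are exactly two; (ii) for a single summand $A_a=F_k^{g_a}\otimes_{F_k\cap F_j}F_j$, compute $\dim_{F_k}(A_a)=[F_j:F_k\cap F_j]$ and $\dim_{F_j}(A_a)=[F_k:F_k\cap F_j]$, these being elementary facts about induced bimodules over field extensions, so that summing over the (one or two) arrows gives $\dim_{F_k}(e_kA(\tau,\xi)e_j)$ and $\dim_{F_j}(e_kA(\tau,\xi)e_j)$; (iii) on the combinatorial side, compute $b_{kj}(\tau,\omega)$ directly from the definition of the skew-symmetrizable matrix attached to $(Q(\tau,\omega),\mathbf d(\tau,\omega))$ via \cite[Lemma 2.3]{LZ}, and check in each triangle type that $[b_{kj}(\tau,\omega)]_+$ equals the number of arrows $j\to k$ times $[F_j:F_k\cap F_j]$, and symmetrically for $[-b_{jk}(\tau,\omega)]_+$; (iv) for part~(3), invoke Proposition~\ref{prop:properties-of-Fi-Fj-bimodules}-(1), which says the simple objects $F_k^\rho\otimes_{F_k\cap F_j}F_j$ of $\mathcal C_{F_k,F_j}$ are precisely the bimodules of this form, together with the defining property of a species (the $\Hom$-duality condition in \S\ref{subsubsec:species-def-of-concept}), or alternatively the explicit duality isomorphisms exhibited in the proof of Lemma~\ref{lemma:simplyfing-reps-of-these-species}, to produce the required bimodule isomorphism $\Hom_{F_k}(e_kA(\tau,\xi)e_j,F_k)\cong\Hom_{F_j}(e_kA(\tau,\xi)e_j,F_j)$ summand by summand.

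The only mildly delicate point, and the one I expect to be the main obstacle, is the bookkeeping in the twice orbifolded case with $d(\tau,\omega)_k=d(\tau,\omega)_j=4$: here $F_k=F_j=E$, $F_k\cap F_j=E$, there are two parallel arrows $\delta_0^\triangle,\delta_1^\triangle$ carrying the automorphisms $\rho^\ell$ and $\rho^{\ell+2}$, each summand is one-dimensional over $E$ on both sides, and one must check that the matrix entry $b_{kj}(\tau,\omega)$ indeed equals $\pm 2$ with the correct sign, matching the total dimension $2$. One must likewise double-check the once orbifolded cases where one weight is $1$ and the other is $2$ or $4$, where the intersection field is $F$ and the two one-sided dimensions differ. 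All of this is routine once the case list is laid out, but it is the part where a sign error or a miscount would hide, so I would carry it out carefully against Figures~\ref{Fig:unpunct_puzzle_pieces} and~\ref{Fig:rule-for-arrows-of-overlineQ} and against the explicit Blocks~1--7 in Tables~\ref{table-Jacobian-blocks-1-to-5} and~\ref{table-Jacobian-blocks-6-to-10}, which by Example~\ref{ex-modulated-quivers-from-puzlle-pieces-defined-table} already exhibit every local configuration that can occur. Since the authors write ``The proof is left to the reader,'' I would simply indicate that it amounts to this finite check and omit the tables of cases.
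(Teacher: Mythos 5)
Your proposal is correct and is exactly the finite, triangle-by-triangle verification the authors intend when they write ``the proof is left to the reader'' (the paper contains no proof of its own, deferring to \cite{GLF2}): decompose $e_kA(\tau,\xi)e_j$ over the arrows $j\to k$, compute $\dim_{F_k}(A_a)=[F_j:F_k\cap F_j]=d_j/\gcd(d_k,d_j)$ and $\dim_{F_j}(A_a)=[F_k:F_k\cap F_j]$ per summand, match against the matrix entries from \cite[Lemma 2.3]{LZ}, and get part (3) from the explicit $F_j$-$F_k$-bimodule isomorphisms $\Hom_{F_k}(F_k^{\rho}\otimes_{F_k\cap F_j}F_j,F_k)\cong F_j^{\rho^{-1}}\otimes_{F_k\cap F_j}F_k\cong\Hom_{\mathbf{vec}\text{-}F_j}(F_k^{\rho}\otimes_{F_k\cap F_j}F_j,F_j)$ exhibited in the proof of Lemma \ref{lemma:simplyfing-reps-of-these-species}. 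One small inaccuracy in your step (i): two arcs can be sides of two distinct triangles (e.g.\ in the once-punctured torus, or in an annulus with one marked point on each boundary component), so $\overline{Q}(\tau)$, and hence $Q(\tau,\omega)$, can carry two parallel arrows between non-pending arcs as well; this is harmless for your argument, since both the dimensions and the matrix entries are additive over arrows and each arrow is still induced by a single triangle, but that configuration should appear in your case list.
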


\begin{remark}
Notice that in the situation $d(\tau,\omega)_{h(a)}=4=d(\tau,\omega)_{t(a)}$ above, writing $a:j\rightarrow k$ we have
$$
F_{k}^{\theta^{\xi_{\delta_0}}}\otimes_{L} F_{j} = E^{\theta^{\xi_{\delta_0}}}\otimes_LE
\cong 
(E^{\rho^l}\otimes_{E} E)
\oplus 
(E^{\rho^{l+2}}\otimes_{E} E)=
(F_{k}^{g(\tau,\xi)_{\delta_0}}\otimes_{F_k\cap F_j} F_{j})
\oplus 
(F_{k}^{g(\tau,\xi)_{\delta_1}}\otimes_{F_k\cap F_j} F_{j})
$$
and $E^{\rho^l}\otimes_{E} E
\not\cong 
E^{\rho^{l+2}}\otimes_{E} E$ as $E$-$E$-bimodules.
\end{remark}

We now move towards the definition of a natural potential $W(\tau,\xi)\in\usualRA{A(\tau,\xi)}$. There are some obvious cycles on $A(\tau,\xi)$, that we point to explicitly.

\begin{defi}\label{def:cycles-from-triangles}\cite[Definitions 6.7, 6.8 and 6.9]{GLF2}
Let $(\tau,\xi)$ be a colored triangulation of $\SSigmaw$ and $\triangle$ be an interior triangle of $\tau$.
\begin{enumerate}
    \item If $\triangle$ does not contain any orbifold point, then, with the notation from the picture on the upper left in Figure \ref{Fig:triangles_quivers}), we set $W^\triangle(\tau,\xi);=\alpha^\triangle\beta^\triangle\gamma^\triangle$;
    \item if $\triangle$ contains exactly one orbifold point, let $k$ be the unique pending arc of $\tau$ contained in $\triangle$. Using the notation from the picture on the upper right in Figure \ref{Fig:triangles_quivers}, we set $W^\triangle(\tau,\xi)=\alpha^\triangle\beta^\triangle\gamma^\triangle$, regardless of whether $d(\tau,\omega)_k=1$ or $d(\tau,\omega)_k=4$;
    \item it $\triangle$ contains exactly two orbifold points, let $k_1$ and $k_2$ be the two pending arcs of $\tau$ contained in $\triangle$, and assume that they are configured as in Figure \ref{Fig:triangles_quivers}.
\begin{itemize}
\item If $d(\tau,\omega)_{k_1}=1=d(\tau,\omega)_{k_2}$, then, with the notation of the picture on the bottom left in Figure \ref{Fig:triangles_quivers}, we set $W^\triangle(\tau,\xi)=\delta_0^\triangle\beta^\triangle\gamma^\triangle+\delta_1^\triangle\beta^\triangle u\gamma^\triangle$.
\item If $d(\tau,\omega)_{k_1}=1$ and $d(\tau,\omega)_{k_2}=4$, then, with the notation of the picture on the bottom right in Figure \ref{Fig:triangles_quivers}, we set $W^\triangle(\tau,\xi)=\alpha^\triangle\beta^\triangle\gamma^\triangle$.
\item If $d(\tau,\omega)_{k_1}=4$ and $d(\tau,\omega)_{k_2}=1$, then, with the notation of the picture on the bottom right in Figure \ref{Fig:triangles_quivers}, we set $W^\triangle(\tau,\xi)=\alpha^\triangle\beta^\triangle\gamma^\triangle$.
\item If $d(\tau,\omega)_{k_1}=4$ and $d(\tau,\omega)_{k_2}=4$, then, with the notation of the picture on the bottom left in Figure \ref{Fig:triangles_quivers}, we set $W^\triangle(\tau,\xi)=(\delta_0^\triangle+\delta_1^\triangle)\beta^\triangle\gamma^\triangle$.
\end{itemize}
\end{enumerate}
\end{defi}

\begin{figure}[!ht]
                \centering
                \includegraphics[scale=.5]{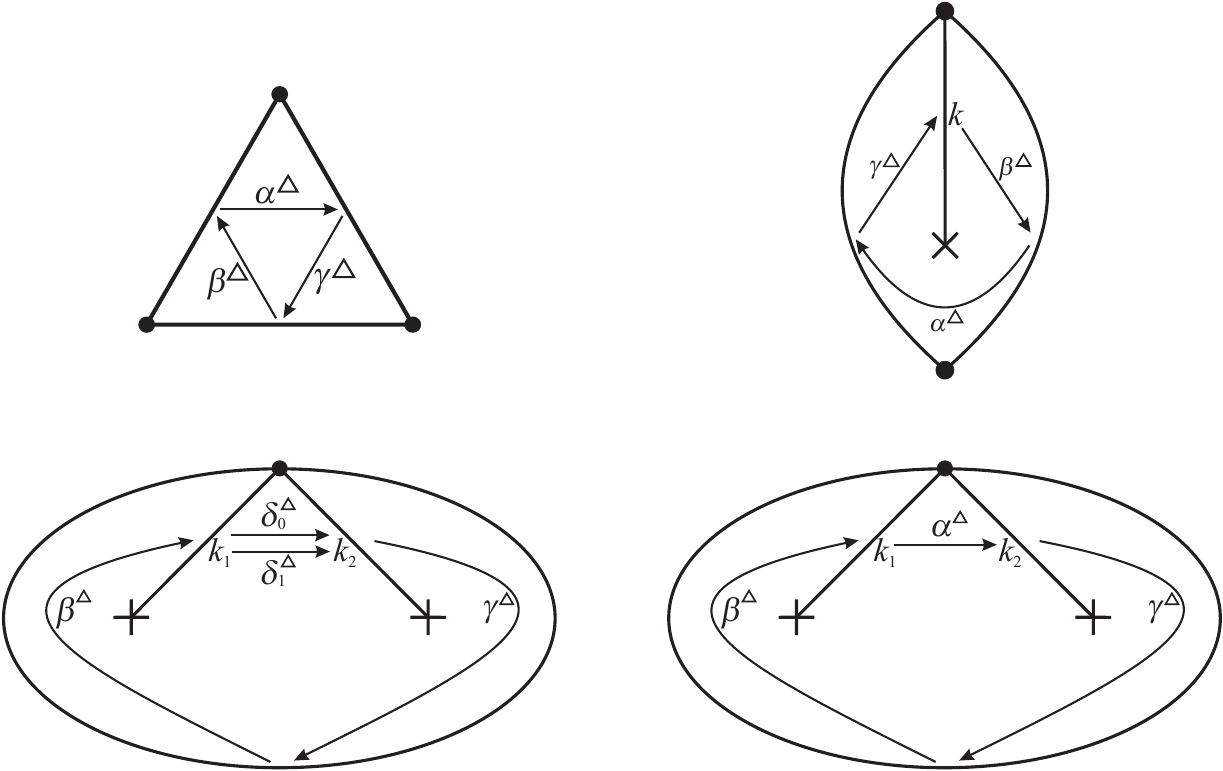}\caption{Notation for the definition of $W^\triangle(\tau,\xi)$.}
                \label{Fig:triangles_quivers}
        \end{figure}

For the next definition, we remind the reader that $\SSigma=\surf$ is assumed to be either unpunctured or once-punctured closed.

\begin{defi}\label{def:W(tau,xi)}\cite[Definition 6.10]{GLF2}\label{def:Jacobian-alg-of-colored-triang} Let $\SSigmaw=(\Sigma,\marked,\orb,\omega)$ be a surface with weighted orbifold points, and $(\tau,\xi)$ a colored triangulation of $\SSigmaw=(\Sigma,\marked,\orb,\omega)$.
\begin{enumerate}
\item
The \emph{potential associated to $(\tau,\xi)$} is
$$
W(\tau,\xi)\coloneqq \sum_{\triangle}W^\triangle(\tau,\xi)\in \usualRA{A(\tau,\xi)} \subseteq\compRA{A(\tau,\xi)},
$$
where the sum runs over all interior triangles $\triangle$ of $\tau$;
\item the \emph{Jacobian algebra associated to $(\tau,\xi)$} is the quotient
$$
\jacobalg{(A(\tau,\xi),W(\tau,\xi))}\coloneqq \compRA{A(\tau,\xi)}/J(W(\tau,\xi)),
$$
where the \emph{Jacobian ideal} $J(W(\tau,\xi))$ is defined according to Definition \ref{def:paths-potentials-cyclic-derivatives-jacobian-algebras}, cf. \cite[Definition 3.11]{GLF1}.
\end{enumerate}
\end{defi}

For detailed examples of the basic arithmetic in $\usualRA{A(\tau,\xi)}$ and in the Jacobian algebra $\jacobalg{(A(\tau,\xi),W(\tau,\xi))}$, we kindly refer the reader to \cite[Example 6.11, Example 6.12 and Section 13]{GLF2} and \cite[Example 6.2.18]{GeuenichPHD}. See Example \ref{ex:jac-algs-pentagon-2orb-pts-arb-weights} below as well.  

\begin{thm}\cite[Theorems 10.1 and 10.2]{GLF2}\label{thm:Jac-algs-are-fin-dim}
    Let $\SSigmaw$ be an unpunctured surface with weighted orbifold points.
    \begin{enumerate}
    \item
    For every colored triangulation $(\tau,\xi)$ of $\SSigmaw$, the Jacobian algebra $\jacobalg{(A(\tau,\xi),W(\tau,\xi))}$ is $F$-linearly isomorphic to $\usualRA{A(\tau,\xi)}/J_0(W(\tau,\xi))$ and its dimension over the ground field $F$ is finite.
    \item For every pari $(\tau,\xi_{1})$ and $(\tau,\xi_{2})$ of colored triangulations of $\SSigmaw$ with same underlying triangulation $\tau$, the following statements are equivalent:
\begin{enumerate}
 \item $[\xi_{1}] = [\xi_{2}]$ in the first cohomology group $H^{1}(C^{\bullet}(\tau))$;
\item the Jacobian algebras $\jacobalg{A(\tau, \xi_{1}), W(\tau, \xi_{1})}$ and $\jacobalg{A(\tau, \xi_{2}), W(\tau, \xi_{2})}$ are isomorphic through an $F$-linear ring isomorphism acting as the identity on the set of idempotents $\{e_{k} \suchthat k \in \tau\}$.
\end{enumerate}
    \end{enumerate}
\end{thm}

\begin{remark} When $\SSigma$ is once-punctured closed,  $[\xi_{1}] = [\xi_{2}]$ in cohomology implies $\jacobalg{A(\tau, \xi_{1}), W(\tau, \xi_{1})}\cong\jacobalg{A(\tau, \xi_{2}), W(\tau, \xi_{2})}$ through an $F$-linear ring isomorphism acting as the identity on $\{e_{k} \suchthat k \in \tau\}$.
\end{remark}

\begin{ex}\label{ex:jac-algs-pentagon-2orb-pts-arb-weights} Consider the triangulations $\tau$ and $\sigma$ of the pentagon with two orbifold points shown in Figure \ref{Fig:pentagon_two_orb_points}. Therein we can visualize not only the quivers $\overline{Q}(\tau)$ and $\overline{Q}(\sigma)$, but all the cells conforming the bases of the chain complexes $C_\bullet(\tau)$ and $C_\bullet(\sigma)$: the shaded regions are the 2-cells, the arrows are the 1-cells, and the vertices of $\overline{Q}(\tau)$ and $\overline{Q}(\sigma)$ are the 0-cells.
Take arbitrary $1$-cocycles $\xi\in Z^1(\tau)\subseteq C^1(\tau)$ and $\phi\in Z^1(\sigma)\subseteq C^1(\sigma)$, and an arbitrary choice of weights $\omega:\orb\rightarrow\{1,4\}$. In view of Theorem \ref{thm:Jac-algs-are-fin-dim}, in Table \ref{table-jac-algs-pentagon-2orb-pts-arb-weights} we can visualize the Jacobian algebras $\jacobalg{A(\tau,\xi),W(\tau,\xi))}$ and $\jacobalg{A(\sigma,\phi),W(\sigma,\phi)}$, for we see the species with potential $(A(\tau,\xi),W(\tau,\xi))$ and $(A(\sigma,\phi),W(\sigma,\phi))$, as well as all the cyclic derivatives of the potentials $W(\tau,\xi)$ and $W(\sigma,\phi)$.

\begin{center}
{\tiny
\begin{tabular}{|c|c|c|}
\hline
$\omega(q_1),\omega(q_2)$ & $\jacobalg{A(\tau,\xi),W(\tau,\xi))}$ & $\jacobalg{A(\sigma,\phi),W(\sigma,\phi)}$\\
\hline
$1,1$ &
\begin{tabular}{c}
$\xymatrix{
F \ar@/_0.5pc/[rr]_{F\otimes_FF} \ar@/^0.5pc/[rr]^{F\otimes_FF} & & F \ar[dl]^{L\otimes_FF} & \\
 & L \ar[ul]^{F\otimes_FL} \ar[dr]^{L^{\theta^{\xi_\beta}}\otimes_LL} & \\
 L \ar[ur]^{L^{\theta^{\xi_\gamma}}\otimes_LL} & & L \ar[ll]^{L^{\theta^{\xi_\alpha}}\otimes_LL} \ar[r]_{L^{\theta^{\xi_\nu}}\otimes_LL} & L
}$ \\
$W(\tau,\xi)=\alpha\beta\gamma+\delta_0\varepsilon\eta+\delta_1\varepsilon u\eta$ \\
\begin{tabular}{ll}
$\partial_\alpha W(\tau,\xi)=\beta\gamma$ & $\partial_{\delta_0}W(\tau,\xi)=\varepsilon\eta$ 
 \\
$\partial_\beta W(\tau,\xi)=\gamma\alpha$ & $\partial_{\delta_1}W(\tau,\xi)=\varepsilon u\eta$  
 \\
$\partial_\gamma W(\tau,\xi)=\alpha\beta$  & 
$\partial_{\varepsilon}W(\tau,\xi)=\eta\delta_0+u\eta\delta_1$ 
 \\
$\partial_{\nu}W(\tau,\xi)=0$& $\partial_{\eta}W(\tau,\xi)=\delta_0\varepsilon+\delta_1\varepsilon u$ 
\end{tabular}
\end{tabular}
& 
\begin{tabular}{c}
$\xymatrix{
 & F \ar[dr]|-{L\otimes_FF} &  & F \ar[dr]^{L\otimes_FF} & & \\
L \ar[ur]^{F\otimes_FL} &  & L \ar[ll]^{L^{\theta^{\phi_\beta}}\otimes_LL} \ar[ur]|-{F\otimes_FL} & & L \ar[ll]^{L^{\theta^{\phi_\varepsilon}}\otimes_L L} \ar[r]_{L^{\theta^{\phi_\nu}}\otimes_LL} & L 
}$ \\ 
$W(\sigma,\phi)=\alpha\beta\gamma+\delta\varepsilon\eta$  \\
\begin{tabular}{ll}
$\partial_\alpha W(\sigma,\phi)=\beta\gamma$ & $\partial_{\delta}W(\sigma,\phi)=\varepsilon\eta$ 
 \\
$\partial_\beta W(\sigma,\phi)=\frac{1}{2}(\gamma\alpha$ & $\partial_{\eta}W(\sigma,\phi)=\delta\varepsilon$  \\
$\phantom{\partial_\beta W(\sigma,\phi)=}+(-1)^{\phi_\beta}u^{-1}\gamma\alpha u)$ & $\partial_{\nu}W(\sigma,\phi)=0$ \\
$\partial_\gamma W(\sigma,\phi)=\alpha\beta$  & 
 \\
$\partial_{\varepsilon}W(\sigma,\phi)=\frac{1}{2}(\eta\delta$   & \\
$\phantom{\partial_{\varepsilon}W(\sigma,\phi)=}+(-1)^{\phi_\varepsilon}u^{-1}\eta\delta u)$&
\end{tabular}
\end{tabular}
\\
\hline
$1,4$ &
\begin{tabular}{c}
$\xymatrix{
F \ar[rr]^{E\otimes_FF} & & E \ar[dl]^{L^{\theta^{\xi_\eta}}\otimes_LE} & \\
 & L \ar[ul]^{F\otimes_FL} \ar[dr]^{L^{\theta^{\xi_\beta}}\otimes_LL} & \\
 L \ar[ur]^{L^{\theta^{\xi_\gamma}}\otimes_LL} & & L \ar[ll]^{L^{\theta^{\xi_\alpha}}\otimes_LL} \ar[r]_{L^{\theta^{\xi_\nu}}\otimes_LL} & L
}$ \\
$W(\tau,\xi)=\alpha\beta\gamma+\delta\varepsilon\eta$ \\
\begin{tabular}{ll}
$\partial_\alpha W(\tau,\xi)=\beta\gamma$ & $\partial_{\delta}W(\tau,\xi)=\varepsilon\eta$ 
 \\
$\partial_\beta W(\tau,\xi)=\gamma\alpha$ & 
$\partial_{\varepsilon}W(\tau,\xi)=\eta\delta$ 
 \\
$\partial_\gamma W(\tau,\xi)=\alpha\beta$  & 
$\partial_{\eta}W(\tau,\xi)= \frac{1}{2}(\delta\varepsilon$  \\
$\partial_{\nu}W(\tau,\xi)=0$ & $\phantom{\partial_{\eta}W(\tau,\xi)=}+(-1)^{\xi_\eta}u^{-1}\delta\varepsilon u)$
\end{tabular}
\end{tabular}
& 
\begin{tabular}{c}
$\xymatrix{
 & F \ar[dr]|-{L\otimes_FF} &  & E \ar[dr]^{L^{\theta^{\phi_\eta}}\otimes_LE} & & \\
L \ar[ur]^{F\otimes_FL} &  & L \ar[ll]^{L^{\theta^{\phi_\beta}}\otimes_LL} \ar[ur]|-{E^{\theta^{\phi_\delta}}\otimes_LL} & & L \ar[ll]^{L^{\theta^{\phi_\varepsilon}}\otimes_L L} \ar[r]_{L^{\theta^{\phi_\nu}}\otimes_LL} & L 
}$ \\ 
$W(\sigma,\phi)=\alpha\beta\gamma+\delta\varepsilon\eta$ \\
\begin{tabular}{ll}
$\partial_\alpha W(\sigma,\phi)=\beta\gamma$ & $\partial_{\delta}W(\sigma,\phi)=\varepsilon\eta$ 
 \\
$\partial_\beta W(\sigma,\phi)=\frac{1}{2}(\gamma\alpha$ & $\partial_{\varepsilon}W(\sigma,\phi)=\eta\delta$  
 \\
 $\phantom{\partial_\beta W(\sigma,\phi)=}+(-1)^{\phi_\beta}u^{-1}\gamma\alpha u)$ & 
$\partial_{\eta}W(\sigma,\phi)=\delta\varepsilon$ 
 \\
$\partial_\gamma W(\sigma,\phi)=\alpha\beta$ & $\partial_{\nu}W(\sigma,\phi)=0$
\end{tabular}
\end{tabular}
\\
\hline
$4,1$ &
\begin{tabular}{c}
$\xymatrix{
E \ar[rr]^{F\otimes_FE} & & F \ar[dl]^{L\otimes_FF} & \\
 & L \ar[ul]^{E^{\theta^{\xi_\varepsilon}}\otimes_LL} \ar[dr]^{L^{\theta^{\xi_\beta}}\otimes_LL} & \\
 L \ar[ur]^{L^{\theta^{\xi_\gamma}}\otimes_LL} & & L \ar[ll]^{L^{\theta^{\xi_\alpha}}\otimes_LL} \ar[r]_{L^{\theta^{\xi_\nu}}\otimes_LL} & L
}$ \\
$W(\tau,\xi)=\alpha\beta\gamma+\delta\varepsilon\eta$ \\
\begin{tabular}{ll}
$\partial_\alpha W(\tau,\xi)=\beta\gamma$ & $\partial_{\delta}W(\tau,\xi)=\varepsilon\eta$ 
 \\
$\partial_\beta W(\tau,\xi)=\gamma\alpha$ & 
$\partial_{\varepsilon}W(\tau,\xi)=\frac{1}{2}(\eta\delta$ 
 \\
$\partial_\gamma W(\tau,\xi)=\alpha\beta$  & 
$\phantom{\partial_{\varepsilon}W(\tau,\xi)=}+(-1)^{\xi_\varepsilon}u^{-1}\eta\delta u)$
 \\
$\partial_{\nu}W(\tau,\xi)=0$& $\partial_{\eta}W(\tau,\xi)=\delta\varepsilon$ 
\end{tabular}
\end{tabular}
& 
\begin{tabular}{c}
$\xymatrix{
 & E \ar[dr]|-{\quad L^{\theta^{\phi_\gamma}}\otimes_LE} &  & F \ar[dr]^{L\otimes_FF} & & \\
L \ar[ur]|-{E^{\theta^{\phi_{\alpha}}}\otimes_LL\quad} &  & L \ar[ll]^{L^{\theta^{\phi_\beta}}\otimes_LL} \ar[ur]|-{\quad F\otimes_FL} & & L \ar[ll]^{L^{\theta^{\phi_\varepsilon}}\otimes_L L} \ar[r]_{L^{\theta^{\phi_\nu}}\otimes_LL} & L 
}$ \\ 
$W(\sigma,\phi)=\alpha\beta\gamma+\delta\varepsilon\eta$ \\
\begin{tabular}{ll}
$\partial_\alpha W(\sigma,\phi)=\beta\gamma$ & $\partial_{\delta}W(\sigma,\phi)=\varepsilon\eta$ 
 \\
$\partial_\beta W(\sigma,\phi)=\gamma\alpha$ & $\partial_{\varepsilon}W(\sigma,\phi)=\frac{1}{2}(\eta\delta$  
 \\
$\partial_\gamma W(\sigma,\phi)=\alpha\beta$  & 
$\phantom{\partial_{\varepsilon}W(\sigma,\phi)=}+(-1)^{\xi_\varepsilon}u^{-1}\eta\delta u)$
 \\
$\partial_{\nu}W(\sigma,\phi)=0$ & $\partial_{\eta}W(\sigma,\phi)=\delta\varepsilon$ 
\end{tabular}
\end{tabular}
\\
\hline
$4,4$ &
\begin{tabular}{c}
$\xymatrix{
E \ar@/_0.5pc/[rr]_{E^{\rho^{\ell+2}}\otimes_EE} \ar@/^0.5pc/[rr]^{E^{\rho^\ell}\otimes_EE} & & E \ar[dl]^{L^{\theta^{\xi_\eta}}\otimes_LE} & \\
 & L \ar[ul]^{E^{\theta^{\xi_\varepsilon}}\otimes_LL} \ar[dr]^{L^{\theta^{\xi_\beta}}\otimes_LL} & \\
 L \ar[ur]^{L^{\theta^{\xi_\gamma}}\otimes_LL} & & L \ar[ll]^{L^{\theta^{\xi_\alpha}}\otimes_LL} \ar[r]_{L^{\theta^{\xi_\nu}}\otimes_LL} & L
}$ \\
$W(\tau,\xi)=\alpha\beta\gamma+(\delta_0+\delta_1)\varepsilon\eta$ \\
\begin{tabular}{ll}
$\partial_\alpha W(\tau,\xi)=\beta\gamma$ & $\partial_{\delta_0}W(\tau,\xi)=\frac{1}{2}(\varepsilon\eta$ 
 \\
$\partial_\beta W(\tau,\xi)=\gamma\alpha$ & $\phantom{\partial_{\delta_0}W(\tau,\xi)=}+\rho^{-l}(v^{-1})\varepsilon\eta v)$
 \\
$\partial_\gamma W(\tau,\xi)=\alpha\beta$  & 
$\partial_{\delta_1}W(\tau,\xi)=\frac{1}{2}(\varepsilon\eta$  
 \\
$\partial_{\nu}W(\tau,\xi)=0$ & $\phantom{\partial_{\delta_1}W(\tau,\xi)=}+\rho^{-l-2}(v^{-1})\varepsilon\eta v)$\\
 & $\partial_{\varepsilon}W(\tau,\xi)=\eta(\delta_0+\delta_1)$ \\
 & $\partial_{\eta}W(\tau,\xi)=(\delta_0+\delta_1)\varepsilon$ 
\end{tabular}
\end{tabular}
& 
\begin{tabular}{c}
$\xymatrix{
 & E \ar[dr]|-{\quad L^{\theta^{\phi_\gamma}}\otimes_LE} &  & E \ar[dr]^{L^{\theta^{\phi_\eta}}\otimes_LE} & & \\
L \ar[ur]|-{E^{\theta^{\phi_\alpha}}\otimes_LL\quad } &  & L \ar[ll]^{L^{\theta^{\phi_\beta}}\otimes_LL} \ar[ur]|-{\qquad E^{\theta^{\phi_\delta}}\otimes_LL} & & L \ar[ll]^{L^{\theta^{\phi_\varepsilon}}\otimes_L L} \ar[r]_{L^{\theta^{\phi_\nu}}\otimes_LL} & L 
}$ \\ 
$W(\sigma,\phi)=\alpha\beta\gamma+\delta\varepsilon\eta$  \\
\begin{tabular}{ll}
$\partial_\alpha W(\sigma,\phi)=\beta\gamma$ & $\partial_{\delta}W(\sigma,\phi)=\varepsilon\eta$ 
 \\
$\partial_\beta W(\sigma,\phi)=\gamma\alpha$ & $\partial_{\varepsilon}W(\sigma,\phi)=\eta\delta$  
 \\
$\partial_\gamma W(\sigma,\phi)=\alpha\beta$  & 
$\partial_{\eta}W(\sigma,\phi)=\delta\varepsilon$ 
 \\
& $\partial_{\nu}W(\sigma,\phi)=0$
\end{tabular}
\end{tabular}
\\
\hline
\end{tabular}
\captionof{table}{Jacobian algebras from Example \ref{ex:jac-algs-pentagon-2orb-pts-arb-weights} \label{table-jac-algs-pentagon-2orb-pts-arb-weights}}
}
\end{center}
\end{ex}

The initial interest in the Jacobian algebras $\jacobalg{A(\tau, \xi), W(\tau, \xi)}$ stems from the relation to cluster combinatorics provided by Proposition \ref{prop:our-species-realize-FeShTu-matrices} and the following result on their good behavior under mutations of species with potential.

\begin{thm}\cite[Theorem 7.1]{GLF2}\label{thm:SPs-well-behaved-under-colored-flips-and-muts-arb-weights} Let $\SSigmaw$ be a surface with weighted orbifold points, either unpunctured or once-punctured closed, and $(\tau,\xi)$ and $(\sigma,\phi)$ be colored triangulations of $\SSigma$.  If $(\sigma,\phi)$ can be obtained from $(\tau,\xi)$ by the colored flip of an arc $k \in\tau$, then the species with potential $(A(\sigma,\phi),W(\sigma,\phi))$ and $\mu_k(A(\tau, \xi), W(\tau, \xi))$ are right-equivalent.
\end{thm}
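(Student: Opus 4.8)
The plan is to follow the strategy of \cite{GLF2} (itself a weighted-orbifold refinement of Labardini-Fragoso's thesis): reduce the global assertion to a finite list of local computations, and in each one exhibit an explicit right-equivalence. Recall that $\mu_k(A(\tau,\xi),W(\tau,\xi))$ is by construction only defined up to right-equivalence, so it suffices to produce, for the local part of the data around $k$, an $F$-linear isomorphism of complete path algebras that restricts to the identity on $R=\times_{j}F_j$ and carries the mutated potential to $W(\sigma,\phi)$ up to cyclic equivalence.

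The first step is to note that flipping $k$ is a local move: it changes only the two triangles of $\tau$ incident to $k$ (or the single triangle in the handful of degenerate configurations), hence only the arrows of $Q(\tau,\omega)$ incident to $k$, the bimodules $A(\tau,\xi)_a$ for those arrows, and the terms $W^\triangle(\tau,\xi)$ coming from those triangles. The colored flip likewise modifies $\xi$ only on the arrows incident to $k$, following the explicit rule of \cite{GLF2}, and one must check that the resulting $1$-cochain $\phi$ is again a cocycle on $C^\bullet(\sigma)$ — this is precisely where the cocycle condition of Remark \ref{rem:cocycle-condition} is genuinely needed. Since both $\mu_k(A(\tau,\xi),W(\tau,\xi))$ and $(A(\sigma,\phi),W(\sigma,\phi))$ agree with the unchanged data on the complement of this local part, the right-equivalence produced locally extends to a global one by the identity.

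The second step is to run through the finitely many local configurations. Because $\tau$ is glued from the puzzle pieces of Figure \ref{Fig:unpunct_puzzle_pieces} (together with the puncture piece when $\SSigma$ is once-punctured closed), $k$ is non-pending, or pending of weight $1$, or pending of weight $4$; each incident triangle is ordinary, once-orbifolded, or twice-orbifolded; and the arcs adjacent to $k$ are pending or non-pending, of weight $1$ or $4$. For each resulting case one: (i) writes $(A(\tau,\xi),W(\tau,\xi))$ on the relevant vertices using Definitions \ref{def:species-of-colored-triangulation} and \ref{def:cycles-from-triangles}; (ii) forms the premutation $\widetilde{\mu}_k$, reversing the arrows at $k$, adjoining the composite arrows coming from the tensor products $A_b\otimes_{F_k}A_a$ of consecutive arrows through $k$, and replacing $W$ by $[W]+\Delta_k$ where $\Delta_k$ is the canonical triangle-shaped term built from the $F_k$-bimodule pairing; (iii) reduces, splitting off the trivial summand via a vertex-fixing change of variables that cancels the $2$-cycles of $[W]$ (this is always possible for SPs arising from triangulations), to obtain $\mu_k(A(\tau,\xi),W(\tau,\xi))$; and (iv) compares the outcome with $(A(\sigma,\phi),W(\sigma,\phi))$ constructed directly from the flipped picture, writing down the required automorphism. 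Throughout, one tracks the Galois automorphisms $g(\tau,\xi)_a\in\Gal(F_{h(a)}\cap F_{t(a)}/F)$ and the eigenbasis elements $v$, $u=v^2$ so that the twisted-bimodule actions \eqref{eq:def-of-twisted-bimodule-actions} match on both sides; the cyclic-derivative formula \eqref{eq:def-cyclic-derivative-of-cyclic-path} and Proposition \ref{prop:properties-of-Fi-Fj-bimodules} do the heavy lifting.

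The main obstacle is the weight-$4$ part: when $k$ is pending of weight $4$, or has weight-$4$ neighbours inside a twice-orbifolded triangle, the relevant bimodules live over the degree-$4$ extension $E/F$, and the premutation produces composite arrows whose $E$-$E$-bimodule type ($E^{\rho^\ell}\otimes_E E$ versus $E^{\rho^{\ell+2}}\otimes_E E$) depends on the parity encoded by $\xi(\delta_0^\triangle)$. One must check that the decomposition $E^{\theta^{\xi}}\otimes_L E\cong (E^{\rho^\ell}\otimes_E E)\oplus(E^{\rho^{\ell+2}}\otimes_E E)$ into the two non-isomorphic summands is compatible with the way the colored flip redistributes the cocycle values, and that, after reduction, the potential matches $W(\sigma,\phi)$; this uses $\zeta^2+1=0$ and the eigenvector relations $\rho(v)=\zeta v$, $\theta(u)=-u$, exactly as in the block isomorphisms of Proposition \ref{prop-isomorphisms-between-some-of-the-blocks}. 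A secondary, more bureaucratic difficulty is simultaneity: when $k$ lies in two triangles, both contributions must be premutated at once and their reductions chosen compatibly, and one must verify that no spurious $2$-cycle is created between the local part and the remainder of the surface — which holds because the flip is local and, in the unpunctured or once-punctured-closed setting, the neighbourhood of $k$ is always one of the listed puzzle-piece configurations.
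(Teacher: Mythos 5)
The paper does not prove this statement: it is imported verbatim as \cite[Theorem 7.1]{GLF2}, so there is no in-paper proof to compare against. Measured against the proof in the cited source, your outline is the correct strategy — locality of the flip, reduction to the finitely many puzzle-piece configurations around $k$, premutation followed by reduction, and an explicit vertex-fixing right-equivalence in each case, with the weight-$4$ decomposition $E^{\theta^{\xi}}\otimes_L E\cong (E^{\rho^{\ell}}\otimes_E E)\oplus(E^{\rho^{\ell+2}}\otimes_E E)$ and the cocycle bookkeeping correctly identified as the genuinely delicate points.

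That said, what you have written is a proof plan rather than a proof. Essentially all of the mathematical content of the theorem lives in your step (ii)--(iv): actually listing the local configurations, computing $\widetilde{\mu}_k$ and its reduction in each one, and writing down the unitriangular automorphism of the complete path algebra that matches the result with $(A(\sigma,\phi),W(\sigma,\phi))$. None of that is carried out, and it cannot be waved through: the splitting theorem you invoke to ``cancel the $2$-cycles of $[W]$'' is itself a nontrivial result of \cite{GLF1} whose applicability must be checked (the trivial/reduced decomposition is only defined up to right-equivalence, and one must verify that the degree-$2$ part of the premutated potential is nondegenerate as a bilinear form on the relevant bimodules before the reduced part is even well defined). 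Likewise, the assertion that the colored flip sends cocycles to cocycles is a statement about $C^\bullet(\sigma)$ versus $C^\bullet(\tau)$ that requires its own (short) argument, and the once-punctured closed case needs separate care since the link of $k$ can fail to be an embedded quadrilateral. So: right road map, but the journey — which is the theorem — is not taken here.
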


 The notion of \emph{colored flip} of colored triangulations is defined in \cite[Definition 5.8]{GLF2} (see also \cite[Examples 5.9 and 5.10]{GLF2}), while those of \emph{right equivalence} and \emph{mutation} of species with potential are defined in \cite[Definitions 3.11 and 3.19]{GLF1} (see also \cite[Remark 3.20]{GLF1} and \cite[Section 2.1]{GLF2}). The latter two notions were of course inspired by the corresponding ones introduced by Derksen-Weyman-Zelevinsky in \cite{DWZ1}.

\subsubsection{\textbf{The semilinear clannish algebra of a colored triangulation}}\label{subsubsec:arb-weights-semilin-clan-alg}

Fix a degree-$4$ datum $E/F$, and let $(\tau,\xi)$ be a colored triangulation of a surface with weighted orbifold points $\SSigmaw=(\surfnoM,\marked,\orb,\omega)$. 
We associate to $(\tau,\xi)$ a semilinear clannish algebra $K_{\bldsigma(\tau,\xi)}\compactQhat (\tau)/I(\tau,\xi)$ as follows.

Define $\compactQhat (\tau)$ to be the quiver obtained from $\overline{Q}(\tau)$ by adding one loop at each pending arc of $\tau$, which we assume to be \emph{special}. 
Since $\overline{Q}(\tau)$ is loop-free, this means we are taking the set $\bbS(\tau)$ of special loops in $\compactQhat (\tau)$ to be the set of all loops in $\compactQhat (\tau)$, or said another way, $\compactQhat (\tau)_{1}=\overline{Q}(\tau)_{1}\sqcup \bbS(\tau)$.

Let $K=L$. 
To each arrow $a\in\compactQhat (\tau)_1$ we define a field automorphism $\sigma(\tau,\xi)_a\in \Gal(L/F)\subseteq \Aut(L)$ by
$$
\sigma(\tau,\xi)_a\coloneqq \begin{cases}
\theta^{\xi_a} & \text{if $\overline{Q}(\tau)_1=\compactQhat (\tau)_1\setminus \bbS(\tau)$};\\
\theta & \text{if $h(a)=t(a)$ and $d(\tau,\omega)_{h(a)}=1$};\\
\myid_L & \text{if $h(a)=t(a)$ and $d(\tau,\omega)_{h(a)}=4$}.
\end{cases}
$$
This information determines already a semilinear path algebra $K_{\bldsigma(\tau,\xi)}\compactQhat (\tau)$. 
Furthermore, to each loop $s\in \bbS(\tau)$ of $\compactQhat (\tau)$ with head and tail $k$, we attach the quadratic polynomial
$$
q_s(x)\coloneqq \begin{cases}
x^2-1\in L[x;\theta] & \text{if $d(\tau,\omega)_k=1$};\\
x^2-u\in L[x] & \text{if $d(\tau,\omega)_k=4$}.
\end{cases}
$$
This information determines the set  of special relations, defined by
\[
\begin{array}{cc}
S(\tau,\xi)=\{q(s)\mid s\in \bbS(\tau)=\compactQhat (\tau)_1\setminus \overline{Q}(\tau)_1 \}, & e_{k}K_{\bldsigma(\tau,\xi)}\compactQhat (\tau)e_{k}\ni q(s)= \begin{cases}
s^2-e_{k} & \text{if $d(\tau,\omega)_k=1$};\\
s^2-ue_{k} & \text{if $d(\tau,\omega)_k=4$}.
\end{cases}
\end{array}
\]
We define the two-sided ideal $I(\tau,\xi)=\langle Z(\tau,\xi)\cup S(\tau,\xi)\rangle$ in $K_{\bldsigma(\tau,\xi)}\compactQhat (\tau)$ by defining the set $Z(\tau,\xi)$ of zero-relations, as follows.
Suppose $\triangle$ is a triangle in $\tau$, say of one of the forms depicted in Figure \ref{Fig:rule-for-arrows-of-overlineQ}. 
Each such $\triangle$ gives rise to three distinct arrows of $\overline{Q}(\tau)$ subject to certain conditions, namely
\[
\begin{array}{cccc}
\alpha^{\triangle}, \beta^{\triangle},\gamma^{\triangle}\in \compactQhat (\tau)_1\setminus \bbS(\tau)=\overline{Q}(\tau)_1, & h(\alpha^{\triangle})=t(\gamma^{\triangle}), & h(\gamma^{\triangle})=t(\beta^{\triangle}),
& h(\beta^{\triangle})=t(\alpha^{\triangle}).
\end{array}
\] 
We now let $Z(\tau,\xi)$ be the union of the sets $Z(\tau,\xi,\triangle)=\{\alpha^{\triangle}\beta^{\triangle},\beta^{\triangle}\gamma^{\triangle},\gamma^{\triangle}\alpha^{\triangle}\}$ taken over all such $\triangle$.

\begin{ex}
For $k=1,\ldots,7$, the rings appearing in 
Tables \ref{table-semilinear-clannish-blocks-1-to-5} and \ref{table-semilinear-clannish-blocks-6-to-10} have the form $L_{\bldsigma(\tau,\xi)}\compactQhat (\tau)/I(\tau,\xi)$ where $(\tau,\xi)$ is a colored triangulation of a puzzle piece surface $\SSigma$ from Figure \ref{Fig:unpunct_puzzle_pieces}. Compare with Example \ref{ex-modulated-quivers-from-puzlle-pieces-defined-table}.
\end{ex}

\begin{prop}
\label{prop-algebras-are-semilinear-clannish}
If $(\tau,\xi)$ is a triangulation of a surface $\SSigmaw$ then $A=L_{\bldsigma(\tau,\xi)}\compactQhat (\tau)/I(\tau,\xi)$ is a semilinear clannish algebra which is  normally-bound, non-singular and of semisimple type.
\end{prop}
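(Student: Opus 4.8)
The plan is to verify the four conditions in Definition \ref{defi-semilinear-clannish-algebra} one at a time, using the combinatorics of triangulations of surfaces with orbifold points recalled in Subsection \ref{subsec:surfaces-and-triangs}, and then appeal to Remark \ref{rem:normality-and-semisimplicity-of-q} and Example \ref{ex:specific-pols-of-semisimple-type} for the normality, non-singularity and semisimple-type statements. First I would check condition (Q): the quiver $\compactQhat(\tau)$ is $\overline{Q}(\tau)$ with one (special) loop added at each pending arc, so every loop of $\compactQhat(\tau)$ is special, which is exactly the requirement that the arrows outside $\bbS(\tau)$ be the ordinary ones. The valence condition — at most two arrows into any vertex, at most two out — follows because each arc $i\in\tau$ lies in at most two triangles, each contributing at most one arrow of $\overline{Q}(\tau)$ with head $i$ and at most one with tail $i$ (this is the content of the rule in Figure \ref{Fig:rule-for-arrows-of-overlineQ}), and the possibly-added loop $s_i$ contributes one more arrow in and one more out only when $i$ is pending and hence lies in only one triangle; in the pending case that triangle contributes at most one arrow in and one out, so the totals stay $\le 2$. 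The subtle point is that a pending arc incident to a once- or twice-orbifolded triangle could \emph{a priori} receive two arrows from that triangle, but the configuration of puzzle pieces in Figure \ref{Fig:unpunct_puzzle_pieces} shows this does not happen; I would spell this out by a short case analysis over the three triangle types.

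Next I would check condition (Z). The set $Z(\tau,\xi)$ is a disjoint union over interior triangles $\triangle$ of $\{\alpha^\triangle\beta^\triangle,\beta^\triangle\gamma^\triangle,\gamma^\triangle\alpha^\triangle\}$, all of which are length-two paths on $\overline{Q}(\tau)$, so the length requirement is immediate. For the two ``at most one arrow'' bullets: fix an ordinary arrow $a$; the arrows $b$ with $ba\notin Z(\tau,\xi)$ are those $b$ with $h(a)=t(b)$ such that $a,b$ do \emph{not} lie in a common triangle. Since the vertex $h(a)=t(b)$ lies in at most two triangles, one of which already contains $a$, there is at most one triangle left that can produce such a $b$, and that triangle produces exactly one arrow out of $h(a)$; hence at most one such $b$. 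The argument for $ab\notin Z(\tau,\xi)$ is symmetric. Finally, no path in $Z(\tau,\xi)$ involves a loop $s_i$ at all — every constituent arrow is an arrow of $\overline{Q}(\tau)$, none of which is special — so trivially no $p\in Z(\tau,\xi)$ starts or ends with a special loop or contains its square. Then condition (S) is essentially by construction: $S(\tau,\xi)=\{q_s(x)=x^2-\mu_s e_k : s\in\bbS(\tau), h(s)=k=t(s)\}$ with $\mu_s\in\{1,u\}\subseteq L=K$, which is precisely a quadratic polynomial $x^2-\beta_s x+\gamma_s\in K[x;\sigma_s]$ with $\beta_s=0$ and $\gamma_s=-\mu_s$.

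For the last clause, I would invoke Remark \ref{rem:normality-and-semisimplicity-of-q}: for $q_s(x)=x^2-\mu_s$ with $\mu_s\in\{1,u\}$, normality amounts to $\sigma_s(\mu_s)=\mu_s$ together with $\sigma_s^2=\myid_L$ (the latter holds automatically since $\sigma_s\in\Gal(L/F)$ and $[L:F]=2$, so every element has order dividing $2$). When $d(\tau,\omega)_k=1$ we have $\sigma_s=\theta$ and $\mu_s=1$, and $\theta(1)=1$; when $d(\tau,\omega)_k=4$ we have $\sigma_s=\myid_L$ and $\mu_s=u$, and $\myid_L(u)=u$. So each $q_s$ is normal, and non-singular since $\gamma_s=-\mu_s\neq 0$. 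For semisimple type, the two cases land exactly in the two parts of Example \ref{ex:specific-pols-of-semisimple-type}: the pair $(\myid_L,u)$ falls under part (1) because $u\in L\setminus F$ has $\theta(u)=-u\neq u$, so $u$ is not a square in $L$ (a square would be $\theta$-fixed), whence $L[x]/\langle x^2-u\rangle$ is a field; and the pair $(\theta,1)$ with $\theta$ an order-$2$ automorphism of $L$ falls under part (2), so $L[x;\theta]/\langle x^2-1\rangle\cong F^{2\times 2}$. Hence $A=L_{\bldsigma(\tau,\xi)}\compactQhat(\tau)/I(\tau,\xi)$ is a semilinear clannish algebra which is normally-bound, non-singular and of semisimple type. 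The main obstacle I anticipate is purely bookkeeping rather than conceptual: carefully confirming, across all three triangle types and the way pending arcs sit inside puzzle pieces, that the valence count in (Q) and the uniqueness counts in (Z) never exceed the allowed bound — in particular handling correctly the interaction between an added special loop and the (unique) triangle containing a pending arc.
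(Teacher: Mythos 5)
Your proof follows essentially the same route as the paper's: a case analysis on how an arc sits in one or two triangles to verify (Q) and (Z), with (S) and the normality/non-singularity/semisimple-type claims reduced to Remark \ref{rem:normality-and-semisimplicity-of-q} and Example \ref{ex:specific-pols-of-semisimple-type}. One justification is incorrect, however: squares in $L$ need not be $\theta$-fixed (e.g.\ $(1+u)^2=1+u^2+2u$ is not fixed by $\theta$ since $u^2\in F$ but $2u\notin F$), so the parenthetical ``a square would be $\theta$-fixed'' does not show that $u$ is a non-square in $L$. The correct and immediate reason is that the degree-$4$ datum supplies $v\in E\setminus L$ with $v^2=u$ and $[E:L]=2$, so the only two square roots $\pm v$ of $u$ lie outside $L$. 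A second, harmless, imprecision: in your count for condition (Z) you enumerate only the ordinary arrows $b$ with $ba\notin Z(\tau,\xi)$, but a special loop at $h(a)$ is also such a $b$; the bound still holds because a pending arc lies in a single triangle, so the special loop cannot coexist with an ordinary $b$ coming from a second triangle.
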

\begin{proof}
Let $\compactQhat=\compactQhat (\tau)$. 
In what follows we consider an element $i\in \compactQhat_{0}$ from the set $\compactQhat (\tau)_{0}$ of arcs in the triangulation $(\tau,\xi)$ of $\SSigmaw=(\Sigma,\marked,\orb,\omega)$. 
We fix notation for such an arc $i$ which depends on cases.
\begin{enumerate}[(a)]
    \item If $i$ is the edge of only one triangle in $(\tau,\xi)$, we denote it $\triangle(i)$;  let $e(i),f(i)\in \compactQhat_{0}$ be the arcs defining the remaining edges of $\triangle(i)$; and we write $\kappa(i),\lambda(i),\mu(i)$ for the (ordinary) arrows in $\overline{Q}(\tau)_1=\compactQhat_{1}\setminus \bbS(\tau)$ with $t(\kappa(i))=i=h(\lambda(i))$,  $t(\lambda(i))=e(i)=h(\mu(i))$ and $t(\mu(i))=f(i)=h(\kappa(i))$. 
    \item If $i$ is the edge of two distinct triangles in $(\tau,\xi)$ we denote them $\triangle_{-}(i)$ and $\triangle_{+}(i)$; let $e_{\pm}(i),f_{\pm}(i)\in \compactQhat_{0}$ be the other arcs defining edges of $\triangle_{\pm}(i)$; and let  $\kappa_{\pm}(i),\lambda_{\pm}(i),\mu_{\pm}(i)\in\compactQhat_{1}\setminus \bbS(\tau)$ where  $t(\kappa_{\pm}(i))=i=h(\lambda_{\pm}(i))$,  $t(\lambda_{\pm}(i))=e_{\pm}(i)=h(\mu_{\pm}(i))$ and $t(\mu_{\pm}(i))=f_{\pm}(i)=h(\kappa_{\pm}(i))$. 
\end{enumerate}
Note that: exactly one of (a) or (b) holds; $Z(\tau,\xi,\triangle(i))=\{\kappa(i)\lambda(i),\lambda(i)\mu(i),\mu(i)\kappa(i)\}$ in case (a); and $Z(\tau,\xi,\triangle_{\pm}(i))=\{\kappa_{\pm}(i)\lambda_{\pm}(i),\lambda_{\pm}(i)\mu_{\pm}(i),\mu_{\pm}(i)\kappa_{\pm}(i)\}$ in case (b).
     As observed in Example \ref{ex-main-example-for-semilinear-clannish-blocks}, it is straightforward to check conditions (S) and (i)--(iii) hold from Definition \ref{defi-semilinear-clannish-algebra}.

(Q) Note firstly that there is at most one special loop incident at $i$, and if there is one, $i$ must be a pending arc, and we must be in case (a) above. 
In case (a) the arrow $\lambda(i)$ (respectively,  $\kappa(i)$) is the unique ordinary arrow with head (respectively, tail) $i$. Hence (Q) holds in case (a), whether or not $i$ is pending.

In case (b) $i$ must be non-pending, meaning there are no special loops at $i$. Hence (Q) holds in case (b) as well, since the arrows with head (respectively, tail) $i$ are precisely  $\lambda_{\pm}(i)$ (respectively,  $\kappa_{\pm}(i)$). 

(Z) Let $y$ be an ordinary arrow, and hence an element of $\overline{Q}(\tau)_1=\compactQhat_{1}\setminus \bbS(\tau)$, and write $h(y)=i$. Hence in case (a) we have $y=\lambda(i)$, in which case $\kappa(i)y\in Z(\tau,\xi,\triangle(i))$. 
Likewise in case (b) we have, after relabeling, $y=\lambda_{+}(i)$, and therefore $\kappa_{+}(i)y\in Z(\tau,\xi,\triangle_{+}(i))$. 
By having shown condition (Q) holds, and since no special loop occurs in a path from $Z(\tau,\xi)$, we now have that (Z) holds. 
\end{proof}

\begin{ex}\label{ex:semilin-clannish-algs-pentagon-2orb-pts-arb-weights} Consider the triangulations $\tau$ and $\sigma$ of the pentagon with two orbifold points from Figure \ref{Fig:pentagon_two_orb_points}. 
Take arbitrary $1$-cocycles $\xi\in Z^1(\tau)\subseteq C^1(\tau)$ and $\phi\in Z^1(\sigma)\subseteq C^1(\tau)$, and take arbitrary weights $\omega:\orb\rightarrow\{1,4\}$. In Table \ref{table-semilin-clannish-algs-pentagon-2orb-pts-arb-weights} we visualize the semilinear clannish algebras $L_{\bldsigma(\tau,\xi)}\compactQhat (\tau)/I(\tau,\xi)$ and $L_{\bldsigma(\sigma,\phi)}\compactQhat (\sigma)/I(\sigma,\phi)$.
\begin{center}
{\tiny
\begin{tabular}{|c|c|c|}
\hline
$\omega(q_1),\omega(q_2)$ & $L_{\bldsigma(\tau,\xi)}\compactQhat (\tau)/I(\tau,\xi)$ & $L_{\bldsigma(\sigma,\phi)}\compactQhat (\sigma)/I(\sigma,\phi)$\\
\hline
$1,1$ &
\begin{tabular}{c}
$\xymatrix{
L \ar@(l,u)^(.4){L^{\theta}\otimes_LL} \ar[rr]^{L^{\theta^{\xi_\delta}}} & & L \ar@(u,r)_(.4){\qquad\qquad\qquad\qquad\qquad\quad L^{\theta}\otimes_LL} \ar[dl]^{L^{\theta^{\xi_\eta}}\otimes_LL} & \\
 & L \ar[ul]^{L^{\theta^{\xi_\varepsilon}}\otimes_FL} \ar[dr]^{L^{\theta^{\xi_\beta}}\otimes_LL} & \\
 L \ar[ur]^{L^{\theta^{\xi_\gamma}}\otimes_LL} & & L \ar[ll]^{L^{\theta^{\xi_\alpha}}\otimes_LL} \ar[r]_{L^{\theta^{\xi_\nu}}\otimes_LL} & L
}$ \\
\begin{tabular}{l}
$I(\tau,\xi)=\langle\alpha\beta,\beta\gamma,\gamma\alpha,\delta\varepsilon,\varepsilon\eta,\eta\delta,$\\ 
$\phantom{I(\tau,\xi)=}s_1^2-e_1,s_2^2-e_2\rangle$  
\end{tabular}
\end{tabular}
& 
\begin{tabular}{c}
$\xymatrix{
 & L \ar@(lu,ru)^{L^{\theta}\otimes_LL} \ar[dr]|-{\quad L^{\theta^{\phi_\gamma}}\otimes_LL} &  & L \ar@(lu,ru)^{L^{\theta}\otimes_LL} \ar[dr]^{L^{\theta^{\phi_\eta}}\otimes_LL} & & \\
L \ar[ur]|-{L^{\theta^{\phi_\alpha}}\otimes_LL\quad } &  & L \ar[ll]^{L^{\theta^{\phi_\beta}}\otimes_LL} \ar[ur]|-{\qquad L^{\theta^{\phi_\delta}}\otimes_LL} & & L \ar[ll]^{L^{\theta^{\phi_\varepsilon}}\otimes_L L} \ar[r]_{L^{\theta^{\phi_\nu}}}\otimes_LL & L 
}$ \\ 
\begin{tabular}{l}
$I(\sigma,\phi)=\langle\alpha\beta,\beta\gamma,\gamma\alpha,\delta\varepsilon,\varepsilon\eta,\eta\delta,$\\ 
$\phantom{I(\sigma,\phi)=}s_1^2-e_1,s_2^2-e_2\rangle$  
\end{tabular}
\end{tabular}
\\
\hline
$1,4$ &
\begin{tabular}{c}
$\xymatrix{
L \ar@(l,u)^(.4){L^{\theta}\otimes_LL} \ar[rr]^{L^{\theta^{\xi_\delta}}\otimes_LL} & & L \ar@(u,r)_(.4){\qquad\qquad\qquad\qquad\qquad\quad L\otimes_LL} \ar[dl]^{L^{\theta^{\xi_\eta}}\otimes_LL} & \\
 & L \ar[ul]^{L^{\theta^{\xi_\varepsilon}}\otimes_FL} \ar[dr]^{L^{\theta^{\xi_\beta}}\otimes_LL} & \\
 L \ar[ur]^{L^{\theta^{\xi_\gamma}}\otimes_LL} & & L \ar[ll]^{L^{\theta^{\xi_\alpha}}\otimes_LL} \ar[r]_{L^{\theta^{\xi_\nu}}\otimes_LL} & L
}$ \\
\begin{tabular}{l}
$I(\tau,\xi)=\langle\alpha\beta,\beta\gamma,\gamma\alpha,\delta\varepsilon,\varepsilon\eta,\eta\delta,$\\ 
$\phantom{I(\tau,\xi)=}s_1^2-e_1,s_2^2-ue_2\rangle$  
\end{tabular}
\end{tabular}
& 
\begin{tabular}{c}
$\xymatrix{
 & L \ar@(lu,ru)^{L^{\theta}\otimes_LL} \ar[dr]|-{\quad L^{\theta^{\phi_\gamma}}\otimes_LL} &  & L \ar@(lu,ru)^{L\otimes_LL} \ar[dr]^{L^{\theta^{\phi_\eta}}\otimes_LL} & & \\
L \ar[ur]|-{L^{\theta^{\phi_\alpha}}\otimes_LL\quad} &  & L \ar[ll]^{L^{\theta^{\phi_\beta}}\otimes_LL} \ar[ur]|-{\qquad L^{\theta^{\phi_\delta}}\otimes_LL} & & L \ar[ll]^{L^{\theta^{\phi_\varepsilon}}\otimes_L L} \ar[r]_{L^{\theta^{\phi_\nu}}}\otimes_LL & L 
}$ \\ 
\begin{tabular}{l}
$I(\sigma,\phi)=\langle\alpha\beta,\beta\gamma,\gamma\alpha,\delta\varepsilon,\varepsilon\eta,\eta\delta,$\\ 
$\phantom{I(\sigma,\phi)=}s_1^2-e_1,s_2^2-ue_2\rangle$  
\end{tabular}
\end{tabular}
\\
\hline
$4,1$ &
\begin{tabular}{c}
$\xymatrix{
L \ar@(l,u)^(.4){L\otimes_LL} \ar[rr]^{L^{\theta^{\xi_\delta}}\otimes_LL} & & L \ar@(u,r)_(.4){\qquad\qquad\qquad\qquad\qquad\quad L^{\theta}\otimes_LL} \ar[dl]^{L^{\theta^{\xi_\eta}}\otimes_LL} & \\
 & L \ar[ul]^{L^{\theta^{\xi_\varepsilon}}\otimes_FL} \ar[dr]^{L^{\theta^{\xi_\beta}}\otimes_LL} & \\
 L \ar[ur]^{L^{\theta^{\xi_\gamma}}\otimes_LL} & & L \ar[ll]^{L^{\theta^{\xi_\alpha}}\otimes_LL} \ar[r]_{L^{\theta^{\xi_\nu}}\otimes_LL} & L
}$ \\
\begin{tabular}{l}
$I(\tau,\xi)=\langle\alpha\beta,\beta\gamma,\gamma\alpha,\delta\varepsilon,\varepsilon\eta,\eta\delta,$\\ 
$\phantom{I(\tau,\xi)=}s_1^2-ue_1,s_2^2-e_2\rangle$  
\end{tabular}
\end{tabular}
& 
\begin{tabular}{c}
$\xymatrix{
 & L \ar@(lu,ru)^{L\otimes_LL} \ar[dr]|-{\quad L^{\theta^{\phi_\gamma}}\otimes_LL} &  & L \ar@(lu,ru)^{L^{\theta}\otimes_LL} \ar[dr]^{L^{\theta^{\phi_\eta}}\otimes_LL} & & \\
L \ar[ur]|-{L^{\theta^{\phi_\alpha}}\otimes_LL\quad } &  & L \ar[ll]^{L^{\theta^{\phi_\beta}}\otimes_LL} \ar[ur]|-{\qquad L^{\theta^{\phi_\delta}}\otimes_LL} & & L \ar[ll]^{L^{\theta^{\phi_\varepsilon}}\otimes_L L} \ar[r]_{L^{\theta^{\phi_\nu}}}\otimes_LL & L 
}$ \\ 
\begin{tabular}{l}
$I(\sigma,\phi)=\langle\alpha\beta,\beta\gamma,\gamma\alpha,\delta\varepsilon,\varepsilon\eta,\eta\delta,$\\ 
$\phantom{I(\sigma,\phi)=}s_1^2-ue_1,s_2^2-e_2\rangle$  
\end{tabular}
\end{tabular}
\\
\hline
$4,4$ &
\begin{tabular}{c}
$\xymatrix{
L \ar@(l,u)^(.4){L\otimes_LL} \ar[rr]^{L^{\theta^{\xi_\delta}}} & & L \ar@(u,r)_(.4){\qquad\qquad\qquad\qquad\qquad\quad L\otimes_LL} \ar[dl]^{L^{\theta^{\xi_\eta}}\otimes_LL} & \\
 & L \ar[ul]^{L^{\theta^{\xi_\varepsilon}}\otimes_FL} \ar[dr]^{L^{\theta^{\xi_\beta}}\otimes_LL} & \\
 L \ar[ur]^{L^{\theta^{\xi_\gamma}}\otimes_LL} & & L \ar[ll]^{L^{\theta^{\xi_\alpha}}\otimes_LL} \ar[r]_{L^{\theta^{\xi_\nu}}\otimes_LL} & L
}$ \\
\begin{tabular}{l}
$I(\tau,\xi)=\langle\alpha\beta,\beta\gamma,\gamma\alpha,\delta\varepsilon,\varepsilon\eta,\eta\delta,$\\ 
$\phantom{I(\tau,\xi)=}s_1^2-ue_1,s_2^2-ue_2\rangle$  
\end{tabular}
\end{tabular}
& 
\begin{tabular}{c}
$\xymatrix{
 & L \ar@(lu,ru)^{L\otimes_LL} \ar[dr]|-{\quad L^{\theta^{\phi_\gamma}}\otimes_LL} &  & L \ar@(lu,ru)^{L\otimes_LL} \ar[dr]^{L^{\theta^{\phi_\eta}}\otimes_LL} & & \\
L \ar[ur]|-{L^{\theta^{\phi_\alpha}}\otimes_LL\quad } &  & L \ar[ll]^{L^{\theta^{\phi_\beta}}\otimes_LL} \ar[ur]|-{\qquad L^{\theta^{\phi_\delta}}\otimes_LL} & & L \ar[ll]^{L^{\theta^{\phi_\varepsilon}}\otimes_L L} \ar[r]_{L^{\theta^{\phi_\nu}}}\otimes_LL & L 
}$ \\ 
\begin{tabular}{l}
$I(\sigma,\phi)=\langle\alpha\beta,\beta\gamma,\gamma\alpha,\delta\varepsilon,\varepsilon\eta,\eta\delta,$\\ 
$\phantom{I(\sigma,\phi)=}s_1^2-ue_1,s_2^2-ue_2\rangle$  
\end{tabular}
\end{tabular}
\\
\hline
\end{tabular}
\captionof{table}{Semilinear clannish algebras from Example \ref{ex:semilin-clannish-algs-pentagon-2orb-pts-arb-weights} 
\label{table-semilin-clannish-algs-pentagon-2orb-pts-arb-weights}}
}
\end{center}
\end{ex}

\subsection{Constant weights: algebras defined over \mathinsection{\bbC/\R}{}}\label{subsec:constant-weights-algs-def-over-C/R}

\,

As pointed out in Remark \ref{rem:orb-not-empty-and-omega-constant-1=>degree-2-datum}, if $\orb\neq \varnothing$ and $\omega\equiv 1$, then one may simply work over a degree-$2$ datum (not necessarily extendable to a degree-$4$ datum, e.g. $\mathbb{C}/\mathbb{R}$), and all the constructions and results from Subsections \ref{subsubsec:arb-weights-Jac-alg} and \ref{subsubsec:arb-weights-semilin-clan-alg} are valid.

\begin{ex}\label{ex:jac-algs-and-semilin-clan-algs-pentagon-2orb-pts-constant-weight=1} Consider the triangulations $\tau$ and $\sigma$ of the pentagon with two orbifold points shown in Figure \ref{Fig:pentagon_two_orb_points}, and the constant function $\omega:\orb\rightarrow\{1,4\}$ with value $1$. Let  $\xi\in Z^1(\tau)\subseteq C^1(\tau)$ and $\phi\in Z^1(\sigma)\subseteq Z^1(\sigma)$ be arbitrary $1$-cocycles. Working over the degree-2 datum $\mathbb{C}/\mathbb{R}$ (thus, $\theta:\mathbb{C}\rightarrow\mathbb{C}$ is the usual complex conjugation and the square of $u=i\in\mathbb{C}$ is $-1\in\mathbb{R}$), in the following two tables we can visualize the Jacobian algebras $\jacobalg{A}(\tau,\xi),W(\tau,\xi))$ and $\jacobalg{A}(\sigma,\phi),W(\sigma,\phi)$ as well as the semiinear clannish algebras $\mathbb{C}_{\bldsigma(\tau,\xi)}\compactQhat (\tau)/I(\tau,\xi)$ and $\mathbb{C}_{\bldsigma(\sigma,\phi)}\compactQhat (\sigma)/I(\sigma,\phi)$.
\begin{center}
{\small
\begin{tabular}{|c|c|c|}
\hline
$\omega(q_1),\omega(q_2)$ & $\jacobalg{A}(\tau,\xi),W(\tau,\xi))$ & $\jacobalg{A}(\sigma,\phi),W(\sigma,\phi)$\\
\hline
$1,1$ &
\begin{tabular}{c}
$\xymatrix{
\mathbb{R} \ar@/_0.5pc/[rr]_{\mathbb{R}\otimes_\mathbb{R}\mathbb{R}} \ar@/^0.5pc/[rr]^{\mathbb{R}\otimes_\mathbb{R}\mathbb{R}} & & \mathbb{R} \ar[dl]^{\mathbb{C}\otimes_\mathbb{R}\mathbb{R}} & \\
 & \mathbb{C} \ar[ul]^{\mathbb{R}\otimes_\mathbb{R}\mathbb{C}} \ar[dr]^{\mathbb{C}^{\theta^{\xi_\beta}}\otimes_\mathbb{C}\mathbb{C}} & \\
 \mathbb{C} \ar[ur]^{\mathbb{C}^{\theta^{\xi_\gamma}}\otimes_\mathbb{C}\mathbb{C}} & & \mathbb{C} \ar[ll]^{\mathbb{C}^{\theta^{\xi_\alpha}}\otimes_\mathbb{C}\mathbb{C}} \ar[r]_{\mathbb{C}^{\theta^{\xi_\nu}}\otimes_\mathbb{C}\mathbb{C}} & \mathbb{C}
}$ \\
$W(\tau,\xi)=\alpha\beta\gamma+\delta_0\varepsilon\eta+\delta_1\varepsilon i\eta$ \\
\begin{tabular}{ll}
$\partial_\alpha W(\tau,\xi)=\beta\gamma$ & $\partial_{\delta_0}W(\tau,\xi)=\varepsilon\eta$ 
 \\
$\partial_\beta W(\tau,\xi)=\gamma\alpha$ & $\partial_{\delta_1}W(\tau,\xi)=\varepsilon i\eta$  
 \\
$\partial_\gamma W(\tau,\xi)=\alpha\beta$  & 
$\partial_{\varepsilon}W(\tau,\xi)=\eta\delta_0+i\eta\delta_1$ 
 \\
$\partial_{\nu}W(\tau,\xi)=0$& $\partial_{\eta}W(\tau,\xi)=\delta_0\varepsilon+\delta_1\varepsilon i$ 
\end{tabular}
\end{tabular}
& 
\begin{tabular}{c}
$\xymatrix{
 & \mathbb{R} \ar[dr]|-{\mathbb{C}\otimes_\mathbb{R}\mathbb{R}} &  & \mathbb{R} \ar[dr]^{\mathbb{C}\otimes_\mathbb{R}\mathbb{R}} & & \\
\mathbb{C} \ar[ur]^{\mathbb{R}\otimes_\mathbb{R}\mathbb{C}} &  & \mathbb{C} \ar[ll]^{\mathbb{C}^{\theta^{\phi_\beta}}\otimes_{\mathbb{C}}\mathbb{C}} \ar[ur]|-{\mathbb{R}\otimes_\mathbb{R}\mathbb{C}} & & \mathbb{C} \ar[ll]^{\mathbb{C}^{\theta^{\phi_\varepsilon}}\otimes_\mathbb{C} \mathbb{C}} \ar[r]_{\mathbb{C}^{\theta^{\phi_\nu}}\otimes_\mathbb{C}\mathbb{C}} & \mathbb{C} 
}$ \\ 
$W(\sigma,\phi)=\alpha\beta\gamma+\delta\varepsilon\eta$  \\
\begin{tabular}{ll}
$\partial_\alpha W(\sigma,\phi)=\beta\gamma$ & $\partial_{\delta}W(\sigma,\phi)=\varepsilon\eta$ 
 \\
$\partial_\beta W(\sigma,\phi)=\frac{1}{2}(\gamma\alpha$ & $\partial_{\eta}W(\sigma,\phi)=\delta\varepsilon$  \\
$\phantom{\partial_\beta W(\sigma,\phi)=}-(-1)^{\phi_\beta}i\gamma\alpha i)$ & $\partial_{\nu}W(\sigma,\phi)=0$ \\
$\partial_\gamma W(\sigma,\phi)=\alpha\beta$  & 
 \\
$\partial_{\varepsilon}W(\sigma,\phi)=\frac{1}{2}(\eta\delta$   & \\
$\phantom{\partial_{\varepsilon}W(\sigma,\phi)=}-(-1)^{\phi_\varepsilon}i\eta\delta i)$&
\end{tabular}
\end{tabular}
\\
\hline
\end{tabular}
}
\end{center}

\begin{center}
{\small
\begin{tabular}{|c|c|c|}
\hline
$\omega(q_1),\omega(q_2)$ & $\mathbb{C}_{\bldsigma(\tau,\xi)}\compactQhat (\tau)/I(\tau,\xi)$ & $\mathbb{C}_{\bldsigma(\sigma,\phi)}\compactQhat (\sigma)/I(\sigma,\phi)$\\
\hline
$1,1$ &
\begin{tabular}{c}
$\xymatrix{
\mathbb{C} \ar@(l,u)^(.4){\mathbb{C}^\theta\otimes_{\mathbb{C}}\mathbb{C}} \ar[rr]^{\mathbb{C}^{\theta^{\xi_\delta}}\otimes_\mathbb{C}\mathbb{C}} & & \mathbb{C} \ar@(u,r)_(.4){\qquad\qquad\qquad\qquad \mathbb{C}^\theta\otimes_{\mathbb{C}}\mathbb{C}} \ar[dl]^{\mathbb{C}^{\theta^{\xi_\eta}}\otimes_\mathbb{C}\mathbb{C}} & \\
 & \mathbb{C} \ar[ul]^{\mathbb{C}^{\theta^{\xi_{\varepsilon}}}\otimes_\mathbb{C}\mathbb{C}} \ar[dr]^{\mathbb{C}^{\theta^{\xi_\beta}}\otimes_\mathbb{C}\mathbb{C}} & \\
 \mathbb{C} \ar[ur]^{\mathbb{C}^{\theta^{\xi_\gamma}}\otimes_\mathbb{C}\mathbb{C}} & & \mathbb{C} \ar[ll]^{\mathbb{C}^{\theta^{\xi_\alpha}}\otimes_\mathbb{C}\mathbb{C}} \ar[r]_{\mathbb{C}^{\theta^{\xi_\nu}}\otimes_\mathbb{C}\mathbb{C}} & \mathbb{C}
}$ \\
\begin{tabular}{l}
$I(\tau,\xi)=\langle\alpha\beta,\beta\gamma,\gamma\alpha,\delta\varepsilon,\varepsilon\eta,\eta\delta,$\\ 
$\phantom{I(\sigma,\phi)=}s_1^2-e_1,s_2^2-e_2\rangle$  
\end{tabular}
\end{tabular}
& 
\begin{tabular}{c}
$\xymatrix{
 & \mathbb{C} \ar@(lu,ru)^{\mathbb{C}^\theta\otimes_{\mathbb{C}}\mathbb{C}} \ar[dr]|-{\mathbb{C}^{\theta^{\phi_\gamma}}\otimes_{\mathbb{C}}\mathbb{C}} &  & \mathbb{C} \ar@(lu,ru)^{\mathbb{C}^\theta\otimes_{\mathbb{C}}\mathbb{C}} \ar[dr]^{\mathbb{C}^{\theta^{\phi_\eta}}\otimes_\mathbb{C}\mathbb{C}} & & \\
\mathbb{C} \ar[ur]|-{\mathbb{C}^{\theta^{\phi_\alpha}}\otimes_{\mathbb{C}}\mathbb{C}\quad } &  & \mathbb{C} \ar[ll]^{\mathbb{C}^{\theta^{\phi_\beta}}\otimes_{\mathbb{C}}\mathbb{C}} \ar[ur]|-{\quad \mathbb{C}^{\theta^{\phi_{\delta}}\otimes_{\mathbb{C}}\mathbb{C}}} & & \mathbb{C} \ar[ll]^{\mathbb{C}^{\theta^{\phi_\varepsilon}}\otimes_{\mathbb{C}} \mathbb{C}} \ar[r]_{\mathbb{C}^{\theta^{\phi_\nu}}\otimes_\mathbb{C}\mathbb{C}} & \mathbb{C} 
}$ \\ 
\begin{tabular}{l}
$I(\sigma,\phi)=\langle\alpha\beta,\beta\gamma,\gamma\alpha,\delta\varepsilon,\varepsilon\eta,\eta\delta,$\\ 
$\phantom{I(\sigma,\phi)=}s_1^2-e_1,s_2^2-e_2\rangle$  
\end{tabular}
\end{tabular}
\\
\hline
\end{tabular}
}
\end{center}
\end{ex}

Thus, the rest of this short subsection will be devoted to giving a small modification of the constructions from Subsections \ref{subsubsec:arb-weights-Jac-alg} and \ref{subsubsec:arb-weights-semilin-clan-alg} that allow to work over a degree-$2$ datum also when $\omega\equiv 4$.

Let $\SSigmaw=(\Sigma,\marked,\orb,\omega)$ be a surface with weighted orbifold points, with $\omega:\orb\rightarrow\{1,4\}$ constant taking the value 4, let $\tau$ be a triangulation of $\SSigma$. For each $k\in\tau$, set
\begin{equation}\label{eq:omega=4=>half-the-weights}
\delta(\tau,\omega)_k\coloneqq 
\frac{d(\tau,\omega)_k}{2}=\begin{cases}1 & \text{if $k$ is not a pending arc;}\\
2 & \text{if $k$ is a pending arc.}
\end{cases}.
\end{equation}

Set $\delta\coloneqq \lcm\{\delta(\tau,\omega)_k\suchthat k\in\tau\}$, and
let $L/F$ be a degree-$\delta$ datum. Thus, $\delta=2$ if $\orb\neq\varnothing$, and $\delta=1$ if $\orb=\varnothing$. Notice that one may take $L/F$ to be $\mathbb{C}/\mathbb{R}$ if $\delta=2$, or $L=F=\mathbb{C}$ if $\delta=1$.

For each $k\in\tau$ we set $F_k/F$ to be the unique degree-$\delta(\tau,\omega)_k$ field subextension of $L/F$, and denote $G_k=\Gal(F_k/F)$. We also denote $G_{j,k}=\Gal(F_j\cap F_k/F)$ for $j,k\in\tau$. Thus:
$$
G_{j,k}=\begin{cases}
\{\myid_{L},\theta\} & \text{if both $j$ and $k$ are pending arcs;}\\
\{\myid_{F}\} & \text{if at least one of $j$ and $k$ is not a pending arc.}
\end{cases}
$$

Since we are assuming $\omega\equiv 4$, we have $\delta(\tau,\omega)_k=1$ for every non-pending arc $k$. Thus, our construction of a Jacobian algebra and a clannish algebra will be independent of any 1-cocycle $\xi$. For this reason, in Subsections \ref{subsubsec:constant-weights-Jac-alg} and \ref{subsubsec:constant-weights-semilin-clan-alg}, we shall work with plain triangulations $\tau$ instead of colored triangulations $(\tau,\xi)$.

\subsubsection{\textbf{Jacobian algebra}}\label{subsubsec:constant-weights-Jac-alg}
We are working under the assumptions and notations described in the first few paragraphs of the current Subsection \ref{subsec:constant-weights-algs-def-over-C/R}.
Let $\tau$ be a triangulation of $\SSigmaw$.
We define a modulating function $c(\tau):Q(\tau,\omega)_1\rightarrow\bigcup_{j,k\in\tau}G_{j,k}$ as follows. Take an arrow $a\in Q(\tau,\xi)_1$.
\begin{enumerate}
\item If $\min\{\delta(\tau,\omega)_{h(a)},\delta(\tau,\omega)_{t(a)}\}=1$, set
$$
c(\tau)_{a}=\myid\in G_{h(a),t(a)}.
$$
\item If $\min\{\delta(\tau,\omega)_{h(a)},\delta(\tau,\omega)_{t(a)}\}=2$, then both $h(a)$ and $t(a)$ are pending arcs, and the quiver $Q(\tau,\omega)$ has exactly two arrows going from $t(a)$ to $h(a)$, say $\beta_0$ and $\beta_1$. We set
    $$
    c(\tau)_{a}=\begin{cases}
    \myid_L & \text{if $a=\beta_0$;}\\
    \theta & \text{if $a=\beta_1$.}
    \end{cases}
    $$
\end{enumerate}

\begin{ex}\label{ex-modulated-quivers-from-puzlle-pieces-defined-table-constantweights}
For $k=8,9,10$, the weighted quiver $(Q,\bldD)$ and the modulating function $Q_{1}\to\bigcup_{i,j}G_{i,j}$ appearing in the column labeled ``Block $k$'' in Table \ref{table-Jacobian-blocks-6-to-10} have the form $(Q(\tau,\omega),\blddelta(\tau,\omega))$ and  $c(\tau)$, respectively, for some triangulation $\tau$ of a puzzle piece surface from Figure \ref{Fig:unpunct_puzzle_pieces}, where $\blddelta(\tau,\omega)$ is the tuple defined by \eqref{eq:omega=4=>half-the-weights}.
\end{ex}

With the modulating function $c(\tau)$ we form the species
\begin{align*}
(\mathbf{F},\boldsymbol{A}(\tau))&\coloneqq ((F_k)_{k\in\tau},(A(\tau)_a)_{a\in Q(\tau,\omega)_1}),  \qquad \text{where} \\
A(\tau)_a&\coloneqq F_{h(a)}^{c(\tau)_{a}}\otimes_{F_{h(a)}\cap F_{t(a)}} F_{t(a)}. 
\end{align*}

We write $R\coloneqq \times_{k\in\tau} F_k$ and $A(\tau)\coloneqq \bigoplus_{a\in Q(\tau,\omega)_1}A(\tau)_a$. It is clear that $R$ is a semisimple ring and $A(\tau)$ is an $R$-$R$-bimodule.

One easily verifies that the pair $(\mathbf{F},\boldsymbol{A}(\tau))$ satisfies Proposition \ref{prop:our-species-realize-FeShTu-matrices} too, i.e., we are obtaining a species realization of one of the $2^{|\orb|}$ skew-symmetrizable matrices associated to $\tau$ by Felikson-Shapiro-Tumarkin \cite{FeShTu-orbifolds}, cf. \cite[Remark 3.5-(2)]{GLF2}.

\begin{remark}\label{rem:decomposing-Lotimes_FL} If $\min\{\delta(\tau,\omega)_{h(a)},\delta(\tau,\omega)_{t(a)}\}=2$, so that $h(a)$ and $t(a)$ are pending and $Q(\tau,\omega)$ has exactly two arrows going from $j\coloneqq t(a)$ to $k\coloneqq h(a)$, namely $\beta_0$ and $\beta_1$ (one of them being $a$ of course), then 
$$
F_{k}\otimes_{F} F_{j} = L\otimes_FL
\cong 
(L^{\myid_L}\otimes_{L} L)
\oplus 
(L^{\theta}\otimes_{L} L)=
(F_{k}^{c(\tau)_{\beta_0}}\otimes_{F_k\cap F_j} F_{j})
\oplus 
(F_{k}^{c(\tau)_{\beta_1}}\otimes_{F_k\cap F_j} F_{j})
$$
and $L^{\myid_L}\otimes_{L} L
\not\cong 
L^{\theta}\otimes_{L} L$ as $L$-$L$-bimodules.
\end{remark}

We now move towards the definition of a natural potential $W(\tau)\in\usualRA{A}(\tau)$. There are some obvious cycles on $A(\tau)$, that we point to explicitly.

\begin{defi}\label{def:cycles-from-triangles-constantweights}
Let $\tau$ be a triangulation of $\SSigmaw$ and $\triangle$ be an interior triangle of $\tau$.
\begin{enumerate}
    \item If $\triangle$ does not contain any orbifold point, then, with the notation from the picture on the upper left in Figure \ref{Fig:triangles_quivers}), we set $W^\triangle(\tau);=\alpha^\triangle\beta^\triangle\gamma^\triangle$;
    \item if $\triangle$ contains exactly one orbifold point, let $k$ be the unique pending arc of $\tau$ contained in $\triangle$. Using the notation from the picture on the upper right in Figure \ref{Fig:triangles_quivers}, we set $W^\triangle(\tau)=\alpha^\triangle\beta^\triangle\gamma^\triangle$;
    \item if $\triangle$ contains exactly two orbifold points, let $k_1$ and $k_2$ be the two pending arcs of $\tau$ contained in $\triangle$, and assume that they are configured as in Figure \ref{Fig:triangles_quivers}. Then, with the notation of the picture on the bottom left in Figure \ref{Fig:triangles_quivers}, we set $W^\triangle(\tau)=(\delta_0^\triangle+\delta_1^\triangle)\beta^\triangle\gamma^\triangle$.
\end{enumerate}
\end{defi}

For the next definition, we remind the reader that $\SSigma=\surf$ is assumed to be either unpunctured or once-punctured closed.

\begin{defi}\label{def:W(tau,xi)-constantweights} Let $\SSigmaw=(\Sigma,\marked,\orb,\omega)$ be a surface with weighted orbifold points, with $\omega\equiv4$, and let $\tau$ a triangulation of $\SSigmaw=(\Sigma,\marked,\orb,\omega)$.
\begin{enumerate}
\item
The \emph{potential associated to $\tau$} is
$$
W(\tau)\coloneqq \sum_{\triangle}W^\triangle(\tau)\in \usualRA{A}(\tau) \subseteq\compRA{A}(\tau),
$$
where the sum runs over all interior triangles $\triangle$  of $\tau$;
\item the \emph{Jacobian algebra associated to $\tau$} is the quotient
$$
\jacobalg{A}(\tau),W(\tau)\coloneqq \compRA{A}(\tau)/J(W(\tau)),
$$
where the \emph{Jacobian ideal} $J(W(\tau))\subseteq \compRA{A}(\tau)$ is defined according to \cite[Definition 3.11]{GLF1}.
\end{enumerate}
\end{defi}

For detailed examples of the basic arithmetic in $\usualRA{A}(\tau)$ and in the Jacobian algebra $\jacobalg{A}(\tau),W(\tau)$, we kindly refer the reader to  \cite[Example 4.8 and Section 9]{GLF1}.

\begin{remark} 
\begin{enumerate}
    \item if $\orb\neq\varnothing$ and $\omega\equiv 4$, then $(A(\tau),W(\tau))$ is the species with potential associated to $\tau$ in \cite{GLF1} (although therein punctures are allowed, whereas here they are excluded);
    \item if $\orb=\varnothing$ and $\marked\subseteq\partial\Sigma$, then $(A(\tau),W(\tau))$ is the quiver with potential defined in \cite{LF-QPsurfs1}, and $\jacobalg{A}(\tau),W(\tau)$ is the gentle algebra studied in \cite{ABCP}.
\end{enumerate}    
\end{remark}

The same argument as the one given in the proof of \cite[Theorem 10.2]{GLF2} can be applied to obtain a proof of the next result.

\begin{thm}\label{thm:Jac-algs-are-fin-dim-constantweights}
    Let $\SSigmaw$ be an unpunctured surface with weighted orbifold points, with $\omega\equiv4$. For every triangulation $\tau$ of $\SSigmaw$, the Jacobian algebra $\jacobalg{A}(\tau),W(\tau)$ is $F$-linearly isomorphic to $\usualRA{A}(\tau)/J_0(W(\tau))$ and its dimension over the ground field $F$ is finite.
\end{thm}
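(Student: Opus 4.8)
The statement is the exact analogue of \cite[Theorem 10.2]{GLF2} with the colored datum $(\tau,\xi)$ replaced by a plain triangulation $\tau$ and the degree-$4$ datum $E/F$ replaced by a degree-$2$ datum $L/F$, so the natural approach is to transport the proof of \cite[Theorem 10.2]{GLF2} verbatim, checking only that each ingredient it uses survives the passage to the $\omega\equiv 4$ setting. Concretely, recall from the third item of the Remark following Definition \ref{def:paths-potentials-cyclic-derivatives-jacobian-algebras} that, once we know $\mathfrak{m}^n\subseteq J(W(\tau))$ for some $n>0$, the canonical inclusion $\usualRA{\Lambda(\tau)}\hookrightarrow\compRA{\Lambda(\tau)}$ induces an $F$-algebra isomorphism $\usualRA{\Lambda(\tau)}/J_0(W(\tau))\xrightarrow{\ \sim\ }\jacobalg{\Lambda(\tau),W(\tau)}$, which is the first half of the assertion; moreover once $\mathfrak{m}^n\subseteq J_0(W(\tau))$ the algebra $\usualRA{\Lambda(\tau)}/J_0(W(\tau))$ is a quotient of $\bigoplus_{k<n}\Lambda(\tau)^{\otimes_R k}$, hence finite-dimensional over $F$ since each $\Lambda(\tau)_a$ is. So the entire statement reduces to the single claim: there is $n>0$ with $\mathfrak{m}^n\subseteq J_0(W(\tau))$, equivalently, every sufficiently long path on $\Lambda(\tau)$ lies in the polynomial Jacobian ideal generated by the cyclic derivatives of the $W^{\triangle}(\tau)$.

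First I would record the local structure of $W(\tau)$: by Definition \ref{def:cycles-from-triangles-constantweights}, $W(\tau)=\sum_{\triangle}W^{\triangle}(\tau)$ with $W^{\triangle}(\tau)=\alpha^{\triangle}\beta^{\triangle}\gamma^{\triangle}$ for ordinary and once-orbifolded triangles and $W^{\triangle}(\tau)=(\delta_0^{\triangle}+\delta_1^{\triangle})\beta^{\triangle}\gamma^{\triangle}$ for twice-orbifolded ones; and in $Q(\tau,\omega)$ every arrow belongs to exactly one triangle (an arrow incident to a pending arc of weight $2$ need not be examined separately, since the modulating function $c(\tau)$ only changes bimodule twists, not the underlying path combinatorics). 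The cyclic derivatives listed in the last rows of Tables \ref{table-Jacobian-blocks-6-to-10} show that, up to nonzero $F$-scalars and invertible left/right multiplications by elements of the $F_k$ (which are units), $\partial_a(W(\tau))$ is, for each arrow $a$ of a triangle $\triangle$, precisely the length-$2$ path completing $a$ to the $3$-cycle of $\triangle$ (for twice-orbifolded triangles the two derivatives $\partial_{\delta_0},\partial_{\delta_1}$ together with $\partial_{\beta},\partial_{\gamma}$ still account for all three length-$2$ paths $\beta\gamma$, $\gamma(\delta_0+\delta_1)$, $(\delta_0+\delta_1)\beta$ inside $\triangle$, up to units, exactly as in the $d=4$ situation where $\rho^{\pm l}(v^{\pm 1})$ are units of $E$). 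Hence $J_0(W(\tau))$ contains all length-$2$ paths $ba$ with $a,b$ in a common triangle — i.e. all "consecutive within a triangle" compositions.

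It then remains to argue, purely combinatorially on $Q(\tau,\omega)$, that every path not using such a forbidden length-$2$ subpath has bounded length. This is the step I expect to be the bulk of the (borrowed) work, and it is exactly the content of the argument in \cite[Theorem 10.2]{GLF2}: a path in $\overline{Q}(\tau)$ avoiding all "two arrows in the same triangle" relations, read on the surface, corresponds to a walk that must leave each triangle it enters through the "other" side, and because $\Sigma$ is an unpunctured surface (no puncture to wind around) such walks cannot return and so are bounded in length by a quantity depending only on the number of triangles; consequently $\mathfrak{m}^n\subseteq J_0(W(\tau))$ for $n$ one more than that bound. Since the passage from $E/F$ to $L/F$ and from $g(\tau,\xi)$ to $c(\tau)$ alters none of the data this combinatorial argument uses — it only rescales generators of $J_0$ by units and never merges or deletes arrows — the argument of \cite[Theorem 10.2]{GLF2} applies word for word. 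The main obstacle is thus not a new idea but the bookkeeping: one must double-check that in the twice-orbifolded case the four cyclic derivatives $\partial_{\beta},\partial_{\gamma},\partial_{\delta_0},\partial_{\delta_1}$ still generate, modulo units, all three intra-triangle length-$2$ paths — this is where the identities $v^2=u\in L^{\times}$ and $\theta(u)=-u$ (so that the relevant scalar combinations are units rather than zero) are used, just as $\zeta^2+1=0$ was used in the proof of Proposition \ref{prop-isomorphisms-between-some-of-the-blocks} — after which finite-dimensionality and the isomorphism $\usualRA{\Lambda(\tau)}/J_0(W(\tau))\cong\jacobalg{\Lambda(\tau),W(\tau)}$ follow formally as above.
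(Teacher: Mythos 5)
Your proposal is correct and follows exactly the route the paper takes: the paper's own proof consists of the single remark that ``the same argument as the one given in the proof of \cite[Theorem 10.2]{GLF2} can be applied,'' and your write-up is precisely that transfer, with the right reduction (show $\mathfrak{m}^n\subseteq J_0(W(\tau))$, then invoke the remark after Definition \ref{def:paths-potentials-cyclic-derivatives-jacobian-algebras}) and the right check that in the twice-orbifolded case the derivatives $\partial_{\delta_0},\partial_{\delta_1}$ recover all intra-triangle length-two paths up to units because $\theta(u)=-u$ and $\operatorname{char}F\neq 2$. No gap to report.
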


\begin{ex}\label{ex:jac-algs-pentagon-2orb-pts-constant-weight=4} Consider the triangulations $\tau$ and $\sigma$ of the pentagon with two orbifold points shown in Figure \ref{Fig:pentagon_two_orb_points}, and the constant function $\omega:\orb\rightarrow\{1,4\}$ with value $4$. Working over the degree-2 datum $\mathbb{C}/\mathbb{R}$ (thus, $\theta:\mathbb{C}\rightarrow\mathbb{C}$ is the usual complex conjugation and the square of $u=i\in\mathbb{C}$ is $-1\in\mathbb{R}$), in the following table we can visualize the Jacobian algebras $\jacobalg{A}(\tau),W(\tau))$ and $\jacobalg{A}(\sigma),W(\sigma)$.
\begin{center}
{\small
\begin{tabular}{|c|c|c|}
\hline
$\omega(q_1),\omega(q_2)$ & $\jacobalg{A}(\tau),W(\tau))$ & $\jacobalg{A}(\sigma),W(\sigma)$\\
\hline
$4,4$ &
\begin{tabular}{c}
$\xymatrix{
\mathbb{C} \ar@/_0.5pc/[rr]_{\mathbb{C}^{\mathbb{\theta}}\otimes_\mathbb{C}\mathbb{C}} \ar@/^0.5pc/[rr]^{\mathbb{C}\otimes_\mathbb{C}\mathbb{C}} & & \mathbb{C} \ar[dl]^{\mathbb{R}\otimes_\mathbb{R}\mathbb{C}} & \\
 & \mathbb{R} \ar[ul]^{\mathbb{C}\otimes_\mathbb{R}\mathbb{R}} \ar[dr]^{\mathbb{R}\otimes_\mathbb{R}\mathbb{R}} & \\
 \mathbb{R} \ar[ur]^{\mathbb{R}\otimes_\mathbb{R}\mathbb{R}} & & \mathbb{R} \ar[ll]^{\mathbb{R}\otimes_\mathbb{R}\mathbb{R}} \ar[r]_{\mathbb{R}\otimes_\mathbb{R}\mathbb{R}} & \mathbb{R}
}$ \\
$W(\tau,\xi)=\alpha\beta\gamma+(\delta_0+\delta_1)\varepsilon\eta$ \\
\begin{tabular}{ll}
$\partial_\alpha W(\tau,\xi)=\beta\gamma$ & $\partial_{\delta_0}W(\tau,\xi)=\frac{1}{2}(\varepsilon\eta$ 
 \\
$\partial_\beta W(\tau,\xi)=\gamma\alpha$ & $\phantom{\partial_{\delta_0}W(\tau,\xi)=}-i\varepsilon\eta i)$
 \\
$\partial_\gamma W(\tau,\xi)=\alpha\beta$  & 
$\partial_{\delta_1}W(\tau,\xi)=\frac{1}{2}(\varepsilon\eta$  
 \\
$\partial_{\nu}W(\tau,\xi)=0$ & $\phantom{\partial_{\delta_1}W(\tau,\xi)=}+i\varepsilon\eta i)$\\
 & $\partial_{\varepsilon}W(\tau,\xi)=\eta(\delta_0+\delta_1)$ \\
 & $\partial_{\eta}W(\tau,\xi)=(\delta_0+\delta_1)\varepsilon$ 
\end{tabular}
\end{tabular}
& 
\begin{tabular}{c}
$\xymatrix{
 & \mathbb{C} \ar[dr]|-{\mathbb{R}\otimes_\mathbb{R}\mathbb{C}} &  & \mathbb{C} \ar[dr]^{\mathbb{R}\otimes_\mathbb{R}\mathbb{C}} & & \\
\mathbb{R} \ar[ur]|-{\mathbb{C}\otimes_\mathbb{R}\mathbb{R}} &  & \mathbb{R} \ar[ll]^{\mathbb{R}\otimes_{\mathbb{R}}\mathbb{R}} \ar[ur]|-{\mathbb{C}\otimes_\mathbb{R}\mathbb{R}} & & \mathbb{R} \ar[ll]^{\mathbb{R}\otimes_\mathbb{R} \mathbb{R}} \ar[r]_{\mathbb{R}\otimes_\mathbb{R}\mathbb{R}} & \mathbb{R} 
}$ \\ 
$W(\sigma,\phi)=\alpha\beta\gamma+\delta\varepsilon\eta$  \\
\begin{tabular}{ll}
$\partial_\alpha W(\sigma,\phi)=\beta\gamma$ & $\partial_{\delta}W(\sigma,\phi)=\varepsilon\eta$ 
 \\
$\partial_\beta W(\sigma,\phi)=\gamma\alpha$ & $\partial_{\varepsilon}W(\sigma,\phi)=\eta\delta$  
 \\
$\partial_\gamma W(\sigma,\phi)=\alpha\beta$  & 
$\partial_{\eta}W(\sigma,\phi)=\delta\varepsilon$ 
 \\
& $\partial_{\nu}W(\sigma,\phi)=0$
\end{tabular}
\end{tabular}
\\
\hline
\end{tabular}
}
\end{center}
\end{ex}

\begin{thm}\cite[Theorem 8.4]{GLF1}\label{thm:SPs-well-behaved-under-flips-and-muts-constant-weight=4} Let $\SSigmaw$ be an unpunctured surface with weighted orbifold points, with $\omega\equiv 4$, and let $\tau$ and $\kappa$ be colored triangulations of $\SSigma$.  If $\kappa$ can be obtained from $\tau$ by the flip of an arc $k \in\tau$, then the species with potential $(A(\kappa),W(\kappa))$ and $\mu_k(A(\tau), W(\tau))$ are right-equivalent.
\end{thm}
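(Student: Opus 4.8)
The plan is to follow the strategy of Derksen--Weyman--Zelevinsky \cite{DWZ1} in the species-theoretic form developed in \cite{GLF1}: show that the flip of $k$ is a purely local operation, that the species-with-potential mutation $\mu_k$ reproduces this local operation, and then glue. The proof runs exactly parallel to that of the arbitrary-weights Theorem \ref{thm:SPs-well-behaved-under-colored-flips-and-muts-arb-weights} (itself an extension of the simply-laced case), the only difference being that for $\omega\equiv 4$ the vertex fields appearing are just $\mathbb{R}$ (non-pending arcs) and $\mathbb{C}$ (pending arcs) and the arrow bimodules are the $\mathbb{C}/\mathbb{R}$-bimodules of \S\ref{subsubsec:constant-weights-Jac-alg}. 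First I would record the locality: flipping $k$ changes the triangulation only inside the union of the triangle(s) of which $k$ is a side, so $Q(\kappa,\omega)$, $\blddelta(\kappa,\omega)$ and $W(\kappa)$ agree with the data of $\tau$ away from the ``star'' of $k$. Using the behavior of $\mu_k$ under the gluing of blocks --- equivalently, under the $\rho$-block decomposition of \cite{GLFS-schemes}, and ultimately the Splitting Theorem for species with potential \cite[\S3]{GLF1} --- one reduces the claim to the finitely many local configurations of a flippable arc, the untouched part of $(\Lambda(\tau),W(\tau))$ being carried along unchanged by the premutation-and-reduction procedure.

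Next I would enumerate these local configurations, which are dictated by the puzzle pieces of Figure \ref{Fig:unpunct_puzzle_pieces} together with the rule of Figure \ref{Fig:triangles_quivers}. Since $\SSigma$ is unpunctured and by the conventions on arcs from \S\ref{subsec:surfaces-and-triangs} there are no ``bad'' local pictures, and $k$ is either a non-pending arc ($\delta(\tau,\omega)_k=1$, a diagonal of a quadrilateral whose two triangles are each ordinary, once-orbifolded, or twice-orbifolded) or a pending arc ($\delta(\tau,\omega)_k=2$, sitting inside a once- or twice-orbifolded triangle). For each such configuration one writes down, from Definitions \ref{def:cycles-from-triangles-constantweights} and \ref{def:W(tau,xi)-constantweights}, the relevant piece of $Q(\tau,\omega)$ and of $W(\tau)$, including the double-arrow potential $(\delta_0^\triangle+\delta_1^\triangle)\beta^\triangle\gamma^\triangle$ for a twice-orbifolded triangle.

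Then, in each case, I would carry out the mutation explicitly: form the premutation $\widetilde{\mu}_k(\Lambda(\tau),W(\tau))$ according to \cite[Definition 3.19]{GLF1} --- reverse every arrow incident to $k$, adjoin a composite arrow $[ba]$ for each length-two path $a\colon j\to k$, $b\colon k\to i$, and replace $W(\tau)$ by $[W(\tau)]+\sum [ba]\,a^*b^*$ with the bimodule identifications of the $^*$-arrows as in \S2 of \cite{GLF1} --- and then apply the reduction step, splitting off the trivial part via the Splitting Theorem and recording the resulting change of variables. In every case the reduced species with potential coincides, up to an explicit right-equivalence (an automorphism of $\compRA{\Lambda(\kappa)}$ acting as the identity on $R$ modulo higher-order terms), with $(\Lambda(\kappa),W(\kappa))$. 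Finally, using the uniqueness up to right-equivalence in the Splitting Theorem, the local right-equivalences assemble --- together with the identity on the untouched part --- into a global right-equivalence between $(\Lambda(\kappa),W(\kappa))$ and $\mu_k(\Lambda(\tau),W(\tau))$.

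The hard part will be the bookkeeping in the twice-orbifolded cases. There $Q(\tau,\omega)$ carries a pair of parallel arrows $\delta_0,\delta_1$ between the two degree-$4$ pending arcs, so the premutated potential acquires several cubic and quartic terms whose reduction requires a non-obvious substitution, and one must track the $\theta$-twist of every bimodule through each composition of arrows (compositions are $\otimes$ over $\mathbb{C}$ with possible $\theta$-shifts, cf. Remark \ref{rem:decomposing-Lotimes_FL}). A secondary subtlety is to verify that none of the composite arrows $[ba]$ created at $k$ produces a loop or an uncancellable $2$-cycle in the reduction; this is precisely where unpuncturedness of $\SSigma$ enters, guaranteeing that the local pictures around $k$ are only the benign ones listed above.
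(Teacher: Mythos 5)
This theorem is not proved in the present paper: it is quoted verbatim (as \cite[Theorem 8.4]{GLF1}) from the earlier work of Geuenich--Labardini-Fragoso, so there is no in-paper proof to compare against. Your proposal correctly reconstructs the strategy actually used there: locality of the flip, reduction via the Splitting Theorem to the finitely many puzzle-piece configurations around $k$, explicit premutation and reduction in each case (with the decomposition of composite bimodules such as $L\otimes_F L$ into $\myid$- and $\theta$-twisted summands being the delicate point in the twice-orbifolded configurations), and gluing of the resulting local right-equivalences.
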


\subsubsection{\textbf{Semilinear clannish algebra}}\label{subsubsec:constant-weights-semilin-clan-alg}
We maintain the assumptions and notations described in the first few paragraphs of the current Subsection \ref{subsec:constant-weights-algs-def-over-C/R}.
Let $\tau$ be a triangulation of $\SSigma$. 
We associate to $\tau$ a semilinear clannish algebra $K_{\bldsigma(\tau)}\compactQhat (\tau)/I(\tau)$ as follows.

Exactly as in Subsection \ref{subsubsec:arb-weights-semilin-clan-alg}, we set $\compactQhat (\tau)$ to be the quiver obtained from $\overline{Q}(\tau)$ by adding one special loop at each pending arc of $\tau$.

We set $K\coloneqq F$.
To every arrow $a\in\compactQhat (\tau)_1$ we attach the trivial field automorphism $\sigma(\tau,\xi)_a\coloneqq \myid_F\in \Gal(K/F)\subseteq \Aut(K)$.

This information determines already a path algebra $K_{\bldsigma(\tau)}\compactQhat (\tau)$. 
Furthermore, to each loop $s\in \bbS(\tau)$ of $\compactQhat (\tau)$ with head and tail $k$, we attach the quadratic polynomial
$$
q_s(x)\coloneqq 
x^2-u^2\in F[x].
$$
This information determines the set  of special relations, defined by
\[
\begin{array}{cc}
S(\tau)=\{q(s)\mid s\in \bbS(\tau)=\compactQhat (\tau)_1\setminus \overline{Q}(\tau)_1 \}, & e_{k}K_{\bldsigma(\tau)}\compactQhat (\tau)e_{k}\ni q(s)= 
s^2-u^2e_{k}.
\end{array}
\]
We define the two-sided ideal $I(\tau)=\langle Z(\tau)\cup S(\tau)\rangle$ in $K_{\bldsigma(\tau)}\compactQhat (\tau)$ by defining the set $Z(\tau)$ of zero-relations, as follows.
Suppose $\triangle$ is a triangle in $\tau$, say of one of the forms depicted in Figure \ref{Fig:rule-for-arrows-of-overlineQ}. 
Each such $\triangle$ gives rise to three distinct arrows of $\overline{Q}(\tau)$  subject to certain conditions, namely
\[
\begin{array}{cccc}
\alpha^{\triangle}, \beta^{\triangle},\gamma^{\triangle}\in \compactQhat (\tau)_1\setminus \bbS(\tau)=\overline{Q}(\tau)_1, & h(\alpha^{\triangle})=t(\gamma^{\triangle}), & h(\gamma^{\triangle})=t(\beta^{\triangle}),
& h(\beta^{\triangle})=t(\alpha^{\triangle}).
\end{array}
\] 
We now let $Z(\tau)$ be the union of the sets $Z(\tau,\triangle)=\{\alpha^{\triangle}\beta^{\triangle},\beta^{\triangle}\gamma^{\triangle},\gamma^{\triangle}\alpha^{\triangle}\}$ taken over all such $\triangle$.

\begin{ex}
For $k=8,9,10$, the ring appearing in the column labeled ``Block $k$'' in Table \ref{table-semilinear-clannish-blocks-6-to-10} has the form $K_{\bldsigma(\tau)}\compactQhat (\tau)/I(\tau)$ where $\tau$ is triangulation of a puzzle piece surface $\SSigma$ from Figure \ref{Fig:unpunct_puzzle_pieces}. 
\end{ex}

A minor modification of the proof of Proposition \ref{prop-algebras-are-semilinear-clannish} proves the next result.

\begin{prop}
\label{prop-algebras-are-semilinear-clannish-constantweights}
Let $\SSigmaw$ be a surface with weighted orbifold points, with $\omega\equiv4$. For every triangulation $\tau$ of $\SSigma$, $F_{\bldsigma(\tau)}\compactQhat (\tau)/I(\tau)$ is a clannish $F$-algebra which is  normally-bound, non-singular and of semisimple type.
\end{prop}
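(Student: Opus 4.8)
The plan is to re-run the argument proving Proposition \ref{prop-algebras-are-semilinear-clannish}, observing that the only data that change are the quadratic polynomials attached to the special loops. First I would record that the quiver $\compactQhat(\tau)$, its set $\bbS(\tau)$ of special loops (one at each pending arc of $\tau$), and the set of zero-relations $Z(\tau)=\bigcup_{\triangle}Z(\tau,\triangle)$ are produced from $\tau$ by exactly the recipe of \S\ref{subsubsec:arb-weights-semilin-clan-alg}; in particular none of $\compactQhat(\tau)$, $\bbS(\tau)$, $Z(\tau)$ depends on the choice $K=F$ or on the fact that every $\sigma(\tau)_a$ is trivial. Hence conditions (Q) and (Z) of Definition \ref{defi-semilinear-clannish-algebra} can be verified by the same case analysis on an arc $i\in\compactQhat(\tau)_0$ as in the proof of Proposition \ref{prop-algebras-are-semilinear-clannish} --- splitting according to whether $i$ bounds a single triangle (case (a)) or two distinct triangles (case (b)) --- and the relevant conclusions (at most one special loop at $i$, and only when $i$ is a pending arc, so only in case (a); the arrows $\lambda$ resp.\ $\kappa$ attached to $i$ are the unique ordinary arrows with head resp.\ tail $i$; any ordinary arrow $y$ with $h(y)=i$ left-completes to a path lying in $Z(\tau)$) go through verbatim.

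What remains is to check condition (S) together with the three adjectives. Condition (S) holds by construction, since $S(\tau)=\{s^2-u^2e_k\suchthat s\in\bbS(\tau),\,h(s)=k=t(s)\}$ already has the prescribed form (with $\beta_s=0$ and $\gamma_s=u^2e_k$). If $\orb=\varnothing$ there are no pending arcs, $\bbS(\tau)=\varnothing$, and there is nothing further to prove. If $\orb\neq\varnothing$ then $\delta=\lcm\{\delta(\tau,\omega)_k\suchthat k\in\tau\}=2$, so $L/F$ is a genuine degree-$2$ datum with $L=F(u)$ and $u^2\in F$; the minimal polynomial of $u$ over $F$ is then $x^2-u^2$, so $u^2$ is a non-square in $F$. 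For each special loop $s$, the polynomial $q_s(x)=x^2-u^2e_k\in F[x;\myid_F]=F[x]$ has nonzero constant term, hence is non-singular; since $F[x]$ is commutative, $q_s$ is normal by Remark \ref{rem:normality-and-semisimplicity-of-q} (taking $\sigma=\myid_F$); and by Example \ref{ex:specific-pols-of-semisimple-type}(1), applied with the non-square $\mu=u^2$, the quotient $F[x]/\langle q_s(x)\rangle\cong L$ is a field, so $q_s$ is of semisimple type. Therefore $F_{\bldsigma(\tau)}\compactQhat(\tau)/I(\tau)$ is a clannish $F$-algebra (indeed an ordinary clannish algebra, since all $\sigma(\tau)_a$ are the identity) which is normally-bound, non-singular and of semisimple type.

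I do not expect a genuine obstacle here; the proof is a transcription of that of Proposition \ref{prop-algebras-are-semilinear-clannish} with the new special polynomials. The single point worth stating carefully is why $u^2$ is a non-square in $F$ whenever $\orb\neq\varnothing$ --- so that the quotient $F[x]/\langle x^2-u^2\rangle$ is the field $L$ rather than $F\times F$ --- and this is immediate from the definition of a degree-$2$ datum ($[L:F]=2$, $L=F(u)$, $u^2\in F$).
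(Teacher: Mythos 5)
Your proposal is correct and follows exactly the route the paper intends: the paper disposes of this proposition with the single remark that ``a minor modification of the proof of Proposition \ref{prop-algebras-are-semilinear-clannish} proves the next result,'' and your write-up is precisely that modification, with (Q) and (Z) unchanged and the new special polynomials $x^2-u^2e_k\in F[x]$ handled via Remark \ref{rem:normality-and-semisimplicity-of-q} and Example \ref{ex:specific-pols-of-semisimple-type}(1). Your observation that $u^2$ is a non-square in $F$ (so that $F[x]/\langle x^2-u^2\rangle\cong L$ is a field) is the one point the paper leaves implicit, and you justify it correctly.
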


\begin{ex}\label{ex:semilin-clannish-algs-pentagon-2orb-pts-constant-weights} Consider the triangulations $\tau$ and $\sigma$ of the pentagon with two orbifold points shown in Figure \ref{Fig:pentagon_two_orb_points}, and the constant function $\omega:\orb\rightarrow\{1,4\}$ taking the value $4$. Working over the degree-2 datum $\mathbb{C}/\mathbb{R}$ (thus, $\theta:\mathbb{C}\rightarrow\mathbb{C}$ is the usual complex conjugation and the square of $u=i\in\mathbb{C}$ is $-1\in\mathbb{R}$), in the following table we can visualize the clannish algebras $\mathbb{R}_{\bldsigma(\tau,\xi)}\compactQhat (\tau)/I(\tau,\xi)$ and $\mathbb{R}_{\bldsigma(\sigma,\phi)}\compactQhat (\sigma)/I(\sigma,\phi)$.
\begin{center}
{\small
\begin{tabular}{|c|c|c|}
\hline
$\omega(q_1),\omega(q_2)$ & $\jacobalg{A}(\tau),W(\tau))$ & $\jacobalg{A}(\sigma),W(\sigma)$\\
\hline
$4,4$ &
\begin{tabular}{c}
$\xymatrix{
\mathbb{R} \ar@(l,u)^(.4){\mathbb{R}\otimes_{\mathbb{R}}\mathbb{R}} \ar[rr]^{\mathbb{R}\otimes_\mathbb{R}\mathbb{R}} & & \mathbb{R} \ar@(u,r)_(.4){\qquad\qquad\qquad\quad\mathbb{R}\otimes_{\mathbb{R}}\mathbb{R}} \ar[dl]^{\mathbb{R}\otimes_\mathbb{R}\mathbb{R}} & \\
 & \mathbb{R} \ar[ul]^{\mathbb{R}\otimes_\mathbb{R}\mathbb{R}} \ar[dr]^{\mathbb{R}\otimes_\mathbb{R}\mathbb{R}} & \\
 \mathbb{R} \ar[ur]^{\mathbb{R}\otimes_\mathbb{R}\mathbb{R}} & & \mathbb{R} \ar[ll]^{\mathbb{R}\otimes_\mathbb{R}\mathbb{R}} \ar[r]_{\mathbb{R}\otimes_\mathbb{R}\mathbb{R}} & \mathbb{R}
}$ \\
\begin{tabular}{l}
$I(\tau,\xi)=\langle\alpha\beta,\beta\gamma,\gamma\alpha,\delta\varepsilon,\varepsilon\eta,\eta\delta,$\\ 
$\phantom{I(\sigma,\phi)=}s_1^2+e_1,s_2^2+e_2\rangle$  
\end{tabular}
\end{tabular}
& 
\begin{tabular}{c}
$\xymatrix{
 & \mathbb{R} \ar@(lu,ru)^{\mathbb{R}\otimes_{\mathbb{R}}\mathbb{R}} \ar[dr]|-{\mathbb{R}\otimes_\mathbb{R}\mathbb{R}} &  & \mathbb{R} \ar@(lu,ru)^{\mathbb{R}\otimes_{\mathbb{R}}\mathbb{R}} \ar[dr]^{\mathbb{R}\otimes_\mathbb{R}\mathbb{R}} & & \\
\mathbb{R} \ar[ur]|-{\mathbb{R}\otimes_\mathbb{R}\mathbb{R}} &  & \mathbb{R} \ar[ll]^{\mathbb{R}\otimes_{\mathbb{R}}\mathbb{R}} \ar[ur]|-{\mathbb{R}\otimes_\mathbb{R}\mathbb{R}} & & \mathbb{R} \ar[ll]^{\mathbb{R}\otimes_\mathbb{R} \mathbb{R}} \ar[r]_{\mathbb{R}\otimes_\mathbb{R}\mathbb{R}} & \mathbb{R} 
}$ \\ 
\begin{tabular}{l}
$I(\sigma,\phi)=\langle\alpha\beta,\beta\gamma,\gamma\alpha,\delta\varepsilon,\varepsilon\eta,\eta\delta,$\\ 
$\phantom{I(\sigma,\phi)=}s_1^2+e_1,s_2^2+e_2\rangle$  
\end{tabular}
\end{tabular}
\\
\hline
\end{tabular}
}
\end{center}
\end{ex}

\vspace{2mm}

\section{Main result: Morita equivalence between Jacobian and semilinear clannish algebras}
\label{sec-morita-equivalence-between-algebras-from-triangulations}

Let us describe how we shall glue the building blocks from Section \ref{sec:building-blocks} together in order to construct `bigger' algebras. This way of gluing algebras along vertices was first introduced by Br\"{u}stle in~\cite{Brustle-kit} (in broader generality). The \emph{$\rho$-block decompositions} from \cite{GLFS-schemes} can be thought of as a ``reverse-engineering'' of the gluing process.

\begin{enumerate}
\item Take finitely many disjoint copies of blocks from one and only one of the following four sets:
\begin{enumerate}
    \item\label{item:Jac-blocks-1-to-7} the Jacobian blocks $1,\ldots,7$ in Tables \ref{table-Jacobian-blocks-1-to-5} and \ref{table-Jacobian-blocks-6-to-10};
    \item\label{item:Jac-blocks-8-9-10} the Jacobian blocks $8,9,10$ in Table \ref{table-Jacobian-blocks-6-to-10};
    \item\label{item:semilin-clan-blocks-1-to-7} the semilinear clannish blocks $1,\ldots,7$ in Tables \ref{table-semilinear-clannish-blocks-1-to-5} and \ref{table-semilinear-clannish-blocks-6-to-10};
   \item\label{item:semilin-clan-blocks-8-9-10} the semilinear clannish blocks $8,9,10$ in Table \ref{table-semilinear-clannish-blocks-6-to-10}.
\end{enumerate}
As said in the opening paragraphs of Section \ref{sec:building-blocks}, in each of these copies, some entries of the weight triple $\bldD=(d_1,d_2,d_3)$ appear enclosed in a small circle. This means that the corresponding vertex is an \emph{outlet}, allowed to be matched and glued to another outlet. Notice that there is never a loop based at an outlet, and that on all the outlets of the block copies chosen appears the same field (it is $L$ if the block copies are taken from \eqref{item:Jac-blocks-1-to-7} or \eqref{item:Jac-blocks-8-9-10}, and it is $F$ if the block copies are taken from \eqref{item:semilin-clan-blocks-1-to-7} or \eqref{item:semilin-clan-blocks-8-9-10}).
\item Fix a partial matching of this set of outlets, never matching two outlets of the same block copy.
\item Glue the puzzle pieces along the matched outlets.
\item After the gluing, choose an arbitrary subset of the set of outlets that were not matched (hence also not glued) to any other outlets, and delete this subset and the arrows incident to its elements.
\end{enumerate}

\begin{remark}
\label{remark-on-gluing-procedures}
    Recall that, as outlined in \S\ref{subsec:surfaces-and-triangs}, our meaning of triangulation in this paper is precisely that of an \emph{ideal triangulation} as defined in \cite{GLF1}.   
    We explain here, how the notion of a \emph{puzzle}-\emph{piece} \emph{decomposition} coming from \cite[Definition~2.8]{GLF1}, corresponds with the  gluing of building blocks (from Section \ref{sec:building-blocks}) above.

    Item (1) above corresponds to equipping oneself with several copies of the puzzle pieces from Figure \ref{Fig:unpunct_puzzle_pieces}. 
    In this way, choosing a side of a triangle correspond to choosing a vertex of a quiver of a block. 
    Furthermore, the \emph{outer side} of a triangle corresponds to an outlet.  
    Thus item (2) above corresponds to equipping oneself with a partial matching of the outer sides of puzzle pieces from Figure \ref{Fig:unpunct_puzzle_pieces}, and item (3) corresponds to gluing them. 
    
     For the sake of Proposition \ref{prop:our-algs-are-block-decomposable} and Theorem \ref{thm:main-result-of-paper}, it is important to note  \cite[Theorem~2.7]{GLF1}, which says that every triangulation can be obtained by a suitable partial matching, as described above. 
     Table \ref{table-surface-blocks-1-to-5} (respectively, \ref{table-surface-blocks-6-to-10}) describes how the blocks in Tables \ref{table-Jacobian-blocks-1-to-5} and \ref{table-semilinear-clannish-blocks-1-to-5} (respectively, \ref{table-Jacobian-blocks-6-to-10}  and \ref{table-semilinear-clannish-blocks-6-to-10}) are given by weighted surfaces $\SSigmaw=(\Sigma,\marked,\orb,\omega)$. 
\end{remark}

\afterpage{%
    \clearpage
    \thispagestyle{empty}
    \begin{landscape}
        \centering 
        \hspace{-1cm}
\renewcommand{\arraystretch}{1.25}
{\small
\begin{tabular}{|c|c|c|c|c|c|}
\hline
 & Block 1 & Block 2 & Block 3 & Block 4 & Block 5  \\
\hline
\begin{tabular}{c}
Weight triple\\ 
$\left(\begin{array}{ccc} & d_1 &\\ d_2 & & d_3\end{array}\right)$\end{tabular} & 
$\left(\begin{array}{ccc} & \circled{2} &\\ \circled{2} & & \circled{2} \end{array}\right)$&
$\left(\begin{array}{ccc} & 1 &\\ \circled{2} & & \circled{2} \end{array}\right)$
&
$\left(\begin{array}{ccc} & 4 &\\ \circled{2} & & \circled{2}\end{array}\right)$ & $\left(\begin{array}{ccc} & \circled{2} &\\ 1 & & 1\end{array}\right)$ & $\left(\begin{array}{ccc} & \circled{2} &\\ 4 & & 4\end{array}\right)$ \\
\hline
\begin{tabular}{c}
Triangulation\\
$(\Sigma,\marked,\orb)$
\\{}\\{}
\end{tabular}
&
\begin{tabular}{c}
\includegraphics[trim={0 0 42.5cm 0},clip,scale=.125]{Fig_unpunct_puzzle_pieces.png}
\end{tabular}
&
\begin{tabular}{c}
\includegraphics[trim={19.5cm 0 22.5cm -0.5cm},clip,scale=.125]{Fig_unpunct_puzzle_pieces.png}
\end{tabular}
&
\begin{tabular}{c}
\includegraphics[trim={19.5cm 0 22.5cm -0.5cm},clip,scale=.125]{Fig_unpunct_puzzle_pieces.png}
\end{tabular}
&
\begin{tabular}{c}
\includegraphics[trim={39.5cm 0 0 -1cm},clip,scale=.125]{Fig_unpunct_puzzle_pieces.png}
\end{tabular}
&
\begin{tabular}{c}
\includegraphics[trim={39.5cm 0 0 -1cm},clip,scale=.125]{Fig_unpunct_puzzle_pieces.png}
\end{tabular}
\\
\hline
\begin{tabular}{c}
Jacobian\\
algebra\\
mnemotechnics\end{tabular}
& 
$
\xymatrix@C=0.7em@R=6em{
& L \ar[dr]_(0.6){\gamma}^(.4){L^{\theta^{\xi_\gamma}}\smallotimesL L} & \\
L \ar[ur]_(0.4){\alpha}^(.6){L^{\theta^{\xi_\alpha}}\smallotimesL L} & & L \ar[ll]_{\beta}^{L^{\theta^{\xi_\beta}}\smallotimesL L} 
}$
&
$\xymatrix@C=0.7em@R=6em{
& F \ar[dr]_(.6){\gamma}^(.4){L\smallotimesF F} & \\
L \ar[ur]_(.4){\alpha}^(.6){F\smallotimesF L} & & L \ar[ll]_{\beta}^{L^{\theta^{\xi_\beta}}\smallotimesL L} 
}$
&
$\xymatrix@C=0.7em@R=6em{
 & E  \ar[dr]_(.6){\gamma}^(.4){L^{\theta^{\xi_\gamma}}\smallotimesL E} &\\
 L \ar[ur]_(.4){\alpha}^(.6){E^{\theta^{\xi_\alpha}}\smallotimesL L} & & L\ar[ll]_{\beta}^{L^{\theta^{\xi_\beta}}\smallotimesL L}
}$
 &{
 $\xymatrix@R=2.5em{
  & L  \ar[ddr]_(.4){\gamma}^(.4){F\smallotimesF L} &  \\
  & &  \\
 F \ar[uur]_(.6){\alpha}^(.6){L\smallotimesF F}  & &  F \ar@/_0.7pc/[ll]^{\beta_0}_{F\smallotimesF F} \ar@/^0.9pc/[ll]_{\beta_1}^{F\smallotimesF F}
}$}
 &{
 $\xymatrix@R=2.5em{
  & L  \ar[ddr]_(.4){\gamma}^(.4){E^{\theta^{\xi_\gamma}}\smallotimesL L} &  \\
  & &  \\
 E \ar[uur]_(.6){\alpha}^(.6){L^{\theta^{\xi_\alpha}}\smallotimesL E}  & &  E \ar@/_0.7pc/[ll]^{\beta_0}_{E^{\rho^l}\smallotimesE E} \ar@/^0.9pc/[ll]_{\beta_1}^{E^{\rho^{l+2}}\smallotimesE E}
}$}
\\
\hline
\begin{tabular}{c}
Semilinear\\
clannish\\
algebra\\
mnemotechnics\end{tabular}
& 
$\xymatrix@C=1.2em@R=4.5em{& L \ar[dr]_(.6){\gamma}^(.4){L^{\theta^{\xi_\gamma}}\smallotimesL L} & \\
L \ar[ur]_(.4){\alpha}^(.6){L^{\theta^{\xi_\alpha}}\smallotimesL L} & & L \ar[ll]_{\beta}^{L^{\theta^{\xi_\beta}}\smallotimesL L}}$
&
$\xymatrix@C=1.2em@R=4.5em{
& L \ar@(lu,ru)_{s_1}^{L^\theta\smallotimesL L} \ar[dr]_(.6){\gamma}^(.4){L\smallotimesL L} & \\
L \ar[ur]_(.4){\alpha}^(.6){L^{\theta^{-\xi_\beta}}\smallotimesL L} & & L \ar[ll]_{\beta}^{L^{\theta^{\xi_\beta}}\smallotimesL L} 
}$
&
$\xymatrix@C=1.2em@R=4.5em{
& L \ar@(lu,ru)_{s_1}^{L\smallotimesL L} \ar[dr]_(.6){\gamma}^(.4){L^{\theta^{\xi_\gamma}}\smallotimesL L} & \\
L \ar[ur]_(.4){\alpha}^(.6){L^{\theta^{\xi_\alpha}}\smallotimesL L} & & L \ar[ll]_{\beta}^{L^{\theta^{\xi_\beta}}\smallotimesL L} 
}$
 &
 $\xymatrix@C=1.7em@R=4.5em{
 & L  \ar[dr]_(.6){\gamma}^(.4){L\smallotimesL  L} &  \\
 L \ar@(dr,dl)_{s_2}^{L^{\theta}\smallotimesL  L} \ar[ur]_(.4){\alpha}^(.6){L\smallotimesL L} & & L \ar@(dr,dl)_{s_3}^{L^\theta\smallotimesL L} \ar[ll]_{\beta}^{L\smallotimesL L} 
}$
 &
$\xymatrix@C=1.7em@R=4.5em{
 & L  \ar[dr]_(.6){\gamma}^(.4){L^{\theta^{\xi_\gamma}}\smallotimesL  L} &  \\
 L \ar@(dr,dl)_{s_2}^{L\smallotimesL  L} \ar[ur]_(.4){\alpha}^(.6){L^{\theta^{\xi_\alpha}}\smallotimesL L} & & L \ar@(dr,dl)_{s_3}^{L\smallotimesL L} \ar[ll]_{\beta}^{L^{\theta^{\xi_\beta}}\smallotimesL L} 
}$ 
\\
\hline
\end{tabular}}
       \captionof{table}{Triangulations and mnemotechnics for blocks 1 to 5 \label{table-surface-blocks-1-to-5}}
    \end{landscape}
    \clearpage
}

\afterpage{%
    \clearpage
    \thispagestyle{empty}
    \begin{landscape}
        \centering 
        \hspace{-1cm}
\renewcommand{\arraystretch}{1.25}
{\small
\begin{tabular}{|c|c|c|c|c|c|}
\hline
 & Block 6 & Block 7 & Block 8 & Block 9 & Block 10  \\
\hline
\begin{tabular}{c}
Weight triple\\ 
$\left(\begin{array}{ccc} & d_1 &\\ d_2 & & d_3\end{array}\right)$\end{tabular} 
& 
$\left(\begin{array}{ccc} & \circled{2} &\\ 4 & & 1\end{array}\right)$
&
$\left(\begin{array}{ccc} & \circled{2} &\\ 1 & & 4\end{array}\right)$
&
$\left(\begin{array}{ccc} & \circled{1} &\\ \circled{1} & & \circled{1}\end{array}\right)$ 
& 
$\left(\begin{array}{ccc} & 2 &\\ \circled{1} & & \circled{1}\end{array}\right)$ 
& 
$\left(\begin{array}{ccc} & \circled{1} &\\ 2 & & 2\end{array}\right)$ \\
\hline
\begin{tabular}{c}
Triangulation\\
$(\Sigma,\marked,\orb)$
\\{}\\{}
\end{tabular}
&
\begin{tabular}{c}
\includegraphics[trim={39.5cm 0 0 -1cm},clip,scale=.125]{Fig_unpunct_puzzle_pieces.png}
\end{tabular}
&
\begin{tabular}{c}
\includegraphics[trim={39.5cm 0 0 -1cm},clip,scale=.125]{Fig_unpunct_puzzle_pieces.png}
\end{tabular}
&
\begin{tabular}{c}
\includegraphics[trim={0 0 42.5cm 0},clip,scale=.125]{Fig_unpunct_puzzle_pieces.png}
\end{tabular}
&
\begin{tabular}{c}
\includegraphics[trim={19.5cm 0 22.5cm -0.5cm},clip,scale=.125]{Fig_unpunct_puzzle_pieces.png}
\end{tabular}
&
\begin{tabular}{c}
\includegraphics[trim={39.5cm 0 0 -1cm},clip,scale=.125]{Fig_unpunct_puzzle_pieces.png}
\end{tabular}
\\
\hline
\begin{tabular}{c}
Jacobian\\
algebra\\
mnemotechnics\end{tabular}
& 
$\xymatrix@C=0.7em@R=6em{
 & L  \ar[dr]_(.6){\gamma}^(.4){F\smallotimesF L} & \\
 E \ar[ur]_(.4){\alpha}^(.6){L^{\theta^{\xi_\alpha}}\smallotimesL E} & & F \ar[ll]_{\beta}^{E\smallotimesF F}
}$
&
$\xymatrix@C=0.7em@R=6em{
 & L  \ar[dr]_(.6){\gamma}^(.4){E^{\theta^{\xi_\gamma}}\smallotimesL L} & \\
 F \ar[ur]_(.4){\alpha}^(.6){L\smallotimesF F} & & E \ar[ll]_{\beta}^{F\smallotimesF E}
}$
&
$\xymatrix@C=0.7em@R=6em{
& F \ar[dr]_(.6){\gamma}^(.4){F\smallotimesF F} & \\
F \ar[ur]_(.4){\alpha}^(.6){F\smallotimesF F} & & F \ar[ll]_{\beta}^{F\smallotimesF F} 
}$ & $\xymatrix@C=0.7em@R=6em{
& L \ar[dr]_(.6){\gamma}^{F\smallotimesF L} & \\
F \ar[ur]_(.4){\alpha}^{L\smallotimesF F} & & F \ar[ll]_{\beta}^{F\smallotimesF F} 
}$ &  $\xymatrix@C=1.2em@R=2.5em{
 & & F  \ar[ddrr]_(.4){\gamma}^(.4){L\smallotimesF F} & & \\
 & & & & \\
 L \ar[uurr]_(.6){\alpha}^(.6){F\smallotimesF L} & & & & L \ar@/_1pc/[llll]^{\beta_0}_{L\smallotimesL L} \ar@/^1pc/[llll]_{\beta_1}^{L^\theta\smallotimesL L}
}$
\\
\hline
\begin{tabular}{c}
Semilinear\\
clannish\\
algebra\\
mnemotechnics\end{tabular}
& 
$\xymatrix@C=1.2em@R=4.5em{
 & L  \ar[dr]_(.6){\gamma}^(.4){L^{\theta^{-\xi_\alpha}}\smallotimesL  L} &  \\
 L \ar@(dr,dl)_{s_2}^{L\smallotimesL  L} \ar[ur]_(.4){\alpha}^(.6){L^{\theta^{\xi_\alpha}}\smallotimesL L} & & L \ar@(dr,dl)_{s_3}^{L^{\theta}\smallotimesL L} \ar[ll]_{\beta}^{L\smallotimesL L} 
}$
&
$\xymatrix@C=1.2em@R=4.5em{
 & L  \ar[dr]_(.6){\gamma}^(.4){L^{\theta^{\xi_\gamma}}\smallotimesL  L} &  \\
 L \ar@(dr,dl)_{s_2}^{L^\theta\smallotimesL  L} \ar[ur]_(.4){\alpha}^(.6){L^{\theta^{-\xi_\gamma}}\smallotimesL L} & & L \ar@(dr,dl)_{s_3}^{L\smallotimesL L} \ar[ll]_{\beta}^{L\smallotimesL L} 
}$
&
$\xymatrix@C=1.2em@R=4.5em{
& F \ar[dr]_(.6){\gamma}^(.4){F\smallotimesF F} & \\
F \ar[ur]_(.4){\alpha}^(.6){F\smallotimesF F} & & F \ar[ll]_{\beta}^{F\smallotimesF F} 
}$
 &
$\xymatrix@C=1.2em@R=4.5em{
& F \ar@(lu,ru)_{s_1}^{F\smallotimesF F} \ar[dr]_(.6){\gamma}^(.4){F\smallotimesF F} & \\
F \ar[ur]_(.4){\alpha}^(.6){F\smallotimesF F} & & F \ar[ll]_{\beta}^{F\smallotimesF F} 
}$
 &
$\xymatrix@C=1.2em@R=4.5em{
 & F  \ar[dr]_(.6){\gamma}^(.4){F\smallotimesF F} &  \\
 F \ar@(dr,dl)_{s_2}^{F\smallotimesF F} \ar[ur]_(.4){\alpha}^(.6){F\smallotimesF F} & & F \ar@(dr,dl)_{s_3}^{F\smallotimesF F} \ar[ll]_{\beta}^{F\smallotimesF F} 
}$ 
\\
\hline
\end{tabular}}
       \captionof{table}{Triangulations and mnemotechnics for blocks 6 to 10 \label{table-surface-blocks-6-to-10}}
    \end{landscape}
    \clearpage
}

The next result is completely in sync with the combinatorial block decompositions from \cite[\S13]{Fomin-Shapiro-Thurston} and \cite[\S3]{FeShTu-orbifolds}. The proof is straightforward; see Remark \ref{remark-on-gluing-procedures} above.

\begin{prop}\label{prop:our-algs-are-block-decomposable} All the Jacobian algebras introduced in Subsections \ref{subsubsec:arb-weights-Jac-alg} and \ref{subsubsec:constant-weights-Jac-alg}, 
as well as all the semilinear clannish algebras defined in Subsections \ref{subsubsec:arb-weights-semilin-clan-alg} and \ref{subsubsec:constant-weights-semilin-clan-alg}, are $F$-linearly isomorphic to algebras that can be obtained through the process just described.
\end{prop}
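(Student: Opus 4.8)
The plan is to verify that the gluing procedure described in items (1)--(4) above, applied to the appropriate list of blocks, reproduces exactly the algebras $\jacobalg{A(\tau,\xi),W(\tau,\xi)}$, $\jacobalg{\Lambda(\tau),W(\tau)}$, $L_{\bldsigma(\tau,\xi)}\compactQhat(\tau)/I(\tau,\xi)$ and $F_{\bldsigma(\tau)}\compactQhat(\tau)/I(\tau)$ up to $F$-linear isomorphism. The starting point is the puzzle-piece decomposition of $\tau$ recalled in Subsection \ref{subsec:surfaces-and-triangs}: every triangulation of an unpunctured or once-punctured closed surface $\SSigma$ in our setting is obtained by gluing finitely many of the puzzle pieces of Figure \ref{Fig:unpunct_puzzle_pieces} along their boundary segments, and by Examples \ref{ex-weighted-quivers-def-in-tables-also-come-from-triangulations}, \ref{ex-modulated-quivers-from-puzlle-pieces-defined-table}, \ref{ex-modulated-quivers-from-puzlle-pieces-defined-table-constantweights} together with the associated examples for the clannish side, the data $(Q(\tauw),\dtuple(\tauw))$, the modulating function, the potential and the ideal of relations attached to a single puzzle piece are precisely one of the ten blocks (with boundary segments playing the role of outlets). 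So the two constructions agree "locally", one puzzle piece at a time.

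First I would make precise the combinatorial bookkeeping. Each puzzle piece $P$ of Figure \ref{Fig:unpunct_puzzle_pieces} has some boundary segments; when two puzzle pieces $P$, $P'$ are glued along a pair of boundary segments, the corresponding arcs of $\tau$ get identified, i.e. two outlet-vertices (one in the block of $P$, one in the block of $P'$) are identified into a single vertex of $\overline{Q}(\tau)$. This is exactly step (2)--(3) of the gluing of blocks: matching and gluing outlets, never two outlets of the same block. The boundary segments of $\SSigma$ that are not glued to anything correspond to outlets that are left unmatched, and deleting them together with incident arrows (step (4)) realizes the convention "no arrow incident to a boundary segment is drawn" from the definition of $\overline{Q}(\tau)$. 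Thus on the level of quivers the glued quiver is literally $\overline{Q}(\tau)$ for the clannish blocks and $Q(\tauw)$ after adding the multiple arrows in twice-orbifolded triangles for the Jacobian blocks (here one must check that the two arrows $\delta_0^\triangle,\delta_1^\triangle$ produced by a twice-orbifolded puzzle piece block match the definition of $Q(\tauw)$, which is immediate from Definition \ref{def:Q(tau,omega)} and the tables). For the clannish side one also checks that the special loops $s_j$ attached at pending arcs coincide, since a pending arc lies in exactly one triangle, hence in exactly one puzzle piece.

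Next I would check that the algebra structure is compatible with gluing. Concretely: (i) the vertex fields match up along glued outlets because all outlets carry the same field ($L$ in the Jacobian cases \eqref{item:Jac-blocks-1-to-7}, \eqref{item:Jac-blocks-8-9-10}, and $F$ in the clannish cases \eqref{item:semilin-clan-blocks-1-to-7}, \eqref{item:semilin-clan-blocks-8-9-10}) --- this is exactly why outlets are defined with a circled weight, and one reads off from the tables that every circled weight is $2$ (field $L$) for the Jacobian/clannish blocks $1$--$7$ and every circled weight is $1$ (field $F$) for blocks $8$--$10$; (ii) the arrow bimodules attached to an arrow of a glued block are unchanged, since an arrow of $\overline{Q}(\tau)$ lies in a unique triangle, hence in a unique puzzle piece, so its bimodule is determined by the block it comes from, and the definitions in Subsections \ref{subsubsec:arb-weights-Jac-alg}, \ref{subsubsec:constant-weights-Jac-alg}, \ref{subsubsec:arb-weights-semilin-clan-alg}, \ref{subsubsec:constant-weights-semilin-clan-alg} of the modulating function / $\sigma$-function are "triangle-local"; (iii) the potential $W(\tau,\xi)=\sum_\triangle W^\triangle(\tau,\xi)$ (resp. $W(\tau)$) is by construction the sum over interior triangles of the puzzle-piece potentials, which are precisely the block potentials $W(\inputtripleforblocks(\xi))$ appearing in the tables; similarly the ideal $I(\tau,\xi)=\langle Z(\tau,\xi)\cup S(\tau,\xi)\rangle$ is the union over triangles of the block ideals $\langle Z\cup S\rangle$, because $Z(\tau,\xi)=\bigcup_\triangle Z(\tau,\xi,\triangle)$ and the special relations are indexed by pending arcs, each inside one triangle. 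Hence taking the quotient of the completed (resp. uncompleted) tensor ring of the glued $R$-$R$-bimodule by the (closure of the) ideal generated by the cyclic derivatives of $W$ (resp. by $I$) gives the same algebra whether one assembles first and quotients after, or quotients block by block and glues --- the point being that the generators of the Jacobian ideal $\partial_a(W)$ only involve the triangle(s) containing $a$, since $W$ is a sum of length-$3$ cycles, so $\partial_a(W)=\partial_a(W^{\triangle}(\tau,\xi))$ when $a$ lies in a unique triangle $\triangle$, and for arcs lying in two triangles the derivative splits as a sum of two block-derivatives, exactly matching Br\"ustle's gluing rule and the $\rho$-block decomposition of \cite{GLFS-schemes}.

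Finally I would invoke the finite-dimensionality results --- Lemma \ref{lem-quotient-of-completed-path-algebra-is-f.d} for blocks and Theorems \ref{thm:Jac-algs-are-fin-dim}, \ref{thm:Jac-algs-are-fin-dim-constantweights} for the glued Jacobian algebras --- to pass between the complete and the ordinary tensor ring whenever needed, so that the gluing statement can be phrased uniformly (the ordinary path algebra quotient modulo the polynomial Jacobian ideal is what literally glues, and it is $F$-linearly isomorphic to the completed Jacobian algebra). The main obstacle I anticipate is purely bookkeeping rather than conceptual: one must set up a clean enough notation for puzzle pieces, their boundary segments, the matching encoding a given $\tau$, and the induced identification of vertices, and then carefully track that (a) the twice-orbifolded cases with mixed weights $(1,4)$ and $(4,1)$ really do reproduce blocks $6$ and $7$ with the correct orientation of $\alpha,\beta,\gamma$ and the correct field assignment, and (b) in the $\omega\equiv 4$ constant-weight construction of Subsection \ref{subsec:constant-weights-algs-def-over-C/R}, where no cocycle is present and the fields halve via \eqref{eq:omega=4=>half-the-weights}, the only blocks that occur are $8$, $9$, $10$ and their gluing is consistent with Remark \ref{rem:decomposing-Lotimes_FL}. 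Once this dictionary is in place, the proposition is straightforward, as asserted, and no genuinely new argument beyond the block-by-block identifications already recorded in the Examples of Section \ref{sec:Jac-algs-and-semilinear-clan-algs-of-colored-triangs} is required.
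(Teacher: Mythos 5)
Your proposal is correct and follows exactly the route the paper intends: the paper itself declares the proof of Proposition \ref{prop:our-algs-are-block-decomposable} straightforward and omits it, relying precisely on the puzzle-piece decomposition of $\tau$, the identification of each puzzle piece's algebra with one of the ten blocks (as recorded in Examples \ref{ex-weighted-quivers-def-in-tables-also-come-from-triangulations}, \ref{ex-modulated-quivers-from-puzlle-pieces-defined-table} and \ref{ex-modulated-quivers-from-puzlle-pieces-defined-table-constantweights}), and the triangle-locality of the potentials and relations that you spell out. The only slight imprecision is your remark about derivatives "splitting" for arcs in two triangles: cyclic derivatives are taken with respect to arrows, and each arrow lies in a unique triangle, so each generator $\partial_a(W)$ already comes from a single block; this does not affect the argument.
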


We have arrived at the main result of the paper.

\begin{thm}\label{thm:main-result-of-paper} Let $\SSigmaw=(\Sigma,\marked,\orb,\omega)$ be a surface with weighted orbifold points, either unpunctured or once-punctured closed, and let $(\tau,\xi)$ be a colored triangulation of $\SSigma$.
\begin{enumerate}
    \item For $\omega:\orb\rightarrow\{1,4\}$ arbitrary, the Jacobian algebra $\jacobalg{A(\tau,\xi),W(\tau,\xi)}$ defined in Subsection \ref{subsubsec:arb-weights-Jac-alg} and the semilinear clannish algebra $L_{\bldsigma(\tau,\xi)}\compactQhat (\tau)/I(\tau,\xi)$ defined in Subsection \ref{subsubsec:arb-weights-semilin-clan-alg} are Morita-equivalent through $F$-linear functors.
    \item If $\omega\equiv 4$, then the Jacobian algebra $\jacobalg{A}(\tau),W(\tau)$ defined in Subsection \ref{subsubsec:constant-weights-Jac-alg} and the clannish $F$-algebra $F_{\bldsigma(\tau)}\compactQhat (\tau)/I(\tau)$ defined in Subsection \ref{subsubsec:constant-weights-semilin-clan-alg} are isomorphic through an $F$-linear ring isomorphism.
\end{enumerate}
\end{thm}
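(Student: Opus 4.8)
The plan is to reduce the global statement to the local statements about blocks already proved in Section~\ref{sec:building-blocks}, by invoking the block-decomposition machinery. First I would recall from Proposition~\ref{prop:our-algs-are-block-decomposable} that both $\jacobalg{A(\tau,\xi),W(\tau,\xi)}$ and $L_{\bldsigma(\tau,\xi)}\compactQhat(\tau)/I(\tau,\xi)$ are $F$-linearly isomorphic to algebras built by gluing, respectively, copies of Jacobian blocks $1,\ldots,10$ and copies of semilinear clannish blocks $1,\ldots,10$ along matched outlets; the puzzle-piece decomposition of $\tau$ (from \cite{FeShTu-orbifolds,Fomin-Shapiro-Thurston}) dictates which blocks occur and how their outlets are matched, and crucially the \emph{same} combinatorial gluing datum governs both algebras, since on both sides the gluing vertices are exactly the non-pending arcs, the fields attached to outlets agree ($L$ on the Jacobian side, $F$ on the clannish side before Morita passage), and the arrows/idempotents at those vertices correspond. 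For part~(2) one uses instead the constant-weight constructions of \S\ref{subsubsec:constant-weights-Jac-alg}–\S\ref{subsubsec:constant-weights-semilin-clan-alg}, where only blocks $8,9,10$ appear and where Proposition~\ref{prop-isomorphisms-between-some-of-the-blocks} gives \emph{honest $F$-algebra isomorphisms} (not merely Morita equivalences) block by block.

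Next I would make precise the statement that ``gluing commutes with the block-wise isomorphisms.'' Concretely: write $\tau=\bigcup_\ell P_\ell$ as a gluing of puzzle pieces, let $B^{\mathrm{Jac}}_\ell$ and $B^{\mathrm{clan}}_\ell$ be the corresponding Jacobian and clannish blocks, and let $\phi_\ell:B^{\mathrm{Jac}}_\ell\xrightarrow{\ \sim\ }B^{\mathrm{clan}}_\ell$ be the isomorphisms from Proposition~\ref{prop-isomorphisms-between-some-of-the-blocks} (valid for $k\in\{8,9,10\}$, which is all we need in part~(2)). Each $\phi_\ell$ acts as the identity on the idempotents $e_i$ and on the copies of $F$ attached to the gluing vertices — this is visible from the explicit formulas in the proof of Proposition~\ref{prop-isomorphisms-between-some-of-the-blocks} and from the fact that, for blocks $8,9,10$ in the constant-weight setting, all vertex fields are $F$ and all arrow automorphisms are $\myid_F$, so the block isomorphisms are built from $\alpha\mapsto\alpha$, $\gamma\mapsto\gamma$, $\beta\mapsto\beta$ (or $\beta_0+\beta_1$), $s_j\mapsto u e_j$ or $s_j\mapsto v e_j$, together with the identification $F_j^{c(\tau)_{\beta_0}}\oplus F_j^{c(\tau)_{\beta_1}}\cong F\otimes_F F$ of Remark~\ref{rem:decomposing-Lotimes_FL}. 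Because the $\phi_\ell$ fix the gluing data identically, they are compatible on overlaps and therefore amalgamate to a single $F$-algebra homomorphism $\Phi$ on the glued algebra; $\Phi$ is bijective because each $\phi_\ell$ is and the gluing is a pushout (amalgamated coproduct of $R$-rings over the semisimple subring spanned by the idempotents at the gluing vertices). Finally one checks that $\Phi$ intertwines the ideals: $\Phi$ sends $J_0(W(\tau))=\sum_\ell J_0(W^{\triangle_\ell})$ to $I(\tau)=\sum_\ell\langle Z(\tau,\triangle_\ell)\cup S(\tau,\triangle_\ell)\rangle$, since each summand is sent correctly by the local $\phi_\ell$ (this is exactly the content of the ``cyclic derivatives lie in $\ker$'' computations in the block proofs), and then Theorem~\ref{thm:Jac-algs-are-fin-dim-constantweights} (finite-dimensionality, and $\jacobalg{\Lambda(\tau),W(\tau)}\cong\usualRA{\Lambda(\tau)}/J_0(W(\tau))$) lets us pass from the polynomial quotient to the Jacobian algebra. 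This proves part~(2).

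For part~(1) the same strategy applies, but one replaces block-wise isomorphisms by the block-wise $F$-linear Morita equivalences of Proposition~\ref{prop-morita-equivalences-for-blocks}: each $B^{\mathrm{Jac}}_\ell\text{-}\mathbf{Mod}\simeq B^{\mathrm{clan}}_\ell\text{-}\mathbf{Mod}$ is implemented by functors $\Psi_\ell,\Phi_\ell$ (Tables~\ref{table-Morita-equivalences-2467} etc.) which — when read through Corollary~\ref{coro:rep-of-species-equiv-to-cat-of-semilinear-maps} as operations on semilinear quiver representations — act as the identity on the vector spaces sitting at the outlet vertices and only modify the data at interior (pending-arc) vertices. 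One then glues these local equivalences: a module over the glued Jacobian algebra restricts to modules over each $B^{\mathrm{Jac}}_\ell$ that agree on the shared outlet vertices, apply $\Psi_\ell$ to each, and reassemble (the reassembly is well-defined precisely because each $\Psi_\ell$ is the identity on outlet data). The resulting functors $\Psi,\Phi$ are mutually quasi-inverse because the $\Psi_\ell,\Phi_\ell$ are, and the natural isomorphisms $\varepsilon_\ell,\eta_\ell$ glue likewise; $F$-linearity and the restriction to finite-dimensional modules are inherited from the blocks. The main obstacle I anticipate is \textbf{not} any single computation but the bookkeeping needed to state and verify the gluing compatibility rigorously: one must show that the identifications forced by the puzzle-piece combinatorics (which arrow of which block corresponds to which arrow of $\overline{Q}(\tau)$, which idempotent to which arc, which $\xi$-value to which $g(\tau,\xi)_a$) are exactly the ones under which the local isomorphisms/equivalences restrict to the identity on shared data — i.e.\ that the $\rho$-block decomposition of the glued algebra in the sense of \cite{GLFS-schemes} literally recovers the blocks of Section~\ref{sec:building-blocks}. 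Once that dictionary is pinned down (Examples~\ref{ex-weighted-quivers-def-in-tables-also-come-from-triangulations}, \ref{ex-modulated-quivers-from-puzlle-pieces-defined-table} and \ref{ex-modulated-quivers-from-puzlle-pieces-defined-table-constantweights} already do this for single puzzle pieces), the amalgamation is formal and the theorem follows.
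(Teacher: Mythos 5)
Your proposal is correct and follows essentially the same route as the paper: reduce via Proposition \ref{prop:our-algs-are-block-decomposable} to the puzzle-piece/block decomposition, match Jacobian and clannish block copies (and their glued outlets) one-to-one, and amalgamate the block-wise isomorphisms of Proposition \ref{prop-isomorphisms-between-some-of-the-blocks} (for part (2)) and the block-wise Morita equivalences of Proposition \ref{prop-morita-equivalences-for-blocks} (for part (1)), using that these act as the identity on the idempotents and vector spaces at outlet vertices, where no special loops sit. The only additions beyond the paper's argument are cosmetic (the pushout framing of the gluing and the explicit appeal to Theorem \ref{thm:Jac-algs-are-fin-dim-constantweights} to pass from the polynomial quotient to the completed Jacobian algebra), which the paper leaves implicit.
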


\begin{proof}
   As noted in Remark \ref{remark-on-gluing-procedures}, $(\Sigma,\marked,\orb)$ may be obtained by gluing a suitable partial matching of outer sides of puzzle pieces. By Proposition \ref{prop:our-algs-are-block-decomposable}, both the Jacobian algebra and the semilinear clannish algebra can be obtained through a corresponding gluing of copies of Jacobian blocks, resp. copies of semilinear clannish blocks. Since both the Jacobian algebra and the semilinear clannish algebra are associated to the same (colored) triangulation, these two block decompositions can be consistently taken in such a way that there is a bijection between the set of copies of Jacobian blocks and the set of copies of semilinear clannish blocks, with the following two properties:
    \begin{itemize}
        \item[(a)] every time a copy of a Jacobian block corresponds to a copy of a semilinear clannish block under the bijection,  there is a $k=1,\ldots,10$ such that the Jacobian block copy lies on the $k^{\operatorname{th}}$ column of Tables \ref{table-Jacobian-blocks-1-to-5} and \ref{table-Jacobian-blocks-6-to-10}, and the semilinear clannish block copy lies on the $k^{\operatorname{th}}$ column of Tables \ref{table-semilinear-clannish-blocks-1-to-5} and \ref{table-semilinear-clannish-blocks-6-to-10}.
        \item[(b)] the bijection takes pairs of matched-and-glued Jacobian block copies (and corresponding outlets) to pairs of matched-and-glued semilinear clannish block copies (and corresponding outlets), and viceversa.
    \end{itemize}

    Thus, there are Morita equivalences between the blocks of the Jacobian algebra and the blocks of the semilinear clannish algebra by Propositions \ref{prop-isomorphisms-between-some-of-the-blocks} and \ref{prop-morita-equivalences-for-blocks}. 

    We have noticed above that there is never a loop based at an outlet, and that on all the outlets of the block copies chosen appears the same field. This, and the explicit definition of the Morita equivalences appearing in the proofs of Propositions \ref{prop-isomorphisms-between-some-of-the-blocks} and \ref{prop-morita-equivalences-for-blocks} (see Table \ref{table-Morita-equivalences-2467}), show that these Morita equivalences can be glued as well to produce a Morita equivalence between $\jacobalg{A(\tau,\xi),W(\tau,\xi)}$ and $L_{\bldsigma(\tau,\xi)}\compactQhat (\tau)/I(\tau,\xi)$. This proves the first statement.

    The second statement follows by the same reasoning, after noticing that in Proposition \ref{prop-isomorphisms-between-some-of-the-blocks} we have an $F$-linear isomorphism between the $k^{\operatorname{th}}$ Jacobian block and the $k^{\operatorname{th}}$ semilinear clannish block for $k=8,9,10$.
\end{proof}

\section{Indecomposable representations for blocks}\label{sec:indecs-for-blocks}

Indecomposable finite-dimensional modules over clannish algebras were classified by Crawley-Boevey in~\cite{CB-clans}. 
Such modules are either \emph{string modules}, defined by walks in the quiver, or \emph{band modules}, given by cyclic walks.  
The class of string modules and that of band modules each split into so-called \emph{asymmetric} and \emph{symmetric} subclasses. 
The symmetry is a  reflection of the relevant walk about a special loop. 
Crawley-Boevey's classification has been generalized to semilinear clannish algebras in \cite{BTCB}. 
We recall this result in Theorem \ref{main-theorem-from-BTCB}.

Recall Definitions \ref{def:types-of-clannish} and \ref{defi-semilinear-clannish-algebra}. 
Throughout all of Section \ref{sec:indecs-for-blocks} we fix a division ring $K$ and a semilinear clannish algebra $A=K_{\bldsigma}\compactQhat/I$ which is non-singular, normally-bound and of semisimple type. 

\subsection{
Asymmetric and symmetric strings and bands. 
}

\,

The main theorem in \cite{BTCB} gives a classification of the indecomposable modules, finite-dimensional over $K$, for $A$. 
These indecomposables,  \emph{strings} and \emph{bands},  are defined in  \cite[\S2.4--\S2.6,~\S3]{BTCB}. 
They are described in terms of certain \emph{words} in an alphabet defined by the arrows of the quiver $Q$ subject to the set $Z$ of zero-relations and the set $\bbS$ of special loops. 
Such words are defined explicitly in \cite[\S2.4]{BTCB}, where it is explained what it means for a string or band to be \emph{symmetric} or \emph{asymmetric}. 
The next result describes the words that occur for the semilinear clannish algebras we are considering from Tables \ref{table-semilinear-clannish-blocks-1-to-5} and \ref{table-semilinear-clannish-blocks-6-to-10}.

\begin{prop}
\label{strings-and-bands-for-building-blocks} 

The strings for a  semilinear clannish block from Tables \ref{table-semilinear-clannish-blocks-1-to-5} and \ref{table-semilinear-clannish-blocks-6-to-10} are given by Table \ref{table-semilinear-clannish-blocks-1-to-10-rep-type}.

\begin{center}
\renewcommand{\arraystretch}{1.25}
{\small
\begin{tabular}{|c|c|c|c|}
\hline
$w$ & Blocks 1 and 8 & Blocks 2, 3 and 9 & Blocks 4, 5, 6, 7 and 10  \\
\hline
\begin{tabular}{c}
Ordinary quiver\\
$\compactQhat $
\end{tabular}&
$\xymatrix{
& 1 \ar[dr]^{\gamma} & \\
2 \ar[ur]^{\alpha} & & 3, \ar[ll]^{\beta} 
}$
&
$\xymatrix{
 & 1 \ar@(lu,ru)^{s_1} \ar[dr]^{\gamma} &\\
 2 \ar[ur]^{\alpha} & & 3\ar[ll]^{\beta}
}$
&
$\xymatrix{
 & 1  \ar[dr]^{\gamma} &  \\
 2 \ar@(u,l)_{s_2} \ar[ur]^{\alpha} & & 3 \ar@(u,r)^{s_3} \ar[ll]^{\beta} 
}$
\\
\hline
\begin{tabular}{l}
Equivalence classes of \\
asymmetric strings
\end{tabular} 
&
\begin{tabular}{l}
$1_{1}$ \\ $1_{2}$ \\ $1_{3}$ \\ $\alpha$ 
\\ $\beta$ \\ $\gamma$
\end{tabular} 
&
\begin{tabular}{l}
$1_{2}$ \\ $1_{3}$ \\ $\beta$ \\ $s_{1}^{*}\alpha$
\\ $\gamma s_{1}^{*}$ \\ $\gamma s_{1}^{*}\alpha$
\end{tabular} 
&
\begin{tabular}{ll}
\begin{tabular}{l}
$1_{1}$\\
$(s_{3}^{*}\beta^{-1}s_{2}^{*}\beta)^{n}s_{3}^{*}\gamma$ \\
$\alpha s_{2}^{*}(\beta s_{3}^{*}\beta^{-1}s_{2}^{*})^{n}$\\ 
$s_{2}^{*}(\beta s_{3}^{*}\beta^{-1}s_{2}^{*})^{n}\beta s_{3}^{*}$ \\
$s_{2}^{*}(\beta s_{3}^{*}\beta^{-1}s_{2}^{*})^{n}\beta s_{3}^{*}\gamma$ \\
$\alpha s_{2}^{*}(\beta s_{3}^{*}\beta^{-1}s_{2}^{*})^{n}\beta s_{3}^{*}$ \\
$\alpha s_{2}^{*}(\beta s_{3}^{*}\beta^{-1}s_{2}^{*})^{n}\beta s_{3}^{*}\gamma$
\end{tabular} 
&
$(n\geq 0)$
\end{tabular} \\
\hline
\begin{tabular}{l}
Equivalence classes of \\
symmetric strings
\end{tabular} 
&
None.
&
\begin{tabular}{l}
$s_{1}^{*}$ \\ $\alpha^{-1}s_{1}^{*}\alpha$ \\  $\gamma s_{1}^{*} \gamma^{-1}$
\end{tabular} 
&
\begin{tabular}{ll}
\begin{tabular}{l}
$s_{2}^{*}(\beta s_{3}^{*} \beta^{-1} s_{2}^{*})^{n}$ \\
$s_{3}^{*}(\beta^{-1}s_{2}^{*}\beta s_{3}^{*})^{n}$ \\
$\alpha s_{2}^{*}(\beta s_{3}^{*} \beta^{-1} s_{2}^{*})^{n}\alpha^{-1}$ \\ 
$\gamma^{-1}s_{3}^{*}(\beta^{-1}s_{2}^{*}\beta s_{3}^{*})^{n}\gamma$
\end{tabular}
$ (n\geq 0)$
\end{tabular}
\\
\hline
\end{tabular}}
 \captionof{table}{Strings for semilinear clannish blocks 1 to 10 \label{table-semilinear-clannish-blocks-1-to-10-rep-type}}
 \vspace{3mm}
\end{center}

 Furthermore, 
 for each of the blocks there are no asymmetric bands, and the following statements hold.
\begin{enumerate}
    \item For blocks 1, 2, 3, 8 and 9 there are no symmetric bands.
    \item For blocks 4, 5, 6, 7 and 10 every symmetric band is equivalent to 
    \[
    {}^{\infty}(\beta s_{3}^{*}\beta^{-1}s_{2}^{*})^{\infty}=\dots \beta s_{3}^{*}\beta^{-1}s_{2}^{*} \mid \beta s_{3}^{*}\beta^{-1}s_{2}^{*} \beta s_{3}^{*}\beta^{-1}s_{2}^{*} \dots
    \]
\end{enumerate}
\end{prop}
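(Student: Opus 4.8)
The statement is a finite combinatorial enumeration, so the plan is to instantiate the general definitions of \cite[\S2.4]{BTCB} of strings and bands to each of the ten semilinear clannish blocks, and then to carry out the enumeration block by block, grouping together blocks whose underlying quiver $\widehat{Q}$, set $\mathbb{S}$ of special loops, and set $Z$ of zero-relations coincide (so that the combinatorial data defining words is literally identical). Inspection of Tables \ref{table-semilinear-clannish-blocks-1-to-5} and \ref{table-semilinear-clannish-blocks-6-to-10} shows that, up to this combinatorial data, there are exactly three cases: (i) blocks $1$ and $8$, with quiver $2\to 1\to 3\to 2$, no special loops, and $Z=\{\alpha\beta,\beta\gamma,\gamma\alpha\}$; (ii) blocks $2,3,9$, which add a single special loop $s_1$ at vertex $1$; and (iii) blocks $4,5,6,7,10$, which instead add special loops $s_2,s_3$ at vertices $2$ and $3$. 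The special relations $S$ differ among blocks in a group (e.g. $s_i^2-e_i$ versus $s_i^2-ue_i$ versus $s_i^2-u^2e_i$), but these do not affect which words are strings or bands — they only enter the module construction — so the enumeration depends only on the combinatorial triple $(\widehat Q,\mathbb{S},Z)$. This is why the three columns of Table \ref{table-semilinear-clannish-blocks-1-to-10-rep-type} suffice.

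\textbf{Key steps.} First I would recall precisely, from \cite[\S2.4]{BTCB}, the alphabet of direct letters (the ordinary arrows not killed by membership in a forbidden subpath), inverse letters, and the virtual letters $s^*$ attached to special loops $s\in\mathbb{S}$, together with the admissibility conditions a word must satisfy to be a string (no subword of the form $xy$ with $xy$ or its inverse in $Z$, no subword $ss$ or $s^{-1}s^{-1}$, correct compatibility at each special loop, and the maximality/reduction conventions) and the extra cyclicity and primitivity conditions for a band. Second, for case (i) I would note that the absence of special loops forces every reduced word to avoid all three length-two subpaths $\alpha\beta,\beta\gamma,\gamma\alpha$ (and their inverses), and a short check shows the only surviving words are the three trivial paths $1_1,1_2,1_3$ and the three single arrows $\alpha,\beta,\gamma$; no cyclic reduced word can avoid $Z$, hence no bands. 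Third, for case (ii) I would introduce the virtual letter $s_1^*$ at vertex $1$ and observe that, because $s_1$ is a loop at the source of $\gamma$ and the target of $\alpha$, the admissible ways to insert $s_1^*$ produce exactly the listed asymmetric strings $s_1^*\alpha,\ \gamma s_1^*,\ \gamma s_1^*\alpha$ (together with $1_2,1_3,\beta$, which do not touch vertex $1$ through an ordinary arrow other than in a forbidden subpath), and the symmetric ones $s_1^*$, $\alpha^{-1}s_1^*\alpha$, $\gamma s_1^* \gamma^{-1}$; again one checks directly that no cyclic admissible word exists, so there are no bands of either kind. Fourth, for case (iii) — the only genuinely infinite families — I would analyse the ``loop at each of the two outlets'' configuration: the only ordinary arrow not forbidden at vertex $2$ after hitting $Z$ is $\beta$ (from $3$) and $\alpha$ (to $1$), similarly at vertex $3$; this forces any long string to alternate $s_2^*$ and $s_3^*$ linked by $\beta^{\pm1}$, and the boundary behaviour (whether one may cap off with $\alpha$ at the $2$-end and with $\gamma$ at the $3$-end, or terminate with $s_2^*$ or $s_3^*$) gives precisely the seven asymmetric families and the four symmetric families listed, parametrised by the repetition count $n\ge 0$; the unique cyclic reduced primitive word is ${}^{\infty}(\beta s_3^*\beta^{-1}s_2^*)^{\infty}$, which is symmetric (it is invariant under the relevant reflection about a special loop), whence there is exactly one symmetric band and no asymmetric band. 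Finally I would record, using the symmetry criterion of \cite[\S2.4]{BTCB}, that the words I have labelled symmetric are indeed fixed (up to the allowed equivalence) by the reflection, and that the asymmetric ones are not, completing the classification.

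\textbf{Main obstacle.} The genuinely delicate point is case (iii): getting the boundary conditions of the alternating words exactly right, so that the list of seven asymmetric and four symmetric families is complete and non-redundant under the equivalence relation on words (inversion and, for symmetric words, the reflection). In particular one must be careful that the ``half-open'' symmetric strings $s_2^*(\beta s_3^*\beta^{-1}s_2^*)^n$ and $s_3^*(\beta^{-1}s_2^*\beta s_3^*)^n$ are counted once, not twice, and that the cap-offs by $\alpha$ and $\gamma^{-1}$ in the symmetric families are forced rather than optional; this requires tracking the precise admissibility rules at a vertex carrying a special loop, and is where I expect the bulk of the (routine but error-prone) verification to lie. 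Everything else reduces to finite case checks that the reader can reconstruct from the tables.
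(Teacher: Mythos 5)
Your proposal is correct and follows essentially the same route as the paper's own (very terse) proof: both reduce the ten blocks to three combinatorial configurations of $(\compactQhat,\bbS,Z)$, observe that relation- and end-admissibility force admissible words to alternate ordinary arrows of the $3$-cycle with virtual letters $s^*$ (and to end in $s^*$ at any vertex carrying a special loop, which is what removes $1_1,\alpha,\gamma$ from the middle column), and then check pairwise non-equivalence via inversion-plus-reversal for strings and shifts for bands. The paper states these observations in a few lines and leaves the finite case checks to the reader; your plan simply makes the same enumeration explicit.
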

\begin{proof}
From the choice of $Q$, $\bbS$ and $Z$ used in defining $A$ it follows that the words above constitute a complete list of the strings and bands for $A$. 
To see this, note that any word which is \emph{relation}-\emph{admissible} and \emph{end}-\emph{admissible} must be a sequence that alternates between an ordinary arrow (which here is one of the arrows in the $3$-cycle) and any special loop (so any of the loops). 
Moreover, if a word ends on a vertex at which there is a special loop $s$, the last letter of the word must end with $s^{*}$.

To see that the words above are pairwise non-equivalent, consider cases. 
For strings, note that distinct strings $w$ and $w'$ are equivalent provided $w'$ is found by inverting the letters of $w$ and reversing their order, where letters of the form $s^*$ with $s\in \bbS$ are self-inverse. 
For bands, note that there can only be four distinct infinite words, all of which are \emph{shifts} of one another, meaning they must be equivalent. 
\end{proof}

\subsection{Modules over semilinear clannish algebras}
\label{subsec-modules-for-clannish}

\,

For each string or band $w$ one defines a ring $R_{w}$ and  a $A$-$R_{w}$-bimodule $M(C_{w})$. 
The ring $R_{w}$ is one of four \emph{parameterising} rings, depending on whether $w$ is a string or a band, and depending on whether $w$ is symmetric or asymmetric. 

\begin{defi}
Let $w$ be a word which is either a string or a band, and either symmetric or asymmetric. 
    \begin{enumerate}[(a)]
        \item \cite[\S3.1]{BTCB} If $w$ is an \emph{asymmetric string} then $R_{w}=K$. 
        \item \cite[\S3.2]{BTCB} If $w$ is a \emph{symmetric} string then  $R_{w}=K[x;\nu]/\langle q(x)\rangle$ for an automorphism $\nu$  of $K$ and a monic, normally-bound, non-singular quadratic skew-polynomial $q(x)\in K[x;\nu]$  of semisimple type. 
         \item \cite[\S3.3]{BTCB} If $w$ is an \emph{asymmetric band} then $R_{w}=K[x,x^{-1};\sigma]$ for an automorphism $\sigma$ of $K$. 
        \item \cite[\S3.4]{BTCB} If $w$ is a \emph{symmetric band} then $R_{w}=K[x;\rho]/\langle r(x)\rangle*_{K}K[y;\tau]/\langle p(y)\rangle$ for automorphisms $\rho,\tau$ of $K$ and monic, normally-bound, non-singular quadratics $r(x),p(y)$  of semisimple type.
    \end{enumerate}
\end{defi}

\begin{remark}
\label{remark-nonsemilinear-symmetric-bands}
Recall that for the semilinear clannish blocks $1,\ldots,7$, we take $K=L$, and that the automorphisms $\sigma_{x}$ assigned to each arrow $x$ come from the Galois group $\Gal(L/F)=\{\myid_{L},\theta\}$. 
Suppose now $w$ is a symmetric  band, and so $R_{w}=L[x;\rho]/\langle r(x)\rangle*_{L}L[y;\tau]/\langle p(y)\rangle $ by (d) above. 
Since $L$ is a  finite-dimensional field extension of the field fixed by each of the automorphisms $\sigma_{x}$, namely $F$. 
This means $R_{w}$ is a \emph{classical hereditary order}. 
So, in principle, finite-length $R_{w}$-modules  are well understood; see \cite[Theorem~3.8,~Remark~3.9]{BTCB} and, for example, the survey of modules over classical hereditary orders by Levy \cite{Levy}. Let us consider two examples:
\begin{enumerate}[(i)]
    \item Let $\rho=\tau=\myid_{L}$,  $r(x)=x^{2}-u$ and $p(y)=y^{2}-u$ for $u$ as in the degree-$2$ situation. 
    It follows by results of Cohn  \cite[Theorem 3.5]{Cohn-free-prod-associative-II}, \cite[Lemma 2]{Cohn-quadratic} that  $L[x]/\langle x^{2}-u\rangle *_{L} L[y]/\langle y^{2}-u\rangle$, which is a free product (over $L$) of two isomorphic copies of the field extension $E$, is a left and right principal ideal domain.
    \item Let $\rho=\theta$, $\tau=\myid_{L}$,  $r(x)=x^{2}-1$ and $p(y)=y^{2}-u$ for $u$ as above and $\theta$ as in the degree-$2$ situation. Rings of the form $L[x;\rho]/\langle x^{2}-1\rangle *_{L} L[y]/\langle y^{2}-u\rangle$, and finite-dimensional modules over them, have been considered and studied before by Smits \cite{Smits}.
\end{enumerate}
\end{remark}

For the definition of the $A$-$R_{w}$-bimodule $M(C_{w})$, we refer the reader to \cite{BTCB}.
\begin{ex}
\label{ex-semilinear-representation-running-example}
Recall the $6^{\operatorname{th}}$ semilinear clannish block from Table \ref{table-semilinear-clannish-blocks-6-to-10}.
Throughout this example we simplify notation, letting $\xi\coloneqq\xi_{\alpha}$.  
Here we take $K=L$, take $\compactQhat$ to be the quiver depicted below on the left, and take $\bldsigma\colon \compactQhat_{1}\to \Aut(L)$ to be the function whose image is depicted below on the right
    \[
    \begin{array}{ccc}
\xymatrix@C=3em@R=1.5em{
 & 1  \ar[dr]^{\gamma} &  \\
 2 \ar@(u,l)_{s_2}\ar[ur]^{\alpha} & & 3 \ar@(u,r)^{s_3}\ar[ll]^{\beta}
} & \hspace{10mm}
&
\xymatrix@C=3em@R=1.5em{
 & L  \ar[dr]^{\theta^{-\xi}} &  \\
 L \ar@(u,l)_{\myid_{L}}\ar[ur]^{\theta^{\xi}} & & L \ar@(u,r)^{\theta}\ar[ll]^{\myid_{L}}
}    \end{array}
    \]
For block 6 we are additionally taking $Z=\{\alpha\beta,\beta\gamma,\gamma\alpha\}$, $\bbS=\{s_{2},s_{3}\}$, $q_{s_{2}}(x)=x^{2}-u\in L[x]$ and $q_{s_{3}}(x)=x^{2}-1\in L[x;\theta]$. 
Recall, from Example \ref{ex-main-example-for-semilinear-clannish-blocks}, that the quotient $A=L_{\bldsigma}\compactQhat/\langle Z\cup S\rangle$ is a semilinear clannish algebra which is normally bound, non-singular and of semisimple type.

We aim to give an example of a symmetric string module $M(C_{w})\otimes_{R_{w}} E$, following the notation from  \cite[\S 2.6,~\S3.2]{BTCB}. 
Let $
w=\gamma^{-1} s_{3}^{*}\beta^{-1} s_{2}^{*} \beta s_{3}^{*}\gamma
$, which is a  symmetric string with  $R_{w}=L[x]/\langle x^{2}-u\rangle$. 
Let $V=E$, an $R_{w}$-module where $x$ acts by multiplication with $v$, with $L$-basis $\{1,v\}$. 
One then defines the symmetric string module above by constructing the $A$-$R_{w}$-bimodule $M(C_{w})$. 
This construction is done in such a way that the following properties are  satisfied.
\begin{itemize}
    \item Considered as a left $L$-vector space $M(C_{w})$ has basis $\{b_{0},\dots,b_{7}\}$, and for any $\ell\in L$ we have 
\[
\begin{array}{ccc}
    b_{i}\ell=\ell b_{i} \quad (i=0,7), &  
    b_{j}\ell=\theta^{-\xi}(\ell) b_{j} \quad (j=1,6), &
b_{k}\ell=\theta^{1-\xi}(\ell) b_{k} \quad (k=2,3,4,5).
\end{array}
\]
\item The  $L$-ring generators of $A=L_{\bldsigma}\compactQhat/I$ and $R_{w}=L[x]/\langle x^{2}-u\rangle $ act according to  the diagram
\[
\xymatrix@C=4em@R=2em{
b_{0}\ar@/_1.75pc/[rrrrrrr]_{x}\ar[r]^{\gamma} & 
b_{1}\ar@/_1.5pc/[rrrrr]\ar[r]^{s_{3}} & 
b_{2}\ar@/_1.25pc/[rrr]\ar[r]^{\beta} &
b_{3}\ar@/_1pc/[r]\ar[r]^{s_{2}} & 
b_{4} &
b_{5}\ar[l]_{\beta} &
b_{6}\ar[l]_{s_{3}} & 
b_{7}\ar[l]_{\gamma}
}
\]
\end{itemize}
Next we describe the semilinear representation $N$ corresponding to $M(C_{w})\otimes_{R_{w}}V$, the symmetric string module  described above. 
By identifying such semilinear representations with representations of the species $(\compactQhat, \bldsigma)$ annihilated by the relations $Z\cup S$, one can consider $N$ as the image of the equivalence $\Omega$ recalled in \S\ref{subsubsec-representations-and-modules}.

Consider the right $L$-action on $M(C_{w})$ discussed in the first item above. 
As in \cite{BTCB}, for each $i=0,\dots,7$ we identify the left $L$-vector space $b_{i}\otimes V$ with a twisted copy of $L$. 
The $\bldsigma$-semilinear representation $N$ can hence  be described as follows. For any $\sigma\in\Aut(L)$ we write ${}^{\sigma}E$ to denote the $L$-vector space ${{}^{\sigma}L}\oplus {{}^{\sigma}L}$. 
\begin{enumerate}[(i)]
    \item $N_{1}= E$, $N_{2}=  {}^{\theta^{\xi-1}}E$ and $N_{3}= {}^{\theta^{\xi}}E\oplus{}^{\theta^{\xi-1}}E$ are considered as $L$-vector spaces.  
    \item $N_{\alpha}\colon N_{2}\to N_{1}$ is the zero map, which is $\theta^{\xi}$-semilinear.
    \item $N_{\beta}\colon N_{3}\to N_{2}$ is the projection onto the right-hand component, which is $\myid_{L}$-semilinear.
    \item $N_{\gamma}\colon N_{1}\to N_{3}$ is the canonical embedding into the left-hand component, which is  $\theta^{-\xi}$-semilinear.
    \item $N_{s_{2}}\colon N_{2}\to N_{2}$ is given by multiplication by $v$, which is $\myid_{L}$-semilinear, and $N_{s_{2}}^{2}=u\myid_{N_{2}}$. 
    \begin{itemize}
        \item Writing $N_{2}={}^{\theta^{\xi-1}}L^{2}=\{(\lambda,\mu)\mid\lambda,\mu\in L\}$, any $\ell\in L$ acts by $\ell(\lambda,\mu)=(\theta^{\xi-1}(\ell)\lambda,\theta^{\xi-1}(\ell)\mu)$. 
        \item The map $N_{s_{2}}$ is defined by the assignment  $(\lambda,\mu)\mapsto (\theta^{\xi-1}(u)\mu,\lambda)$, and so $N^{2}_{s_{2}}(\lambda,\mu)=u(\lambda,\mu)$. 
        \item Corresponding to  $N_{s_{2}}$ is an  anti-diagonal matrix in $M_{2}(L)$ with non-zero entries $1,\theta^{\xi-1}(u)$.
        \end{itemize}
\item $N_{s_{3}}\colon N_{3}\to N_{3}$ is given by swapping the entries, which is $\theta$-semilinear since $\theta^{2}=\myid_{L}$, and $N_{s_{3}}^{2}=\myid_{N_{3}}$.
\end{enumerate}
The $L$-vector spaces $N_{i}$ and $\sigma_{b}$-semilinear maps $N_{b}$ thus define a $\bldsigma$-semilinear representation $N$ of $\compactQhat$. 
\end{ex}

We now recall the main result in \cite{BTCB}.

\begin{thm}
\label{main-theorem-from-BTCB}
The modules $M(C_{w})\otimes_{R_{w}} V$ run through a complete set of pairwise non-isomorphic indecomposable $A$-modules which are finite-dimensional over $K$, where:
\begin{enumerate}
    \item $w$ runs through representatives of distinct equivalence classes of strings and bands; and 
    \item for each fixed string or band $w$, $V$ runs through representatives of distinct isomorphism classes of indecomposable $R_{w}$-modules which are finite-dimensional over $K$. 
\end{enumerate}
\end{thm}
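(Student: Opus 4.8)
The statement is the main classification theorem of \cite{BTCB}, so the plan is to follow the functorial-filtration strategy pioneered by Crawley-Boevey for clannish algebras in \cite{CB-clans} and upgraded there to the semilinear setting. The first reduction I would make exploits the hypothesis that $\Lambda$ is of semisimple type: for each special loop $s\in\bbS$ the quadratic $q_s(x)\in K[x;\sigma_s]$ is normal, non-singular and of semisimple type, so $K[x;\sigma_s]/\langle q_s(x)\rangle$ is a semisimple ring. Restricting any finite-dimensional $\Lambda$-module to the subring generated by $s$ at its base vertex therefore splits it into isotypic components, which plays the role of the $\pm$-decoration on special letters in the classical clan picture. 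This converts the action of the special loops into combinatorial data carried by the letters $s^{*}$, and it is what makes the alphabet of \cite[\S2.4]{BTCB} — and hence the words appearing in Proposition \ref{strings-and-bands-for-building-blocks} — the correct indexing set.

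Second, using the Dlab--Ringel equivalence recalled in \S\ref{subsubsec-representations-and-modules} together with Corollary \ref{coro:rep-of-species-equiv-to-cat-of-semilinear-maps}, I would pass from $\Lambda$-modules to representations of $\compactQhat$ given by tuples of $\sigma_a$-semilinear maps annihilated by $Z\cup S$. In this picture each module is a finite-dimensional $K$-space equipped with structure maps, and the fact that $\Lambda=K_{\bldsigma}\compactQhat/I$ with conditions (Q) and (Z) of Definition \ref{defi-semilinear-clannish-algebra} forces the familiar string-algebra local structure: at most two arrows in and two out at every vertex, and compositions controlled by the zero-relations. The heart of the argument is then to define, for each string $w$ and each position along $w$, a pair of subspace-valued functors built from alternating kernels and images of the structure maps, and to prove the covering lemmas describing how a chosen basis at one position determines the behaviour at the adjacent position. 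The semilinearity is absorbed by transporting scalars through the automorphisms $\sigma_a$ as one moves along $w$; this twisting is exactly what is recorded in the twisted bimodule $M(C_w)$ and in the choice of the parameterising ring $R_w$.

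Third, I would treat the four cases of $R_w$ separately. For asymmetric strings $R_w=K$ and the filtration layers directly yield indecomposable string modules with endomorphism ring $K$. For symmetric strings the reflection of $w$ about a special loop produces a nontrivial local parameterising ring $R_w=K[x;\nu]/\langle q(x)\rangle$, and the indecomposable $R_w$-modules over semisimple-type $q(x)$ index the resulting symmetric string modules. For bands one has $R_w=K[x,x^{-1};\sigma]$ in the asymmetric case and the free product $K[x;\rho]/\langle r\rangle *_K K[y;\tau]/\langle p\rangle$ in the symmetric case; here $M(C_w)\otimes_{R_w}V$ must be shown to be indecomposable whenever $V$ is, and one must verify that $\End_\Lambda(M(C_w)\otimes_{R_w}V)$ is controlled by $\End_{R_w}(V)$ so that non-isomorphic $V$ give non-isomorphic modules. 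Completeness — that every finite-dimensional indecomposable arises — then follows by assembling the filtration layers, and irredundancy follows from the non-equivalence of the indexing words together with the $R_w$-module parametrisation.

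The step I expect to be the main obstacle is the symmetric-band case. There $R_w$ is a classical hereditary order arising as a free product of two copies of a semisimple-type quotient, as in Remark \ref{remark-nonsemilinear-symmetric-bands}, and one must both control its finite-length module theory — drawing on the results of Cohn and the survey of Levy cited there — and show that the tensor functor $M(C_w)\otimes_{R_w}-$ faithfully transports indecomposables without collisions, all while tracking the field automorphisms along the infinite periodic word. Everything else is, in principle, a semilinear bookkeeping upgrade of the classical covering-lemma calculations, but the free-product parameterising ring is where phenomena genuinely beyond the commutative and gentle theory enter.
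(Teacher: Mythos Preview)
The paper does not prove this theorem at all: it is introduced with the sentence ``We now recall the main result in \cite{BTCB}'' and stated without proof, being quoted verbatim as the classification theorem of Bennett-Tennenhaus--Crawley-Boevey. There is therefore no ``paper's own proof'' to compare your proposal against.

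Your sketch is a reasonable outline of the functorial-filtration strategy that \cite{BTCB} itself employs, and the ingredients you identify (the semisimple-type hypothesis giving isotypic decompositions at special loops, the covering lemmas along words, the four parameterising rings $R_w$, and the free-product hereditary order in the symmetric-band case) are indeed the ones that appear there. But for the purposes of the present paper no argument is required: the theorem is imported as a black box and used only to justify that the strings and bands listed in Proposition~\ref{strings-and-bands-for-building-blocks} parameterise the indecomposables of the semilinear clannish blocks.
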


Hence the classification of  finite-dimensional indecomposable modules over a semilinear clannish algebra are parameterised by  the finite-dimensional indecomposable $R_{w}$-modules $V$, as  $w$ runs through the equivalence classes of strings and bands.

By the verification of condition (iii) in the proof of Proposition \ref{prop-algebras-are-semilinear-clannish}, if $w$ is a string then the ring $R_{w}$ is simple artinian. 
Otherwise, by Proposition \ref{strings-and-bands-for-building-blocks}, $w$ is a symmetric band. 
In this case, by definition and Remark \ref{remark-nonsemilinear-symmetric-bands}, the finite-dimensional $R_{w}$-modules are well understood.

\subsection{Passing representations through the established Morita equivalence}
\label{subsec-passing-reps-through-the-equivalence}

\,

We aim at illustrating how the equivalence given in Theorem \ref{main-theorem} works for modules over the Jacobian and semilinear clannish blocks 6 from Tables \ref{table-Jacobian-blocks-6-to-10} and  \ref{table-semilinear-clannish-blocks-6-to-10}. 
In Example \ref{ex-semilinear-representation-running-example} we described a representation for the semilinear clannish block, chosen using Theorem \ref{main-theorem-from-BTCB}. 
We now pass the representation through the Morita equivalence exhibited in Proposition \ref{prop-morita-equivalences-for-blocks}. 

\begin{ex}
Recall the $6^{\operatorname{th}}$ Jacobian block is defined in Table \ref{table-Jacobian-blocks-6-to-10} as follows. 
As we did in Example \ref{ex-semilinear-representation-running-example}, in this example we let $\xi\coloneqq\xi_{\alpha}$. 
Let $(Q,\bldD)$ be the weighted quiver with $d\coloneqq \lcm(2,4,1)=4$, given by
    \[
\begin{array}{ccc}
\xymatrix@C=3em@R=.75em{
& 1 \ar[dr]^{\gamma} & \\
2 \ar[ur]^{\alpha} & & 3 \ar[ll]^{\beta} 
}
&
\hspace{10mm}
&
\xymatrix@C=3em@R=0.75em{ 
& d_1=2 & \\
d_2=4  & & d_3=1
}
\end{array}
\]
For block 6 we also have $E/L/F$, a degree-$4$ datum for $(Q,\bldD)$, recalled as follows.
\begin{itemize}
    \item $E/L$ and  $L/F$ are degree-$2$ cyclic Galois extensions with $E=L(v)$ and $L=F(u)$.
    \item $\zeta\in F$ is a primitive $4^{\operatorname{th}}$ root of unity and $v^{2}=u\in L$. 
    \item $\theta\in \Gal(L/F)$ and $\rho\in \Gal(E/L)$ are  generators   of the respective Galois groups.
\end{itemize}
For the Jacobian algebra we fix  $\xi\in\ZZ/2\ZZ$ and define a modulating function for $(Q,\bldD)$ and $E/L/F$ by $\alpha\mapsto \theta^{\xi}$ and $\beta,\gamma\mapsto\myid_{L}$. Thus $A(\inputtripleforblocks)$ is the $F$-modulation of $(Q,\bldD)$ given by 
\[
\begin{array}{cc}
(F_{1},F_{2},F_{3})=(L,E,F), & (A_{\alpha},A_{\beta},A_{\gamma})=(L^{\theta^{\xi}}\otimes_{L}E,E\otimes_{F}F ,F\otimes_{F}L).
\end{array}
\]
Moreover, $W(\inputtripleforblocks)=\alpha\beta\gamma$ is a potential and $\rho=\{\frac{1}{2} (\beta\gamma + 
\theta^{-\xi}(u^{-1})\beta\gamma u), \gamma\alpha, \alpha\beta\}$, the set of derivatives, defines the relations for the quotient  $A'=\jacobalg{\inputtripleforblocks)}\cong\usualRA{\inputtripleforblocks}/\langle\rho\rangle$, the  Jacobian algebra. 
Now consider the image  $M=\Phi(N)$ of the module $N$ for the semilinear clannish block $A$ exhibited in Example \ref{ex-semilinear-representation-running-example},  under the equivalence from Proposition \ref{prop-morita-equivalences-for-blocks}. We use notation from Proposition \ref{prop-morita-equivalences-for-blocks} and Example \ref{ex-semilinear-representation-running-example}. 
\begin{enumerate}[(i)]
    \item $M_{1}=E$, which, just as for  $N_{1}=E$, is considered as an $L$-vector space.
        \item $M_{2}$ is the $E=L(v)$-vector space ${}^{\theta^{\xi-1}}L^{2}$  where the $E$-action $E\times {}^{\theta^{\xi-1}}L^{2}\to {}^{\theta^{\xi-1}}L^{2}$ is defined by 
        \[
\begin{array}{c}
       (\ell +\ell' v ,(\lambda,\mu))\mapsto (\theta^{\xi-1}(\ell)\lambda+\theta^{\xi-1}(\ell' u)\mu,\theta^{\xi-1}(\ell)\mu+\theta^{\xi-1}(\ell')\lambda)
    \end{array}
        \]
        where $\ell+\ell'v\in E$ for unique $\ell,\ell'\in L$, and where $\lambda,\mu\in L$.
    \item $M_{3}=E^{\Delta}=\{(e,e)\mid e\in E\}$ which is an $F$-subspace of $N_{3}={}^{\theta^{\xi}}E\oplus{}^{\theta^{\xi-1}}E$, but  not an $L$-subspace. 
    \item $M_{\alpha}\colon M_{2}\to M_{1}$ is the zero map, which is again considered $\theta^{\xi}$-semilinear.
    \item $M_{\beta}\colon M_{3}\to M_{2}$ sends $(\lambda + \mu v,\lambda + \mu v)\in {}_{F}E^{\Delta}$ to $(\lambda,\mu)\in {}_{E}({}^{\theta^{\xi-1}}L^{2})$ which is $F$-linear.
    \item $M_{\gamma}\colon M_{1}\to M_{3}$ sends $(\ell,\ell')\in {}_{L}E=L^{2}$ to $\frac{1}{2}(\ell+\ell'v,\ell+\ell'v)\in {}_{F}E^{\Delta}$ which is $F$-linear.
\end{enumerate}
We have defined $F_{i}$-vector spaces $M_{i}$ ($i\in Q_{0}$) and $g(a)$-semilinear maps $M_{a}\colon M_{t(a)}\to M_{h(a)}$ ($a\in Q_{1}$), which combine to define a representation $M$ of $A(\inputtripleforblocks)$ given by the image of $N$ under the functor $\Phi$.

We now compute the module $Y$ over the Jacobian algebra $\jacobalg{\inputtripleforblocks}$ that corresponds to the representation $M$ under the funtor $\Gamma$ in \S\ref{subsubsec-representations-and-modules}. 
Begin by considering  
\[
Y={}_{L}E\oplus {}_{E}({}^{\theta^{\xi-1}}L^{2})\oplus {}_{F}E^{\Delta}
\]
as a module over the direct product $L\times E\times F$, via the diagonal action. 
Consider now the $F$-linear map $Y_{\gamma}\colon A_{\gamma}\otimes_{R_{1}} M_{1}\to M_{3}$, and the $E$-linear map $Y_{\beta}\colon A_{\beta}\otimes_{R_{3}} M_{3}\to M_{2}$, respectively defined by
\[
\begin{array}{cc}
Y_{\gamma}\colon F\otimes_{F}L\otimes_{L}E\to {}_{F}E^{\Delta},
   & 
   Y_{\beta}\colon E\otimes_{F}F\otimes_{F}E^{\Delta}\to {}_{E}({}^{\theta^{\xi-1}}L^{2}),
   \\
  1\otimes \lambda\otimes (\ell,\ell')\mapsto \frac{1}{2}(\lambda\ell+\lambda\ell'v,\lambda\ell+\lambda\ell'v),
     & 1\otimes f\otimes (\lambda+\mu v,\lambda+\mu v)\mapsto (f\lambda,f\mu).
\end{array}
\]
The left action $\jacobalg{\inputtripleforblocks}\times Y\to Y$ is given by canonically extending the maps $Y_{\gamma}$ and $Y_{\beta}$. For example, 
the action of the path 
$v^{3}\beta\gamma u\in\compRA{\curlS(Q,\bldD,\xi)}$ on any element $(\ell,\ell')\in  {}{}_{L}E$ is given by 
\[
\begin{array}{c}
{}_{E}({}^{\theta^{\xi-1}}L^{2})\ni v^{3}\beta\gamma u.(\ell,\ell')=
(0+uv)(M_{\beta}\circ M_{\gamma}(u\ell,u\ell'))=
\frac{1}{2}(u^{3}\ell',\theta^{\xi-1}(u)u\ell).
\end{array}
\]

\end{ex}

\section{On the need of cocycles to define the algebras}\label{sec:on-need-of-cocycles}

The constructions in Subsection \ref{subsec:algebras-for-arb-weights} have a colored triangulation $(\tau,\xi)$, that is, a triangulation $\tau$ and a $1$-cocycle $\xi$, as an input. As mentioned in Remark \ref{rem:cocycle-condition}, choosing a $1$-cocycle $\xi = \sum_{{{\alpha}}} \xi_{{\alpha}} {{\alpha}}^\vee \in \CZonetauwFtwo\subseteq C^1(\tau)\coloneqq \Hom_{\F_2}(C_1(\tau),\F_2)$ amounts to fixing, for each arrow \smash{${{\alpha}} \in \overline{Q}(\tau)_1$}, an element $\xi_{{\alpha}} \in \{0,1\} = \F_2$, in such a way that for every interior triangle $\triangle$ of $\tau$, the three arrows ${{\alpha}}, {{\beta}}, {{\gamma}}$ of $\overline{Q}(\tau)$ induced by $\triangle$ satisfy
 \[
  \xi_{{\alpha}} + \xi_{{\beta}} + \xi_{{\gamma}} = 0 \:\in\: \F_2 \,,
 \]
 which translates into the equation
 \begin{equation}\label{eq:cocycle-cond-inside-Gal-group}
  \theta^{\xi_{{\alpha}}} \theta^{\xi_{{\beta}}} \theta^{\xi_{{\gamma}}} = \myid \:\in\: \Gal(L/F) \,.
 \end{equation}
In \cite[Examples 3.12, 3.13, 3.25]{GLF1} and \cite[Example 4.1 and Section 11]{GLF2}, it is shown that if \eqref{eq:cocycle-cond-inside-Gal-group} failed to hold for some interior triangle $\triangle$ such that
\begin{equation}\label{eq:min-geq-2}
\min\{d(\tau,\omega)_{h(\alpha)},d(\tau,\omega)_{h(\beta)},d(\tau,\omega)_{h(\gamma)}\}\geq 2
\end{equation}
(notice that $h(\alpha),h(\beta)$ and $h(\gamma)$ are the three arcs of $\tau$ contained in $\triangle$),
then any oriented $3$-cycle involving the arrows $\alpha$, $\beta$ and $\gamma$ would be cyclically equivalent to zero in the corresponding (complete) path algebra. Since our cyclic derivatives are defined so that cyclically equivalent potentials have the same cyclic derivatives  (see \cite[Definition 3.11]{GLF1}), this ultimately implies that such (complete) path algebra fails to admit a non-degenerate potential (in the sense of Derksen-Weyman-Zelevinsky \cite[Definition 7.2]{DWZ1}) under the mutation of species with potential from \cite[Definitions 3.19 and 3.22]{GLF1}.

Thus, the imposition of a cocycle condition as part of the input $(\tau,\xi)$ comes from the desire to give the (complete) path algebra of the associated species $A(\tau,\xi)$ a chance to admit a non-degenerate potential. 

Now, why do we need to allow arbitrary $1$-cocycles?, why do we not simply work with the zero cocycle? These questions are fully answered in \cite[Proposition 2.12 and the paragraph that follows it]{GLF1} and \cite[Case~1 in the proof of Theorem 7.1]{GLF2}. Roughly speaking, the main point is that when one decomposes the tensor product of bimodules as a direct sum of indecomposable bimodules, one is forced to consider twisted bimodules such as $\mathbb{C}^\theta\otimes_{\mathbb{C}}\mathbb{C}$ even if the tensor factors are not twisted by field automorphisms  (notice that $\mathbb{C}$ does not act centrally on this bimodule: one needs to apply complex conjugation in order to move complex numbers through the tensor symbol).

Thus, if one wants the first step of the purely combinatorial weighted quiver mutation (i.e., the introduction of ``composite arrows'', see, e.g., \cite{LF-Oberwolfach-2013} or \cite[\S2]{LZ}) to be categorified as taking the tensor product of bimodules, one is forced to allow non-trivial $1$-cocycles.

Summarizing, the need to work with $1$-cocycles stems from the phenomena that arise in the categorification of mutations of weighted quivers via mutations of species with potential.

\begin{remark} 
What happens in Subsection \ref{subsec:constant-weights-algs-def-over-C/R} is that for a degree-$2$ datum $L/F$, the set of bimodules
$$
\{F\otimes_FF,F\otimes_FL,L\otimes_FF,L\otimes_FL\}
$$
is closed under tensor products, and the bimodule $L\otimes_FL$ (on which $L$ does not act centrally, see also Remark \ref{rem:decomposing-Lotimes_FL}) always connects pending arcs. So, under the setting in \eqref{eq:omega=4=>half-the-weights} and the paragraphs that follow it, one can coincidentally avoid bimodules with non-trivial twists, even when one is interested in mutations of species with potential. 
\end{remark}

On the other hand, the definitions and results from \cite{BTCB} on semilinear clannish algebras do not require the field automorphisms $\sigma_a$ attached to the arrows $a$ that are not special loops, to satisfy any particular identity. This means that we can modify the constructions in Subsections \ref{subsec:semilinear-clannish-blocks}, \ref{subsubsec:arb-weights-semilin-clan-alg} and \ref{subsubsec:constant-weights-semilin-clan-alg} by attaching to each non-loop $a:j\rightarrow k$ any field automorphism $\sigma_a:K\rightarrow K$, but keeping the automorphisms attached to the loops, as well as the definition of the relations intact, and still obtain a semilinear clannish algebra, most likely not isomorphic or even Morita equivalent to the ones we defined. (Recall that $K\coloneqq L$ in \ref{subsubsec:arb-weights-semilin-clan-alg} and in the first seven columns of Tables \ref{table-semilinear-clannish-blocks-1-to-5} and \ref{table-semilinear-clannish-blocks-6-to-10}, whereas $K\coloneqq F$ in \ref{subsubsec:constant-weights-semilin-clan-alg} and in the last three tables of the referred tables).

\section*{Acknowledgements}
    We are deeply grateful to Bill Crawley-Boevey,
    this paper originated in discussions with him. We thank the anonymous referee for many useful suggestions.

    The first author is grateful to have been supported during part of this work by: the Alexander von Humboldt Foundation in the framework of an Alexander von Humboldt Professorship endowed by the German Federal Ministry of Education; the Danish National Research Foundation (grant DNRF156); the Independent Research Fund Denmark (grant 1026-00050B); and the Aarhus University Research Foundation (grant AUFF-F-2020-7-16).

    The second author received support from UNAM's \emph{Direcci\'on General de Asuntos del Personal Acad\'emico} through its \emph{Programa de Apoyos para la Superaci\'on del Personal Acad\'emico}. Most of the paper was written during a visit of his to Sibylle Schroll at the Mathematisches Institut der Universit\"at zu K\"oln. The great working conditions and the hospitality are gratefully acknowledged.

\end{document}